\numberwithin{equation}{section}
\ifpdf \usepackage[pdftex,pdfstartview=FitH,pdfpagemode=none,colorlinks,bookmarks,linkcolor=blue]{hyperref} \else  \usepackage[hypertex]{hyperref} \fi
\newtheorem{theorem}{Theorem}[section]
\newtheorem{lemma}[theorem]{Lemma}
\newtheorem{corollary}[theorem]{Corollary}
\newtheorem{definition}[theorem]{Definition}
\newtheorem{proposition}[theorem]{Proposition}
\newtheorem{property}[theorem]{Property}
\newtheorem{remark}[theorem]{Remark}
\theoremstyle{definition}\newtheorem*{acknowledgments}{Acknowledgments}
\newcommand{\cA}{\mathcal{A}}
\newcommand{\cB}{\mathcal{B}}
\newcommand{\cE}{\mathcal{E}}
\newcommand{\cL}{\mathcal{L}}
\newcommand{\cM}{\mathcal{M}}
\newcommand{\cO}{\mathcal{O}}
\newcommand{\cP}{\mathcal{P}}
\newcommand{\cU}{\mathcal{U}}
\newcommand{\bC}{\mathbb{C}}
\newcommand{\bE}{\mathbb{E}}
\newcommand{\bP}{\mathbb{P}}
\newcommand{\bR}{\mathbb{R}}
\newcommand{\bZ}{\mathbb{Z}}
\newcommand{\bQ}{\mathbb{Q}}
\newcommand{\bN}{\mathbb{N}}
\newcommand{\bT}{\mathbb{T}}
\newcommand{\talpha}{{\tilde{\alpha}}}
\newcommand{\tX}{{\tilde{X}}}
\newcommand{\tx}{{\tilde{x}}}
\newcommand{\ty}{{\tilde{y}}}
\newcommand{\tmu}{{\tilde{\mu}}}
\newcommand{\txi}{{\tilde{\xi}}}
\newcommand{\hL}{{\hat{L}}}
\newcommand{\hbeta}{{\hat{\beta}}}
\newcommand{\rmm}{{\mathrm{m}}}
\newcommand{\dA}{{\dot{A}}}
\newcommand{\dG}{{\dot{G}}}
\newcommand{\dX}{{\dot{X}}}
\newcommand{\dmu}{{\dot{\mu}}}
\newcommand{\dtX}{{\dot{\tX}}}
\newcommand{\dtmu}{{\dot{\tmu}}}
\newcommand{\dtx}{{\dot{\tx}}}
\newcommand{\dGamma}{{\dot{\Gamma}}}
\newcommand{\dx}{{\dot{x}}}
\newcommand{\dv}{{\dot{v}}}
\newcommand{\gog}{\mathfrak{g}}
\newcommand{\goh}{\mathfrak{h}}
\newcommand{\gol}{\mathfrak{l}}
\newcommand{\gos}{\mathfrak{s}}
\newcommand{\gou}{\mathfrak{u}}
\newcommand{\gov}{\mathfrak{v}}
\newcommand{\gow}{\mathfrak{w}}
\newcommand{\bfe}{\mathbf{e}}
\newcommand{\bfm}{\mathbf{m}}
\newcommand{\bfn}{\mathbf{n}}
\newcommand{\bft}{\mathbf{t}}
\newcommand{\bfp}{\mathbf{p}}
\newcommand{\bfq}{\mathbf{q}}
\newcommand{\bfv}{\mathbf{v}}
\newcommand{\bfx}{\mathbf{x}}
\newcommand{\bfy}{\mathbf{y}}
\newcommand{\bfzero}{\mathbf{0}}
\newcommand{\bfone}{\mathbf{1}}
\newcommand{\bfxi}{{\boldsymbol{\xi}}}
\newcommand{\bfeta}{{\boldsymbol{\eta}}}
\newcommand{\SL}{\operatorname{SL}}
\newcommand{\GL}{\operatorname{GL}}
\newcommand{\SO}{\operatorname{SO}}
\newcommand{\supp}{\operatorname{supp}}
\newcommand{\rank}{\operatorname{rank}}
\newcommand{\dist}{\operatorname{dist}}
\newcommand{\Aut}{\operatorname{Aut}}
\newcommand{\Ad}{\operatorname{Ad}}
\newcommand{\Stab}{\operatorname{Stab}}
\newcommand{\di}{\mathrm{d}}
\newcommand{\Id}{\mathrm{Id}}
\newcommand{\ab}{\mathrm{ab}}
\newcommand{\T}{\mathrm{T}}
\newcommand\diag[1]{\operatorname{diag}\left(#1\right)}
\newcommand{\onto}{\xymatrix{\ar@{>>}[r]&}}
\newcommand{\da}[4]{\xymatrix{#1 \ar@<.5ex>[r]^{#2} \ar@<-.5ex>[r]_{#3} & #4}}
\newcounter{subconst}[subsection]
\newcounter{const}
\newcounter{CONST}
\begin{document}

\title{Multi-invariant measures and subsets on nilmanifolds}
\author[Z. Wang]{Zhiren Wang}
\address{\newline Yale University, New Haven, CT 06520, USA\newline \rm zhiren.wang@yale.edu}
\setcounter{page}{1}
\begin{abstract}Given a $\bZ^r$-action $\alpha$ on a nilmanifold $X$ by automorphisms and an ergodic $\alpha$-invariant probability measure $\mu$, we show that $\mu$ is the uniform measure on $X$, unless modulo finite index modification, one of the following obstructions occurs for an algebraic factor action: \begin{enumerate}\item The factor measure has zero entropy under every element of the action;
\item The factor action is virtually cyclic.\end{enumerate}

We also deduce a rigidity property for invariant closed subsets.
\end{abstract}
\maketitle
{\small\tableofcontents}

\section{Introduction}
\subsection{Main results}

A $\bZ^r$-action $\alpha$ on a topological space $\Omega$ is a group morphism $\alpha:\bfn\mapsto\alpha^\bfn$ from $\bZ^r$ to $\mathrm{Homeo}(\Omega)$,  the group of self-homeomorphisms of $\Omega$. The rank of the action is the torsion-free rank of the image of this morphism.
When the rank is $0$ or $1$, then the action is, up to torsion elements, generated by a single transformation, and we say in this case that $\alpha$ is {\bf virtually cyclic}.
 
By a nilmanifold we mean the quotient $X=G/\Gamma$, where $G$ is a simply connected nilpotent Lie group and $\Gamma$ is a cocompact lattice in $G$. Let $\rmm_X$ denote the unique left $G$-invariant probability measure on $X$. 

We focus on $\bZ^r$-actions on $X$ by automorphisms, that is, a group morphism $\alpha:\bfn\mapsto\alpha^\bfn$ from $\bZ^r$ to the automorphism group $\Aut(X)$ of $X$. An automorphism of $X$ is a homeomorphism of $X$ descending from a group automorphism of $G$ that preserves $\Gamma$. 

Our main result is: 
\begin{theorem}\label{MeasureThm} Let $\alpha:\bZ^r\curvearrowright X$ be a $\bZ^r$-action on a nilmanifold $X$ by automorphisms and $\mu$ be an ergodic $\alpha$-invariant probability measure on $X$. Then $\mu=\rmm_X$, unless there is a finite index subgroup $\Sigma\subset\bZ^r$, such that for every $\alpha|_\Sigma$-ergodic component $\mu'$ of $\mu$, there is a non-trivial algebraic factor action $\dot\alpha:\Sigma\curvearrowright\dX$ of the restriction $\alpha|_\Sigma$ of $\alpha$ to $\Sigma$, which satisfies at least one of the following:
\begin{enumerate}
\item The projection of $\mu'$ to $\dX$ has entropy $0$ for every $\dot\alpha^\bfn$,  $\bfn\in\Sigma$;
\item $\dot\alpha$ is virtually cyclic.
\end{enumerate} 
\end{theorem}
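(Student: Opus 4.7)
The plan is to induct on the nilpotency step of $G$, using the maximal toral factor as the base of the induction. The base case --- $G$ abelian, so $X$ is a torus --- is the higher rank measure rigidity for $\bZ^r$-actions by toral automorphisms, which in the generality needed here combines the high entropy method of Einsiedler--Katok, the low entropy analysis of Einsiedler--Lindenstrauss, and the algebraic classification of potential factor obstructions; I would invoke it as a black box. For the inductive step, let $G' = [G,G]$, set $X^\ab = G/G'\Gamma$ (a torus, since $G'$ is rational and characteristic), and let $\pi\colon X \to X^\ab$ be the projection. The action $\alpha$ descends to a $\bZ^r$-action $\alpha^\ab$ on $X^\ab$ by toral automorphisms, and $\pi_*\mu$ is an ergodic $\alpha^\ab$-invariant probability measure.

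Applying the toral case to $(\pi_*\mu, \alpha^\ab)$ yields a dichotomy. If $\pi_*\mu$ is not equal to $\rmm_{X^\ab}$, then there exist a finite index $\Sigma\subset\bZ^r$, an $\alpha^\ab|_\Sigma$-ergodic component of $\pi_*\mu$, and an algebraic toral factor of $\alpha^\ab|_\Sigma$ exhibiting alternative (1) or (2). Since any algebraic factor of $X^\ab$ is also an algebraic factor of $X$, this supplies the required witness and we are done. The remaining case is $\pi_*\mu = \rmm_{X^\ab}$; under the standing assumption that no algebraic factor of $\alpha|_\Sigma$ on any finite index $\Sigma$ triggers (1) or (2) --- for if one did we would again be done --- the task becomes to show $\mu = \rmm_X$.

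In this lifting step, I would disintegrate $\mu = \int \mu_y \, d\rmm_{X^\ab}(y)$; each fiber $\pi^{-1}(y)$ is a translate of the lower step nilmanifold $X' = G'/(G'\cap\Gamma)$, and it suffices to show that $\mu_y$ is Haar on $\pi^{-1}(y)$ for a.e.\ $y$. The mechanism is to analyze leafwise measures of $\mu$ along coarse Lyapunov subgroups of $\alpha$ sitting inside $G'$, beginning with those contained in the center $Z(G)$, which is itself a rational subtorus. The standing hypothesis excludes the low entropy degeneracies, so a version of the Einsiedler--Katok--Lindenstrauss high entropy method adapted to coarse Lyapunov subgroups in $G'$ should produce translation invariance of $\mu$ along a nontrivial central Lyapunov leaf. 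A shearing/commutator argument then propagates this invariance up the lower central series: invariance along $H\subset Z(G)$ combined with invariance along a noncentral Lyapunov direction $Y$ yields invariance along $[Y,H]$, which lies one layer deeper in the filtration. Iterating, and invoking the inductive hypothesis on quotients by characteristic rational subgroups already shown to be acted on by Haar, one obtains full $G$-invariance of each $\mu_y$.

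The main obstacle is the bookkeeping in this lifting step. The finite index $\Sigma$ in the statement is forced by two phenomena: coarse Lyapunov exponents that coincide on the abelianization can split once one ascends the central series, and ergodic components of $\mu$ under $\alpha|_\Sigma$ can be strictly finer than those under $\alpha$. Before running the shearing at a new layer of the central filtration, one must verify that the corresponding algebraic factor fails both (1) and (2) --- otherwise that factor is itself the obstruction promised by the theorem --- and then iteratively refine $\Sigma$ and the ergodic decomposition accordingly. Carrying this out uniformly across all layers, while ensuring that the final $\Sigma$ and choice of ergodic component $\mu'$ are consistent with the statement, is where the bulk of the technical work lies.
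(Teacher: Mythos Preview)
Your proposal has two concrete gaps and misses the structural device that makes the paper's argument work.

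First, the shearing step as you state it is vacuous: if $H\subset Z(G)$ is central and $Y$ is any element, then $[Y,H]=\{e\}$, so commuting with a central subgroup produces nothing new. The commutator mechanism runs the other way---brackets of non-central Lyapunov directions land deeper in the lower central series---but that alone does not let you climb from invariance on $X_{\ab}$ to invariance along central leaves; you need a separate source of invariance inside $G'$, and your sketch does not supply one.

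Second, the inductive hypothesis you want to invoke is not available in the form you describe. The conditional measures $\mu_y$ on the fibers $\pi^{-1}(y)\cong X'$ are not $\alpha$-invariant: the action permutes fibers via $\alpha^{\ab}$, so there is no $\bZ^r$-action on a single fiber to which the theorem applies. One can only compare $\mu_y$ with $\mu_{\alpha^{\ab}\cdot y}$ under the appropriate affine map, which is a relative (skew-product) situation, not a direct recursion on nilpotency step.

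The paper handles both issues by recasting the problem as a statement about joinings: Theorem~\ref{MeasureInductive} treats $\talpha=\alpha\times\beta$ on $\tX=X\times Y$ for an arbitrary auxiliary system $(Y,\beta)$, and the induction is on $\dim X$. At each step one strips off a single totally irreducible central torus $Z$ (Lemma~\ref{IrrVertical}), applies the inductive hypothesis to $\dtX=(X/Z)\times Y$ to force $\dtmu=\rmm_{\dX}\times\nu$, and then establishes translational invariance of some leafwise measure $\tmu_\tx^V$ (Property~\ref{TransInvProp}) by a four-way case split. The case you do not anticipate is Case~IV (\S\ref{SecEnt0Fiber}): when the fiber entropy $h_\tmu(\talpha^\bfn\mid\cB_{\dtX})$ vanishes and all leafwise supports are isometric, one must manufacture a new auxiliary factor $Y_\cU$ out of the map $\tx\mapsto\tmu_\tx^V$ itself and re-apply the inductive hypothesis to $\dX\times Y\times Y_\cU$. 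This is precisely why the joining framework is needed, and it has no analogue in your outline. Once Property~\ref{TransInvProp} is obtained, a Ratner-type argument (\S\ref{SecRatner}) upgrades it to invariance under a nontrivial normal rational subgroup $L^i$, and one quotients by $L^i$ to drop dimension---not by $[G,G]$ all at once.
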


For definition of algebraic factor actions, see \S\ref{SecNilfiber}. It should be remarked that the second obstruction, namely the restriction $\alpha|_\Sigma$ to some finite index subgroup has a virtually cyclic algebraic factor, is equivalent to that $\alpha$ has such a factor (see Remark \ref{Rk2ResRmk}). The theorem is stated in its present form for conciseness.

When either of the two obstructions is present, no classification of invariant measures is available in the factor $\dX$, even in the simplest case where $\dX$ is a torus on which $\dot\alpha$ acts hyperbolically and totally irreducibly. In fact, the action being virtually cyclic is known to be a genuine obstruction that makes such classification impossible, in the sense that the action virtually becomes a Markov process and invariant measures can be quite arbitrary. On the other hand, it is a long-standing open problem and a generalization of Furstenberg Conjecture, to characterize zero entropy measures invariant under higher-rank actions (see \S\ref{SecBackground} below). In this sense, Theorem \ref{MeasureThm} is optimal within the scope of current technologies.

In certain special cases, Theorem \ref{MeasureThm} has been known by the works of Katok-Spatzier \cite{KS96}, Einsiedler-Lindenstrauss \cite{EL03} and Kalinin-Spatzier\cites{KS05}, see \S\ref{SecBackground}.

We also study the rigidity of $\alpha$-invariant subsets. For a nilmanifold $X=G/\Gamma$, there is a maximal torus factor $G/[G,G]\Gamma$, which we denote by $X_\ab$. Moreover, a $\bZ^r$-action by automorphisms of $X$ naturally projects to a factor $\bZ^r$-action on $X_\ab$ by toral automorphisms.

\begin{theorem}\label{TopoThm} Let $\alpha:\bZ^r\curvearrowright X$ be an action by nilmanifold automorphisms. Suppose $\alpha$ has no virtually cyclic algebraic factors.  

 If $A$ is an $\alpha$-invariant closed subset whose projection to the maximal torus factor $X_\ab$ is $X_\ab$ itself, then $A=X$.\end{theorem}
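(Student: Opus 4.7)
The plan is to derive the topological statement from the measure classification in Theorem~\ref{MeasureThm} by producing an $\alpha$-invariant probability measure $\mu$ supported on $A$ whose pushforward to the maximal torus factor $X_\ab$ equals $\rmm_{X_\ab}$, and then ruling out both obstructions of Theorem~\ref{MeasureThm}.

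Assume toward contradiction that $A \subsetneq X$. Since $\pi|_A : A \to X_\ab$ is a continuous surjection between compact metric spaces, the Kuratowski--Ryll-Nardzewski selection theorem supplies a Borel section $s : X_\ab \to A$, and $\nu_0 := s_* \rmm_{X_\ab}$ projects under $\pi$ to $\rmm_{X_\ab}$. Any weak-$*$ accumulation point of F{\o}lner averages of $\alpha^\bfn_* \nu_0$ yields an $\alpha$-invariant $\mu$ on $A$ with $\pi_* \mu = \rmm_{X_\ab}$, the latter persisting because $\alpha$ acts on $X_\ab$ by toral automorphisms. Because $\rmm_X$ has full support while $\supp \mu \subseteq A \subsetneq X$, no $\alpha$-ergodic component of $\mu$ can coincide with $\rmm_X$. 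Theorem~\ref{MeasureThm}, combined with Remark~\ref{Rk2ResRmk} to exclude the virtually cyclic obstruction under the present hypothesis, then supplies for every $\alpha$-ergodic component $\mu_\omega$ a finite-index subgroup $\Sigma_\omega \subseteq \bZ^r$ and a non-trivial $\Sigma_\omega$-invariant algebraic factor $\dX_\omega$ such that every $\alpha|_{\Sigma_\omega}$-ergodic sub-component of $\mu_\omega$ projects to $\dX_\omega$ with zero entropy under every element of $\Sigma_\omega$.

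Finite-index subgroups of $\bZ^r$ and algebraic factors of the restricted actions form countable families (the latter correspond to rational normal Lie subalgebras of $G$), so a pigeon-hole argument produces a single finite-index $\Sigma \subseteq \bZ^r$ and a single non-trivial $\Sigma$-invariant algebraic factor $\pi' : X \to \dX$ such that the set of $\alpha|_\Sigma$-ergodic components $\mu'_\omega$ of $\mu$ satisfying $h_\bfn(\pi'_* \mu'_\omega) = 0$ for all $\bfn \in \Sigma$ has positive $\mu$-measure. The abelianization $\dX_\ab$ of $\dX$ is a non-trivial quotient torus of $X_\ab$, and the compositions $X \to X_\ab \to \dX_\ab$ and $X \to \dX \to \dX_\ab$ agree. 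Splitting $\mu = t \bar\mu_0 + (1-t) \bar\mu_1$, where $\bar\mu_0,\bar\mu_1$ are the probability normalizations of the restrictions of $\mu$ to the good and bad $\alpha|_\Sigma$-ergodic components (so $t \in (0,1]$), pushing forward to $\dX_\ab$, and using the affineness of measure-theoretic entropy together with the maximality of Haar measure among invariant measures of a toral automorphism, one obtains $h_\bfn(\rmm_{\dX_\ab}) \leq (1-t) h_\bfn(\rmm_{\dX_\ab})$ for every $\bfn \in \Sigma$, hence $h_\bfn(\rmm_{\dX_\ab}) = 0$.

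Thus $\alpha_\ab|_\Sigma$ acts quasi-unipotently on the non-trivial torus $\dX_\ab$. After passing to a further finite-index $\Sigma' \subseteq \Sigma$, its image consists of commuting unipotent integer matrices, which share a common primitive fixed integer vector, equivalently a common fixed character of $\dX_\ab$; the associated one-dimensional quotient torus is a non-trivial algebraic factor of $\alpha|_{\Sigma'}$ on which $\Sigma'$ acts trivially, and in particular virtually cyclically. Remark~\ref{Rk2ResRmk} then yields a virtually cyclic algebraic factor of $\alpha$ itself, contradicting the hypothesis. The main technical nuisance I anticipate is the bookkeeping for the pigeon-hole argument over $\alpha|_\Sigma$-ergodic components of the non-ergodic measure $\mu$ and the measurability of the selection $\omega \mapsto (\Sigma_\omega, \dX_\omega)$; the geometric core, pushing forward to $\dX_\ab$ and comparing entropies, is otherwise quite direct.
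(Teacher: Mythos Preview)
Your argument is essentially correct, but it takes a substantially longer route than necessary, and the detour stems from missing one observation that the paper exploits directly.

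The paper's proof constructs $\mu$ more simply: pick a Birkhoff-generic point $z\in X_\ab$ for $\rmm_{X_\ab}$ (which exists since, by Lemma~\ref{Rk2Equiv}, $\rmm_{X_\ab}$ is $\alpha$-ergodic), lift it to any $x\in A$, and let $\mu$ be a weak-$*$ limit of Birkhoff averages of $\delta_x$. This already gives an $\alpha$-invariant $\mu$ supported on $A$ with $\pi_*\mu=\rmm_{X_\ab}$, without the selection theorem or F{\o}lner averaging.

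The real difference is in how the zero-entropy obstruction is ruled out. The key observation (packaged as Corollary~\ref{FullXabCor}) is that for \emph{every} finite-index $\Sigma$, the measure $\rmm_{X_\ab}$ is $\alpha|_\Sigma$-ergodic (again Lemma~\ref{Rk2Equiv}). Hence every $\alpha|_\Sigma$-ergodic component $\mu'$ of $\mu$ must itself project to $\rmm_{X_\ab}$, and therefore to $\rmm_{\dX_\ab}$ on \emph{any} nontrivial $\alpha|_\Sigma$-equivariant algebraic factor $\dX$ (since $\dX_\ab$ is a nontrivial factor of $X_\ab$). Because the action on $\dX_\ab$ is not virtually cyclic, this projection has positive entropy for some element of $\Sigma$, and the zero-entropy obstruction is excluded outright. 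Theorem~\ref{MeasureThm} then forces $\mu=\rmm_X$, so $A=X$.

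Your approach instead lets the factor $\dX$ vary with the ergodic component, pigeon-holes over the countable family of pairs $(\Sigma,\dX)$, and then runs an entropy-comparison to squeeze $h_\bfn(\rmm_{\dX_\ab})=0$, finally manufacturing a virtually cyclic one-dimensional torus factor. All of this is sound (the measurability and bookkeeping issues you flag are genuine but routine given countability), but it is working hard to reproduce, by contradiction, exactly the conclusion that the ergodicity of $\rmm_{X_\ab}$ under every finite-index restriction gives for free. If you insert that observation at the point where you first obtain $\pi_*\mu=\rmm_{X_\ab}$, the entire second half of your argument collapses to two lines.
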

 
 A $\bZ^r$-action on a torus $\bT^d$ by automorphisms is {\bf totally irreducible} if the restriction to any finite index subgroup of $\bZ^r$ leaves invariant no proper non-trivial subtorus of $\bT^d$.
  
When the factor action of $\alpha$ on the maximal torus action $X_\ab$ is totally irreducible, one can say more about invariant measures and subsets on $X$.

\begin{corollary}\label{MeasureCor} Suppose $\alpha:\bZ^r\curvearrowright X$ is an action by nilmanifold automorphisms and it factors to a totally irreducible, non-virtually cyclic action on $X_{ab}$. If $\mu$ is an ergodic $\alpha$-invariant measure, then
\begin{itemize} 
\item either the projection of $\mu$ to $X_\ab$ has zero entropy under the projection of $\alpha^\bfn$ for all $\bfn\in\bZ^r$;  
\item or $\mu=m_X$.
\end{itemize}
\end{corollary}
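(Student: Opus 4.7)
The plan is to derive Corollary \ref{MeasureCor} directly from Theorem \ref{MeasureThm}. Suppose $\mu\neq\rmm_X$. Theorem \ref{MeasureThm} then produces a finite-index subgroup $\Sigma\subset\bZ^r$ such that every $\alpha|_\Sigma$-ergodic component $\mu'$ of $\mu$ admits a non-trivial algebraic factor action $\dot\alpha:\Sigma\curvearrowright\dX$ of $\alpha|_\Sigma$ satisfying obstruction (1) or (2) of the theorem. I aim to show that, under total irreducibility of $\alpha$ on $X_\ab$, both obstructions collapse to yield zero entropy for the projection of $\mu$ to $X_\ab$.

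The heart of the proof is the following observation about how $\dX$ abelianizes. Since $\dX$ is a non-trivial nilmanifold, its maximal torus factor $(\dX)_\ab$ is a non-trivial torus (the abelianization of any non-trivial simply connected nilpotent Lie group is non-trivial). The factor map $X\to\dX\to(\dX)_\ab$ factors through the abelianization of $X$, producing an $\alpha|_\Sigma$-equivariant surjection $q:X_\ab\to(\dX)_\ab$. The identity component of $\ker q$ is an $\alpha|_\Sigma$-invariant subtorus of $X_\ab$; since total irreducibility is preserved under restriction to finite-index subgroups by definition, this subtorus is either $\{0\}$ or all of $X_\ab$, and the latter would force $(\dX)_\ab$ to be trivial. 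Thus $q$ is a finite isogeny. This rules out case (2): the $\alpha|_\Sigma$-action on $X_\ab$ is not virtually cyclic (finite-index restriction preserves rank), so neither is its isogenous copy on $(\dX)_\ab$, and since the latter is a factor of $\dot\alpha$, the action $\dot\alpha$ itself cannot be virtually cyclic.

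Only case (1) remains, so $\mu'$ projects to $\dX$ with zero entropy under every $\dot\alpha^\bfn$, $\bfn\in\Sigma$. Entropy is monotone under factors, so the further projection to $(\dX)_\ab$ still has zero entropy; and since $q$ is finite-to-one, entropy is preserved through it, so the projection $\pi_*\mu'$ to $X_\ab$ has zero entropy under $\alpha^\bfn|_{X_\ab}$ for every $\bfn\in\Sigma$. By affinity of entropy on invariant measures, integrating over the $\alpha|_\Sigma$-ergodic components of $\mu$ yields that $\pi_*\mu$ has zero entropy under $\alpha^\bfn|_{X_\ab}$ for every $\bfn\in\Sigma$. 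Finally, for arbitrary $\bfn\in\bZ^r$, the multiple $N\bfn$ with $N=[\bZ^r:\Sigma]$ lies in $\Sigma$, and the identity $h(T^N)=N\,h(T)$ applied to $T=\alpha^\bfn|_{X_\ab}$ transports the vanishing back to $\bfn$ itself. The main technical content is the finite-isogeny observation in the second paragraph; everything else is routine entropy bookkeeping.
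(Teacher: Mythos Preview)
Your proof is correct, and it takes a genuinely different route from the paper's. The paper argues the contrapositive: it first uses total irreducibility to check that $\alpha$ has no virtually cyclic algebraic factors, then observes that if $\mu_\ab$ has positive entropy for some $\alpha^\bfn$, the Einsiedler--Lindenstrauss torus result \cite{EL03} forces $\mu_\ab=\rmm_{X_\ab}$, after which Corollary~\ref{FullXabCor} (itself a consequence of Theorem~\ref{MeasureThm}) yields $\mu=\rmm_X$. You instead apply Theorem~\ref{MeasureThm} directly and use your finite-isogeny observation twice: once to eliminate obstruction~(2), and once to pull the zero-entropy conclusion back from $(\dX)_\ab$ to $X_\ab$ through the finite cover. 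Your approach has the virtue of being entirely internal to the paper---it does not invoke \cite{EL03} separately---while the paper's proof is a bit shorter because it outsources the torus step and avoids the ergodic-component and finite-index bookkeeping you carry out at the end.
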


We remark that behaviors of invariant subsets can be very different from those of invariant measures. In fact, after comparing Theorem \ref{TopoThm} with Theorem \ref{MeasureThm}, one might wonder if,  in Theorem \ref{TopoThm}, the weaker assumption that $A$ has full projection in every totally irreducible torus factor action would also force $A$ to be $X$. This is false in general, even when $X$ is a torus, unless $X_\ab$ is totally irreducible under the action.  But when the action on $X_\ab$ is totally irreducible and hyperbolic, we do have such rigidity:

\begin{corollary}\label{TopoCor} Let $\alpha:\bZ^r\curvearrowright X$ be  an action by nilmanifold automorphisms. Suppose the factor action on $X_\ab$ is totally irreducible, not virtually cyclic, and contains a hyperbolic toral automorphism. If $A\subset X$ is an $\alpha$-invariant closed subset, then
\begin{itemize} 
\item either $A$ projects to a finite subset of $X_\ab$;
\item or $A=X$.
\end{itemize}
\end{corollary}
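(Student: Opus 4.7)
The plan is to reduce the statement to Theorem \ref{TopoThm} via Berend's classical theorem on closed $\bZ^r$-invariant subsets of tori. Write $\alpha_\ab$ for the induced factor action of $\alpha$ on $X_\ab$. By hypothesis, $\alpha_\ab$ is totally irreducible, contains a hyperbolic element, and is not virtually cyclic. These are the standard conditions under which Berend showed that every closed $\alpha_\ab$-invariant subset of $X_\ab$ is either finite or all of $X_\ab$.

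Before invoking Theorem \ref{TopoThm}, I would verify that $\alpha$ itself has no non-trivial virtually cyclic algebraic factor. Any algebraic factor $\dot\alpha \colon \bZ^r \curvearrowright \dX$ of $\alpha$ descends, via the abelianization functor on nilmanifolds, to a factor action on $\dX_\ab$, which is a quotient torus of $X_\ab$; if $\dot\alpha$ is virtually cyclic then so is this induced action. Total irreducibility of $\alpha_\ab$ forces the quotient torus $\dX_\ab$ to be either trivial or all of $X_\ab$. The latter case would make $\alpha_\ab$ itself virtually cyclic, contradicting the hypothesis; the former forces $\dX$ to be a point, since a non-trivial nilmanifold has non-trivial abelianization. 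Therefore $\alpha$ has no non-trivial virtually cyclic algebraic factor, and Theorem \ref{TopoThm} applies.

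Now let $\pi \colon X \to X_\ab$ denote the canonical projection, and set $B := \pi(A)$, a closed $\alpha_\ab$-invariant subset of $X_\ab$. Berend's theorem leaves two options: either $B$ is finite, which is the first alternative of the conclusion, or $B = X_\ab$, in which case Theorem \ref{TopoThm} applied to $\alpha$ and $A$ yields $A = X$. The main point that requires care, but no new ideas, is matching the hypotheses of the corollary with Berend's original formulation (phrased in terms of common eigenvectors of the matrices defining the action); this translation uses total irreducibility and hyperbolicity in a standard way, and I expect it to be the only non-routine bookkeeping step.
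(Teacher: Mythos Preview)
Your argument is correct and matches the paper's proof: apply Berend's theorem (stated in the paper as Theorem \ref{Berend}) to the projection of $A$ in $X_\ab$, and in the full-image case invoke Theorem \ref{TopoThm}. The paper's proof is terser and omits your explicit verification that $\alpha$ has no virtually cyclic algebraic factor (that check appears verbatim in the adjacent proof of Corollary \ref{MeasureCor}); also, your anticipated bookkeeping about translating Berend's hypotheses is unnecessary here, since Theorem \ref{Berend} is already stated in the paper with exactly the hypotheses you have.
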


In the cases studied by \cites{KS96,EL03,KS05}, in the absence of virtually cyclic factors, when an ergodic invariant measure $\mu$ has positive entropy, some finite index component of $\mu$ is invariant under left translations by some connected closed subgroup $H\subset G$ and $H$ has compact orbits at all points in the support of this component. Whereas, this fibration property fails in general, as the following theorem shows.

\begin{theorem}\label{Heisenberg}There exists a nilmanifold $X$, a $\bZ^2$-action $\alpha$ by automorphisms on $X$ without virtually cyclic algebraic factors, and an $\alpha$-invariant probability measure $\mu$, such that: 
\begin{enumerate}
\item There is an algebraic factor of $\alpha$, which takes place on a torus factor of $X$, such that the factor action is totally irreducible and not virtually cyclic, and the projection of $\mu$  to the factor is Lebesgue;
\item $\mu$ is ergodic under the restriction of $\alpha$ to every finite index subgroup $\Sigma\subset\bZ^2$;
\item For any finite index subgroup $\Sigma$, there does not exist a non-trivial $\alpha|_\Sigma$-invariant connected closed subgroup $H$ of $G$, such that $\mu$ can be desintegrated into $H$-invariant probability measures on compact $H$-orbits.\end{enumerate}
\end{theorem}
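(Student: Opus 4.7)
The plan is to construct an explicit counterexample $(X,\alpha,\mu)$ satisfying all three properties. Two observations guide the construction. First, property (3) with $H=G$ already forces $\mu\neq \rmm_X$, so combined with the no-virtually-cyclic hypothesis, Theorem~\ref{MeasureThm} requires, for some finite-index $\Sigma\subset\bZ^2$, the existence of a non-trivial algebraic factor of $\alpha|_\Sigma$ on which the projection of $\mu$ has zero entropy. Second, since the no-virtually-cyclic hypothesis extends to all algebraic factors of $\alpha$ (cf. Remark~\ref{Rk2ResRmk}), the zero-entropy factor of $\alpha|_\Sigma$ must be one that is \emph{not} visible to $\alpha$ itself. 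The construction must therefore be engineered so that $\alpha|_\Sigma$ admits extra algebraic factors beyond those of $\alpha$.

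Concretely, I would start from a totally real cubic number field $K$ with unit group $\cO_K^\times$ of rank $2$, and take a rank-$2$ torsion-free subgroup $\Sigma_0\subset\cO_K^\times$. Let $V_0=K\otimes\bR$ and consider a $2$-step nilpotent Lie algebra $\gog = (V_0\oplus V_0)\oplus W$ with a Heisenberg-type bracket $(V_0\oplus V_0)\wedge(V_0\oplus V_0)\to W$ compatible with the diagonal $\Sigma_0$-action $u\mapsto \diag{u,u}$ and with a finite-order symmetry $\sigma$ that swaps the two copies of $V_0$ (after a Galois twist if needed for $\alpha$-invariance of the bracket). The full action $\alpha$, generated by $\Sigma_0$ together with $\sigma$, has torsion-free rank $2$; the $\alpha$-invariant $\bQ$-subspaces of $V_0\oplus V_0$ are only the diagonal and antidiagonal $\Delta V_0,\Delta^- V_0$, and analysis of $\alpha$-invariant rational Lie ideals of $\gog$ confirms that all algebraic factors of $\alpha$ are non-virtually cyclic. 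The index-two subgroup $\Sigma=\Sigma_0\subset\alpha$ (without the swap) additionally preserves the splitting $V_0\oplus V_0=V_1\oplus V_2$ and all $K$-lines therein, producing a whole family of additional algebraic factors of $\alpha|_\Sigma$; the measure $\mu$ will then be built so that its projection to one such factor has zero entropy. Explicitly, $\mu$ is defined by taking a zero-entropy $\Sigma$-ergodic measure $\nu$ on a suitably chosen $\Sigma$-invariant subfactor (e.g.\ a Furstenberg-type measure on an invariant non-dense closed subset), then symmetrizing under $\sigma$ to obtain an $\alpha$-invariant measure on $X$ that projects to Lebesgue on an $\alpha$-invariant totally irreducible torus factor (namely $\Delta T_{V_0}$) while maintaining the zero-entropy projection to the $\alpha|_\Sigma$-factor.

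Properties (1) and (2) follow from standard ergodicity analysis: the projection of $\mu$ to $\Delta T_{V_0}$ is Lebesgue, and total irreducibility of $\Sigma_0$ on $V_0$ survives every finite-index restriction (because each non-torsion unit generates $K$ over $\bQ$), guaranteeing ergodicity under every $\alpha|_{\Sigma'}$. For property (3), one classifies the $\alpha|_{\Sigma'}$-invariant connected closed subgroups $H\subset G$ via the $\alpha|_{\Sigma'}$-invariant Lie subalgebras of $\gog$; the candidates are $\{0\}$, $\exp W$, $G$, and subgroups built from the diagonal/antidiagonal. Disintegration of $\mu$ into $H$-invariant measures on compact $H$-orbits would, in each nontrivial case, force either $\mu=\rmm_X$ (for $H\supset\exp W$) or $\mu$ to be Haar on a proper subnilmanifold (for the diagonal subgroup), each contradicting the fact that some $\alpha|_\Sigma$-factor projection of $\mu$ has zero entropy. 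The main obstacle of the proof is precisely this simultaneous realization: finding a nilpotent Lie algebra $\gog$ and a rank-$2$ action $\alpha$ admitting both (a) no virtually cyclic algebraic factors for the full $\alpha$, and (b) an extra $\alpha|_\Sigma$-invariant algebraic factor carrying a non-trivial zero-entropy ergodic measure whose symmetrization produces an $\alpha$-invariant $\mu$ with the required ergodicity and non-homogeneity. Verifying that the resulting $\mu$ is not supported on any subgroup orbit requires a careful tangent-space analysis at a generic point of the support, using the non-triviality of the zero-entropy measure to rule out each candidate $H$.
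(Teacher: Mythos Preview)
Your proposal rests on a misreading of Theorem~\ref{MeasureThm}. You argue that because the no-virtually-cyclic hypothesis passes to all algebraic factors of $\alpha|_\Sigma$ (Remark~\ref{Rk2ResRmk}), the zero-entropy factor guaranteed by Theorem~\ref{MeasureThm} must be one that is ``not visible to $\alpha$ itself.'' This inference is wrong: Theorem~\ref{MeasureThm} only asserts the \emph{existence} of a factor on which one of the two obstructions occurs; nothing prevents that factor from being fully $\alpha$-equivariant. In fact, in the paper's construction the zero-entropy factor is the first $\bT^6$ in $X_\ab=\bT^6\times\bT^6$, which is $\alpha$-equivariant from the outset, and the projection of $\mu$ to it is a circle measure with zero entropy. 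The swap symmetry and the passage to a proper finite-index subgroup that you build your whole construction around are unnecessary.

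More seriously, your concrete proposal cannot work as written. With a totally real cubic field $K$, every unit in $\cO_K^\times$ acts on $V_0=K\otimes\bR$ with three real eigenvalues, so the induced $\bZ^2$-action on each torus factor is hyperbolic and totally irreducible. By Berend's Theorem~\ref{Berend}, such an action has no proper infinite invariant closed subsets, hence no ``Furstenberg-type measure on an invariant non-dense closed subset'' other than atomic measures on finite orbits. There is simply no room for the zero-entropy non-atomic ingredient you need. The paper's construction avoids this by using a \emph{partially hyperbolic} example (due to S.~Katok): a pair of commuting matrices in $\SL(6,\bZ)$ with a common two-dimensional subspace on which both act by irrational rotations. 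The zero-entropy measure is then the uniform measure on a small circle in this isometric subspace, lifted to the $13$-dimensional Heisenberg nilmanifold via an explicit section map. Property~(3) is checked by directly classifying the possible $\alpha|_\Sigma$-invariant connected subgroups $H$ and observing that almost every point in $\supp\mu$ has irrational $\bfx$-coordinate, hence a non-compact $H$-orbit for the only surviving candidate. The key idea you are missing is precisely this isometric direction; without it, no explicit non-homogeneous $\mu$ of the required type is available.
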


\subsection{Background}\label{SecBackground}

There has been a long history of studies of the rigidity of higher rank abelian algebraic actions, which we partially survey here.

In the seminal work \cite{F67} of Furstenberg on the multiplicative semigroup action generated by $\times 2$ and $\times 3$ on $\bR/\bZ$,  it was showed that any invariant closed subset is either a finite set of rational points or $\bR/\bZ$ itself. In the measure category, Furstenberg's conjecture asks if any ergodic invariant probability measure $\mu$ is either supported by a finite invariant set or the Lebesgue measure on $\bR/\bZ$.

While the conjecture remains open, it has been confirmed under the assumption that $\mu$ has positive entropy with respect to an element of the action, by Rudolph \cite{R90}. The complete positive entropy case had been previously settled by Lyons \cite{L88}. Though there has been various strengthenings as well as alternative proofs ( \cites{J92,F93,H95,JR95,P96,H12,HS12}, to list a few) to Rudolph's theorem, so far nothing is yet known in the zero entropy case.   

It is important that the action has rank $2$. Indeed, the action generated by $\times 2$ alone is isomorphic to the Bernoulli shift on $\{0,1\}^\bZ$ and enjoys no rigidity in either measure or subset settings. 

A natural model in higher dimension is a $\bZ^r$-action by toral automorphisms. In this setting, besides higher rank, the known rigidity results also need the positive entropy assumption. Actually, these requirements should hold for every algebraic factor action, since non-standard invariant measures and sets lift from a factor to the original space.

\begin{theorem}\label{KSEL}\cites{KS96,EL03} Suppose $\alpha:\bZ^r\curvearrowright\bT^d$ acts by toral automorphisms and has no virtually cyclic algebraic factors and $\mu$ is an ergodic $\alpha$-invariant probability measure. Then there is a finite index subgroup $\Sigma\subset\bZ^r$, such that for every $\alpha|_\Sigma$-ergodic component $\mu^i$ of $\mu$,  there is an $\alpha|_\Sigma$-invariant subgroup $T^i\subset\bT^d$ satisfying:\begin{itemize}
\item The projection of $\mu^i$ to $\bT^d/T^i$ has zero entropy with respect to the projection of every $\alpha^\bfn$, $\bfn\in\Sigma$;
\item $\mu^i$ is invariant under translation by $T^i$.
\end{itemize}\end{theorem}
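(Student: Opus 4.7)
The plan is to follow the roadmap of Katok-Spatzier and Einsiedler-Lindenstrauss. First I would linearize by lifting $\alpha$ to a commuting family $\tilde\alpha^\bfn\in\GL_d(\bZ)$ acting on $\bR^d$, and perform their joint Jordan decomposition over $\bC$. This gives $\bR^d\otimes\bC=\bigoplus_\lambda V_\lambda$ into common generalized joint eigenspaces, indexed by joint eigencharacters $\lambda:\bZ^r\to\bC^\times$. Grouping these by proportionality of the Lyapunov functionals $\chi_\lambda(\bfn):=\log|\lambda(\bfn)|$ yields the coarse Lyapunov decomposition $\bR^d=\bigoplus_{[\chi]}V_{[\chi]}$, defined over $\bR$. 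After replacing $\bZ^r$ by a suitable finite-index subgroup $\Sigma$ to kill Galois and sign obstructions, each $V_{[\chi]}$ induces a smooth affine foliation $\cF_{[\chi]}$ of $\bT^d$.

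Next I would invoke the Ledrappier-Young entropy formula, which in this affine setting takes the clean form
\begin{equation*}
h_\mu(\alpha^\bfn) \;=\; \sum_{[\chi]}\chi(\bfn)^{+}\,d_{[\chi]}(\mu), \qquad \bfn\in\Sigma,
\end{equation*}
where $d_{[\chi]}(\mu)\in[0,\dim V_{[\chi]}]$ is the Lyapunov dimension of $\mu$ along $\cF_{[\chi]}$, read off from the transverse dimension of the leafwise conditional measures. The theorem then reduces to the following dichotomy: after disintegrating into $\alpha|_\Sigma$-ergodic components $\mu^i$, each $d_{[\chi]}(\mu^i)$ equals either $0$ or $\dim V_{[\chi]}$, with Lebesgue leafwise conditionals in the latter case.

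This dichotomy is the main obstacle and the technical heart of both \cite{KS96} and \cite{EL03}. The strategy combines the \emph{high-entropy method} of Einsiedler-Katok --- in which, for two non-opposite coarse Lyapunov functionals $[\chi]\neq\pm[\chi']$, one finds $\bfn\in\Sigma$ with $\chi(\bfn)=0$ and $\chi'(\bfn)<0$, then uses the commutation of $\alpha^\bfn$ with the $\cF_{[\chi]}$-flow to propagate translation invariance along $V_{[\chi]}$ --- with the \emph{low-entropy / Lusin-shearing method} of Einsiedler-Lindenstrauss, which handles the remaining cases when the high-entropy construction is blocked by oppositely paired Lyapunov functionals. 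The no-virtually-cyclic-factors hypothesis is what guarantees a sufficiently rich Lyapunov spectrum for these arguments to close: if some coarse Lyapunov functional admitted no admissible companion, then the sum of the other $V_{[\chi']}$ would be an $\alpha$-invariant rational subspace whose quotient torus would be virtually cyclic under $\alpha$, contradicting the assumption.

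Once the dichotomy is established, for each ergodic component $\mu^i$ I would set $T^{i}_{\bR}:=\bigoplus_{[\chi]:\,d_{[\chi]}(\mu^i)=\dim V_{[\chi]}}V_{[\chi]}$. The Haar-conditional property along these directions directly yields invariance of $\mu^i$ under translation by the image of $T^{i}_\bR$ in $\bT^d$. A final enlargement of $\Sigma$ to a common finite-index refinement across the (finite, Galois-stable) list of possible $T^{i}_\bR$'s makes each $T^{i}_\bR$ $\bQ$-rational and hence descends to an $\alpha|_\Sigma$-invariant subtorus $T^i\subset\bT^d$. Applying the entropy formula to the factor $\bT^d/T^i$ now involves only coarse Lyapunov subspaces with $d_{[\chi]}(\mu^i)=0$, so the projection of $\mu^i$ to $\bT^d/T^i$ has zero entropy under every $\alpha^\bfn$, $\bfn\in\Sigma$, as required.
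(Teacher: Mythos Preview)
The paper does not prove Theorem~\ref{KSEL}; it is quoted as background from \cite{KS96} and \cite{EL03}, with a detailed exposition deferred to the forthcoming \cite{ELW}. There is thus no proof in the paper to compare your proposal against directly.

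That said, your outline is a fair high-level summary of the Einsiedler--Lindenstrauss strategy, and the coarse Lyapunov\,/\,leafwise-measure machinery you invoke is exactly what the present paper sets up in \S\ref{SecPrelim} and deploys in \S\S\ref{SecBerendFiber}--\ref{SecEnt0Fiber} for its nilmanifold generalization. Two points deserve sharpening. First, the dichotomy you assert --- that each $d_{[\chi]}(\mu^i)$ is either $0$ or $\dim V_{[\chi]}$ --- is the entire content of the theorem; naming the high- and low-entropy methods is not a proof, and in \cite{EL03} the actual mechanism on an irreducible torus factor is the proportionality of entropy contributions across coarse Lyapunov directions (recorded here as Theorem~\ref{IrrTorus}(1)) together with the translational invariance of leafwise measures (Theorem~\ref{IrrTorus}(2)), not a generic Ledrappier--Young dimension argument. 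Second, your rationality step for $T^i$ via a ``Galois-stable list'' of subspaces is not how this goes: individual coarse Lyapunov subspaces need not be $\bQ$-rational, and no finite-index refinement changes that. The correct argument is that the group $\{t\in\bT^d:\mu^i\text{ is }t\text{-translation invariant}\}$ is closed in $\bT^d$, so its identity component is automatically a subtorus $T^i$; $\alpha|_\Sigma$-invariance of $T^i$ then follows from that of $\mu^i$ with no further passage to finite index.
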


In particular, if some ergodic component $\mu^i$ has positive entropy projection for every non-trivial algebraic factor action, then $T=\bT^d$ and $\mu=\rmm_{\bT^d}$.  Theorem \ref{MeasureThm} is a generalization of this fact.

Theorem \ref{KSEL} was first proved by Katok and Spatzier in \cite{KS96} under an additional assumption called totally non-symplecticity (TNS). The general case was obtained by Einsiedler and Lindenstrauss in \cite{EL03}. A detailed proof of this theorem, extended to certain slightly more general settings, is the subject of a forthcoming paper \cite{ELW}. 

The set-theoretic analogue of Theorem \ref{KSEL} is available only when the action is totally irreducible. Berend  showed that 
\begin{theorem}\label{Berend}\cite{B83} Let $\alpha:\bZ^r\curvearrowright\bT^d$ be an action by toral automorphisms. Suppose $\alpha$ is totally irreducible and not virtually cyclic, and contains a hyperbolic element,  then every invariant closed subset is either finite or $\bT^d$. \end{theorem}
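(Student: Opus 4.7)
The plan is to show that an infinite closed $\alpha$-invariant subset $A \subset \bT^d$ must coincide with $\bT^d$. I would follow the classical three-stage strategy for such topological rigidity results: use hyperbolicity to produce an accumulation direction inside $A$, use the higher-rank hypothesis to upgrade this direction into a genuine line segment, and use total irreducibility to spread the segment across the torus. Concretely, passing if necessary to a minimal closed invariant subset, one may assume $0 \in A$ and that $0$ is an accumulation point of $A$. Lift a sequence $x_n \in A \setminus \{0\}$ with $x_n \to 0$ to $\bR^d$, fix a hyperbolic $\alpha^{\bfm}$ with decomposition $\bR^d = \bigoplus_j V_j$ into generalized eigenspaces grouped by absolute value of eigenvalue, and for each $n$ choose $k_n \in \bZ$ so that $\|\alpha^{\bfm k_n}(x_n)\|$ is of order $1$. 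Passing to a subsequence produces a nonzero limit $w \in \bR^d$, and a further refinement, letting the slowest-contracting surviving component dominate, places $w$ inside a single coarse Lyapunov subspace $V_{j_0}$.

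Second, to upgrade $w$ to an actual line segment inside $A$, I would pick $\alpha^{\bfn}$ with $\bfn$ linearly independent from $\bfm$, which exists because $\alpha$ is not virtually cyclic. The Lyapunov exponents of $\alpha^{\bfm}$ and $\alpha^{\bfn}$ on the coarse Lyapunov space containing $V_{j_0}$ are $\bR$-linearly independent real numbers; this independence follows from total irreducibility combined with Dirichlet's unit theorem applied to the number field generated by the eigenvalues of the action, since a linear dependence would collapse the relevant Weyl chamber into a rank-one sublattice and force a virtually cyclic factor. Consequently, for any target scalar $t \in \bR^\times$ one can find integer pairs $(j_n, \ell_n)$ such that $\alpha^{\bfm j_n + \bfn \ell_n}$ multiplies on $V_{j_0}$ by approximately $t / \|x_n\|$ while contracting all other Lyapunov components. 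Applied to $x_n$, this produces accumulation points of $A$ at every $t w$, and closedness of $A$ yields an honest segment $[0, t_0] \cdot w \subset A$ for some $t_0 > 0$.

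Third, consider the $\alpha$-orbit of this segment: $A$ contains line segments in the directions $\alpha^{\bfp}(w)$ for all $\bfp \in \bZ^r$. The $\bR$-linear span of these directions is an $\alpha$-invariant subspace of $\bR^d$, and total irreducibility (passed through a finite-index subgroup if necessary) forces it to be all of $\bR^d$. Hence $A$ contains segments in a spanning family of directions through $0$. Iterating the hyperbolic element $\alpha^{\bfm}$ then stretches any such segment that has a component in the unstable direction of $\alpha^{\bfm}$ into a subset of $A$ which, since unstable manifolds of an irreducible hyperbolic toral automorphism are dense in $\bT^d$, becomes dense in $\bT^d$. Because $A$ is closed, $A = \bT^d$.

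The main obstacle is the second stage: turning an accumulation direction into an actual line segment. Here the hypothesis $r \geq 2$ is indispensable, since only the two-dimensional lattice of integer combinations $j\bfm + \ell \bfn$ affords the Dirichlet-type freedom needed to fill out an entire line, while a rank-one action cannot. Berend's original argument carries out this step via delicate number-theoretic input on logarithms of algebraic units in the field generated by the eigenvalues, whereas a modern route can parallel the line-segment construction underlying Theorem \ref{TopoThm}.
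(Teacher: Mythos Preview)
The paper does not supply its own proof of Theorem~\ref{Berend}: it is quoted from Berend~\cite{B83} as background in \S\ref{SecBackground}, so there is nothing in the paper to compare your argument against.

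Your three-stage outline matches Berend's strategy, and Stages~1 and~3 are fine as sketches. Stage~2, however, contains a genuine gap. First, the phrase ``$\bR$-linearly independent real numbers'' applied to the two scalars $\chi(\bfm)$ and $\chi(\bfn)$ is meaningless; any two real numbers are $\bR$-dependent, so whatever you intended here is not what you wrote. Second, and more seriously, your claim that one can find $j_n\bfm+\ell_n\bfn$ which expands $V_{j_0}$ by a prescribed factor \emph{while contracting every other Lyapunov component} amounts to asserting that the Weyl chamber $\{\bfp:\chi_{j_0}(\bfp)>0,\ \chi_i(\bfp)<0\text{ for }i\neq j_0\}$ meets the rank-$2$ sublattice spanned by $\bfm,\bfn$ inside the Lyapunov hyperplane $\{\sum_i\chi_i=0\}$. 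For $d\geq 4$ this chamber is a proper open cone in a space of dimension $\geq 3$, and a $2$-plane through the origin can miss it entirely, so the claim is not automatic from rank~$\geq 2$ and requires justification you have not given. Berend does not argue this way: using the number-field description of the eigenvalues (cf.\ Lemma~\ref{IrrField}) together with the hyperbolicity hypothesis, he establishes only that for each eigendirection \emph{some} element of the action expands it, and then builds the line in $A$ by an iterative scheme that handles the transverse components one at a time rather than by a single simultaneous contraction. You acknowledge in your last paragraph that this step needs ``delicate number-theoretic input,'' but the mechanism you actually wrote down does not work.
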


Corollary \ref{TopoCor} generalizes Theorem \ref{Berend}.

When the action on $\bT^d$ is not totally irreducible, the situation is more complicated and there can be non-homogeneous invariant closed subsets. See \cites{M10, LW12}.

In \cite{B84} and \cite{EL03}, actions by automorphisms on solenoids, which are non-archimedean cousins of tori, were also studied.

The question of whether under Berend's assumptions every ergodic invariant measure is either atomic or Lebesgue, is an open problem which, through the analogy between tori and solenoids, roughly equivalent to Furstenberg's conjecture.

One moves next from tori to nilmanifolds. Nilmanifold automorphisms are natural genralizations of toral automorphisms. They are also important because, for example, the only known examples of hyperbolic diffeomorphisms of algebraic nature are given by nilmanifold automorphisms modulo finite covering. 

In the nilmanifolds setting, generalizing \cite{KS96}, Kalinin and Spatzier \cite{KS05} established an analogue of Theorem \ref{KSEL} under the following additional assumptions: (1) the action is TNS; (2) the derivative of every $\alpha^\bfn$ is diagonalizable over $\bC$; (3) either every derivative has only real eigenvalues or $\mu$ is mixing.


In this paper, we adapt the approach of Einsiedler and Lindenstrauss \cite{EL03} to establish Theorem \ref{MeasureThm}. Many definitions and lemmas will be borrowed from the nicely written lecture notes \cite{EL10} on Einsiedler-Katok-Lindenstrauss's work \cite{EKL06} towards Littlewood's conjecture.

An important difference between our settings in Theorem \ref{MeasureThm} and those in \cites{KS96,EL03,KS05} is that, under our assumptions an action can still carry virtually cyclic behaviors. A nilmanifold is a tower of bundles where each level is made of torus fibers. An action by automorphisms, after restricting to a finite index subgroup, induces actions by fiber maps on every level. The assumption that there is no virtually cyclic algebraic factor only guarantees that the induced actions on the bottom levels, which make up the maximal torus factor $X_\ab$, are of higher rank. However, on the upper levels, $\alpha$ may still induce virtually cyclic fiber actions. This issue is absent for actions on tori. And in \cite{KS05}, the TNS assumption prevents this from happening.

Due of this phenomenon, a general analogue of Theorem \ref{KSEL} on nilmanifolds, in the form that the measure (or possibly a finite index ergodic component) can be projected to an algebraic factor, such that the factor is dominated by a mixture of zero entropy and virtually cyclic behaviors and the measure is uniform along the fibers, is not possible. This kind of failure is indicated by Theorem \ref{Heisenberg}. 

Finally, we would like to remark that a parallel line of research (see \cite{KN11}*{\S Introduction 3} and references therein and more recent progresses \cites{FKS11,FKS13,RHW}) is the smooth rigidity of higher rank abelian actions by automorphisms, which aims to classify actions by diffeomorphisms that are topologically equivalent to a given action by automorphisms. This question can be thought of as the classification of smooth structures invariant under an action by automorphisms. In this regard, Theorem \ref{MeasureThm} might be viewed as a parallel result to our recent joint work \cite{RHW} with Federico Rodriguez Hertz, in which $C^\infty$ Anosov $\bZ^r$-actions without virtually cyclic factors by nilmanifold automorphisms are smoothly classified.
     
\subsection{Organization of paper} Section \ref{SecPrelim} consists of preliminaries. 

Section \ref{SecInductive} puts Theorem \ref{MeasureThm} as a special case of a more general Theorem \ref{MeasureInductive}, which studies the joining of $\alpha$ with another action. We then give an inductive argument that proves Theorem \ref{MeasureInductive} under a certain assumption (Property \ref{TransInvProp}), which says locally at every point, the measure is invariant by the left translation of some element of $G$. We also deduce topological properties from their measure-theoretic counterparts.

The verification of Property \ref{TransInvProp} is divided into four different cases, which will be taken care of separately by Sections \ref{SecBerendFiber}--\ref{SecEnt0Fiber}.

It will be showed in Section \ref{SecRatner} that Property \ref{TransInvProp} is indeed sufficient to complete the inductive argument.

Finally, in Section \ref{SecHeisen}, a non-homogeneous example as in Theorem \ref{Heisenberg} will be constructed on the 13-dimensional Heisenberg nilmanifold.

\begin{acknowledgments}I would like to thank Danijela Damjanovi\'c for suggesting the problem and stimulating conversations. 

An essential portion of this paper is based on Manfred Einsiedler and Elon Lindenstrauss's proof of Theorem \ref{KSEL}. I am deeply indebted to them for explaining their approach to me in details, as well as for guiding me through homogeneous dynamics over the years. 

Thanks are also due to Federico Rodriguez Hertz and Andrew Torok for helpful discussions.

The author was supported by NSF grant DMS-1201453 and an AMS-Simons travel grant.\end{acknowledgments}

\section{Preliminaries}\label{SecPrelim}

\subsection{Fibration of nilmanifolds}\label{SecNilfiber}
\begin{definition} Given a simply connected nilpotent Lie group $G$ and a cocompact lattice $\Gamma\subset G$, a {\bf rational subgroup} is a connected closed subgroup $H\subset G$ such that $H\cap\Gamma$ is a cocompact lattice in $H$.\end{definition}

It is a well known fact that $\exp^{-1}(\Gamma)$ is a lattice in the lie algebra $\gog$. Furthermore, a connected closed subgroup $H$ is rational if and only if its Lie algebra $\goh$ satisfies the condition that $\goh\cap\exp^{-1}(\Gamma)$ is a lattice in $\goh$. Equivalently, $\exp^{-1}(\Gamma)$ determines a $\bQ$-structure on $\gog$, and $\goh$ is a rational subspace with respect to this structure, see e.g. \cite{R72}. 

\begin{remark}\label{RationalFiberRmk}Suppose $H\lhd G$ is a rational normal subgroup. It is easy to show that the image $\Gamma/(H\cap\Gamma)$ of $\Gamma$ in the quotient group $G/H$ is a cocompact lattice. The compact nilmanifold $G/\Gamma$ decomposes as a bundle whose base is the nilmanifold $G/H\Gamma=(G/H)\Big/\big(\Gamma/(H\cap\Gamma)\big)$ and whose fibers are isomorphic to the nilmanifold $H/(H\cap\Gamma)$.\end{remark}

In this case, we will call $G/H\Gamma$ an {\bf algebraic factor} of $X$ and denote it by $X/H$.

Let $\alpha:\bZ^r\curvearrowright X$ be a $\bZ^r$-action by automorphisms of $X=G/\Gamma$, and identify $\alpha$ with its lift, which is an action on $G$ by group automorphisms. Then the lifted action $\alpha$ preserves $\Gamma$. Therefore, if $H$ is an $\alpha$-invariant rational subgroup, then $H\cap\Gamma$ is also $\alpha$-invariant. And hence $\alpha$ induces a natural actions on $H/(H\cap\Gamma)$. Moreover, if $H$ is normal, then $\alpha$ factors onto a $\bZ^r$-action on the quotient nilmanifold $X/H$. All such actions will be denoted by $\alpha$ indifferently in this paper.

\begin{definition}An {\bf algebraic factor} of $\alpha$ is the induced action by $\alpha$ on some $X/H$ where $H$ is an $\alpha$-invariant normal rational subgroup in $G$. We say this factor action is a {\bf torus factor action} if $X/H$ is a torus, or equivalently $G/H$ is abelian.\end{definition}

We also say that $X/H$ is an {\bf $\alpha$-equivariant algebraic factor} of $X$ in this case.

Before stating the following lemma, we recall that any finite index subgroup of $\bZ^r$ is isomorphic to $\bZ^r$ itself.

\begin{lemma}\label{IrrVertical}If $\alpha$ is a $\bZ^r$-action on a compact nilmanifold $X=G/\Gamma$ by automorphisms,  then there is a non-trivial abelian subgroup $Z$ of  $G$, such that:
\begin{enumerate}
\item $Z$ is in the center of in $G$, rational with respect to $\Gamma$, and invariant under the restriction of $\alpha$ to a finite index subgroup $\Sigma\subset\bZ^r$;
\item The induced and restricted action $\alpha|_\Sigma: \Sigma\curvearrowright Z/(Z\cap\Gamma)$ is totally irreducible.
\end{enumerate} 
\end{lemma}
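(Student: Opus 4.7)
The plan is to extract $Z$ from inside the center of $G$. Since $G$ is simply connected nilpotent, its center equals $\exp(\goz)$, where $\goz\subset\gog$ is the Lie algebra center. As the kernel of the adjoint representation, $\goz$ is a rational subspace of $\gog$ with respect to the $\bQ$-structure $\gog_\bQ$ determined by $\exp^{-1}(\Gamma)$, so $Z(G)$ is a rational subgroup of $G$ and $Z(G)/(Z(G)\cap\Gamma)$ is a torus. Moreover $Z(G)$ is characteristic in $G$, hence invariant under the full action $\alpha$ by automorphisms, so $\alpha$ descends to a $\bZ^r$-action by automorphisms of this torus. The problem thereby reduces to locating an $\alpha$-equivariant subtorus of $Z(G)/(Z(G)\cap\Gamma)$ on which, after restricting to a finite-index subgroup of $\bZ^r$, the action is totally irreducible.

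Write $\goz_\bQ:=\goz\cap\gog_\bQ$ for the rational form of $\goz$ and let $\alpha_*:\bZ^r\to\GL(\goz_\bQ)$ denote the induced representation. Set $G_0 := \alpha_*(\bZ^r)$, let $\overline{G_0}\subset\GL(\goz_\bQ)$ be its Zariski closure in the $\bQ$-algebraic group $\GL(\goz_\bQ)$, and let $\overline{G_0}^0$ be the identity component. Define $\Sigma\subset\bZ^r$ to be the preimage of $G_0\cap\overline{G_0}^0$; it has finite index in $\bZ^r$. I would then choose a minimal non-zero rational subspace $V\subset\goz_\bQ$ invariant under $\overline{G_0}^0$ and set $Z:=\exp(V\otimes_\bQ\bR)$. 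By construction $Z$ is non-trivial (using that $G$ is non-trivial, so $\goz\neq 0$), abelian, central in $G$, rational with respect to $\Gamma$, and $\Sigma$-invariant.

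The substantive step is to verify total irreducibility of the induced $\Sigma$-action on $Z/(Z\cap\Gamma)$. Since invariant subtori correspond to invariant rational subspaces of $V$, one must show that for every finite index $\Sigma'\subset\Sigma$ the only $\Sigma'$-invariant rational subspaces of $V$ are $\{0\}$ and $V$. The lever here is the standard fact that the Zariski closure of a group and of any of its finite-index subgroups share the same identity component: applied to $\alpha_*(\Sigma')\subset\alpha_*(\Sigma)\subset\overline{G_0}^0$, this forces $\overline{\alpha_*(\Sigma')}=\overline{G_0}^0$. Hence $\Sigma'$-invariant rational subspaces of $V$ coincide with $\overline{G_0}^0$-invariant ones, and minimality of $V$ completes the argument.

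The main obstacle is precisely this last point, namely the distinction between $\bQ$-irreducibility of the $\bZ^r$-representation and total irreducibility in the sense of the paper. A minimal $\bZ^r$-invariant rational subspace can easily split after replacing $\bZ^r$ by a sublattice of finite index, because the Galois symmetries of the eigenvalues that merge rational subspaces can be lost under restriction. Passing at the outset to the connected component $\overline{G_0}^0$ of the Zariski closure neutralizes this phenomenon and makes the chosen $V$ stable under every subsequent finite-index restriction.
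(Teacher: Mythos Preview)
Your proof is correct and takes a genuinely different, more structural route than the paper. The paper argues by bare minimality: it considers all non-trivial subtori of $G_0/(G_0\cap\Gamma)$ whose stabilizer in $\bZ^r$ has finite index (this collection is non-empty since the full torus qualifies), picks one of minimal dimension, calls its stabilizer $\Sigma$, and then notes that any proper $\alpha|_{\Sigma'}$-invariant subtorus for a finite-index $\Sigma'\subset\Sigma$ would itself have finite-index stabilizer in $\bZ^r$, contradicting minimality. You instead absorb the finite-index ambiguity at the outset by passing to the identity component $\overline{G_0}^0$ of the Zariski closure of $\alpha_*(\bZ^r)$ in $\GL(\goz_\bQ)$: every further finite-index subgroup of your $\Sigma$ remains Zariski-dense in $\overline{G_0}^0$, so it has exactly the same invariant rational subspaces, and any $\overline{G_0}^0$-irreducible rational subspace does the job. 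The paper's argument is shorter and needs no algebraic-group input; yours explains conceptually why total irreducibility survives every subsequent finite-index restriction and yields a canonical choice of $\Sigma$ independent of the subtorus selected.
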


\begin{proof} The center $G_0$ of $G$ is rational with respect to $\Gamma$ by \cite{R72}*{Prop. 2.17}, and $\alpha$-invariant because $\alpha$ acts on $G$ by automorphisms. 

If the torus $G_0/(G_0\cap \Gamma)$ is totally irreducible under $\alpha$ then we are done.

Otherwise, there must be a minimal non-trivial subtorus whose stabilizer under $\alpha$ has finite index in $\bZ^r$. The subtorus can be written as $Z/(Z\cap \Gamma)$ and we denote its stabilizer by $\Sigma$. Then $Z$ is $\alpha|_\Sigma$-invariant and rational. It is also normal as it is a subgroup of $G_0$. Moreover, the quotient torus $Z/(Z\cap \Gamma)$ is totally irreducible under $\alpha|_\Sigma$ by the minimality assumption.\end{proof}

\begin{remark}\label{EquivariantTower}Repeated applications of the Lemma shows that there exist a finite index subgroup $\Sigma$ and sequence of $\alpha|_\Sigma$-invariant normal rational subgroups $\{e\}=H_0\subsetneq H_1\subsetneq H_2\subsetneq\cdots\subsetneq H_k=G$, such $H_{i+1}/H_i$ is abelian for all $i$ and the induced $\Sigma$-action $\alpha|_\Sigma$ on $H_{i+1}/H_i(H_{i+1}\cap\Gamma)$ is totally irreducible. This gives rise to an $\alpha|_\Sigma$-equivariant fibration of $M$ as a tower of torus bundle extensions $M\mapsto M/H_1\mapsto\cdots M/H_k=\{\text{point}\}$, where each $M_i$ is a principle torus bundle on $M_k$ with fibers isomorphic to $H_{i+1}/H_i(H_{i+1}\cap\Gamma)$.\end{remark}

In the present paper we will primarily deal with $\bZ^r$-actions without virtually cyclic factors by nilmanifold automorphisms. The following lemma is another characterization of this property.  
\begin{lemma}\label{Rk2Equiv} Suppose $\alpha$ is a $\bZ^r$-action on a nilmanifold $X$ by automorphisms. Then the following are equivalent:
\begin{enumerate}
\item $\alpha$ has no virtually cyclic algebraic factor;
\item There is a subgroup $\Lambda\subset\bZ^r$ isomorphic to $\bZ^2$, such that for every non-trivial element $\bfn\in\Lambda$, $\alpha^\bfn$ is ergodic with respect to the standard volume $\rmm_X$.
\end{enumerate}
\end{lemma}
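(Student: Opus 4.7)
The plan is to rely on the Parry--Halmos criterion throughout---that a nilmanifold automorphism is ergodic under Haar measure iff the induced toral automorphism on the maximal torus factor $X_\ab$ has no eigenvalue that is a root of unity---and to analyze both implications through characters of the linearized action on $X_\ab$. For $(2)\Rightarrow(1)$, I argue by contrapositive: assume $\alpha$ has a non-trivial virtually cyclic algebraic factor $\dot\alpha\colon\bZ^r\to\Aut(\dX)$. Since the image has torsion-free rank $\le 1$, there is a subgroup $K\subset\bZ^r$ of corank $\le 1$ on which $\dot\alpha$ takes torsion values. For any $\Lambda\cong\bZ^2\subset\bZ^r$ the intersection $\Lambda\cap K$ has rank $\ge 1$; picking a nonzero $\bfn$ in it makes $\dot\alpha^\bfn$ of finite order, so the induced automorphism on $\dX_\ab$ has only roots of unity as eigenvalues and is non-ergodic by Parry--Halmos. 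Since factors of ergodic actions are ergodic, $\alpha^\bfn$ is non-ergodic on $X$, contradicting~(2).

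For $(1)\Rightarrow(2)$, identify the induced action on $X_\ab\cong\bT^d$ as $\bar\alpha\colon\bZ^r\to\GL_d(\bZ)$, and let $\chi_1,\ldots,\chi_s\colon\bZ^r\to\overline\bQ^*$ be representatives of the Galois orbits of joint generalized eigenvalue characters of the commuting family $\bar\alpha(\bZ^r)$. Set $B_j:=\chi_j^{-1}(\mu_\infty)$, where $\mu_\infty\subset\overline\bQ^*$ is the group of roots of unity; this depends only on the Galois orbit (since Galois permutes $\mu_\infty$), and because $\overline\bQ^*/\mu_\infty$ is torsion-free, $\bZ^r/B_j$ is free of some rank $r_j$. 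The Parry--Halmos criterion then says $\alpha^\bfn$ is ergodic iff $\bfn\notin\bigcup_jB_j$. The crucial step is that (1) forces $r_j\ge 2$ for every $j$: using the primary decomposition $\bQ^d=\bigoplus_jW_j$ of $\bQ^d$ as a module over the commutative $\bQ$-algebra $\bQ[\bar\alpha(\bZ^r)]$ (with $W_j$ the primary component for the Galois orbit of $\chi_j$), the semisimple head $W_j/\on{rad}(W_j)$ is a $K_j$-vector space (with $K_j=\bQ(\chi_j)$) on which $\bZ^r$ acts by scalar multiplication through $\chi_j$. The resulting $\alpha$-equivariant surjection $\bQ^d\twoheadrightarrow W_j/\on{rad}(W_j)$ has $\alpha$-invariant rational kernel and thus descends to a toral algebraic factor of $X$ whose image in $\Aut$ has torsion-free rank exactly $r_j$; if $r_j\le 1$ this factor is virtually cyclic, contradicting~(1).

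With $r_j\ge 2$ for each $j$, every $B_j\otimes\bQ\subset\bQ^r$ has codimension $\ge 2$, so the $2$-planes in $\Gr(2,\bQ^r)$ meeting $B_j\otimes\bQ$ nontrivially form a proper Zariski-closed subvariety. Since $\bQ$-points are Zariski-dense in the Grassmannian, I pick a rational $2$-plane $L$ lying outside the finite union of these subvarieties and set $\Lambda:=L\cap\bZ^r\cong\bZ^2$; every nonzero $\bfn\in\Lambda$ then avoids each $B_j$, so $\alpha^\bfn$ is ergodic on $X$. The principal subtlety I expect is in the primary decomposition step: one has to check that the decomposition and the radical are $\bQ$-rational in a way compatible with the lattice $\bZ^d$, so that the projection $\bQ^d\twoheadrightarrow W_j/\on{rad}(W_j)$ really corresponds to a closed $\alpha$-invariant subtorus of $X_\ab$, yielding an algebraic factor in the precise sense of \S\ref{SecNilfiber}.
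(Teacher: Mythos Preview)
The paper does not actually prove this lemma: it simply records that the statement is ``a well-known fact essentially due to Starkov \cite{S99}'' and points to \cite{RHW}*{Lemma 2.9} for details. Your proposal, by contrast, is a complete self-contained proof, so there is no real comparison of strategies to make---you have supplied what the paper omits.

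Your argument is correct. The $(2)\Rightarrow(1)$ direction is straightforward and your contrapositive is clean. For $(1)\Rightarrow(2)$, the Parry criterion (ergodicity of a nilmanifold automorphism is equivalent to ergodicity on $X_\ab$, hence to the Halmos eigenvalue condition) reduces everything to the torus factor, and your use of the primary decomposition of $\bQ^d$ over the commutative $\bQ$-algebra $\bQ[\bar\alpha(\bZ^r)]$ is the right tool. The subtlety you flag about rationality is not a genuine obstacle: the primary decomposition and the Jacobson radical are constructed over $\bQ$ by definition, so the kernel of $\bQ^d\twoheadrightarrow W_j/\mathrm{rad}(W_j)$ is a $\bQ$-subspace; tensoring with $\bR$ gives an $\alpha$-invariant rational subspace of $\bR^d$, hence an $\alpha$-invariant subtorus of $X_\ab$, and the quotient is an algebraic factor of $X$ in the sense of \S\ref{SecNilfiber} (the preimage in $G$ of the corresponding subgroup of $G_\ab$ is normal, rational, and $\alpha$-invariant since it contains $[G,G]$). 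Your identification of the torsion-free rank of the factor action with $r_j$ is also correct, since on the semisimple quotient the action is diagonalizable and $\bfn\in B_j$ is equivalent to $\alpha^\bfn$ having finite order there.

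The Grassmannian step is fine: the incidence condition $\dim(L\cap B_j\otimes\bQ)\ge 1$ is a closed rank condition, proper because $\mathrm{codim}(B_j\otimes\bQ)\ge 2$, and $\Gr(2,r)$ is rational over $\bQ$ so its $\bQ$-points are Zariski dense. If you prefer to avoid the Grassmannian, an elementary alternative is to note that $\bigcup_j B_j$ is a finite union of subgroups of rank $\le r-2$, pick $\bfn_1\notin\bigcup_j B_j$, and then pick $\bfn_2\notin\bigcup_j(B_j+\bQ\bfn_1)\cap\bZ^r$, which is again a finite union of subgroups of rank $\le r-1$; then $\Lambda=\bZ\bfn_1\oplus\bZ\bfn_2$ works.
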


This is a well-known fact essentially due to Starkov \cite{S99}. See \cite{RHW}*{Lemma 2.9} for more explanation. 

\begin{remark}\label{Rk2ResRmk}If $\Sigma$ is of finite index in $\bZ^r$, one see from the lemma that  $\alpha$ has no virtually cyclic algebraic factor if and only if $\alpha|_\Sigma$ has no such factor.\end{remark}

\begin{corollary}\label{ProdErgo}If $\alpha$ is as in Lemma \ref{Rk2Equiv}, and $\beta:\bZ^r\curvearrowright (Y,\nu)$ is an arbitrary ergodic measure-preserving action, then:
\begin{enumerate}
\item $\rmm_X\times\nu$ is ergodic under the product $\bZ^r$-action $\talpha=\alpha\times\beta$ on $\tX=X\times Y$;
\item For any subgroup $\Sigma\subset\bZ^r$ of finite index, the ergodic decomposition of $\rmm_X\times\nu$ with respect to $\talpha|_\Sigma$ can be written as $\frac1N\sum_{i=1}^N\rmm_X\times\nu^i$, where the $\nu^i$'s are the $\beta|_\Sigma$-ergodic components of $\nu$.
\end{enumerate}
\end{corollary}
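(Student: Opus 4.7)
The plan is to prove (1) by a spectral argument using the K-property of ergodic nilmanifold automorphisms, and then deduce (2) by restricting to the finite-index subgroup $\Sigma$ and reusing (1). For (1), Lemma \ref{Rk2Equiv} yields an element $\bfn\in\bZ^r$ for which $\alpha^\bfn$ is ergodic with respect to $\rmm_X$. I would then invoke the classical theorem that any ergodic automorphism of a compact nilmanifold is a K-automorphism (Parry, Dani), so $\alpha^\bfn$ has countable Lebesgue --- in particular continuous --- spectrum on $L^2_0(X):=L^2(X,\rmm_X)\ominus\bC$.

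Next I would decompose
\[ L^2(\tX,\rmm_X\times\nu)\;=\;L^2(Y,\nu)\;\oplus\;\bigl(L^2_0(X)\otimes L^2(Y,\nu)\bigr) \]
and analyze $\talpha$-invariant vectors in each summand. On $L^2(Y,\nu)$ the action $\talpha$ reduces to $\beta$, so invariant vectors are constants by the ergodicity of $\beta$. On $L^2_0(X)\otimes L^2(Y,\nu)$ I would use the following standard spectral fact: if a unitary $U$ has continuous spectrum, then $U\otimes V$ has continuous spectrum for any unitary $V$; indeed, the spectral measure of a pure tensor $f\otimes g$ under $U\otimes V$ is the push-forward of $\mu_f^U\otimes\mu_g^V$ under $(\lambda,\rho)\mapsto\lambda\rho$, which is atomless as soon as $\mu_f^U$ is. Applied to $U=\alpha^\bfn$ and $V=\beta^\bfn$, this rules out any non-zero $\talpha^\bfn$-fixed vector in the second summand, a fortiori any $\talpha$-invariant vector. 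Thus $\rmm_X\times\nu$ is $\talpha$-ergodic.

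For (2), Remark \ref{Rk2ResRmk} ensures $\alpha|_\Sigma$ still has no virtually cyclic algebraic factor. I would decompose $\nu$ into its $\beta|_\Sigma$-ergodic components $\nu^1,\dots,\nu^N$; the coset action of $\bZ^r/\Sigma$ permutes these components, and $\bZ^r$-ergodicity of $\nu$ forces the permutation to be transitive, so all components have equal weight $1/N$. Part (1), applied to the triple $(\alpha|_\Sigma,\beta|_\Sigma,\nu^i)$, shows each $\rmm_X\times\nu^i$ is $\talpha|_\Sigma$-ergodic, and they are pairwise distinct. Linearity then gives the asserted decomposition $\rmm_X\times\nu=\tfrac1N\sum_{i=1}^N\rmm_X\times\nu^i$.

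No step seems to pose a serious obstacle: the only non-routine ingredient is the Parry--Dani spectral property of ergodic nilmanifold automorphisms, which I would simply cite, and the rest is a standard spectral / ergodic-decomposition assembly.
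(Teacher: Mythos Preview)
Your proof is correct and follows essentially the same approach as the paper. Both arguments hinge on the fact that an ergodic nilmanifold automorphism is weakly mixing: the paper cites Parry's result that such automorphisms are mixing and then uses the classical ``weakly mixing $\times$ ergodic $=$ ergodic'' fact via the $\beta^\bfn$-ergodic decomposition of $\nu$, while you phrase the same thing spectrally (continuous spectrum on $L^2_0(X)$ forces $U\otimes V$ to have no fixed vectors). Your version is arguably a bit cleaner, since it directly rules out $\talpha^\bfn$-invariant functions and avoids the intermediate step of identifying $\talpha$-invariant sets as pullbacks from $Y$; the paper's version has the small advantage of only needing weak mixing rather than the stronger K-property. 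Part (2) is handled identically in both.
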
 

\begin{proof}(1) By Lemma \ref{Rk2Equiv}, $\alpha^\bfn$ is an ergodic nilmanifold automorphism for some $\bfn$. It was proved by Parry \cite{P69} that ergodic nilmanifold automorphisms are mixing, and in particular, weakly mixing. 

Take the ergodic decomposition $\nu=\int\nu_y^\cE\di\nu(y)$ with respect to $\beta^\bfn$, where $\cE$ is the $\sigma$-algebra of $\beta^\bfn$-invariant sets. Then for each $y$, $\rmm_X\times\nu_y^\cE$ is ergodic with respect to $\alpha^\bfn\times \beta^\bfn$. This implies that any $\alpha\times\beta$-invariant set is $\pi_Y^{-1}(\cE)$-measurable modulo $(\rmm_X\times\nu)$-null sets. Since $\nu$ is $\beta$-ergodic, it follows that $\rmm_X\times\nu=\int(\rmm_X\times\nu_y^\cE)\di\nu(y)$ is $\talpha$-ergodic.

(2) Since $\nu$ is ergodic under $\beta$,  it has a finite ergodic decomposition $\nu=\frac1N\sum_{i=1}^N\nu^i$ with respect to $\beta|_\Sigma$. By Lemma \ref{Rk2Equiv}, the property of having no virtually cyclic factors passes from the action $\alpha$ to the restriction $\alpha|_\Sigma$. Therefore, by part (1), each $\rmm_X\times\nu^i$ is $\talpha|_\Sigma$-ergodic. Hence $\rmm_X\times\nu=\frac1N\sum_{i=1}^N(\rmm_X\times \nu^i)$ is the ergodic decomposition we want. \end{proof}

\subsection{Common eigenspaces under the action}\label{ComEigenSec}

Given a $\bZ^r$-action $\alpha$ by automorphisms of $X$, the differential $D\alpha$ gives a $\bZ^r$ action on the Lie algebra $\gog$ of $G$ by Lie algebra automorphisms. Abusing notation again, we will denote this action by $\alpha$ as well.

Recall that for all $\bfn$ and $w\in\gog$, $\alpha^\bfn.\exp w=\exp(\alpha^\bfn w)$ as $\alpha^\bfn$ is an automorphism.

Since the action is commutative, the Lie algebra automorphisms $\alpha^\bfn$ can all be upper triangularized simultaneously. More precisely, $\gog\otimes_\bR\bC$ can be decomposed as a direct sum $\bigoplus_kW_k$, where for all $W_k$ and $\alpha^\bfn$, $W_k$ is a generalized eigenspace of $\alpha^\bfn$, the eigenvalue of which is denoted by $\zeta_k^\bfn$ (see for instance \cite{RHW}*{\S 2.3}). The map $\bfn\mapsto\zeta_k^\bfn$ is a group morphism from $\bZ^r$ to $\bC^\times$.

Suppose a connected closed subgroup $H$ is $\alpha$-invariant, then its Lie algbera $\goh$ is also $\alpha$-invariant and $\goh\otimes_\bR\bC=\bigoplus_k\big((\goh\otimes_\bR\bC)\cap W_k\big)$.

\begin{remark}\label{QuotientEigenRmk}If $H_1\subsetneq H_2\subsetneq\cdots\subsetneq H_m=H$ is a sequence of $\alpha$-invariant connected closed subgroup $H$. Then $\goh_i$ are all $\alpha$-invariant, and $\alpha$ induces a linear $\bZ^r$-action on all the quotients $\goh_i/\goh_{i-1}$. Again, $\goh_i/\goh_{i-1}$ can be decomposed into generalized eigenspaces. It is a simple fact of linear algebra that, the eigenvalue functionals $\bfn\mapsto\zeta_k^\bfn$ of $\alpha$ on $\goh$ are made up by all eigenvalue functionals of $\alpha$ on $(\goh_1)\otimes_\bR\bC, (\goh_2/\goh_1)\otimes_\bR\bC,\cdots,(\goh/\goh_{m-1})\otimes_\bR\bC$, with multiplicities counted.\end{remark}

Therefore, using Remark \ref{EquivariantTower} to describe $X$ as a tower of iterated torus bundles, where each layer of tori fibers supports a totally irreducible action induced by $\alpha$, finding eigenvalue functionals for $\alpha:\bZ^r\curvearrowright X$ can be reduced to the the simpler task of finding eigenvalues of totally irreducible toral actions. This is the content of the next lemma.

\begin{lemma}\label{IrrField} Suppose $\alpha$ is a totally irreducible $\bZ^r$-action on the torus $\bT^d$ by automorphisms. Then: \begin{enumerate}
\item The complexified tangent space $\bC^d=\bR^d\otimes_\bR\bC$ splits into $d$ eigenspaces;
\item There exists a number field $K$ of degree $d$ and a group morphism $\zeta$ from $\bZ^r$ to the group of units $\cO_K^\times$ of $K$, such that the eigenvalue functionals of the action $\alpha$ are $\zeta_i=\sigma_i\circ\zeta$, $i=1,\cdots, d$. Here $\sigma_1,\cdots, \sigma_d$ are all the archimedean embeddings of $K$.
\end{enumerate}
\end{lemma}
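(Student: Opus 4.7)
The plan is to show that the $\bQ$-algebra $A := \bQ[\alpha(\bZ^r)] \subset M_d(\bQ)$ generated by the action is itself a number field $K$ of degree $d$, and then to read off the eigenvalue data from the embeddings $K \hookrightarrow \bC$.

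First, any $A$-submodule of $\bQ^d$ is a $\bZ^r$-invariant $\bQ$-rational subspace; total irreducibility (applied to $\Sigma = \bZ^r$ itself) forces such a subspace to be zero or all of $\bQ^d$, so $\bQ^d$ is simple as an $A$-module. Since $A$ is commutative, for any nonzero $a \in A$ the kernel and image of $a : \bQ^d \to \bQ^d$ are $A$-submodules, hence $a$ is a linear bijection; its minimal polynomial over $\bQ$ then has nonzero constant term, so $a^{-1} \in \bQ[a] \subset A$. Thus $A$ is a field $K$, and since a simple module over a field is one-dimensional, $\dim_K \bQ^d = 1$ and $[K:\bQ] = d$.

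Next, define $\zeta : \bZ^r \to K^\times$ by $\zeta(\bfn) = \alpha^\bfn$; this is a group morphism, and since $\alpha^\bfn$ and $(\alpha^\bfn)^{-1}$ both preserve the lattice $\bZ^d$, both $\zeta(\bfn)$ and its inverse are algebraic integers, i.e.\ $\zeta(\bZ^r) \subset \cO_K^\times$. The canonical isomorphism $K \otimes_\bQ \bC \cong \bigoplus_{i=1}^d \bC$ via the $d$ embeddings $\sigma_1,\dots,\sigma_d : K \hookrightarrow \bC$ identifies $\bC^d = \bQ^d \otimes_\bQ \bC$ with $K \otimes_\bQ \bC$, and multiplication by $x \in K$ acts on the $i$-th summand as the scalar $\sigma_i(x)$. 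This yields $d$ one-dimensional joint eigenspaces with eigenvalue functionals $\sigma_i \circ \zeta$, proving part~(2).

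Part~(1) then reduces to checking that the functionals $\sigma_i \circ \zeta$ are pairwise distinct: if $\sigma_i \circ \zeta = \sigma_j \circ \zeta$, then $\sigma_i$ and $\sigma_j$ agree on $\zeta(\bZ^r)$, hence on the $\bQ$-algebra it generates, which is $A = K$ by construction, so $\sigma_i = \sigma_j$. The main (but ultimately mild) obstacle is the step from \emph{commutative algebra acting faithfully on a simple module} to \emph{field}; it rests on the finite-dimensionality of $A$ over $\bQ$ to place inverses of nonzero elements back inside $A$, and everything else in the argument is essentially formal.
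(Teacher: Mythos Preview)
Your proof is correct and complete. The paper itself does not give an argument for this lemma; it simply cites \cite{S95} and \cite{EL03} as containing the general fact. Your approach---showing that the $\bQ$-algebra $A=\bQ[\alpha(\bZ^r)]$ is a field of degree $d$ because $\bQ^d$ is a simple $A$-module, then reading off the eigenspace decomposition from $K\otimes_\bQ\bC\cong\bC^d$---is exactly the standard route underlying those references, so you have supplied the details the paper omits. One small remark: you only invoke irreducibility for $\Sigma=\bZ^r$, so your argument in fact shows the conclusion holds under mere irreducibility rather than total irreducibility; this is fine, since the lemma as stated has a stronger hypothesis than is needed.
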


In particular, in this case $\alpha^\bfn$ is an automorphism of finite order if and only if $\zeta^\bfn$ is a root of unity.

\begin{proof} The proposition is a special case of a well-known fact, a general form of which can be found in \cite{S95}*{\S 7\&29} and \cite{EL03}*{Prop 2.1}. 
\end{proof}

Together with Remark \ref{EquivariantTower}, Lemma \ref{IrrField} shows that given a $\bZ^r$ action by nilmanifold $G/\Gamma$, every eigenvalue functional of $\alpha:\bZ^r\curvearrowright\gog$ is a group morphism from $\bZ^r$ to the group of units of an archimeadeanly embedded number field.

\subsection{Coarse Lyapunov decomposition} Given a $\bZ^r$-action $\alpha$ by automorphisms on $X$, the tangent space $\gog$ of $X$ can be decomposed as a direct sum of different coarse Lyapunov subspaces. Two non-zero tangent vectors belong to the same coarse Lyapunov subspace component if and if they cannot be distinguished by the action, in the sense that no element of the action expands one of the vectors while contracting the other in the long run. The existence and some properties of this decomposition are collected in the proposition and definition below.

\begin{proposition}\label{LyaDecomp} If $\alpha$ is a $\bZ^r$-action on a nilmanifold $X=G/\Gamma$ by automorphisms. Then there exists finitely linear functionals $\chi\in(\bR^r)^*$, to each of which is associated a non-trivial subspace $\gov^\chi$ of the Lie algebra $\gog$ of $G$, such that: \begin{enumerate}
\item $\gog=\bigoplus\gov^\chi$;
\item For all $v\in\gov^\chi\backslash\{0\}$,
$$\lim_{|\bfn|\rightarrow\infty}\frac{\log\big|\alpha^\bfn v\big|-\chi(\bfn)}{|\bfn|}=0,$$
\item If $[\gov^\chi,\gov^{\chi'}]$ is not empty, then it is contained in $\gov^{\chi+\chi'}$.
\end{enumerate}
\end{proposition}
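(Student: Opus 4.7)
The plan is to read off the decomposition directly from the common generalized eigenspace decomposition of \S\ref{ComEigenSec}. Let $\gog\otimes_\bR\bC=\bigoplus_k W_k$ be as there, with eigenvalue functionals $\zeta_k:\bZ^r\to\bC^\times$, and define $\chi_k(\bfn)=\log|\zeta_k^\bfn|$. Since $\bfn\mapsto\zeta_k^\bfn$ is a group morphism, $\chi_k$ is a group morphism $\bZ^r\to\bR$, hence extends uniquely to a linear functional on $\bR^r$. For each distinct value $\chi$ taken by some $\chi_k$, set $\gov^\chi_\bC=\bigoplus_{k:\,\chi_k=\chi}W_k$. Complex conjugation on $\gog\otimes_\bR\bC$ permutes the $W_k$'s, sending the one with eigenvalue functional $\zeta_k^\bfn$ to the one with eigenvalue functional $\overline{\zeta_k^\bfn}$, which has identical modulus. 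Thus conjugation preserves each $\gov^\chi_\bC$, so $\gov^\chi_\bC$ is the complexification of a real subspace $\gov^\chi\subset\gog$, and the direct sum decomposition in (1) follows.

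For (2), I would note that on each $W_k$ the commuting family $\{\alpha^\bfn\}$ can be simultaneously put in upper triangular form with diagonal entries $\zeta_k^\bfn$, so $\alpha^\bfn|_{W_k}=\zeta_k^\bfn(\Id+N_\bfn)$ with $N_\bfn$ nilpotent. Because $\bfn\mapsto\Id+N_\bfn$ is a group morphism from $\bZ^r$ into a unipotent matrix group, both $\Id+N_\bfn$ and its inverse have entries bounded by polynomials in $|\bfn|$. For any nonzero $v\in W_k$ this yields $\log|\alpha^\bfn v|=\chi_k(\bfn)+O(\log|\bfn|)$. A real vector $v\in\gov^\chi$ decomposes as $v=\sum_k v_k$ with $v_k\in W_k$ and $\chi_k=\chi$; since the $W_k$ are linearly independent and the polynomial estimate is uniform across them, $\log|\alpha^\bfn v|=\chi(\bfn)+O(\log|\bfn|)=\chi(\bfn)+o(|\bfn|)$.

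For (3), if $u\in W_k$ and $v\in W_{k'}$, the equivariance $\alpha^\bfn[u,v]=[\alpha^\bfn u,\alpha^\bfn v]$, together with the upper triangular description on $W_k\oplus W_{k'}\oplus[W_k,W_{k'}]$, shows by induction on the nilpotent order that $[u,v]$ lies in the sum of those generalized eigenspaces $W_l$ satisfying $\zeta_l^\bfn\equiv\zeta_k^\bfn\zeta_{k'}^\bfn$; all such $W_l$ have $\chi_l=\chi+\chi'$, so $[u,v]\in\gov^{\chi+\chi'}_\bC$, and taking real parts gives (3). The only point that really needs care is the polynomial bound on $N_\bfn$, which is automatic since $\bfn\mapsto\Id+N_\bfn$ is a homomorphism from $\bZ^r$ into a unipotent group and hence polynomial in the coordinates of $\bfn$; everything else is bookkeeping on top of the material in \S\ref{ComEigenSec}.
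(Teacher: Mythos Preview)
Your proof is correct and follows exactly the approach the paper indicates: define $\gov^\chi$ by grouping the common generalized eigenspaces $W_k$ according to $\log|\zeta_k^\bfn|$ (this is the paper's equation \eqref{LyaEigenEq}), and then read off (1)--(3) from elementary linear algebra. The paper only sketches this and refers to \cite{RHW} for details; your write-up actually supplies those details, including the complex-conjugation argument showing each $\gov^\chi_\bC$ is defined over $\bR$ and the polynomial bound on the unipotent part that gives the $o(|\bfn|)$ error in (2).
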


These facts can be proved by simple arguments from linear algebra. In fact, given the generalized eigenspace decomposition $\gog=\bigoplus_kW_k$ from \S\ref{ComEigenSec}.  The logarithm map $\bfn\mapsto\log|\zeta_k^\bfn|$ is a linear map from $\bZ^r$ to $\bR$. The subspace $\gov^\chi$ is given by
\begin{equation}\label{LyaEigenEq}\gov^\chi=\bigoplus_{\{k\ :\ \log|\zeta_k^\bfn|=\chi(\bfn), \forall\bfn\}}W_k.\end{equation} For details, see for instance \cite{RHW}.

The functionals $\chi$ and the subspaces $\gov^\chi$ are respectively called {\bf Lyapunov exponents} and {\bf Lyapunov subspaces}. It should be stressed that $\gov^\chi$ is in general not a Lie subalgebra. However, if one take the direct sum of all Lyapunov subspaces whose Lyapunov exponents are positively proportional to $\chi$, then these exponents form a semigroup under addition. And in particular, by  part (3) of the proposition, the direct sum is closed under Lie bracket and hence a Lie subalgebra.

\begin{definition}\label{CoarseLya}In the Lyapunov decomposition above,

\begin{enumerate}\item For a Lyapunov exponent $\chi$ that is present in the Lyapunov decomposition, the corresponding {\bf coarse Lyapunov subspace} is the Lie subalgebra
$$\gov^{[\chi]}=\bigoplus_{\chi'=c\chi, c>0}\gov^{\chi'};$$
\item To each coarse Lyapunov subspace $\gov^{[\chi]}$ is associated a closed connected subgroup $V^{[\chi]}=\exp\gov^{[\chi]}\subset G$, which will be called a {\bf coarse Lyapunov subgroup}.
\end{enumerate}\end{definition}

In particular, there is a finite set $\cL$ of homothety equivalence classes $[\chi]$, such that \begin{equation}\label{CoarseLyaDecomp}\gog=\bigoplus_{[\chi]\in\cL}\gov^{[\chi]}.\end{equation}

For any element $\bfn\in\bZ^r$, set \begin{equation}\gog_\bfn^u=\bigoplus_{\{\chi\in\cL, \chi(\bfn)>0\}}\gog^{[\chi]}, \gog_\bfn^s=\bigoplus_{\{\chi\in\cL, \chi(\bfn)<0\}}\gog^{[\chi]}.\end{equation}$\alpha^\bfn$ exponentially expands vectors in the unstable subspace $\gog_\bfn^u$ and exponentially contracts the stable subspace $\gog_\bfn^s$.  Both $\gog_\bfn^u$ and $\gog_\bfn^s$ are Lie subalgebras. Denote the corresponding Lie subgroups by $G_\bfn^u$ and $G_\bfn^s$.

\begin{lemma}\label{LyaFree}Both $G_\bfn^u$ and $G_\bfn^s$ act freely by left translations on $X$.\end{lemma}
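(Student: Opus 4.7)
I will treat $H:=G_\bfn^u$; the stable case $G_\bfn^s$ follows by replacing $\bfn$ with $-\bfn$. The stabilizer of $x=g\Gamma$ under left translation is $g\Gamma g^{-1}$, so freeness amounts to showing $H\cap g\Gamma g^{-1}=\{e\}$ for every $g\in G$. Suppose accordingly that $h\in H$ and $h=g\gamma g^{-1}$ with $\gamma\in\Gamma$; the goal is $h=e$.

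The key point is that $\alpha^{-k\bfn}$ simultaneously preserves $\Gamma$ and contracts $H$. Writing $h_k:=\alpha^{-k\bfn}(h)$, $g_k:=\alpha^{-k\bfn}(g)$, $\gamma_k:=\alpha^{-k\bfn}(\gamma)$, part (2) of Proposition \ref{LyaDecomp} yields $h_k\to e$, while $\gamma_k\in\Gamma$ and
\[
h_k \;=\; g_k\,\gamma_k\,g_k^{-1}.
\]
From this identity I would like to deduce $\gamma_k=e$ for $k$ large, which then forces $h=\alpha^{k\bfn}(h_k)=e$. The subtlety is that $g_k$ may escape to infinity, so one cannot simply pass to a limit on the right hand side.

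The plan is to induct along the lower central series $G=G_1\triangleright G_2\triangleright\cdots\triangleright G_{s+1}=\{e\}$, each of whose terms is rational with respect to $\Gamma$ (\cite{R72}), showing that eventually $\gamma_k\in G_j$ for $j=1,\dots,s+1$. The base $j=1$ is trivial. For the inductive step, assume $\gamma_k\in G_j$ for large $k$; by normality of $G_j$ also $h_k\in G_j$. Since $[G,G_j]\subseteq G_{j+1}$, conjugation by any element of $G$ acts trivially on $G_j/G_{j+1}$, so for the projection $\pi_j\colon G_j\to G_j/G_{j+1}$ one has
\[
\pi_j(\gamma_k) \;=\; \pi_j\bigl(g_k\gamma_k g_k^{-1}\bigr) \;=\; \pi_j(h_k) \;\longrightarrow\; 0.
\]
Rationality of $G_{j+1}$ inside $G_j$ makes $\pi_j(\Gamma\cap G_j)$ a lattice, hence discrete, in the vector group $G_j/G_{j+1}$; so $\pi_j(\gamma_k)=0$ for $k$ large, advancing the induction to $\gamma_k\in G_{j+1}$. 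At $j=s+1$ this gives $\gamma_k=e$, completing the proof.

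The main obstacle I anticipate is just the algebraic background: verifying that every term of the lower central series is rational with respect to $\Gamma$ and that the projected image $\pi_j(\Gamma\cap G_j)$ is a genuine lattice in each successive abelian quotient. This is classical (Malcev/Raghunathan), but it is what licenses the induction. Once it is granted, the dynamical content is simply the contracting behavior of $\alpha^{-k\bfn}$ on $H$, with the commutator twist introduced by conjugation being peeled off one central-series step at a time, reducing the nilpotent statement to $s$ iterated instances of the elementary torus case.
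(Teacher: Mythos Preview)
Your argument is correct, and it takes a genuinely different route from the paper's. The paper avoids the central-series induction entirely by proving a single uniform estimate: there exists $a>0$ such that $d(g\gamma g^{-1},e)>a$ for every $g\in G$ and every $\gamma\in\Gamma\setminus\{e\}$. This is obtained by reducing to $g$ in a precompact fundamental domain, where conjugation is uniformly bi-Lipschitz, so only finitely many lattice elements can have conjugates near $e$. Then $\alpha^{k\bfn}v\in(\alpha^{k\bfn}g)\Gamma(\alpha^{k\bfn}g)^{-1}$ while $\alpha^{k\bfn}v\to e$ gives an immediate contradiction. Your approach instead neutralizes the possible escape of $g_k$ by exploiting that inner automorphisms act trivially on each quotient $G_j/G_{j+1}$, reducing to discreteness of the projected lattice. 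What the paper's proof buys is brevity and independence from the Malcev rationality of the lower central series; what yours buys is an explicit, purely algebraic mechanism that sidesteps any compactness-of-fundamental-domain argument and makes transparent exactly how nilpotency is used.
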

\begin{proof}We prove the claim for $G_\bfn^s$. The other half follows by switching $\bfn$ and $-\bfn$.

We claim that there is a constant $a>0$ depending only on $G$ and $\Gamma$, such that for any $g\in\Gamma$, $\min_{\gamma\in\Gamma\backslash\{e\}}d(g\gamma g^{-1},e)>a$. To see this, observe that it suffices to show the inequality for all the $g$'s in a fundamental domain $F$ of $G/\Gamma$, which we can take to be precompact. For $g\in F$, the map $h\mapsto g^{-1}hg$ is uniformly bi-Lipschitz on a given neighborhood $B_1^G$ of $e\in G$. Hence $\bigcup_{g\in F}g^{-1}B_1^Gg$ is bounded and therefore contains only a finite subset $\Lambda\subset\Gamma$. Thus only when $\gamma\in\Lambda$, there could be some $g\in F$ such that $g\gamma g^{-1}$ falls in $B_1^G$. Again by uniform bi-Lipschitz continuity, $\bigcup_{g\in F}\bigcup_{\gamma\in\Lambda\backslash\{e\}}g\gamma g^{-1}$ is uniformly bounded away from the identity. This proves the claim.

For $v\in G_\bfn^s$, $\alpha^{k\bfn}v$ is contracted towards the identity exponentially fast as $k\rightarrow\infty$. But $\alpha^{k\bfn}v\in\alpha^{k\bfn}(g\Gamma g^{-1})=(\alpha^{k\bfn}g)\Gamma(\alpha^{k\bfn}g)^{-1}$ since $\Gamma$ is an $\alpha$-invariant lattice, and thus, by the claim above, $d(\alpha^{k\bfn}v,e)>a$, which is a contradiction. The lemma is hence proved.\end{proof}

The next fact relates the position of Lyapunov exponents to the rank of the action.
\begin{lemma}Suppose $\alpha$ is a totally irreducible $\bZ^r$-action on the torus $\bT^d$ by automorphisms. Then the action is virtually cyclic if and only if all Lyapunov exponents are proportional to each other.\end{lemma}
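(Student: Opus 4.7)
The plan is to reduce the statement to Dirichlet's unit theorem via Lemma \ref{IrrField}. Assume $\alpha$ is totally irreducible, so by that lemma there is a number field $K$ of degree $d$, a group morphism $\zeta:\bZ^r\to\cO_K^\times$, and archimedean embeddings $\sigma_1,\dots,\sigma_d$ of $K$ such that the eigenvalue functionals of $\alpha$ on $\bR^d\otimes_\bR\bC$ are $\zeta_i=\sigma_i\circ\zeta$. From the defining equation \eqref{LyaEigenEq} one reads off that the Lyapunov exponents of $\alpha$ are exactly the functionals $\chi_i(\bfn)=\log|\sigma_i(\zeta^\bfn)|$, with complex conjugate embeddings contributing the same exponent. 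So the distinct Lyapunov exponents are indexed by the archimedean places of $K$.

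First I would handle the easy direction. If $\alpha$ is virtually cyclic, then, since (by the Remark right after Lemma \ref{IrrField}) $\alpha^\bfn$ has finite order iff $\zeta^\bfn$ is a root of unity, the image $\zeta(\bZ^r)\subset\cO_K^\times$ modulo torsion has rank at most $1$. Pick $u\in\cO_K^\times$ such that every $\zeta^\bfn$ can be written $\epsilon_\bfn u^{k(\bfn)}$ with $\epsilon_\bfn$ a root of unity and $k:\bZ^r\to\bZ$ a group morphism. Then $\chi_i(\bfn)=k(\bfn)\log|\sigma_i(u)|$, so every $\chi_i$ is a scalar multiple of the single linear functional $k$, proving mutual proportionality.

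For the converse I would use the logarithmic embedding $L:\cO_K^\times\to\bR^{r_1+r_2}$, $L(u)=\bigl(\log|\sigma_j(u)|\bigr)_j$ (with suitable weighting for complex places). By Dirichlet's unit theorem, $\ker L$ is exactly the group of roots of unity in $K$, so $L$ descends to an injection of $\cO_K^\times/\mathrm{torsion}$ onto a lattice in a hyperplane of $\bR^{r_1+r_2}$. Suppose all Lyapunov exponents are proportional: there is $\lambda\in(\bR^r)^*$ and constants $a_i\in\bR$ with $\chi_i=a_i\lambda$. Then the composition $L\circ\zeta:\bZ^r\to\bR^{r_1+r_2}$ takes values in the single line $\bR\cdot(a_1,\dots,a_{r_1+r_2})$, hence has image of rank at most one. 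Since $L$ is injective modulo torsion, $\zeta(\bZ^r)$ itself has rank at most one modulo roots of unity. Using the Remark after Lemma \ref{IrrField} once more, this means the torsion-free rank of the image of $\alpha$ in $\Aut(\bT^d)$ is at most $1$, i.e.\ $\alpha$ is virtually cyclic.

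There is no real obstacle; the only mild point of care is the degenerate case where all $a_i$ vanish, but then $|\sigma_i(\zeta^\bfn)|=1$ for every embedding and every $\bfn$, so every $\zeta^\bfn$ is a root of unity (Kronecker), and the action is virtually cyclic of rank $0$. The main ingredient the argument rests on is Lemma \ref{IrrField}, which converts a dynamical question into arithmetic, after which Dirichlet's theorem does the rest.
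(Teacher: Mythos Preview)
Your proof is correct and rests on the same two ingredients as the paper's argument: Lemma~\ref{IrrField} to pass to a number field $K$, and Dirichlet's unit theorem (together with Kronecker's characterization of roots of unity) to control the rank. The forward direction is essentially identical in both.

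For the converse, the paper argues more concretely: given two elements $\bfn,\bfm$, proportionality of the exponents gives a common ratio $c$ with $\chi(\bfn)=c\chi(\bfm)$ for every $\chi$; approximating $c\approx p/q$ produces $\tilde\bfn=p\bfn-q\bfm$ with all $|\chi(\tilde\bfn)|<\epsilon$, and then the discreteness of the unit lattice forces $\zeta^{\tilde\bfn}$ to be a root of unity. You bypass this Diophantine step by observing directly that the logarithmic embedding $L\circ\zeta$ lands in a single line, so the image of $\zeta$ has rank at most one modulo torsion. Your packaging is cleaner and avoids the approximation argument entirely; the paper's version is more hands-on but reaches the same conclusion through the same underlying fact (that $\ker L$ consists of roots of unity). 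Both are valid, and yours is arguably the more natural phrasing once Lemma~\ref{IrrField} is in hand.
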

\begin{proof} A virtually cyclic action is generated by finitely many automorphisms of finite order and possibly another automorphism $\alpha^\bfn$. The Lyapunov exponents of the finite order elements are always identically zero. So every Lyapunov exponent $\chi$ is completely decided by $\chi(\bfn)$. Which implies all the $\chi$'s are proportional as linear functionals.

Conversely suppose all Lypunov exponents are proportional. That is, for any elements $\bfn,\bfm\in\bZ^r$ and Lyapunov exponents $\chi. \chi'$, $\frac{\chi(\bfn)}{\chi'(\bfn)}=\frac{\chi(\bfm)}{\chi'(\bfm)}$, or equivalently $\chi(\bfn)=c\chi(\bfm)$ for a constant $c$ that depends only on $\bfn$ and $\bfm$ but not on $\chi$ (one may assume $c\neq\infty$ by switch $\bfn$ and $\bfm$ if necessary).

Thus for any $\epsilon>0$, one can always find a sufficiently close approximation $c\approx\frac pq$, $p,q\in\bZ$ such that $|p\chi(\bfn)-q\chi(\bfm)|<\epsilon$. As there are only finitely many $\chi$, there is a pair $(p,q)$ making this true for all of them. Then the element $\tilde\bfn=p\bfn-q\bfm$ satisfies $|\chi(\tilde\bfn)|<\epsilon$ for all $\chi$. This is equivalent to that, in Lemma \ref{IrrField},  each algebraic conjugate of the corresponding eigenvalue $\zeta^{\tilde\bfn}\in K$ has absolute value between $e^{-\epsilon}$ and $e^\epsilon$. However, by Dirichlet's Unit Theorem, if $\epsilon$ is chosen to be small enough, then this implies that $\zeta^{\tilde\bfn}$ is a root of unity. 

By Lemma \ref{IrrField}, $\alpha^{\tilde\bfn}$ is of finite order.  Furthermore, this shows that the group of automorphisms generated by $\alpha^\bfn$ and $\alpha^\bfm$ is virtually cyclic. Since this is true for all pairs $(\bfn,\bfm)$, the entire acting group $\{\alpha^\bfn: \bfn\in\bZ^r\}$ is virtually cyclic. The lemma is proved.\end{proof}

\begin{corollary}\label{NonPropLya}Suppose $\alpha$ is a $\bZ^r$-action on a nilmanifold $X$ by automorphisms such that, for any finite index subgroup $\Sigma\subset\bZ^r$ and any $\alpha|_\Sigma$-equivariant algebraic factor $X_0=X/H$, the induced action by $\alpha$ on $X_0$ is not virtually cyclic. Then in the Lyapunov decomposition on $X$, not all Lyapunov exponents are proportional to each other.\end{corollary}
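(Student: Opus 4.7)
The plan is to reduce to the totally irreducible torus case handled by the preceding lemma. I will locate, at the top of the equivariant tower produced by Remark \ref{EquivariantTower}, a toral algebraic factor on which $\alpha$ is totally irreducible and, by hypothesis, not virtually cyclic; then I will pull the resulting non-proportionality of Lyapunov exponents back up to $\gog$.

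\emph{Picking the top of the tower.} Apply Remark \ref{EquivariantTower} to $\alpha$ to obtain a finite index subgroup $\Sigma\subset\bZ^r$ and a chain $\{e\}=H_0\subsetneq H_1\subsetneq\cdots\subsetneq H_k=G$ of $\alpha|_\Sigma$-invariant normal rational subgroups, with each successive quotient $H_{i+1}/H_i$ abelian and the induced $\Sigma$-action on each subquotient torus $H_{i+1}/H_i(H_{i+1}\cap\Gamma)$ totally irreducible. Since $H_k/H_{k-1}=G/H_{k-1}$ is abelian, the factor $X/H_{k-1}=G/H_{k-1}\Gamma$ is a torus, and by construction the $\alpha|_\Sigma$-action on it is totally irreducible. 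Moreover $X/H_{k-1}$ is an $\alpha|_\Sigma$-equivariant algebraic factor of $X$, so by the hypothesis of the corollary this induced action is not virtually cyclic.

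\emph{Applying the preceding lemma.} The lemma immediately above the corollary asserts that a totally irreducible toral $\bZ^r$-action is virtually cyclic if and only if all its Lyapunov exponents are proportional. Its contrapositive, applied to the $\Sigma$-action on $X/H_{k-1}$, yields two Lyapunov exponents of $\alpha|_\Sigma$ on $\gog/\goh_{k-1}$ that are not proportional as linear functionals on $\Sigma$.

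\emph{Lifting to $\gog$ and to the full action.} Apply Remark \ref{QuotientEigenRmk} to the two-step filtration $\goh_{k-1}\subsetneq\gog$: every eigenvalue functional of $\alpha|_\Sigma$ on $\gog/\goh_{k-1}$ occurs among the eigenvalue functionals on $\gog$, and via \eqref{LyaEigenEq} the same inclusion holds for Lyapunov exponents. Hence the Lyapunov exponents of $\alpha|_\Sigma$ on $\gog$ are not all proportional. Finally, since $\Sigma$ has finite index in $\bZ^r$, its $\bQ$-span is all of $\bQ^r$, so two linear functionals on $\bZ^r$ are proportional if and only if their restrictions to $\Sigma$ are; and each Lyapunov exponent of $\alpha$ restricts to the corresponding Lyapunov exponent of $\alpha|_\Sigma$. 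Therefore the Lyapunov exponents of $\alpha$ itself are not all proportional.

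The argument is essentially a book-keeping assembly of earlier results. The one substantive choice to be checked is the layer of the tower used: working with the maximal torus factor $X_\ab$ at the bottom would not suffice, because total irreducibility can fail there and the preceding lemma would not apply; taking instead the maximal proper term $H_{k-1}$ makes the relevant base a totally irreducible torus factor while still being an $\alpha|_\Sigma$-equivariant algebraic factor, to which the hypothesis can be applied directly.
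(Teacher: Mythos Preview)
Your proof is correct and follows essentially the same approach as the paper's: locate a totally irreducible torus factor via Remark~\ref{EquivariantTower}, apply the preceding lemma there, and lift the Lyapunov exponents back to $\gog$ via Remark~\ref{QuotientEigenRmk} and \eqref{LyaEigenEq}. You are more explicit than the paper in identifying the factor as $X/H_{k-1}$ and in justifying the passage from non-proportionality over $\Sigma$ to non-proportionality over $\bZ^r$, both of which the paper leaves implicit.
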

\begin{proof}As Remark \ref{EquivariantTower} explains, there is always a finite index subgroup $\Sigma$, such that one can find an $\alpha|_\Sigma$-equivariant torus factor $X_0$ on which $\alpha|_\Sigma$ acts totally irreducibly. By Remark \ref{QuotientEigenRmk} and \eqref{LyaEigenEq}, all Lyapunov exponents on $X_0$ are Lyapunov exponents on $X$ as well. The corollary follows immediately from the previous lemma.\end{proof}

\subsection{Leafwise measures and entropies} Take a $\bZ^r$-action $\alpha$ on $X$ by nilmanifold automorphisms. Suppose a closed connected group $V\subset G$ acts freely on $X$ by left translations, i.e. $\Stab_V(x)=\{e\}$ for all $x\in X$.

Take another continuous $\bZ^r$-action $\beta$ on a compact metric space $Y$. Let $\tX=X\times Y$ and $\tilde \alpha=\alpha\times \beta$ be the product action on $\tX$.  Denote by $\cB_Y$ the Borel $\sigma$-algebra of $Y$ and regard it as a subalgebra of $\cB_{\tX}$, the Borel $\sigma$-algebra of $\tX$. 

$V$ acts freely on $\tX$ by translation on the $X$-component. Let $\mu$ be a Borel probability measure on $\tX$. One can define leafwise measures along the $H$-orbits. These measures were previously used in various homogeneous dynamical settings (for instance \cites{KS96,  EK03, EL03, KS05, L06}); detailed explanations can be found in \cite{EL10}.

Denote by $\cM(V)$ the space of positive Radon (i.e. locally finite) measures on $V$, and define an equivalence relation on $\cM(V)$ by $\nu\simeq\nu'$ if and only if $\nu=c\nu'$ for some $c>0$. Let $\cM_1(V)$ be the quotient space $\cM(V)/\simeq$ of normalized measures.
If one further pose certain mild growth conditions, then one gets a compact metric space $\cM_1^0(V)\subset\cM_1(V)$. We refer the reader to \cite{EL10}*{8.4} for the norm that defines $\cM_1^0(V)$.

From now on, $B_\delta^H=\exp\{v\in\goh, |v|\leq\delta\}\subset H$ will denote be the compact ball of radius $\delta$ around identity in a Lie group $H$.

\begin{definition}\label{Subord}A Borel $\sigma$-algebra $\cA$ of $\tX$ is {\bf $V$-subordinate} if $\cA$ is countably generated and for $\mu$-a.e. $\tx\in\tX$, there exists $\delta>0$ such that the atom $[\tx]_\cA$ satisfies  $B_\delta^V.\tx\subset [\tx]_\cA\subset B_{\delta^{-1}}^V.\tx$.

Given a continuous map $T:\tX\mapsto\tX$, $\cA$ is {\bf $T$-increasing} if $T\cA\subset\cA$.\end{definition}

Recall that the conditional measure $\tmu_\tx^\cA$ is the uniquely defined measure system (up to a null measure set) such that for all $f\in L^1(\mu)$ and $\tmu$-a.e. $\tx$, $\bE_\tmu(f|\cA)(\tx)=\int f d\tmu_\tx^\cA$.

\begin{proposition}\label{LeafMeas}\cite{EL10}*{Thm. 6.3} Suppose $V$ acts freely by left translations on $\tX$, and $\mu$ is a probability measure on $\tX$. Then there is a set $\tX^*\subset\tX$ of full $\tmu$-measurea, and a Borel measurable map $\tx\mapsto\tmu_\tx^V$ from $\tX^*$ to $\cM_1^0(V)$, such that:
\begin{enumerate}
\item For $\tmu$-a.e. $\tx$, $e$ is in the support of $\tmu_\tx^V$;
\item If $v\in V$ and $\tx, v.\tx\in\tX^*$, then $\tmu_\tx^V$ is proportional to $(\tau_v)_*\tmu_{v.\tx}^V$, the pushforward of $\tmu_{v.\tx}^V$ under the right translation by $v$.
\item If $\cA$ is $V$-subordinate then $(\tmu_\tx^V).\tx$, the pushforward of  $\tmu_\tx^V$ under the map $h\mapsto h.\tx$,  is proportional to $\tmu_\tx^{\cA}$ on $[\tx]_\cA$.
\end{enumerate}
\end{proposition}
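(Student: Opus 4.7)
The plan is to follow the standard construction of leafwise measures via exhaustion by $V$-subordinate $\sigma$-algebras, which is the approach used in \cite{EL10}*{Ch. 6}. The free action of $V$ on $\tX$ lets one identify (locally) a $V$-leaf with a copy of $V$, so the strategy is to produce conditional measures on atoms that grow to exhaust whole leaves, and then glue them into a single Radon measure on $V$ using the uniqueness of conditional measures up to scaling.

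First I would construct, for every integer $n\geq 1$, a countably generated $\sigma$-algebra $\cA_n\subset\cB_{\tX}$ that is $V$-subordinate in the sense of Definition \ref{Subord} and whose atoms $[\tx]_{\cA_n}$ contain $B_n^V\cdot\tx$ (up to a $\mu$-null set in $\tx$). The standard way is to tile $\tX$ by small open sets transverse to the $V$-orbits, say via a countable cover by ``flow boxes'' of the form $B_{\delta_n}^V\cdot W_n^{(i)}$ where $W_n^{(i)}$ is a local transversal, and then take $\cA_n$ to be generated by the transversal factor together with a countable partition. Freeness of the $V$-action is what guarantees such flow boxes exist near $\mu$-a.e.\ point. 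Rokhlin disintegration then yields conditional measures $\tmu_{\tx}^{\cA_n}$ supported on the atom $[\tx]_{\cA_n}\subset B_{\delta_n^{-1}}^V\cdot\tx$.

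Next I would pull back the conditional measure along the orbit parametrization. Since $V$ acts freely, for $\tmu$-a.e.\ $\tx$ the map $h\mapsto h\cdot\tx$ is a bijection from a neighborhood of $e\in V$ onto a neighborhood of $\tx$ in its $V$-orbit, and I define a Radon measure $\nu_{\tx,n}$ on $V$ concentrated on a bounded subset (of radius at most $\delta_n^{-1}$) by pulling back $\tmu_{\tx}^{\cA_n}$. The key compatibility step is: if $m>n$, then $\cA_m$ refines $\cA_n$ after restriction to the atom, so by the tower property of conditional expectation, $\nu_{\tx,n}$ is proportional to the restriction of $\nu_{\tx,m}$ to the corresponding smaller ball. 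Fixing normalizations (e.g.\ assigning mass $1$ to a fixed compact set around $e$ when possible, or using the norm on $\cM_1^0(V)$), one obtains a consistent family and passes to a direct limit, yielding a Radon measure on all of $V$ whose equivalence class $\tmu_{\tx}^V\in\cM_1(V)$ is unambiguously defined. The growth bound built into $\cM_1^0(V)$ (see \cite{EL10}*{\S 8.4}) has to be checked; this comes from $\mu$ being a probability measure and the way atoms grow in volume.

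Properties (1) and (3) will then be essentially tautological from the construction: $e\in\supp\tmu_{\tx}^V$ because every atom contains $\tx$ itself, and (3) is just the identification of the pullback measure on the atom with the restriction of the Radon measure on $V$, which is exactly how $\tmu_{\tx}^V$ was built. The main obstacle is property (2), the equivariance under $V$. To prove it, I would observe that for $v\in V$ small enough so that $\tx$ and $v\cdot\tx$ lie in the same atom of $\cA_n$, the two pullback measures $\nu_{\tx,n}$ and $(\tau_v)_*\nu_{v\cdot\tx,n}$ are pullbacks of the same Radon measure on the atom via two parametrizations that differ by the right translation $\tau_v$; hence they agree (up to the free choice of normalization). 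For larger $v$, one refines $n$ so that both $\tx$ and $v\cdot\tx$ lie in a common atom, applies the same argument, and then passes to the equivalence class in $\cM_1(V)$ to absorb the normalization constant. Measurability of $\tx\mapsto\tmu_{\tx}^V$ follows from the measurability of the Rokhlin disintegrations at each finite stage together with the countable nature of the limiting procedure.
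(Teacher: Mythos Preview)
The paper does not give its own proof of this proposition; it is quoted directly as \cite{EL10}*{Thm.~6.3}, so there is nothing to compare against in the paper itself. Your sketch is in the spirit of the construction in \cite{EL10}*{Ch.~6}, and properties (1) and (3) do fall out of the construction in the way you describe.

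That said, there is a genuine technical gap in your outline. You assert the existence of a nested sequence $\cA_1\supset\cA_2\supset\cdots$ of $V$-subordinate $\sigma$-algebras with $B_n^V\cdot\tx\subset[\tx]_{\cA_n}$ for $\tmu$-a.e.\ $\tx$. In the purely measure-theoretic setting of the proposition (no dynamics is assumed here, only a free $V$-action and a probability measure), one cannot in general build such a nested sequence from flow boxes: a flow-box $\sigma$-algebra has atoms of size comparable to the box, and there is no mechanism to enlarge atoms coherently without an expanding map. The construction in \cite{EL10} does not use a single increasing chain; rather it uses a \emph{countable family} of $V$-subordinate $\sigma$-algebras (not nested) chosen so that for a.e.\ $\tx$ and every $R>0$ some member of the family has an atom at $\tx$ containing $B_R^V\cdot\tx$, and then proves that the pulled-back conditional measures from any two members of the family agree up to a scalar on the overlap of their atoms. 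This compatibility---not a tower/martingale argument along a chain---is what lets one patch the local measures into a single Radon measure on $V$. Relatedly, your line ``if $m>n$, then $\cA_m$ refines $\cA_n$'' has the refinement backwards (larger atoms means coarser $\sigma$-algebra), and the ``tower property'' you invoke would require nestedness that you have not established. The equivariance (2) is handled correctly in spirit, but again relies on the compatibility lemma rather than on common membership in a single chain.
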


\begin{lemma}\label{LeafAction} Suppose $V$ is an $\alpha$-invariant subgroup of $G_\bfn^u$ for some $\bfn$, and $\tmu$ is an $\talpha$-invariant probability measure. Then $$\alpha^\bfm_*\tmu_\tx^V\simeq\tmu_{\talpha^\bfm.\tx}^V, \text{ for }\tmu\text{-a.e. }\tx, \forall\bfm\in\bZ^r.$$\end{lemma}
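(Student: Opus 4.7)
The plan is to derive the equivariance from the characterization of leafwise measures via $V$-subordinate $\sigma$-algebras given in Proposition \ref{LeafMeas}(3), combined with the standard transformation rule of conditional measures under a measure-preserving map. Fix $\bfm\in\bZ^r$ and pick a countably generated $V$-subordinate $\sigma$-algebra $\cA\subset\cB_\tX$ (whose existence is part of the standard leafwise-measure machinery in \cite{EL10}). The first step is to verify that $\cA':=\talpha^\bfm\cA$ is again $V$-subordinate at $\talpha^\bfm.\tx$ for $\tmu$-a.e.\ $\tx$. This uses only that $V$ is $\alpha$-invariant, so $\alpha^\bfm|_V$ is a Lie group automorphism of $V$ and hence bi-Lipschitz on any compact neighborhood of $e$; thus $B_\delta^V.\tx\subset[\tx]_\cA\subset B_{\delta^{-1}}^V.\tx$ implies
\[
B_{\delta'}^V.(\talpha^\bfm\tx)\subset[\talpha^\bfm\tx]_{\cA'}\subset B_{(\delta')^{-1}}^V.(\talpha^\bfm\tx)
\]
for some $\delta'>0$ depending on $\tx$ and $\bfm$.

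Next I would invoke the $\talpha^\bfm$-invariance of $\tmu$, together with the defining property of conditional measures, to obtain the transformation rule $\tmu^{\cA'}_{\talpha^\bfm.\tx}=\talpha^\bfm_*\tmu^\cA_\tx$ for $\tmu$-a.e.\ $\tx$. Proposition \ref{LeafMeas}(3) applied both at $\tx$ and at $\talpha^\bfm.\tx$ then gives, on the corresponding atoms,
\[
(\tmu_\tx^V).\tx\,\propto\,\tmu^\cA_\tx
\qquad\text{and}\qquad
(\tmu^V_{\talpha^\bfm.\tx}).(\talpha^\bfm\tx)\,\propto\,\tmu^{\cA'}_{\talpha^\bfm.\tx}.
\]
Pushing the former forward by $\talpha^\bfm$ and using the intertwining $\talpha^\bfm\circ(v\cdot)=(\alpha^\bfm v)\cdot\circ\,\talpha^\bfm$, which follows from $\talpha^\bfm$ being a product automorphism with $\talpha^\bfm|_V=\alpha^\bfm|_V$, one deduces the proportionality $(\alpha^\bfm_*\tmu_\tx^V).(\talpha^\bfm\tx)\propto(\tmu^V_{\talpha^\bfm.\tx}).(\talpha^\bfm\tx)$ on the common atom.

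To conclude, I would invoke Lemma \ref{LyaFree} to see that $V\subset G_\bfn^u$ acts freely, so the orbit map $v\mapsto v.(\talpha^\bfm\tx)$ is a homeomorphism onto its image; this transfers the proportionality to one of Radon measures on the neighborhood $B_{\delta'}^V$ of $e\in V$. The translation equivariance in Proposition \ref{LeafMeas}(2) then propagates it from this neighborhood of $e$ to all of $V$, yielding $\alpha^\bfm_*\tmu_\tx^V\simeq\tmu^V_{\talpha^\bfm.\tx}$. I expect the only delicate bookkeeping to be in the first step---keeping careful track of how the bi-Lipschitz distortion of $\alpha^\bfm|_V$ interacts with the inner and outer subordinacy radii, and verifying countable generation of $\talpha^\bfm\cA$---but this is essentially formal, since $\alpha^\bfm$ is simultaneously a homeomorphism of $\tX$ and a Lie group automorphism of $V$.
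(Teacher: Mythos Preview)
Your approach is sound and, in one respect, more direct than the paper's: you handle an arbitrary $\bfm$ in one stroke by observing that $\talpha^\bfm\cA$ is again $V$-subordinate (since $\alpha^\bfm|_V$ is a Lie group automorphism, hence bi-Lipschitz on compacta) and then transporting conditional measures. However, your assessment of where the delicate point lies is inverted. The first step---verifying $V$-subordinacy and countable generation of $\talpha^\bfm\cA$---is indeed formal. The step that needs real justification is the \emph{last} one: passing from proportionality on a single bounded atom to proportionality on all of $V$. Proposition~\ref{LeafMeas}(2) relates leafwise measures at \emph{different} base points $\tx$ and $v.\tx$; it does not by itself say that two Radon measures on $V$ agreeing up to scalar on a neighborhood of $e$ agree globally. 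A chaining argument through overlapping translates with matching of constants can be made to work but is not the one-liner you suggest. The clean fix is to phrase the conclusion via the uniqueness in \cite{EL10}*{Thm.~6.3}: the assignment $\tx\mapsto\alpha^{-\bfm}_*\tmu_{\talpha^\bfm\tx}^V$ satisfies the characterizing property~(3) for \emph{every} $V$-subordinate $\sigma$-algebra (by your computation applied with $\cA'=\talpha^\bfm\cA$), hence coincides with $\tx\mapsto\tmu_\tx^V$ in $\cM_1^0(V)$.

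The paper takes a different, more dynamical route that sidesteps this local-to-global issue. It chooses $\cA$ to be not merely $V$-subordinate but also $\talpha^\bfn$-\emph{increasing} (this is exactly what Lemma~\ref{SubordExist} supplies). Then $\{\talpha^{k\bfn}\cA\}_{k\ge 0}$ is a nested family of $V$-subordinate $\sigma$-algebras, and because $\alpha^\bfn$ expands $V$, a Poincar\'e-recurrence argument shows that the atoms $[\tx]_{\talpha^{k\bfn}\cA}$ exhaust arbitrarily large $V$-balls for a.e.\ $\tx$; this gives global proportionality directly, but only for the special element $\bfn$. For general $\bfm$ the paper bootstraps: pick $s$ large so that $\bfn'=\bfm+s\bfn$ again satisfies $V\subset G_{\bfn'}^u$, prove the result for $\bfn'$ by the same argument, and combine via $\talpha^\bfm=\talpha^{\bfn'}\circ\talpha^{-s\bfn}$. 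Your approach trades this two-stage dynamical argument for a single appeal to the uniqueness of leafwise measures; the paper's approach is more self-contained but longer.
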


\begin{proof}
Take $\cA$ to be an $\alpha^\bfn$-increasing, $V$-subordinate  $\sigma$-algebra. Einsiedler and Lindenstrauss \cite{EL03} proved there do exist such nice $\sigma$-algebras, which we will explain later in \S\ref{SecSubord}.

Then $\talpha^{k\bfn}\cA$ is also $V$-subordinate. By Proposition \ref{LeafMeas}, via the identification $h\mapsto h.\tx$, $\tmu_\tx^V$ is proportional to $\tmu_\tx^{\talpha^{k\bfn}\cA}$ on $[\tx]_{\talpha^{k\bfn}\cA}$ for $\tmu$-a.e. $\tx\in\tX$. As $\tmu$ is $\talpha$-invariant, $\tmu_\tx^{\talpha^{k\bfn}\cA}=\talpha^{-\bfn}_*\tmu_{\talpha^\bfn.\tx}^{\talpha^{(k+1)\bfn}\cA}$.  As $\cA$ is $\alpha^\bfn$-increasing, it follows that $\tmu_\tx^V$ is proportional to $\alpha^{-\bfn}_*\tmu_{\talpha^\bfn.\tx}^V$ on $[\tx]_{\talpha^{k\bfn}\cA}$ for any $k>0$. 

As $\cA$ is $V$-subordinate and $V\subset G_\bfn^u$, for almost every $\tx$, there is $k\in\bN$ such that $[\tx]_{\talpha^{k\bfn}\cA}=\talpha^{k\bfn}.[\talpha^{-k\bfn}.\tx]_\cA$ contains arbitrarily large neigborhoods in $V.\tx$. To see this, observe that for any $\epsilon$ there is a subset $\Omega\subset\tX$ of measure $\tmu(\Omega)>1-\epsilon$, such that for all $\tx\in\Omega$, $B_\delta^V.\tx\subset [\tx]_\cA$ for some uniform $\delta>0$.To prove the claim for almost every $\tx\in\Omega$, it sufficient to take values of $k$ for which $\talpha^{-k\bfn}.\tx$ recurs to $\Omega$. By letting $\epsilon$ approach $0$, we see the claim is actually true for almost every $\tx\in\tX$.

So we have proved that $\alpha^\bfn_*\tmu_\tx^V\simeq\tmu_{\talpha^\bfn.\tx}^V$ for almost all points.

For any $\bfm\in\bZ^r$, one can always find a large $s$ such that $\chi(\bfm)+s\chi(\bfn)<0$ for all Lyapunov exponents $\chi$ from $G_\bfn^u$. Then the element $\bfn'$ also has negative Lyapunov exponents on $G_\bfn^u$, so $V\subset G_\bfn^u\subset G_{\bfn'}^u$. By the same proof,   $\alpha^{\bfn'}_*\tmu_\tx^V\simeq\tmu_{\talpha^{\bfn'}.\tx}^V$ almost everywhere. By writing $\talpha^\bfm$ as $\talpha^{\bfn'}\circ\talpha^{-s\bfn}$, the almost everywhere equivariance property for $\bfm$ follows from those for $\bfn$ and $\bfn'$.
\end{proof}

One of the main advantage of studying leafwise measures is that they charaterize quantitatively the source of entropies. In general smooth dynamics, this is the  Ledrappier-Young formula \cite{LY85}. In a homogeneous setup, things can be made more precise.

\begin{lemma}\label{EntContriAtX} For $\bfn\in\bZ^r$, an $\alpha$-invariant closed subgroup $V\subset G_\bfn^u$, and any $\talpha$-invariant probability measure $\tmu$ on $\tX$,
\begin{enumerate}
\item The limit $$D_\tmu(\talpha^\bfn,V)(\tx)=\lim_{k\rightarrow\infty}-\frac1k\log\tmu_{\tx}^V(\talpha^{-k\bfn}.B_1^V)$$
exists for $\tmu$-a.e. $\tx$ and is an $\talpha^\bfn$-invariant function;
\item For $\tmu$-a.e. $\tx$, $D_\tmu(\talpha^\bfn,V)(\tx)=0$ if and only if $\tmu_\tx^V$ is trivial.
\item For $\tmu$-a.e. $\tx$, $D_\tmu(\talpha^\bfn,V)(\tx)\leq h_{\tmu_\tx^\cE}(\talpha^\bfn|\cB_Y)$, where $\tmu_\tx^\cE$ stands for the $\alpha^\bfn$-ergodic component $\tmu_\tx^\cE$ of $\tmu$ at $\tx$. The equality holds when $V=G_\bfn^u$.
\end{enumerate}
\end{lemma}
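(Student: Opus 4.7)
\emph{Plan.} Normalize the leafwise measures by $\tmu_\tx^V(B_1^V)=1$ and set $\phi_k(\tx):=-\log\tmu_\tx^V(\alpha^{-k\bfn}B_1^V)$. With this normalization, Lemma \ref{LeafAction} reads $\tmu_{\talpha^\bfn.\tx}^V=\tmu_\tx^V(\alpha^{-\bfn}B_1^V)^{-1}\cdot\alpha^\bfn_*\tmu_\tx^V$, and evaluating both sides on $\alpha^{-k\bfn}B_1^V$ yields the cocycle identity
\begin{equation*}
\phi_{k+1}(\tx)=\phi_1(\tx)+\phi_k(\talpha^\bfn.\tx),\qquad\text{hence}\qquad \phi_k=\sum_{j=0}^{k-1}\phi_1\circ\talpha^{j\bfn}.
\end{equation*}
Because $V\subset G_\bfn^u$, the automorphism $\alpha^{-\bfn}$ contracts $V$, so $\alpha^{-\bfn}B_1^V\subset B_1^V$ and $\phi_1\geq 0$. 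Integrability $\phi_1\in L^1(\tmu)$ follows from standard maximal-inequality estimates for leafwise measures (cf.\ \cite{EL10}). Part (1) is then the pointwise ergodic theorem for $\talpha^\bfn\curvearrowright(\tX,\tmu)$: $\frac1k\phi_k$ converges $\tmu$-a.e.\ to an $\talpha^\bfn$-invariant function, which by construction equals $\bE_\tmu(\phi_1\,|\,\cE)$.

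For part (2), the implication ``$\tmu_\tx^V$ trivial $\Rightarrow D_\tmu(\talpha^\bfn,V)(\tx)=0$'' is immediate, since then $\phi_k\equiv 0$. Conversely, if $D_\tmu(\talpha^\bfn,V)(\tx)=0$, then by non-negativity of $\phi_1$ one forces $\phi_1=0$ $\tmu$-a.e.\ on the $\talpha^\bfn$-ergodic component containing $\tx$; that is, $\tmu_\tx^V$ is supported on $\alpha^{-\bfn}B_1^V$. Iterating through the equivariance proved in part (1), $\tmu_\tx^V$ must be supported on $\bigcap_{k\geq 0}\alpha^{-k\bfn}B_1^V=\{e\}$, hence trivial.

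For part (3) the key is to realize $D_\tmu(\talpha^\bfn,V)(\tx)$ as the contribution of $V$ to the conditional entropy of $\talpha^\bfn$ relative to $\cB_Y$. Fix a $V$-subordinate, $\talpha^\bfn$-increasing $\sigma$-algebra $\cA$ (the existence, proved by Einsiedler--Lindenstrauss, will be recalled in \S\ref{SecSubord}). Since $\talpha^\bfn\cA\subset\cA$, one has
\begin{equation*}
h_\tmu(\talpha^\bfn,\cA\,|\,\cB_Y)=H_\tmu\bigl(\cA\,\bigm|\,\talpha^\bfn\cA\vee\cB_Y\bigr).
\end{equation*}
Representing the conditional measures on atoms via Proposition \ref{LeafMeas}(3) (and using that the $V$-action only touches the $X$-coordinate, so $\cB_Y$ is transverse to $V$), the right-hand side equals
\begin{equation*}
\int \phi_1\,d\tmu \;=\; \int D_\tmu(\talpha^\bfn,V)\,d\tmu.
\end{equation*}
Since $h_\tmu(\talpha^\bfn,\cA\,|\,\cB_Y)\leq h_\tmu(\talpha^\bfn\,|\,\cB_Y)$, this gives the integrated inequality; applying the same computation within each $\talpha^\bfn$-ergodic component $\tmu_\tx^\cE$ (on which $D_\tmu(\talpha^\bfn,V)$ is a.s.\ constant) upgrades it to the stated pointwise inequality. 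Equality when $V=G_\bfn^u$ is the homogeneous Ledrappier--Young formula: a $G_\bfn^u$-subordinate $\cA$ together with $\cB_Y$ captures the entirety of $h_\tmu(\talpha^\bfn\,|\,\cB_Y)$, because stable and neutral directions contribute nothing to the relative entropy.

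The main obstacle is the conditional entropy identity $H_\tmu(\cA\,|\,\talpha^\bfn\cA\vee\cB_Y)=\int\phi_1\,d\tmu$: one must verify that the additional $\cB_Y$-conditioning slices atoms of $\talpha^\bfn\cA$ transversally to the $V$-leaves without disturbing Proposition \ref{LeafMeas}(3). Once this identity and its equality case for $V=G_\bfn^u$ are in hand, parts (1) and (2) are soft consequences of the cocycle structure, Birkhoff's theorem, and the contraction of $V$ by $\alpha^{-\bfn}$.
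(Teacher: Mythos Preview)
Your proposal is correct and follows essentially the same approach as the paper, which simply defers to \cite{EL10}*{Thm.~7.6} and notes that the proof carries over verbatim to the conditional setting with $\cB_Y$. You have in effect reconstructed that argument: the cocycle identity for $\phi_k$ plus Birkhoff for part~(1), non-negativity and contraction for part~(2), and the $V$-subordinate increasing $\sigma$-algebra computation of $H_\tmu(\cA\mid\talpha^\bfn\cA\vee\cB_Y)$ for part~(3), with the observation that conditioning on $\cB_Y$ is transverse to the $V$-leaves---precisely the adaptation the paper alludes to when it says ``the proof is identical.''
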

 
 This is the conditional version of \cite{EL10}*{Thm. 7.6} and the proof is identical. 

\begin{definition}\label{EntContriDef}When $\tmu$, $\bfn$ and $V$ are as in Lemma \ref{EntContriAtX}, the {\bf entropy contibution} of $V$ is $$h_\tmu(\talpha^\bfn,V)=\int_\tX D_\tmu(\talpha^\bfn,V)(\tx)\di\tmu(\tx).$$ \end{definition}

Notice that, $D_\tmu(\talpha^\bfn, V)$ is also $\talpha^\bfm$-invariant for all $\bfm$, because of Lemma \ref{LeafAction} and the fact that $\alpha^\bfm$ commutes with $\alpha^\bfn$. So if $\tmu$ is $\talpha$-ergodic then $D_\tmu(\talpha^\bfn,V)(\tx)$ is constant and equal to $h_\tmu(\talpha^\bfn,V)$ almost everywhere.

\begin{proposition}\label{EntContri}$h_\tmu(\talpha^\bfn,V)$ has the following properties:
\begin{enumerate}
\item $h_\tmu(\talpha^\bfn,V)=0$ if and only if $\tmu_\tx^V$ is trivial almost everywhere;
\item $h_\tmu(\talpha^\bfn,V)\leq h_\tmu(\talpha^\bfn|\cB_Y)$. The equality holds when $V=G_\bfn^u$;
\item $h_\tmu(\talpha^\bfn,V)\leq\log\big|\det\alpha^\bfn|_\gov\big|$, where $\gov$ is the Lie algebra of $V$. The equality holds if and only if $\tmu$ is invariant under left translations by $V$.
\end{enumerate}
\end{proposition}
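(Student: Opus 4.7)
Parts (1) and (2) follow immediately by integration from the pointwise statements in Lemma \ref{EntContriAtX}. For (1), the integrand $D_\tmu(\talpha^\bfn,V)$ is nonnegative (being a limit of $-\frac{1}{k}\log$ of a probability) and, by Lemma \ref{EntContriAtX}(2), vanishes $\tmu$-a.e.\ if and only if $\tmu_\tx^V$ is trivial $\tmu$-a.e.; hence $h_\tmu(\talpha^\bfn, V) = \int_\tX D_\tmu(\talpha^\bfn, V)\,d\tmu = 0$ is equivalent to the triviality assertion. For (2), I integrate the pointwise inequality $D_\tmu(\talpha^\bfn,V)(\tx) \leq h_{\tmu_\tx^\cE}(\talpha^\bfn|\cB_Y)$ from Lemma \ref{EntContriAtX}(3) against $\tmu$ and invoke the standard ergodic-decomposition formula for conditional entropy $\int h_{\tmu_\tx^\cE}(\talpha^\bfn|\cB_Y)\,d\tmu(\tx) = h_\tmu(\talpha^\bfn|\cB_Y)$, which applies because $\cB_Y$ is $\talpha^\bfn$-invariant; the equality clause when $V = G_\bfn^u$ is inherited directly from the pointwise equality in Lemma \ref{EntContriAtX}(3).

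For part (3), which is the main content, I would fix an $\alpha^\bfn$-increasing, $V$-subordinate $\sigma$-algebra $\cA$ (whose existence is asserted in \S\ref{SecSubord}) and express $h_\tmu(\talpha^\bfn, V)$ as the conditional entropy $H_\tmu(\cA|\talpha^\bfn\cA)$. Proposition \ref{LeafMeas}(3) then identifies the conditional measure on each atom $[\tx]_{\talpha^\bfn\cA}$ with a pushforward of $\tmu_\tx^V$ restricted to a pair of nested neighborhoods of $e$ in $V$, producing a representation of $h_\tmu(\talpha^\bfn,V)$ as an integral of $-\log$ of ratios of $\tmu_\tx^V$-masses of such nested $V$-neighborhoods. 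The upper bound then follows by comparison with a Haar measure $m_V$ on $V$: since $\alpha^\bfn$ acts linearly on $\gov$ with Jacobian $\big|\det\alpha^\bfn|_\gov\big|$, Haar masses of the two neighborhoods differ by exactly this factor, and a Jensen-type averaging (tiling the larger neighborhood by roughly $\big|\det\alpha^\bfn|_\gov\big|$ translates of the smaller one and applying concavity of $\log$ to the corresponding sum of $\tmu_\tx^V$-masses) shows the $\tmu_\tx^V$-ratio cannot exceed the Haar ratio on $\tmu$-average.

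For the equality case, saturation of the Jensen step at every scale forces $\tmu_\tx^V$ to be locally proportional to $m_V$ for $\tmu$-a.e.\ $\tx$; the equivariance in Lemma \ref{LeafAction} together with the translation rule in Proposition \ref{LeafMeas}(2) then propagates this proportionality from $B_1^V$ to all of $V$ by chaining along $V$-orbits, so $\tmu_\tx^V$ is a Haar measure on $V$ (up to normalization) for $\tmu$-a.e.\ $\tx$. It is a standard fact about leafwise measures (see \cite{EL10}*{Ch.~6}) that this property is equivalent to $\tmu$ being invariant under left translations by $V$: one direction is a disintegration of $\tmu$ along $V$-orbits, the other follows from the uniqueness in Proposition \ref{LeafMeas}. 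The main obstacle will be executing the Jensen estimate and the equality analysis carefully; in the unconditional setting (no factor $Y$) this is carried out in \cite{EL10}*{Thm.~7.6}, and I expect only minor modifications to accommodate the additional product factor.
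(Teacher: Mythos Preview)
Your proposal is correct. Parts (1) and (2) proceed exactly as in the paper: integrate the pointwise statements of Lemma~\ref{EntContriAtX} and invoke the linear decomposition of conditional entropy over the $\talpha^\bfn$-ergodic decomposition.

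For part (3) the paper takes a slightly different route from yours. Rather than carrying out the Jensen/tiling argument directly, it first establishes the decomposition formula
\[
h_\tmu(\talpha^\bfn,V)=\int_\tX h_{\tmu_\tx^\cE}(\talpha^\bfn,V)\,\di\tmu(\tx),
\]
using \cite{EL10}*{Prop.~7.22} (leafwise measures of $\tmu$ coincide a.e.\ with those of its ergodic components), and then reduces to the $\alpha^\bfn$-ergodic case, where it simply cites \cite{EL10}*{Thm.~7.9} as a black box. Your approach instead sketches the content of that theorem directly for general $\tmu$. Both are valid: the paper's version is terser but requires the extra decomposition lemma, while yours is more self-contained. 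One small correction: the Jensen/Haar comparison and its equality case that you outline are the content of \cite{EL10}*{Thm.~7.9}, not Thm.~7.6 (the latter is what underlies Lemma~\ref{EntContriAtX}).
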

\begin{proof}By Proposition 7.22 of \cite{EL10}, which can be transplanted here without modification, $(\tmu_\tx^\cE)_\ty^V=\tmu_\ty^V$ for $\tmu$-almost every $\tx$ and $\tmu_\tx^\cE$-almosty every $y$. It follows that \begin{equation}D_\tmu(\talpha^\bfn,V)(\ty)=\int_\tX D_{\tmu_\tx^\cE}(\talpha^\bfn,V)(\ty)\di\tmu(\tx).\end{equation} And therefore,
\begin{equation}\label{EntContriEq1}\begin{split}
h_\tmu(\talpha^\bfn,V)
=&\int_\tX D_\tmu(\talpha^\bfn,V)(\ty)\di\tmu(\ty)\\
=&\int_{\tX\times\tX} D_{\tmu_\tx^\cE}(\talpha^\bfn,V)(\ty)\di(\tmu\times\tmu)(\tx,\ty)\\
=&\int_\tX h_{\tmu_\tx^\cE}(\talpha^\bfn,V)\di\tmu(\tx)
\end{split}\end{equation} 

Given this, part (1) is a direct corollary of Lemma \ref{EntContriAtX}.

Conditional entropies are known to spread linearly in ergodic decompositions: \begin{equation}\label{EntLinearEq}h_\tmu(\talpha^\bfn|\cB_Y)=\int_\tX h_{\tmu_\tx^\cE}(\talpha^\bfn|\cB_Y)\di\tmu(\tx).\end{equation} So we obtain part (2) by integrating Lemma \ref{EntContriAtX}.(3).

When $\tmu$ is $\alpha^\bfn$-ergodic, the last part of the proposition is an adaption of \cite{EL10}*{Theorem 7.9}, whose proof works verbatim in the current setting. In the general case, it suffices to apply \eqref{EntContriEq1}.\end{proof}

\begin{lemma}\label{EntPosForAll} Let $V\subset G$ be an $\alpha$-invariant closed subgroup. If $V$ is contained in both $G_\bfn^u$ and $G_\bfm^u$ for two different elements $\bfn,\bfm\in\bZ^r$, then $h_\tmu(\talpha^\bfm, V)>0$ if and only if $h_\tmu(\talpha^\bfn, V)>0$.\end{lemma}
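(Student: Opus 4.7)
The plan is to reduce the lemma immediately to the triviality criterion for leafwise measures provided by Proposition \ref{EntContri}(1). Both hypotheses $V \subset G_\bfn^u$ and $V \subset G_\bfm^u$ are in place precisely so that the entropy contributions $h_\tmu(\talpha^\bfn, V)$ and $h_\tmu(\talpha^\bfm, V)$ are well-defined in the sense of Definition \ref{EntContriDef}. Once these are defined, Proposition \ref{EntContri}(1) gives
\[
 h_\tmu(\talpha^\bfn, V) = 0 \iff \tmu_\tx^V \text{ is trivial for } \tmu\text{-a.e.\ } \tx,
\]
and analogously for $\bfm$.

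The key observation is that the condition on the right-hand side is a statement purely about the leafwise measures $\tmu_\tx^V$, which are determined by $\tmu$ and $V$ alone and carry no reference to the particular element of the action used to compute an entropy contribution. Therefore both $h_\tmu(\talpha^\bfn, V)>0$ and $h_\tmu(\talpha^\bfm, V)>0$ are equivalent to the single condition that $\tmu_\tx^V$ is non-trivial on a set of positive $\tmu$-measure. From this common equivalence the lemma follows at once.

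There is no real obstacle here; the content of the lemma is entirely packaged in the fact that leafwise measures do not depend on the choice of $\bfn$, and that their (non)triviality detects positivity of the entropy contribution. The only thing worth remarking in a clean write-up is to verify that both $h_\tmu(\talpha^\bfn, V)$ and $h_\tmu(\talpha^\bfm, V)$ are legitimately defined under the hypotheses, i.e.\ that $V$ being $\alpha$-invariant and contained in $G_\bfn^u \cap G_\bfm^u$ lets us apply Lemma \ref{EntContriAtX} with both choices of element.
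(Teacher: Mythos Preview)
Your argument is correct and is actually more direct than the paper's own proof. The paper establishes the lemma by a quantitative comparison: it shows that for a suitable $s\in\bN$ one has $\alpha^{-sk\bfm}.B_1^V\subset\alpha^{-k\bfn}.B_1^V$ for all large $k$, by comparing the smallest Lyapunov exponent of $\alpha^\bfm$ on $\gov$ against the largest of $\alpha^\bfn$; this yields the pointwise inequality $D_\tmu(\talpha^\bfm,V)(\tx)\geq\frac1s D_\tmu(\talpha^\bfn,V)(\tx)$, from which the implication follows after integrating (and then one argues symmetrically). Your route bypasses this comparison entirely by invoking Proposition~\ref{EntContri}(1), which already packages the fact that positivity of $h_\tmu(\talpha^\bfn,V)$ is equivalent to the $\bfn$-independent condition that $\tmu_\tx^V$ is nontrivial on a positive measure set. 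The paper's argument has the advantage of yielding a quantitative bound relating the two entropy contributions, but since the lemma only asserts the equivalence of positivity, your approach suffices and is cleaner.
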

\begin{proof}As the claim is symmetric, we treat the ``if'' direction here. It suffices to prove the almost everywhere property that $D_\tmu(\talpha^\bfm,V)(\tx)\geq\frac1s D_\tmu(\talpha^\bfn,V)(\tx)$ for some constant $s\in\bN$. For this purpose, one only needs that for all sufficiently large $k$, \begin{equation}\label{EntPosForAllEq1}\alpha^{-sk\bfm}.B_1^V\subset \alpha^{-k\bfn}.B_1^V.\end{equation}

Let $\theta=\chi(\bfm)$ and $\theta'=\chi'(\bfn)$ respectively be the smallest Lyapunov exponent of $\alpha^\bfm$ and the largest one of $\alpha^\bfn$ on $\gov$. Then both are strictly positive. Then for all $\epsilon>0$, $\alpha^\bfm$ contracts all vectors of $\gov$ at an exponential rate faster than $e^{-(\theta-\epsilon)k}$, and $\alpha^\bfn$ contracts at a slower rate than $e^{-(\theta'+\epsilon)k}$. Then there is some constant $C$ such that  $\alpha^{-sk\bfm}.B_1^V\subset B_{Ce^{-s(\theta-\epsilon)k}}^V$ and $\alpha^{-k\bfn}.B_1^V\supset B_{C^{-1}e^{-(\theta'+\epsilon)k}}^V$. In order to make \eqref{EntPosForAllEq1} valid for large values of $k$,  one only has to fix $\epsilon<\theta$ and some $s\in\bN$ such that $s(\theta-\epsilon)>\theta'+\epsilon$.  \end{proof}

\subsection{Subordinate $\sigma$-algebras}\label{SecSubord}
In this part we explain a proof, which was outlined by Einsiedler and Lindenstrauss in \cite{EL03}, of the fact that there do exist $V$-subordinate $\sigma$-algebras. 

Let $\tX$, $\talpha$, $G$ and $V$ be as in Section \ref{SecPrelim}. For a finite partition $\cP$ is a compact metric space $\Omega$ into Borel measurable sets, let $\partial_\epsilon\cP=\{x\in \Omega: B_\epsilon(x)\not\subset[\tx]_\cP\}$ for all $\epsilon>0$. In the case where $\Omega=\tX$, we define $\partial_\epsilon^V\cP=\{\tx\in \tX: B_\epsilon^V.\tx\not\subset[\tx]_\cP\}\subset\partial_\epsilon\cP$.

\begin{lemma}\label{SubordExist}For $\bfn\in\bZ^r$, an $\talpha^\bfn$-invariant probability measure $\tmu$ on $\tX$, an $\alpha^\bfn$-invariant closed subgroup $V\subset G_\bfn^u$ and any $\delta>0$, there exist on $\tX$ a finite partition $\cP$ of $\tX$ and a countably generated $\sigma$-algebra $\cA$, such that:\begin{enumerate}
\item Any atom in $\cP$ is of diameter $<\delta$, and $\tmu(\partial_\epsilon^V\cP)<C_\epsilon$ for all $\epsilon>0$;
\item $\cA$ is $\talpha^\bfn$-increasing and $V$-subordinate;
\item $[\tx]_\cA=[\tx]_{\bigvee_{k=0}^\infty\talpha^{k\bfn}\cP}\cap(V.\tx)$.
\end{enumerate}
\end{lemma}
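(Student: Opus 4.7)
The plan is to build $\cP$ by a covering-plus-Fubini adjustment of ball radii, and then take $\cA$ to be the forward join $\cP^+=\bigvee_{k\ge0}\talpha^{k\bfn}\cP$ refined locally via flow boxes for the $V$-foliation. This combines a standard boundary-thickness estimate for the partition with a Borel--Cantelli argument exploiting the fact that $V\subset G_\bfn^u$ is contracted exponentially by $\alpha^{-\bfn}$.

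For $\cP$, fix a finite cover of $\tX$ by open balls $B_{r_i}(z_i)$ with $r_i\in(\delta/4,\delta/2)$, and set $P_i=B_{r_i}(z_i)\setminus\bigcup_{j<i}\overline{B_{r_j}(z_j)}$. For fixed $\tx\in\tX$ and $z_j$, the set of $r$'s for which $\tx\in\partial_\epsilon^V B_r(z_j)$ is an interval of length $O(\epsilon)$; Fubini then gives $\int_{\delta/4}^{\delta/2}\tmu(\partial_\epsilon^V B_r(z_j))\,dr\le C\epsilon$. Choosing the $r_j$'s successively, one obtains $\tmu(\partial_\epsilon^V\cP)\le C'\epsilon$ for all $\epsilon>0$, yielding (1). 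For $\cA$, note that $\cP^+$ is countably generated and $\talpha^\bfn$-increasing. Since $V$ acts locally freely on $\tX$ (Lemma \ref{LyaFree}), cover $\tX$ by countably many flow-box charts $U_\ell\cong B_\rho^V\times T_\ell$ with small $\rho>0$ and Borel transversal $T_\ell$. Define $\cA$ as the $\sigma$-algebra generated by $\cP^+$ together with these chart structures, so that within each $U_\ell$ the atom $[\tx]_\cA$ is precisely $[\tx]_{\cP^+}\cap(V.\tx\cap U_\ell)$. This is countably generated and $\talpha^\bfn$-increasing, and realizes (3) locally inside charts as in the leafwise-measure formalism of \cite{EL10}.

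To verify $V$-subordination, the upper bound $[\tx]_\cA\subset B_\rho^V.\tx$ is immediate from the chart size. For the lower bound, fix $\eta_0>0$. Since $V\subset G_\bfn^u$, there exist constants $c>1$ and $C>0$ with $\|\alpha^{-k\bfn}|_V\|\le Cc^{-k}$ for every $k\ge0$. For $v\in B_\eta^V$ with $\eta\le\eta_0$, one has $\talpha^{-k\bfn}(v.\tx)=\alpha^{-k\bfn}(v).\talpha^{-k\bfn}\tx$, which differs from $\talpha^{-k\bfn}\tx$ by at most $C\eta c^{-k}$ in the $V$-direction. Hence $B_\eta^V.\tx\subset[\tx]_{\cP^+}$ as soon as $\talpha^{-k\bfn}\tx\notin\partial_{C\eta c^{-k}}^V\cP$ for every $k\ge0$. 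Using $\talpha^\bfn$-invariance of $\tmu$ and the boundary bound from (1),
\[
\sum_{k\ge0}\tmu\bigl(\talpha^{k\bfn}\partial_{C\eta_0 c^{-k}}^V\cP\bigr)=\sum_{k\ge0}\tmu\bigl(\partial_{C\eta_0c^{-k}}^V\cP\bigr)\le C'\eta_0\sum_{k\ge0}c^{-k}<\infty.
\]
Borel--Cantelli gives, for $\tmu$-a.e.\ $\tx$, only finitely many $k$ with $\talpha^{-k\bfn}\tx\in\partial_{C\eta_0 c^{-k}}^V\cP$; since $\tmu$-a.e.\ point sits in the $V$-interior of its $\cP$-atom, the finitely many exceptional $k$'s can be disposed of by shrinking $\eta$ further, producing the required $\eta(\tx)>0$.

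The delicate step is the construction of $\cA$: it must be simultaneously countably generated, $\talpha^\bfn$-increasing, and have atoms that sit inside $V$-orbits in a $V$-bounded fashion, even though $V$-orbits in $\tX$ are generically dense and the $V$-action need not be proper. The flow-box refinement is the standard workaround, and forces (3) to be interpreted within a local chart. Once that is in place, the Fubini construction of $\cP$ and the Borel--Cantelli verification of $V$-subordination are routine.
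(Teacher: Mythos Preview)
Your construction of $\cP$ via the Fubini radius-selection trick and your Borel--Cantelli verification of $V$-subordination are both standard and correct; they match what the paper cites from \cite{EL10}*{Lemma 7.27 and 7.32}. The gap is in the construction of $\cA$ and specifically the claim that it is $\talpha^\bfn$-increasing.

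You build $\cA$ by adjoining to $\cP^+$ the transversal $\sigma$-algebras of an arbitrary countable cover by $V$-flow-boxes $U_\ell$. You then assert $\cA$ is $\talpha^\bfn$-increasing, but give no argument. The problem is real: while $\cP^+$ is increasing, the flow-box $\sigma$-algebra $\cF$ is not tied to the dynamics in any way. Under $\talpha^\bfn$ the chart $U_\ell$ moves to $\talpha^\bfn U_\ell$, whose $V$-plaques are $\alpha^\bfn B_\rho^V$-plaques in a new region; there is no reason the resulting transversal $\sigma$-algebra sits inside $\cP^+\vee\cF$. Concretely, the atoms of $\talpha^\bfn\cA$ near $\tx$ are $[\tx]_{\talpha^\bfn\cP^+}\cap(V.\tx\cap\talpha^\bfn U_m)$, and you would need $[\tx]_{\cP^+}\cap(V.\tx\cap U_\ell)\subset\talpha^\bfn U_m$, which fails whenever $\tx$ is near a chart boundary. (The same chart-boundary issue is why your property (3) only holds ``locally'', whereas the lemma asserts it globally.)

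The paper's fix is simple and removes both problems: instead of arbitrary flow-boxes, take $\hat\cP$ to be the $V$-foliation of $\cP$ itself, i.e., the $\sigma$-algebra whose atoms are the local $V$-plaques inside each $\cP$-atom. Then $\cA=\hat\cP\vee\cP^+$ has atoms $[\tx]_\cA=(V.\tx\cap[\tx]_\cP)\cap[\tx]_{\cP^+}=V.\tx\cap[\tx]_{\cP^+}$, which is exactly (3). The increasing property now follows because $\talpha^\bfn\hat\cP$ is the $V$-foliation of $\talpha^\bfn\cP$, and since $V$ is $\alpha^\bfn$-invariant, the $V$-plaques in $\talpha^\bfn\cP$-atoms are unions of $V$-plaques in $(\cP\vee\talpha^\bfn\cP)$-atoms; hence $\talpha^\bfn\hat\cP\subset\hat\cP\vee\talpha^\bfn\cP\subset\cA$. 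Tying the leaf refinement to $\cP$ rather than to external charts is precisely the idea you are missing.
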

\begin{proof} Indeed, for any $\delta>0$, there is a finite partition $\cP_X$ of $X$ into sets of diameter $<\frac\delta2$, and a constant $C>0$, such that $\big((\pi_X)_*\mu\big)(\partial_\epsilon\cP_X)<C\epsilon$ for all $\epsilon>0$ (\cite{EL10}*{Lemma. 7.27}). Then the same proof in \cite{EL10}*{7.32} shows for $(\pi_X)_*\mu$-a.e. $x\in X$, there is $\epsilon>0$, such that $B_\epsilon^{G_\bfn^u}.x\subset [x]_{\bigvee_{k=0}^\infty\alpha^{k\bfn}\cP_X}.$
When $\delta$ is sufficiently small, each member $P$ from the partition $\cP_X$ can be uniquely foliated by the action of $V$, such that every leaf has diameter less than $\delta$ inside some $V$-orbit. 

Define a $\sigma$-algebra $\hat\cP_X\succ\cP_X$ by taking the $V$-leaves inside each $P\in\cP_X$. Then $\hat\cP_X$ is countably generated as the leaves in each $P$ can be parametrized by points from a neighborhood of identity in $V\backslash G$, which is a second countable space.

One constructs a countably generated $\sigma$-algebra $\cA_X$  on $X$ by $\cA_X=\hat\cP_X\vee\left(\bigvee_{k=0}^\infty\alpha^{k\bfn}\cP_X\right)$. 

Let $\cP$ be the joining of $\cP_X$ with an arbitrary finite  parition $\cP_Y$ of $Y$ into sets of diameter less than  $\frac\delta2$. And define $\cA=\cA_X\times\cB_Y$.

Then $\cP$ is again finitely generated. Furthermore, any atom of $\cP$ is the product of an atom of $\cP_X$ and one of $\cP_Y$, and hence has diameter less than $\frac\delta2+\frac\delta2=\delta$.

$\talpha^\bfn\cA=\alpha^\bfn\cA_X\vee\beta^\bfn\cB_Y=\alpha^\bfn\cA_X\vee\cB_Y$. So in order to show part (2) it suffices to show $\cA_X$ is $\alpha^\bfn$-increasing, and $V$-subordinate with repsect to $(\pi_X)_*\mu$.

Because $\bfn$ is fixed, if $\delta$ is smooth enough, then $\alpha^\bfn\cP_X$ has sufficiently small atoms, and atoms of $\alpha^\bfn\hat\cP_X$ are the local $V$-leaves that foliate atoms of $\alpha^\bfn\cP_X$.
Notice $\hat\cP_X\vee\alpha^\bfn\cP_X$ is the $\sigma$-algebra whose atoms are $V$-leaves foliating atoms of $\cP_X\vee\alpha^\bfn\cP_X$. Thus $\alpha^\bfn\hat\cP_X\subset\hat\cP_X\vee\alpha^\bfn\cP_X\subset\cA$. Moreover, $\bigvee_{k=0}^\infty\alpha^{k\bfn}\cP_X $ is clearly $\alpha^\bfn$-increasing. It follows that $\cA_X$ is $\alpha^\bfn$-increasing.

By the choice of $\cP$,  for $(\pi_X)_*\mu$-a.e. $x\in X$, there is $\epsilon>0$, such that $[\tx]_{\bigvee_{k=0}^\infty\alpha^{k\bfn}\cP_X}$ contains $B_\epsilon^V.x$. In particular, $B_\epsilon^V.x$ is contained in $[x]_{\cP_X}$, and, since $\hat\cP_X$ is just the local $V$-foliation of $\cP_X$, also contained in $[\tx]_{\hat\cP_X}$.  So $B_\epsilon^V.x\subset[x]_{\cA_X}$. On the other hand, $[\tx]_{\cA_X}\subset[\tx]_{\hat \cP_X}\subset B_\delta^V.x$. Thus, $\cA_X$ is $V$-subordinate with respect to $(\pi_X)_*\mu$. This shows part (2).

Property (3) follows directly from  the discussion above. \end{proof}

\subsection{The suspension flow}\label{SecSuspension} 

It is a standard practice to extend a $\bZ^r$-action to an $\bR^r$-one using the suspension construction of Katok and Spatzier \cite{KS96}. We now set up notations regarding the suspension flow $\talpha_S$ of the action $\talpha$.

Let $\tX_S=\bR^r\times \tX/\sim$ where $\sim$ is the the equivalence relation given by \begin{equation}(\bfeta,x)\sim(\bfeta-\bfm,\talpha^\bfm.\tx), \forall\bfeta\in\bR^r,\bfm\in\bZ^r, x\in\tX.\end{equation} 
The space $\tX_S$ is a bundle over $\bT^r=\bR^r/\bZ^r$ where each fiber is isomorphic to $\tX$.

We write $\overline{(\bfeta,x)}\in\tX_S$ for the equivalence class containing $(\bfeta,x)$. For all point in $\tX_S$, there is a unique representative $(\bfeta,x)$ with $\bfeta\in[0,1)^r$.

 The suspension flow $\talpha_S:\bR^r\curvearrowright\tX_S$ is given by translations on the first component of $\bR^r\times\tX$. It is easy to check that this is compatible with the equivalence relation $\sim$. And for $\bfm\in\bZ^r$, $\talpha_S^\bfm.\overline{(\bfeta,\tx)}=\overline{(\bfeta,\talpha^\bfm.\tx)}$.
 
 For an $\talpha$-invariant measure $\tmu$ on $\tX$, define a probability measure $\tmu_S$ on $\tX_S$ by \begin{equation}\label{SuspMeasEq}\int_{\tX_S}f\di \tmu_S:=\int_{[0,1)^r}\int_\tX f\big(\overline{(\bfeta,\tx)}\big)\di\tmu(\tx)\di\eta, \forall f\in C(\tX_S).\end{equation} Then $\tmu_S$ is $\talpha_S$-invariant. If $\tmu$ is ergodic under $\talpha$, then so is $\tmu_S$ under $\talpha_S$.
 
For a subgroup $V\subset G$ that acts freely on $X$, it is not obvious whether one can make a canonical construction of a free $V$-action on $\tX_S$. Using the uniqueness of representative from $[0,1)^r\times\tX$, we define a free $V$-action on $\tX_S$ by
\begin{equation}\label{SuspVTransEq}v.\overline{(\bfeta,\tx)}=\overline{(\bfeta,v.\tx)}, \forall\bfeta\in[0,1)^r,\tx\in\tX.\end{equation} We stress that this action is in general discountinuous at $\tx_S\in\tX_S$ when $\tx_S$ projects to the boundary of the fundamental domain $[0,1)^r$.

When $\tmu$ is $\talpha$-invariant and $\tmu_S$ is defined by \ref{SuspMeasEq}, we can define the leafwise measure $(\tmu_S)_{\tx_S}^V$ by
\begin{equation}\label{SuspLeafEq}(\tmu_S)_{\overline{(\bfeta,\tx)}}^V:=\tmu_\tx^V,\ \bfeta\in[0,1)^r.\end{equation} The leafwise measure is defined on the set $\tX_S^*=\{\overline{(\bfeta,\tx)}:\bfeta\in[0,1)^r,\bfx\in X^*\}$, where $X^*$ is as in Proposition \ref{LeafMeas}. Notice $\tmu_S(\tX_S^*)=1$ and $(\tmu_S)_{\tx_S}^V$ is a measurable function in $\tx_S$.

For a general vector $\bfp\in\bR^r$ and $\tx_S=\overline{(\bfeta,\tx)}\in\tX_S$ where $\bfeta\in[0,1)^r$,  there is a unique vector $\bfm$ such that $\bfeta+\bfp-\bfm\in[0,1)^r$. Hence 
$\talpha_S^\bfp.\tx_S=\talpha_S^{\bfp-\bfm}.\talpha_S^\bfm.\overline{(\bfeta,\tx)}=\talpha_S^{\bfp-\bfm}.\overline{(\bfeta,\talpha^\bfm.\tx)}=\overline{(\bfeta+\bfp-\bfm,\talpha^\bfm.\tx)}$. It follows that $\talpha_S^\bfp$ acts along the $\tX$ fiber by $\alpha^\bfm$. In particular, by Proposition \ref{LeafMeas} we have
\begin{equation}\label{SuspActionEq}\alpha^\bfm_*(\tmu_S)_{\tx_S}^V\simeq(\tmu_S)_{\talpha_S^\bfp.\tx_S}^V.\end{equation}

It also follows from Lemma \ref{LeafAction} that, if $v\in V$ and $\tx_S, v.\tx_S\in\tX_S^*$, then \begin{equation}\label{SuspLeafTransEq}(\tmu_S)_{\tx_S}^V\simeq(\tau_v)_*(\tmu_S)_{v.\tx_S}^V.\end{equation}

Notice $\bfm$ depends not only on $\bfp$ but also on the initial position $\bfeta$, however it always approximates $\bfp$ in the sense that $\bfp-\bfm\in(-1,1)^r$. Especially, for each Lyapunov exponent $\chi$, $\chi(\bfm)$ is of bounded distance away from $t\chi(\bfp)$. Hence if $\chi(\bfp)>0$ (resp. $<0$), then $\talpha_S^{t\bfp}$ expands (resp. contracts) exponentially fast along the $V^{[\chi]}$ leaves as $k\rightarrow\infty$.

\subsection{The isometric direction}\label{IsomSec}

When $\chi(\bfp)=0$, $\talpha_S^{\bfp}$ behaves along $V^{[\chi]}$ direction with zero Lyapunov exponent. In this case, it may or may not acts approximately as a sequence of isometries.

\begin{definition}\label{IsomDef}For an element $v\in G$ and $\bfp\in\bR^r$. We say $v$ is {\bf isometric} under $\talpha_S^\bfp$ if for all $t\in\bR$, $\bfeta\in[0,1)^r$ and $\bfm\in\bZ^r$ with $\bfeta+t\bfp-\bfm\in[0,1)^r$,  $\alpha^\bfm.v$ is uniformly bounded.\end{definition}

Clearly, if $g\in V^{[\chi]}$ and is isometric under $\talpha_S^\bfp$, then $\chi(\bfp)=0$.

Let $\gov_0^{[\chi]}\subset \gov^{[\chi]}$ be the collection of all vectors on which all $\alpha^\bfm$'s that are as in Definition \ref{IsomDef} act in a uniformly bounded fashion. Since $\alpha^\bfm$ acts as a Lie algebra automorphism, $\gov_0^{[\chi]}$ is a Lie subalgebra. Therefore the isometric elements in $V^{[\chi]}$ under $\talpha_S^\bfp$ form a closed connected subgroup $V_0^{[\chi]}\exp\gov_0^{[\chi]}$, called the {\bf isometric subgroup} of $V^{[\chi]}$ under $\talpha_S^\bfp$. By commutativity of the action $\alpha$,  $\gov_0^{[\chi]}$ and $V_0^{[\chi]}$ are $\alpha$-invariant.

\begin{definition}\label{LeafIsomDef}We say an $\talpha$-invariant measure $\tmu$ {\bf has isometric support along} a coarse Lyapunov subgroup $V^{[\chi]}$, if for all $\bfp\in\ker\chi$ and $\tmu$-a.e $\tx\in\tX$, $\supp\tmu_\tx^{V^{[\chi]}}$ is contained in the $\talpha_S^\bfp$-isometric subgroup $V_0^{[\chi]}$ of $V^{[\chi]}$.\end{definition}

\begin{lemma}\label{IsomCpct}There exists a compact subset of $\Aut(V_0^{[\chi]})$ that contains all $\alpha^\bfm|_{V_0^{[\chi]}}$ if $t$, $\bfeta$ and $\bfm$ are as in Definition \ref{IsomDef}.\end{lemma}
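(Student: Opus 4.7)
My plan is to transfer the problem to the Lie algebra level. Since $V_0^{[\chi]}$ is a connected subgroup of the simply connected nilpotent Lie group $G$, it is itself simply connected, and differentiation identifies $\Aut(V_0^{[\chi]})$ with $\Aut(\gov_0^{[\chi]})$, which sits as a closed subgroup of $\GL(\gov_0^{[\chi]})$. It therefore suffices to show that the family $\{\alpha^\bfm|_{\gov_0^{[\chi]}}\}$, indexed by the triples $(t,\bfeta,\bfm)$ allowed by Definition \ref{IsomDef}, is relatively compact in $\GL(\gov_0^{[\chi]})$---equivalently, that both the operator norms $\norm{\alpha^\bfm|_{\gov_0^{[\chi]}}}$ and the inverse operator norms $\norm{(\alpha^\bfm|_{\gov_0^{[\chi]}})^{-1}}$ are uniformly bounded.

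The upper bound on $\norm{\alpha^\bfm|_{\gov_0^{[\chi]}}}$ is essentially built into the construction: by the very definition of $\gov_0^{[\chi]}$, every vector in it has uniformly bounded orbit under the admissible $\alpha^\bfm$'s, and applying this to a fixed basis of the finite-dimensional space $\gov_0^{[\chi]}$ yields a uniform operator-norm bound. The substantive step is bounding the inverses, where I will invoke the Lyapunov structure. From the generalized eigenspace decomposition of \S\ref{ComEigenSec} together with \eqref{LyaEigenEq}, every eigenvalue $\zeta_k^\bfm$ of $\alpha^\bfm$ on a weight space $W_k\subset\gov^{[\chi]}\otimes_\bR\bC$ satisfies $\log|\zeta_k^\bfm|=c_k\chi(\bfm)$ for some $c_k>0$. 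The constraint $\bfeta+t\bfp-\bfm\in[0,1)^r$ of Definition \ref{IsomDef} forces $\bfm-t\bfp\in(-1,1)^r$; combined with $\chi(\bfp)=0$ this gives $|\chi(\bfm)|\leq\norm{\chi}\sqrt{r}$ uniformly in the parameters. All such eigenvalues therefore lie in a fixed annulus around the unit circle, and in particular those on the $\alpha^\bfm$-invariant subspace $\gov_0^{[\chi]}$ do, so that $|\det(\alpha^\bfm|_{\gov_0^{[\chi]}})|$ is uniformly bounded below away from zero. Cramer's rule together with the operator-norm bound then produces the required uniform bound on $\norm{(\alpha^\bfm|_{\gov_0^{[\chi]}})^{-1}}$.

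Finally, the resulting precompact subset of $\GL(\gov_0^{[\chi]})$, intersected with the closed subgroup $\Aut(\gov_0^{[\chi]})$ and transported back to $\Aut(V_0^{[\chi]})$ via $\exp$, gives the desired compact set. The only non-routine ingredient is the eigenvalue calculation tying the parameter constraint of Definition \ref{IsomDef} to uniform boundedness of $\chi(\bfm)$; everything else reduces to finite-dimensional linear algebra and the standard identification $\Aut(V_0^{[\chi]})\cong\Aut(\gov_0^{[\chi]})$ for simply connected nilpotent Lie groups.
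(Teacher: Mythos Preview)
Your proof is correct but takes a different route from the paper for the key step of bounding the inverses. The paper observes a symmetry in the set of admissible triples: if $(t,\bfeta,\bfm)$ satisfies the constraints of Definition~\ref{IsomDef}, then so does $(-t,\bfxi-\bfeta,\bfxi'-\bfm)$ for suitable $\bfxi,\bfxi'\in\{-1,0,1\}^r$. Since $\alpha^{\bfxi'-\bfm}|_{\gov_0^{[\chi]}}$ is therefore uniformly bounded by the very definition of $\gov_0^{[\chi]}$, and $\bfxi'$ ranges over a finite set, $\alpha^{-\bfm}|_{\gov_0^{[\chi]}}$ is bounded as well. This argument is shorter and uses only the definition, without invoking the Lyapunov structure. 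Your approach, by contrast, computes explicitly that $|\chi(\bfm)|$ is bounded (via $\chi(\bfp)=0$ and $\bfm-t\bfp\in(-1,1)^r$), so all eigenvalues of $\alpha^\bfm$ on $\gov^{[\chi]}$ lie in a fixed annulus, whence the determinant on the invariant subspace $\gov_0^{[\chi]}$ is bounded away from zero and Cramer's rule controls the inverse. Your eigenvalue calculation is in fact the same one the paper carries out later, inside the proof of Lemma~\ref{IsomFixed}, so you have effectively front-loaded that content; the paper's symmetry trick postpones it.
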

\begin{proof}It is equivalent to find a compact set of $\Aut(\gov_0^{[\chi]})$ that contains all $\alpha^\bfm|_{\gov_0^{[\chi]}}$. By definition $\alpha^\bfm|_{\gov_0^{[\chi]}}$ is uniformly bounded as a matrix. To show it comes from a fixed compact set of $\Aut(\gov_0^{[\chi]})$ it suffices to prove that the inverse $\alpha^{-\bfm}|_{\gov_0^{[\chi]}}$ is also uniformly bounded. Note that for some integer vectors $\bfxi, \bfxi'\in\{-1, 0, 1\}^r$, the triple $(-t, \bfxi-\bfeta, \bfxi'-\bfm)$ also satisfies the conditions in Definition \ref{IsomDef}. So $\alpha^{\bfxi'-\bfm}|_{\gov_0^{[\chi]}}$ is uniformly bounded, which is equivalent to the uniform boundedness of $\alpha^{-\bfm}|_{\gov_0^{[\chi]}}$ as $\bfxi$ is chosen from a finite set.  \end{proof}

 Next, we give an alternative description of $\gov_0^{[\chi]}$.

Consider the Lie algebra $\gov^{[\chi]}=\bigoplus_{\chi'\in[\chi]}V^{\chi'}$, by \eqref{LyaEigenEq}, it can be decomposed as $\bigoplus_{j=1}^JW_j$, where each $W_j$ is a common generalized eigenspace for all $\alpha^\bfm$'s, with eigenvalues $\zeta_j^\bfm$. Consider the linear transform $U_j^\bfm$ of $W_j$ given by $\zeta_j^{-\bfm}\cdot\alpha^\bfm|_{W_j}$. Then $\{U_j^\bfm:\bfm\in\bZ^r\}$ is an abelian group and each $U_j^\bfm$ is unipotent. Hence there is a basis of $W_j$ that simultaneously upper-triangularize all $U_j^\bfm$'s.

On the group of upper-triangular matrices, which is a nilpotent Lie group, the logarithm map is a well-defined diffeomorphism, and the map $h\mapsto h^a$ is well defined for all $a\in\bR$.
As the $U_j^\bfm$'s commute with each other, for all $\bfq\in\bR^r$ we can uniquely define $U_j^\bfq=\prod_{i=1}^r(U_j^{\bfe_i})^{q_i}$ where $\bfq=\sum_{i=1}^rq_i\bfe_i$ is the standard coordinate decomposition of $\bfq$. When $\bfq=\bfm$ is an integer vector, this coincides with the already defined $U_j^\bfm$. Furthermore, set $U^\bfq=\bigoplus_{j=1}^JU_j^\bfq$. Then $U^\bfq$ is also a unipotent linear transform and $\bfq\mapsto U^\bfq$ is a group morphism from $\bR^r$.

Write $Z^\bfm=\bigoplus_{j=1}^J\zeta_j^\bfm\Id|_{W_j}$ and $U^\bfq=\bigoplus_{j=1}^J U_j^\bfq$. Then $\alpha^\bfm|_{\gov^{[\chi]}}=Z^\bfm U^\bfm$ and all the $Z^\bfm$'s and $U^\bfq$'s commute with each other.

\begin{lemma}\label{IsomFixed}$\gov_0^{[\chi]}$ is the maximal eigenspace $\ker(U^\bfp-\Id)$ of $U^\bfp$ in the generalized eigenspace $\gov^{[\chi]}$.\end{lemma}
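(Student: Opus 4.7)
The plan is to exploit the factorization $\alpha^\bfm|_{\gov^{[\chi]}}=Z^\bfm U^\bfm$ set up in the paragraph preceding the statement, and to isolate from $U^\bfm$ the $t$-dependent one-parameter unipotent piece. For the admissible triples $(t,\bfeta,\bfm)$ of Definition \ref{IsomDef}, the residue $\bfxi:=\bfm-t\bfp=\bfeta-(\bfeta+t\bfp-\bfm)$ lies in $(-1,1)^r$. Because $\bfq\mapsto U^\bfq$ is a continuous group morphism on $\bR^r$ (the generators $U^{\bfe_1},\dots,U^{\bfe_r}$ being commuting unipotents) and the $Z$'s commute with the $U$'s, one gets
\[\alpha^\bfm v=Z^\bfm\, U^{t\bfp}\, U^\bfxi v.\]

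I would then show that the factors $Z^\bfm$ and $U^\bfxi$ stay in a fixed compact subset of $\Aut(\gov^{[\chi]})$ as $(t,\bfeta,\bfm)$ ranges over admissible triples, so that $v$ being isometric under $\talpha_S^\bfp$ is equivalent to $\{U^{t\bfp}v:t\in\bR\}$ being bounded in $\gov^{[\chi]}$. Boundedness of $U^\bfxi$ is immediate from continuity of $\bfq\mapsto U^\bfq$ and compactness of $[-1,1]^r$. For $Z^\bfm$, on each common generalized eigenspace $W_j$ one has $\|Z^\bfm|_{W_j}\|=|\zeta_j^\bfm|=e^{\chi'(\bfm)}$, where $\chi'=c\chi$ with $c>0$ is the Lyapunov exponent of $W_j$; the hypothesis $\chi(\bfp)=0$ (noted just after Definition \ref{IsomDef}) forces $\chi'(\bfp)=0$, so $\chi'(\bfm)=\chi'(\bfxi)$ is uniformly bounded, and the identical argument bounds $Z^{-\bfm}$.

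For the remaining step, $U^\bfp$ is unipotent, so setting $N:=\log U^\bfp$ one has $U^{t\bfp}v=\exp(tN)v$, a polynomial in $t$ whose boundedness is equivalent to $Nv=0$. The identification $\ker N=\ker(U^\bfp-\Id)$ follows from the factorization $U^\bfp-\Id=NM$ with $M=\sum_{k\geq 0}N^k/(k+1)!$ unipotent (hence invertible) and commuting with $N$, so $\ker(NM)=\ker N$. Combining the three steps yields $\gov_0^{[\chi]}=\ker(U^\bfp-\Id)$ inside $\gov^{[\chi]}$. I do not anticipate any substantial obstacle: the real work was done earlier in constructing the semisimple--unipotent splitting $\alpha^\bfm=Z^\bfm U^\bfm$ and verifying that $Z$'s commute with $U$'s; what remains is standard bookkeeping for one-parameter unipotent subgroups.
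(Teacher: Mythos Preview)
Your proposal is correct and follows essentially the same approach as the paper's proof: both factor $\alpha^\bfm|_{\gov^{[\chi]}}=Z^\bfm U^{\bfm-t\bfp}U^{t\bfp}$, argue that $Z^\bfm$ and $U^{\bfm-t\bfp}$ lie in a fixed compact subset of $\GL(\gov^{[\chi]})$, and reduce the question to whether the unipotent orbit $\{U^{t\bfp}v:t\in\bR\}$ is bounded. Your treatment is in fact slightly more explicit than the paper's in showing $\ker(\log U^\bfp)=\ker(U^\bfp-\Id)$ via the invertible cofactor $M$, whereas the paper simply invokes unipotence to say $U^{t\bfp}v$ is unbounded when $v$ is not fixed.
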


\begin{proof} (1) We prove first $\ker(U^\bfp-\Id)\subset\gov_0^{[\chi]}$.

If $v\in\gov^{[\chi]}$ is fixed by $U^\bfp$. Let $t$, $\bfeta$ and $\bfm$ be as in Definition \ref{IsomDef}, then $\alpha^\bfm.v=Z^\bfm.v$. In order to show $\alpha^\bfm.v$ is uniformly bounded for all such $\bfm$, it suffices to show the $Z^\bfm$'s are uniformly bounded as matrices.

Actually, we claim there is a compact subsect of $\GL(\gov^{[\chi]})$ that contains all such $Z^\bfm$'s. By the construction of $Z^\bfm$, it suffices to show for each $j$, $|\zeta_j^\bfm|$ is uniformly bounded from $0$ and $\infty$ if $\bfm$ is as in Definition \ref{IsomDef}.

As $W_j\subset\gov^{[\chi]}$, $\log|\zeta_j^\bfm|$ is given by $\chi'(\bfm)$ for some Lyapunov functional $\chi'$ proportional to $\chi$. Therefore, $\chi'(\bfm)=\chi'(t\bfp)+\chi'(\bfm-t\bfp)=\chi'(\bfm-t\bfp)$ because $\bfp\in\ker\chi$. Now since there are only finitely many choices of $\chi'$ and $\bfm-t\bfp\in(-1,1)^r$, it follows that $\log|\zeta_j^\bfm|$ is uniformly bounded, which is what we want.

(2) We show next $\gov^{[\chi]}\subset\ker(U^\bfp-\Id)$. Assume $v\in\gov^{[\chi]}$ is not fixed by $U^\bfp$. As $U^\bfp$ is a unipotent matrix, $U^{t\bfp}.v$ is unbounded. Let $\bfeta\in[0,1)^r$ and $\bfm\in\bZ^r$ be such that $\bfeta+t\bfp-\bfm\in[0,1)^r$. 

We have already shown that $Z^\bfm$ takes value in a compact subset of $\GL(\gov^{[\chi]})$. Since $\bfm-t\bfp\in\bfeta-[0,1)^r\subset(-1,1)^r$ and $\bfq\mapsto U^\bfq$ is a group morphism, $U^{\bfm-t\bfp}$ also takes value in a compact subset of $\GL(\gov^{[\chi]})$. These compact sets are independent of the triple $(t, \eta, \bfm)$. 

Therefore since $U^{t\bfp}.v$ is unbounded and $Z^\bfm U^{\bfm-t\bfp}$ comes from a compact set of invertible matrices, $\alpha^\bfm.v=Z^\bfm U^\bfm.v=Z^\bfm U^{\bfm-t\bfp}U^{t\bfp}.v$ is unbounded when the triple $(t,\bfeta,\bfm)$ varies. That is, $v\notin\gov_0^{[\chi]}$. This shows the claim.\end{proof}

\begin{corollary}\label{IsomInv}For all $\bfm'\in\bZ^r$ and $\bfp'\in\bR^r$, $\gov_0^{[\chi]}$ is invariant under $Z^{\bfm'}$ and $U^{\bfp'}$.\end{corollary}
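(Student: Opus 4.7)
\medskip

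\noindent\textbf{Proof plan for Corollary \ref{IsomInv}.} The plan is to deduce invariance directly from the explicit description of $\gov_0^{[\chi]}$ provided by Lemma \ref{IsomFixed} together with the commutativity among the operators $\{Z^\bfm\}$ and $\{U^\bfq\}$ established just before that lemma.

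Concretely, recall that $\gov_0^{[\chi]} = \ker(U^\bfp - \Id)\cap\gov^{[\chi]}$ by Lemma \ref{IsomFixed}, where $\bfp\in\ker\chi$ is the fixed direction in Definition \ref{LeafIsomDef}. The construction preceding Lemma \ref{IsomFixed} set up each $Z^\bfm$ and each $U^\bfq$ as operators on the coarse Lyapunov subspace $\gov^{[\chi]}$, with the property that they all commute pairwise (the $Z$'s are simultaneously diagonal on the block decomposition $\bigoplus_j W_j$, and the $U$'s come from an abelian one-parameter family of unipotent upper-triangular matrices commuting with them). Moreover, each $Z^{\bfm'}$ and each $U^{\bfp'}$ preserves the subspace $\gov^{[\chi]}$ itself, since they are built block-by-block on the $W_j\subset\gov^{[\chi]}$.

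Thus, for any $v\in\gov_0^{[\chi]}$, one computes
\begin{equation*}
U^\bfp\bigl(Z^{\bfm'}v\bigr) = Z^{\bfm'}\bigl(U^\bfp v\bigr) = Z^{\bfm'}v,
\qquad
U^\bfp\bigl(U^{\bfp'}v\bigr) = U^{\bfp'}\bigl(U^\bfp v\bigr) = U^{\bfp'}v,
\end{equation*}
so that $Z^{\bfm'}v$ and $U^{\bfp'}v$ both lie in $\ker(U^\bfp-\Id)\cap\gov^{[\chi]} = \gov_0^{[\chi]}$. This gives the desired invariance.

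There is no real obstacle here: the corollary is a one-line consequence of commutativity once the characterization $\gov_0^{[\chi]} = \ker(U^\bfp-\Id)$ from Lemma \ref{IsomFixed} is in hand. The only thing to double-check is that $\gov^{[\chi]}$ itself is preserved by $Z^{\bfm'}$ and $U^{\bfp'}$, which is immediate from their block-diagonal construction on $\bigoplus_j W_j=\gov^{[\chi]}$.
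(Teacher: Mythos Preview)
Your proof is correct and takes essentially the same approach as the paper: the paper's one-line argument is that the result ``follows from the previous lemma and the fact that $Z^{\bfm'}$ and $U^{\bfp'}$ both commute with $U^\bfp$,'' which is exactly what you have spelled out.
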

\begin{proof}This follows from the previous lemma and the fact that $Z^{\bfm'}$ and $U^{\bfp'}$ both commute with $U^\bfp$.\end{proof}

\section{The inductive argument}\label{SecInductive}
\subsection{Statement of inductive claim}\label{SecStatement}

In the rest of the paper,  let $X=G/\Gamma$ be a compact nilmanifold and $Y$ be a compact metric space, respectively equipped with $\bZ^r$-actions $\alpha$ and $\beta$, where $\alpha$ acts by automorphisms of $X$. Denote by $\talpha=\alpha\times\beta$ the product action on $\tX=X\times Y$.  If $X/H$ is an algebraic factor of $X$, denote $\tX/H=(X/H)\times Y$. When $H$ is $\alpha$-invariant, $\talpha$ factors naturally onto a $\bZ^r$-action on $\tX/H$, which we still denote by $\talpha$.

The proof of Theorem \ref{MeasureThm} is carried out by induction. More precisely, we aim to prove the following slightly stronger statement.

\begin{theorem}\label{MeasureInductive} Let $\tmu$ be an ergodic $\talpha$-invariant probability measure on $\tX$. Suppose that, for all finite index subgroups $\Sigma\subset\bZ^r$ and all $\alpha|_\Sigma$-invariant rational proper normal subgroups $H$ of $G$, the following conditions hold:
\begin{enumerate}
\item The algebraic factor action $\alpha|_\Sigma$ on $X/H$ is not virtually cyclic;
\item For any $\talpha|_\Sigma$-ergodic component $\tmu'$ of $\tmu$, there exists at least one $\bfn\in\Sigma$, such that the induced isomorphism $\talpha^\bfn$ on $\tX/H$ has positive conditional entropy $h_{\pi_{\tX/H}\tmu'}(\talpha^\bfn|\cB_Y)$ with respect to the natural projection $\pi_{\tX/H}\tmu'$ of $\tmu'$. 
\end{enumerate}
 Then $\tmu$ is the product measure between the Lebesgue measure $\rmm_X$ and an ergodic $\beta$-invariant probability measure on $Y$.
\end{theorem}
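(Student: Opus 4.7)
The plan is to prove Theorem \ref{MeasureInductive} by induction on $\dim G$. When $G$ is abelian, $X$ is a torus and the statement essentially reduces to (a joining version of) Theorem \ref{KSEL}: the positive conditional entropy hypothesis prevents the invariant subgroups $T^i$ appearing there from being proper, so every $\talpha|_\Sigma$-ergodic component of $\tmu$ projects to Lebesgue on $X$, and Corollary \ref{ProdErgo} packages this into $\tmu = \rmm_X \times \nu$ for some $\beta$-ergodic $\nu$ on $Y$. For the inductive step, after passing to a finite-index subgroup $\Sigma \subset \bZ^r$ (harmless by Remark \ref{Rk2ResRmk}), Lemma \ref{IrrVertical} supplies a non-trivial $\alpha|_\Sigma$-invariant central rational subgroup $Z \subset G$ on which the induced action on $Z/(Z \cap \Gamma)$ is totally irreducible. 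I would push $\tmu$ forward to $\tX/Z = (X/Z) \times Y$; the two hypotheses of the theorem are inherited by the smaller nilmanifold $X/Z$, since any algebraic factor of $X/Z$ is already an algebraic factor of $X$, and since for $H \supset Z$ the projection $\tX \to \tX/H$ factors through $\tX/Z$. By the inductive hypothesis, $\pi_{\tX/Z*}\tmu = \rmm_{X/Z} \times \nu$, and it remains to show $\tmu$ is invariant under left translation by $Z$, which combined with the previous step gives $\tmu = \rmm_X \times \nu$.

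To prove $Z$-invariance I would analyze the leafwise measures $\tmu^Z_\tx$ from Proposition \ref{LeafMeas}. The Ratner-type argument of Section \ref{SecRatner} will upgrade mere local translation invariance of these leafwise measures to full Haar invariance along each $Z$-leaf, exploiting the total irreducibility and higher rank of the $\Sigma$-action on the torus $Z/(Z \cap \Gamma)$. So the bulk of the proof reduces to establishing Property \ref{TransInvProp}: at $\tmu$-a.e.\ $\tx$ there exists a non-trivial $z \in Z$ fixing $\tmu^Z_\tx$ (up to scalar).

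This is the main obstacle, and it is where the four-case split of Sections \ref{SecBerendFiber}--\ref{SecEnt0Fiber} enters. The cases are organized by where the hypothesized positive entropy sits relative to $Z$ inside the coarse Lyapunov decomposition $\gog = \bigoplus_{[\chi] \in \cL} \gov^{[\chi]}$. In cases where some coarse Lyapunov subgroup meeting $Z$ carries positive entropy contribution (Proposition \ref{EntContri}), I would run the high-entropy argument of Einsiedler-Katok-Lindenstrauss (see \cite{EL10}): non-triviality of leafwise measures on two non-commuting coarse Lyapunov subgroups yields, via a shearing whose commutator falls into $Z$ by centrality of $Z$ in $G$, an invariance element inside $Z$. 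In the complementary zero-entropy-fiber cases, the global entropy hypothesis forces positive entropy in Lyapunov directions transverse to $Z$, and I would combine a Berend-style unipotent recurrence argument on the totally irreducible torus $Z/(Z \cap \Gamma)$ with the isometric-direction analysis of \S\ref{IsomSec} and the suspension flow of \S\ref{SecSuspension} to extract the required translation element. The most delicate subcase is when the shearing commutators naively land outside $Z$; here the non-integer time evaluations in the suspension flow, together with Corollary \ref{IsomInv} on invariance of the isometric subgroup, must be leveraged to project the invariance back onto $Z$.
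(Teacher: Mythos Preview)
Your high-level plan is close to the paper's, but there is a genuine structural gap in how you close the induction. You write that after pushing to $\tX/Z$ and applying the inductive hypothesis, ``it remains to show $\tmu$ is invariant under left translation by $Z$,'' and you restate Property~\ref{TransInvProp} as the existence of a non-trivial $z\in Z$ fixing $\tmu_\tx^Z$. That is not what Property~\ref{TransInvProp} asserts, and it is not what the four cases deliver. The translation element lives in some coarse Lyapunov subgroup $V^{[\chi]}\subset G$, which in general is \emph{not} contained in $Z$. Case~II (Proposition~\ref{PropRk1Fiber}) is the clearest counterexample to your reading: there the action on $Z$ is virtually cyclic, and the argument deliberately selects $V^{[\chi]}$ with \emph{nontrivial} projection to $\dG=G/Z$, producing an invariance element $v\notin Z$. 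Cases~III and~IV likewise work along arbitrary coarse Lyapunov directions. So the strategy ``prove $Z$-invariance directly'' does not go through.

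What actually happens is a two-pass use of the inductive hypothesis. The first pass (to $\tX/Z$) establishes \eqref{InductiveEq}, which is an \emph{input} to the four case analyses rather than the endgame. Property~\ref{TransInvProp} then yields translation invariance along some $V\subset V^{[\chi]}$, and Proposition~\ref{TIMeas} upgrades this---via Ratner's theorem and the countability of rational subgroups---to invariance of each $\talpha|_\Sigma$-ergodic component $\tmu^i$ by a nontrivial \emph{normal rational} subgroup $L^i\lhd G$, which need not equal $Z$. The normality comes from Lemma~\ref{FullN}, whose proof uses that $\dtmu$ projects to Lebesgue on $X_\ab$ (Lemma~\ref{AbUnif}), not the total irreducibility of $Z/(Z\cap\Gamma)$ as you suggest. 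One then applies the inductive hypothesis a \emph{second} time, to $\tX/L^i$, to conclude $\tmu^i=\rmm_X\times\nu^i$. Your outline omits this second pass entirely. Your descriptions of the individual cases are also off (Case~I uses the Einsiedler--Lindenstrauss low-entropy method on the $Z$-fibers, not a commutator/high-entropy argument; Case~IV fibrates by the map $\tx\mapsto\tmu_\tx^V$ and runs an entropy comparison, not a Berend-style recurrence), but the decisive issue is the misidentification of the target of Property~\ref{TransInvProp} and the missing second induction.
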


\subsection{The proof}\label{SecOutline}

We give the proof of  Theorem \ref{MeasureInductive}, while acknowledging several propositions that will be established in later sections.
\begin{proof}[Proof of  Theorem \ref{MeasureInductive}] The theorem is proved inductively, based on the dimension of the nilmanifold $X$. 

If $\dim X=0$, then $X$ is a point and $\tX=Y$, so the theorem holds trivially.

Suppose now the theorem was true for all configurations with $\dim X<d$. Let $\dim X=d$, by Lemma \ref{IrrVertical}, there is a non-trivial rational central subgroup $Z \subset G$ such that $\alpha$, when restricted to a finite index subgroup $\Lambda\subset\bZ^r$, preserves $Z$ and induces a totally irreducible action on the quotient torus $Z/(Z\cap\Gamma)$. 

$\tmu$ can be written as the average of at most $|\bZ^r/\Lambda|$ ergodic components $\tmu_j$. These components are permuted transitively among themselves by the $\bZ^r$ action. So it suffices to prove $\tmu_1=m_X\times\nu_1$ for some $\nu_1$. 

{\bf We will keep the following settings throughout the rest of the paper. }

\begin{itemize}

\item Without loss of generality,  assume $\Lambda=\bZ^r$, and thus $\tmu_1=\tmu$.

\item Let $\dG=G/Z$, $\dGamma=\Gamma/(Z\cap\Gamma)$, $\dX=\dG/\dGamma=X/Z$ and $\dtX=\dtX/Z=\dX\times Y$. 
\item Write by $\dtmu$ the projection of $\tmu$ to $\dX$. The induced action on $(\dtX,\dtmu)$ is still denoted by $\talpha=\alpha\times\beta$. Regard the Borel $\sigma$-algebra $\cB_\dtX$ of $\dtX$ as a subalgebra of the Borel $\sigma$-algebra $\cB_\tX$ of $\tX$. 

\end{itemize}

For any finite index subgroup $\Sigma$, let $\tmu=\frac1N\sum_{i=1}^N\tmu^i$ be the $\talpha|_\Sigma$-ergodic decomposition of $\tmu$. Then $\dtmu=\frac1N\sum_{i=1}^N\dtmu^i$, while for each $i$, the projection $\dtmu^i$ of $\tmu^i$ is also $\talpha|_\Sigma$-ergodic. Hence every $\talpha|_\Sigma$ ergodic component of $\dtmu$ must be one of the $\dtmu^i$'s. Because any algebraic factor of $(\talpha|_\Sigma,\dtX,\dtmu^i)$ is also one of $(\talpha,\tX,\tmu^i)$. The action $\talpha|_\Sigma$ on $(\dtX,\dtmu)$ satisfies the conditions of the theorem, too. By the inductive hypothesis, \begin{equation}\label{InductiveEq}\dtmu=\rmm_\dX\times\nu,\end{equation} where $\nu=(\pi_Y)_*\tmu$ is an ergodic $\beta$-invariant probability measure on $Y$.

Given the hypothesis above, the following questions can be asked:
\begin{enumerate}[A.]
\item Is the action $\alpha:\bZ^r\curvearrowright Z$ virtually cyclic?
\item Does $h_\tmu(\talpha^\bfn|\cB_\dtX)=0$ hold for all $\bfn\in\bZ^r$?
\item Does $\tmu$ has isometric support along every coarse Lyapunov subsgroup $V^{[\chi]}$?
\end{enumerate}

Any combination of answers to these questions is covered by at least one of the following four cases:
\begin{center}
  \begin{tabular}{ c | c | c | c}
      & A & B & C\\ \hline
    I & No & No&\\ \hline
    II  & Yes &  \\ \hline
    III  &   &  &No\\ \hline
    IV & &Yes&Yes
  \end{tabular}
\end{center}

The following property will be established in Sections \ref{SecBerendFiber}, \ref{SecRk1Fiber}, \ref{SecShear}, \ref{SecEnt0Fiber} respectively for each of these cases:

\begin{property}\label{TransInvProp}{\rm (Translational Invariance) }\it There exists a non-zero class $[\chi]$ of Lyapunov exponent functional, and a non-trivial connected closed subgroup $V\subset V^{[\chi]}$, such that for $\tmu$-a.e. $\tx\in\tX$, there is a non-zero element $v\in V$ such that the leafwise measure $\tmu_\tx^V$ satisfies $(\tau_v)_*\tmu_\tx^V\simeq\tmu_\tx^V$, where $\tau_v$ denotes the right translation by $v$.\footnote{In fact, this would force $(\tau_v)_*\tmu_\tx^{V^{[\chi]}}\simeq\tmu_\tx^{V^{[\chi]}}$ and thus one can take $V=V^{[\chi]}$.}\end{property}

Here it makes sense to talk about $\tmu_\tx^V$, because by Lemma \ref{LyaFree}, $V\subset V^{[\chi]}$ acts freely on $X$ and therefore on $\tX$ as well.

Once Property \ref{TransInvProp} is verified, by Proposition \ref{TIMeas} there exists a finite index subgroup $\Sigma\subset\bZ^r$, such that in the ergodic decomposition $\tmu=\frac1N\sum_{i=1}^N\tmu^i$ with respect to the restriction $\alpha|_\Sigma$, each ergodic component $\tmu^i$ is invariant under left translations by some $\alpha|_\Sigma$-invariant normal rational subgroup $L^i$.

Each point in $\tX$ has a compact $L^i$-orbit. The quotient space modulo the orbit equivalence is $\tX/L^i=X/L^i\times Y$. 

For any finite index subgroup $\Sigma'\subset\Sigma$, any algebraic factor of the induced action $\talpha|_{\Sigma'}$ on $\tX/L^i$, is also an algebraic factor of the action $\talpha|_{\Sigma'}$ on $\tX$, and thus is not virtually cyclic. 

Furthermore, any $\talpha|_{\Sigma'}$-ergodic component  $(\hat\tmu^i)^j$ of the projection $\hat\tmu^i$ of $\tmu^i$ to $\tX/L^i$ is the projection of some $\talpha|_{\Sigma'}$-ergodic component of $\tmu^i$. Therefore by assumption (2) of Theorem \ref{MeasureInductive}, the projection of  $(\hat\tmu^i)^j$ to any $\talpha|_{\Sigma'}$-equivariant algebraic factor has positive entropy for at least one $\talpha^\bfn$.

Therefore both assumptions in Theorem \ref{MeasureInductive} remain valid for $\hat\tmu^i$. Since $L^i$ is non-trivial, $\dim(X/L^i)<\dim X$ and the inductive hypothesis applies. So $\hat\tmu^i=\rmm_{X/L^i}\times\nu^i$ where $\nu^i$ is some ergodic $\beta$-invariant measure on $Y$. Because $\tmu^i$ is $L^i$-invariant, it must be equal to $\rmm_X\times\nu^i$.

Since this can be carried out for every $\tmu^i$, we conclude that $\tmu=\frac1N\sum_{i=1}^N\rmm_X\times\nu^i=\rmm_X\times\nu$ where $\nu$ is the average of the $\nu^i$'s. This completes the proof of Theorem \ref{MeasureInductive}.
\end{proof}

Theorem \ref{MeasureThm} is a direct consequence of Theorem \ref{MeasureInductive}
\begin{proof}[Proof of Theorem \ref{MeasureThm}] In Theorem \ref{MeasureInductive}, let $Y$ be a single point. We obtain that in the setting of Theorem \ref{MeasureThm}, if $\mu\neq\rmm_X$ then for some finite index subgroup $\Sigma\subset\bZ^r$ and some $\alpha|_\Sigma$-ergodic component $\mu'$, one of the two obstructions happens on an $\alpha|_\Sigma$-equivariant algebraic factor $X/H$. But because of finite index, there are only finitely many $\alpha|_\Sigma$-ergodic components, and each of them can be written as $\alpha^\bfm_*\mu'$ for some $\bfm\in\bZ^r$. It is easy to check that for each $\alpha^\bfm_*\mu'$, one of the obstructions is present on the $\alpha|_\Sigma$-equivariant algebraic factor $X/\alpha^\bfm(H)$. This completes the proof.\end{proof}

\begin{corollary}\label{FullXabCor}If $\alpha:\bZ^r\curvearrowright X$ has no virtually cyclic algebraic factors, and $\mu$ is an $\alpha$-invariant measure that projects to $\rmm_{X_\ab}$ on the maximal torus factor $X_{ab}$, then $\mu=\rmm_X$.\end{corollary}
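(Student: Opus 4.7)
My plan is to reduce the corollary to Theorem \ref{MeasureThm} via ergodic decomposition, and then derive a contradiction from the zero-entropy obstruction by descending to the maximal torus factor, where the projected measure is forced to be Haar.

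First I would decompose $\mu = \int \mu_\omega\,dP(\omega)$ into its $\alpha$-ergodic components. The key preliminary observation is that $\rmm_{X_\ab}$ is itself $\alpha$-ergodic: the no-virtually-cyclic-factors hypothesis passes to the induced action on $X_\ab$, so Lemma \ref{Rk2Equiv} supplies an $\alpha^\bfn$ acting ergodically on $(X_\ab,\rmm_{X_\ab})$. Each $\pi_*\mu_\omega$ is then itself $\alpha$-ergodic and their integral equals $\rmm_{X_\ab}$, so uniqueness of the ergodic decomposition forces $\pi_*\mu_\omega=\rmm_{X_\ab}$ for $P$-a.e.\ $\omega$. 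It therefore suffices to treat $\mu$ ergodic and projecting to $\rmm_{X_\ab}$.

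Assuming $\mu\neq\rmm_X$, Theorem \ref{MeasureThm} gives a finite index $\Sigma\subset\bZ^r$ and an $\alpha|_\Sigma$-ergodic component $\mu^\sharp$ of $\mu$ admitting a nontrivial algebraic factor $\dot\alpha:\Sigma\curvearrowright \dX=X/H$ for which either $\pi_\dX\mu^\sharp$ has zero entropy under every $\dot\alpha^\bfn$ or $\dot\alpha$ is virtually cyclic. By Remark \ref{Rk2ResRmk}, $\alpha|_\Sigma$ inherits the no-virtually-cyclic-factors property, so the virtually cyclic alternative is excluded and the zero-entropy one must hold.

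To contradict the zero-entropy option I would pass to the maximal torus factor $\dX_\ab=X/H[G,G]\Gamma$ of $\dX$, which is simultaneously a torus factor of $X_\ab$. The induced $\Sigma$-action on $\dX_\ab$ is again not virtually cyclic (as an algebraic factor of $\alpha|_\Sigma$), so Lemma \ref{Rk2Equiv} applied inside $\Sigma$ furnishes an element $\bfn_0\in\Sigma$ acting ergodically on $(\dX_\ab,\rmm_{\dX_\ab})$, hence with strictly positive entropy since any ergodic toral automorphism has an eigenvalue of modulus greater than one. Simultaneously, $\rmm_{X_\ab}$ is $\alpha|_\Sigma$-ergodic (a suitable power of the original ergodic element lies in $\Sigma$), so the $\alpha|_\Sigma$-ergodic component $\mu^\sharp$ must project to $\rmm_{X_\ab}$ and then further to $\rmm_{\dX_\ab}$. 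Monotonicity of entropy under the factor map $\dX\to\dX_\ab$ then yields
\begin{equation*}
0 < h_{\rmm_{\dX_\ab}}(\dot\alpha^{\bfn_0}) \le h_{\pi_\dX\mu^\sharp}(\dot\alpha^{\bfn_0}) = 0,
\end{equation*}
a contradiction, so $\mu=\rmm_X$. The only delicate point is the bookkeeping around $\alpha|_\Sigma$-ergodicity of Haar on $X_\ab$ and $\dX_\ab$; once this is pinned down, the rest is a routine assembly of Theorem \ref{MeasureThm}, Lemma \ref{Rk2Equiv}, Remark \ref{Rk2ResRmk}, and entropy monotonicity under factor maps.
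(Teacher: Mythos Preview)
Your proof is correct and follows essentially the same route as the paper: reduce to ergodic components via the $\alpha|_\Sigma$-ergodicity of $\rmm_{X_\ab}$ (from Lemma \ref{Rk2Equiv} and Remark \ref{Rk2ResRmk}), rule out the virtually cyclic obstruction by Remark \ref{Rk2ResRmk}, and rule out the zero-entropy obstruction by passing to the maximal torus factor $\dX_\ab$ of $\dX$, where the projected measure is Haar and hence has positive entropy for some element. The only cosmetic difference is that you argue by contradiction from a specific obstructed $(\Sigma,\mu^\sharp,\dX)$ supplied by Theorem \ref{MeasureThm}, whereas the paper verifies directly that neither obstruction arises for any $(\Sigma,\mu',\dX)$ and then invokes Theorem \ref{MeasureThm}.
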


No ergodicity is required here.

\begin{proof}For any subgroup $\Sigma\subset\bZ^r$ of finite index, by Remark \ref{Rk2ResRmk} $\alpha|_\Sigma$ has no virtually cyclic algebraic factor action either. This is also true for the projected action $\alpha|_\Sigma$ on $X_\ab$. Therefore by Lemma \ref{Rk2Equiv}, $\rmm_{X_\ab}$ is ergodic under $\alpha|_\Sigma$.

Let $\mu'$ be an $\alpha|_\Sigma$-ergodic component of $\mu$,  then $\mu'$ projects to an $\alpha|_\Sigma$-ergodic component of $\rmm_{X_\ab}$, which has to be $\rmm_{X_\ab}$ itself.

Moreover, for any non-trivial $\alpha|_\Sigma$-equivariant factor $\dX$ of $X$, its maximal torus factor $\dX_\ab$ is a non-trivial algebraic factor of $X_\ab$. The projection of $\mu'$ to $\dX_\ab$ factors through $\rmm_{X_\ab}$, and is hence equal to $\rmm_{\dX_\ab}$. Since the action $\alpha|_\Sigma$ on $\dX_\ab$ is not virtually cyclic, this guarantees positive entropy for some $\alpha^\bfn$ with respect to the projection of $\mu'$ to $\dX_\ab$. The positive entropy then lifts to the projection of $\mu'$ to $\dX$. 

Therefore neither of the obstructions in Theorem \ref{MeasureThm} occurs. So $\mu=\rmm_X$. \end{proof}

\begin{proof}[Proof of Corollary \ref{MeasureCor}] For any non-trivial $\alpha$-equivariant algebraic factor $\dX$ of $X$. The maximal torus factor $\dX_\ab$ is a non-trivial $\alpha$-equivariant factor of $X_\ab$. By total irreducibility of $X_\ab$, $\dX_\ab=X_\ab$. Thus the induced action on $\dX_\ab$, and hence the one on $\dX$ as well, is not virtually cyclic.

Furthermore, the projection $\mu_\ab$ of the $\alpha$-ergodic measure $\mu$ to $X_\ab$ remains ergodic under the factor action. When $\mu_\ab$ has positive entropy under some $\alpha^\bfn$, Einsiedler and Lindenstrauss \cite{EL03} proved that $\mu_\ab=\rmm_{X_\ab}$. By Corollary \ref{FullXabCor}, $\mu=\rmm_X$.\end{proof}

\subsection{Topological rigidity}\label{SecTopo}

We then deduce the topological Theorem \ref{TopoThm} from measure rigidity.

\begin{proof}As the action $\alpha$ has no virtually cyclic factor, by Lemma \ref{Rk2Equiv}, the Lebesgue measure $\rmm_{X_\ab}$ is ergodic under the induced action $\alpha:\bZ^r\curvearrowright X_\ab$. Take a generic point $z\in X_\ab$ such that the Birkhoff ergodic averages $\frac1{(2N+1)^r}\sum_{\bfn\in\{-N,\cdots,N\}^r}\delta_{\alpha^\bfn.z}$ equidistribute towards $\rmm_{X_\ab}$. Let $x\in A$ be a point that projects to $z$, and $\mu$ be a weak$^*$-limit of a subsequence of $\frac1{(2N+1)^r}\sum_{\bfn\in\{-N,\cdots,N\}^r}\delta_{\alpha^\bfn.\tx}$. Then $\mu$ is $\alpha$-invariant and projects to $\rmm_{X_\ab}$. By Corollary \ref{FullXabCor}, $\mu=\rmm_X$. We conclude that $A=X$ because $\mu$ is supported on $A$.\end{proof}

\begin{proof}[Proof of Corollary \ref{TopoCor}] If the projection of $A$ in $X_\ab$ is infinite, then by Berend's theorem (Theorem \ref{Berend}), the projection is $X_\ab$. Hence by Theorem \ref{TopoThm}, $A=X$.\end{proof}
\section{Case of higher rank fibers with positive entropy}\label{SecBerendFiber}

In this section, we prove that \begin{proposition}\label{PropBerendFiber}In the settings of \S\ref{SecOutline}, if the $\bZ^r$-action $\alpha$ on the normal abelian subgroup $Z$ has at least rank $2$ and $h_\tmu(\talpha^\bfn|\cB_\dtX)>0$ for some $\bfn\in\bZ^r$, then Property \ref{TransInvProp} holds.
\end{proposition}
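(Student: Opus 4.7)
The plan is to combine positive relative entropy with the higher-rank, totally irreducible structure on the central fiber $Z$ to force translational invariance of the leafwise measure along some Lyapunov direction inside $Z$, thereby verifying Property \ref{TransInvProp}.

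First I would extract a nontrivial leafwise measure and locate a shearing direction. Since $Z$ is $\alpha$-invariant and constitutes the fiber of $\tX\to\dtX$, a relative analog of Proposition \ref{EntContri}.(2) (obtained by combining Abramov-Rokhlin with the identities $h_\tmu(\talpha^\bfn|\cB_Y)=h_\tmu(\talpha^\bfn,G_\bfn^u)$ and $h_\dtmu(\talpha^\bfn|\cB_Y)=h_\dtmu(\talpha^\bfn,G_\bfn^u/Z)$) identifies $h_\tmu(\talpha^\bfn|\cB_\dtX)$ with the entropy contribution of the fiber-unstable subgroup $G_\bfn^u\cap Z$. Decomposing $G_\bfn^u\cap Z=\bigoplus_{\gov^{[\chi]}\subset\goz,\,\chi(\bfn)>0}V^{[\chi]}$ and summing the individual contributions via Proposition \ref{EntContri}.(1), some coarse Lyapunov subgroup $V:=V^{[\chi]}\subset Z$ carries nontrivial $\tmu_\tx^V$ on a set of positive $\tmu$-measure; $\talpha$-ergodicity then upgrades this to nontriviality $\tmu$-a.e. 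Because the $\bZ^r$-action on the torus $Z/(Z\cap\Gamma)$ is totally irreducible and of rank at least $2$, the lemma preceding Corollary \ref{NonPropLya} supplies another Lyapunov exponent $\chi'$ on $Z$ not proportional to $\chi$, and I pick $\bfp\in\bR^r$ with $\chi(\bfp)=0$ and $\chi'(\bfp)\neq 0$. By Lemma \ref{IrrField} the totally irreducible action on $Z$ is diagonalizable, so the unipotent part $U^\bfp$ is trivial on $V$; Lemma \ref{IsomFixed} then gives $V=V_0^{[\chi]}$, and Lemma \ref{IsomCpct} makes the family $\{\alpha^\bfm|_V\}$, for $\bfm$ within unit distance of $t\bfp$, precompact in $\Aut(V)$.

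The heart of the argument, and what I expect to be the main obstacle, is converting isometricity together with recurrence into a genuine translation of $\tmu_\tx^V$. Working in the suspension $(\tX_S,\tmu_S,\talpha_S)$, Poincaré recurrence along $\talpha_S^{t\bfp}$ produces a sequence $t_k\to\infty$ with $\talpha_S^{t_k\bfp}.\tx_S$ returning arbitrarily close to a generic $\tx_S$. Writing $t_k\bfp=\bfm_k+\bfeta_k$ with $\bfm_k\in\bZ^r$ and $\bfeta_k$ bounded, $\chi(\bfm_k)$ stays bounded while $|\chi'(\bfm_k)|\to\infty$, so $\alpha^{\bfm_k}$ acts in a precompact fashion on $V$ but is exponentially distorting on $V^{[\chi']}\cap Z$. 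Combining the equivariance $\alpha^{\bfm_k}_*\tmu_\tx^V\simeq\tmu_{\talpha^{\bfm_k}.\tx}^V$ from Lemma \ref{LeafAction} with the smallness of the ambient displacement, and observing that the $V^{[\chi']}\cap Z$-component of the return vector must vanish in the limit (otherwise exponential distortion would contradict recurrence), the displacement is forced, up to negligible error, into $V$ itself. Passing to an accumulation point produces a nonzero $v\in V$ with $(\tau_v)_*\tmu_\tx^V\simeq\tmu_\tx^V$, which is precisely Property \ref{TransInvProp}. The technical delicacy is tracking return displacements inside all of $G$ rather than only the base $\dG$, and exploiting the centrality of $Z\subset G$ to commute $V$-translations freely past the other coarse Lyapunov components; the step is essentially a relative version of the shearing argument of \cite{EL03}, adapted to the fibered setting.
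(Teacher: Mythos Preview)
Your first three steps are correct and essentially set up the right framework: positive relative entropy does force a nontrivial leafwise measure along some coarse Lyapunov subgroup $V=V^{[\chi]}\subset Z$, total irreducibility yields a second exponent $\chi'$ on $Z$ not proportional to $\chi$, and diagonalizability on $Z$ makes $V=V_0^{[\chi]}$ with $\{\alpha^{\bfm}|_V\}$ precompact for $\bfm$ near $t\bfp$.

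The gap is in step 4. Poincar\'e recurrence gives you return times $t_k$ with $\talpha_S^{t_k\bfp}.\tx_S\to\tx_S$, so the ambient displacement tends to the identity; in particular its $V$-component tends to $0$, and no nonzero accumulation point $v\in V$ can be extracted from it. What your argument actually produces is: by equivariance and continuity on a Luzin set, $\alpha^{\bfm_k}_*\tmu_\tx^V\simeq\tmu_\tx^V$, and along a subsequence $\alpha^{\bfm_k}|_V\to A\in\Aut(V)$, hence $A_*\tmu_\tx^V\simeq\tmu_\tx^V$. But $A$ is a linear automorphism of the vector space $V$, not a translation; invariance under a compact group of linear maps does not yield translational invariance (a rotation-invariant annulus measure on $\bR^2$ is a counterexample), so Property~\ref{TransInvProp} does not follow.

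The paper bypasses this entirely by quoting the Einsiedler--Lindenstrauss theorem for totally irreducible torus actions in its skew-product form (Proposition~\ref{IrrTorusBundle}, derived from Theorem~\ref{IrrTorus}): the space $\tX$ is a principal $Z/(Z\cap\Gamma)$-bundle over $\dtX$ with $\talpha^\bfn(v.\tx)=(\alpha^\bfn v).(\talpha^\bfn\tx)$, so that result applies directly and yields the translational invariance of $\tmu_\tx^{V^{[\chi]}}$. If you want a self-contained argument, the missing ingredient is precisely part (1) of Theorem~\ref{IrrTorus}: the entropy-ratio identity forces nontrivial leafwise measure along \emph{every} coarse Lyapunov subgroup in $Z$, in particular along $V^{[\chi']}$ where $\chi'(\bfp)\neq 0$; one then compares $\tmu_\tx^{V^{[\chi]}}$ at points related by $V^{[\chi']}$ (contracted by $\talpha_S^{-t\bfp}$) to obtain the translation, rather than comparing at Poincar\'e returns of a single orbit.
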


We quote the following theorem of Einsiedler and Lindenstrauss.

\begin{theorem}\label{IrrTorus}\cite{EL03} Suppose $\alpha$ is a totally irreducible, not virtually cyclic $\bZ^r$-action on a torus $\bT^d$ and $\talpha=\alpha\times\beta$ is the product of $\alpha$ with a $\bZ^r$ action $\beta$ on a compact metric space $Y$. Use $V^{[\chi]}\subset\bR^d$ to denote the coarse Lyapunov subgroups with respect to $\alpha$.

Suppose $\tmu$ is a $\talpha$-invariant measure on $\bT^d\times Y$. \begin{enumerate}
\item If $\chi$ and $\chi'$ are two different coarse Lyapunov exponents of $\alpha$, then 
$$\label{EntropyRatio}\frac{h_\tmu(\talpha,V^{[\chi]})}{h_\tmu(\talpha,V^{[\chi']})}=\frac{h_{\rmm_{\bT^d}}(\alpha,V^{[\chi]})}{h_{\rmm_{\bT^d}}(\alpha,V^{[\chi']})}.$$
\item  Suppose for some $\bfn\in\bZ^r$, $h_\tmu(\talpha|\cB_Y)>0$. Then for any non-zero Lyapunov exponent $\chi$ of $\alpha$, at $\tmu$-almost every $\tx\in\bT^d\times Y$,  $\tmu_\tx^{V^{[\chi]}}\simeq(\tau_v)_*\tmu_\tx^{V^{[\chi]}}$ some non-zero vector $v\in V^{[\chi]}$. 
\end{enumerate}\end{theorem}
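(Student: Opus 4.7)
The plan is to deduce Property \ref{TransInvProp} by applying Theorem \ref{IrrTorus}(2) fiberwise on the principal torus bundle $\pi:\tX\to\dtX$. Since $Z$ is central in $G$, the fibers of $\pi$ are the $Z$-orbits, each isomorphic to the torus $T=Z/(Z\cap\Gamma)$; by the choice of $Z$ in Lemma \ref{IrrVertical} (with $\Lambda=\bZ^r$ as assumed) the induced $\alpha$-action on $T$ is totally irreducible, and the rank-$\geq 2$ hypothesis of the proposition ensures it is not virtually cyclic. So the fiber $\alpha$-action on $T$ sits precisely in the setting of Theorem \ref{IrrTorus}.

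First, I would locate a nonzero coarse Lyapunov class $[\chi]$ such that $V^{[\chi]}_T:=\exp(\gov^{[\chi]}\cap\got)\subset T$ is nontrivial and carries positive entropy contribution. The $\alpha$-invariance and centrality of $\got$ yield a decomposition $\got=\bigoplus_{[\chi]}(\gov^{[\chi]}\cap\got)$ compatible with the coarse Lyapunov decomposition of $\gog$. By a Ledrappier--Young-style relativization of Proposition \ref{EntContri}(2) over the factor $\pi:\tX\to\dtX$ (with $\cB_\dtX$ playing the role of $\cB_Y$), the fiber-direction conditional entropy decomposes as
\[ h_\tmu(\talpha^\bfn \mid \cB_\dtX) \;=\; \sum_{[\chi]:\ \chi(\bfn)>0}\, h_\tmu\!\bigl(\talpha^\bfn,\, V^{[\chi]}_T\bigr), \]
so the hypothesis $h_\tmu(\talpha^\bfn\mid\cB_\dtX)>0$ forces some summand to be positive. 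The corresponding $\chi$ is nonzero (as $\chi(\bfn)>0$), and by Proposition \ref{EntContri}(1) the leafwise measure $\tmu_\tx^{V^{[\chi]}_T}$ is nontrivial for $\tmu$-a.e.\ $\tx$.

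Next, to apply Theorem \ref{IrrTorus}(2), I would transfer the leafwise-measure analysis to a product. Choose a Borel measurable section $s:\dtX\to\tX$ of the principal $T$-bundle (which exists since the bundle is Polish) and identify $\tX$ with $T\times\dtX$ via $t\cdot s(\dtx)\leftrightarrow(t,\dtx)$. Setting $Y':=\dtX$ with the $\bZ^r$-action $\beta'$ inherited from $\talpha$, and letting $\tilde\tmu$ be the pushforward of $\tmu$ to $T\times Y'$, the $\talpha$-action becomes a twisted product $(t,\dtx)\mapsto(\alpha^\bfn(t)+c(\bfn,\dtx),\,(\beta')^\bfn(\dtx))$ for a measurable cocycle $c:\bZ^r\times\dtX\to T$. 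Under this identification, the leafwise measure of $\tilde\tmu$ along $V^{[\chi]}_T$ at $(t,\dtx)$ coincides with $\tmu_\tx^{V^{[\chi]}_T}$, since leafwise measures are local objects along the free left $T$-action and are insensitive to the choice of base point within a fiber. The positive fiber entropy transfers, giving $h_{\tilde\tmu}(\talpha^\bfn\mid\cB_{Y'})>0$, so Theorem \ref{IrrTorus}(2) provides a nonzero $v\in V^{[\chi]}_T$ with $(\tau_v)_*\tilde\tmu^{V^{[\chi]}_T}_{(t,\dtx)}\simeq\tilde\tmu^{V^{[\chi]}_T}_{(t,\dtx)}$ a.e.; translating back yields Property \ref{TransInvProp} with $V=V^{[\chi]}_T\subset V^{[\chi]}$.

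The main obstacle is that Theorem \ref{IrrTorus}(2) as stated requires a genuine product action $\alpha\times\beta$, whereas our transferred measure is invariant only under the twisted product displayed above. The plan is to verify that the Einsiedler--Lindenstrauss proof of Theorem \ref{IrrTorus}(2) in \cite{EL03} uses only the $\alpha$-equivariance of leafwise measures (Lemma \ref{LeafAction}), a property that persists for the twisted action because the additive cocycle $c(\bfn,\dtx)$ acts by translations of $T$ that commute with left translation by $v$ and hence preserve the leafwise-measure proportionality class. As a cleaner alternative, one can bypass the Borel trivialization and reformulate the argument intrinsically on $\tX$, using the free central $T$-action by left translations together with the factor map $\pi:\tX\to\dtX$ in place of the explicit product structure---this is the form one wants anyway for later use.
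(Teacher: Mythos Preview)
Your approach is correct and ultimately converges to the paper's, but it takes a detour that the paper avoids. The paper does not prove Theorem~\ref{IrrTorus} itself (it is quoted from \cite{EL03}); the proof block labeled ``Proof of Proposition~\ref{IrrTorus}'' is in fact the proof of Proposition~\ref{PropBerendFiber}, and that is what you are proving too. The paper's argument is a single step: it first records Proposition~\ref{IrrTorusBundle}, which asserts that the conclusions of Theorem~\ref{IrrTorus} hold verbatim for any principal $\bT^d$-bundle $\Omega\to Y$ on which the action satisfies $\talpha^\bfn(x+v)=\talpha^\bfn.x+\alpha^\bfn v$, the proof in \cite{EL03} carrying over unchanged. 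Since $\tX\to\dtX$ is exactly such a bundle (with $Z$ central), Property~\ref{TransInvProp} follows immediately from the hypothesis $h_\tmu(\talpha^\bfn\mid\cB_\dtX)>0$.

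Your route differs in two ways. First, your opening paragraph locating a specific $[\chi]$ with $h_\tmu(\talpha^\bfn,V^{[\chi]}_T)>0$ is unnecessary: part~(2) of Theorem~\ref{IrrTorus} already delivers translational invariance along \emph{every} nonzero coarse Lyapunov subgroup of the torus action once the total fiber entropy is positive, so no Ledrappier--Young decomposition is needed here. Second, your Borel-section trivialization to a twisted product $(t,\dtx)\mapsto(\alpha^\bfn t+c(\bfn,\dtx),(\beta')^\bfn\dtx)$ is a valid reduction, and your observation that the Einsiedler--Lindenstrauss proof only uses the equivariance of leafwise measures (which survives the cocycle twist) is exactly the content of the paper's Proposition~\ref{IrrTorusBundle}. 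The ``cleaner alternative'' you sketch at the end---working intrinsically with the free $T$-action on $\tX$ and the factor map $\pi$---\emph{is} the paper's approach. So you have rediscovered the right formulation after a detour; the paper simply states the bundle version upfront and applies it.
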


In this case, as shown in \cite{EL03}, the invariance of the leafwise measure actually forces $\tmu$ to be invariant under translation by any element in $\bR^d$, in other words, $\tmu$ is uniform in all $\bT^d$ fibers.

It should be remarked that in the proof of the above theorem in \cite{EL03}, the assumption that the underlying space is a direct product is not used. And the theorem generalizes to skew products with $\bT^d$ fibers.

\begin{proposition}\label{IrrTorusBundle} Suppose $\alpha$ is a totally irreducible, not virtually cylic $\bZ^r$-action on a torus $\bT^d$. Let $Y$ be a compact metric spaces and $\Omega$ be a principal $\bT^d$-bundle over $Y$.  For $x\in\Omega$, $v\in\bR^d$, denote by $x+v$ the translation of $x$ by $v$ along the fiber.

Assume $\talpha$ is a continuous $\bZ^r$-action on $\Omega$ such that $$\talpha^\bfn(x+v)=\talpha^\bfn.x+\alpha^\bfn v$$ for all $x\in\Omega$, $v\in\bR^d$, $\bfn\in\bZ^r$. Then the same conclusions from Theorem \ref{IrrTorus} hold.
\end{proposition}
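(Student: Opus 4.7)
The plan is to show that the proof of Theorem \ref{IrrTorus} given in \cite{EL03} extends essentially verbatim to the bundle setting, because it depends only on two structural features of $\bT^d \times Y$, both of which persist here. First, the leafwise measures $\tmu_\tx^{V^{[\chi]}}$ along coarse Lyapunov subgroups $V^{[\chi]} \subset \bR^d$ require only a local free action of small balls in $V^{[\chi]}$ on $\Omega$, which is provided by the principal $\bT^d$-bundle structure (the fiberwise $\bR^d$-action is locally free since $\bZ^d \subset \bR^d$ acts trivially but discretely). Second, the equivariance $\alpha^\bfn_* \tmu_\tx^{V^{[\chi]}} \simeq \tmu_{\talpha^\bfn.\tx}^{V^{[\chi]}}$ used throughout the argument follows from the hypothesis $\talpha^\bfn(x+v) = \talpha^\bfn.x + \alpha^\bfn v$ by exactly the proof of Lemma \ref{LeafAction}.

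First I would set up local trivializations $\pi^{-1}(U) \cong \bT^d \times U$ over small open sets $U \subset Y$ to confirm that leafwise measures, subordinate $\sigma$-algebras, and entropy contributions are defined intrinsically along fibers and transform correctly under $\talpha$. Then I would reproduce the entropy-ratio identity of part (1). In the product case, this identity compares $h_\tmu(\talpha^\bfn,V^{[\chi]})$ with the algebraic quantity $\log\big|\det \alpha^\bfn|_{V^{[\chi]}}\big|$, using that total irreducibility of $\alpha$ forces the ratios of the entropy contributions of $\tmu$ across different coarse Lyapunov directions to match those of the Haar measure. Since neither the upper bound of Proposition \ref{EntContri}(3) nor the Galois-theoretic input underlying the argument sees the global bundle structure, the ratio identity carries over without modification.

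The bulk of the work, and the main obstacle, is adapting the ``low entropy'' argument for part (2), which in \cite{EL03} produces a nontrivial translation direction $v \in V^{[\chi]}$ stabilizing $\tmu_\tx^{V^{[\chi]}}$ whenever the conditional entropy is positive. This argument proceeds by exhibiting approximate invariance of leafwise measures under suitable $\alpha^\bfn$-shifts, extracting a limiting translation via a pigeonhole/recurrence argument on the group of possible shifts, and then using the semigroup property of the stabilizer of a leafwise measure to upgrade to genuine translation invariance. Each sub-step is a fiberwise computation mediated by the equivariance relation $\talpha^\bfn(x+v) = \talpha^\bfn.x + \alpha^\bfn v$; the principal bundle structure intervenes only to make the fiberwise translations well-defined and $\alpha$-equivariant, both of which are built into the hypothesis. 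I therefore expect no genuinely new difficulty beyond taking care to phrase every estimate intrinsically on fibers, and the conclusions of Theorem \ref{IrrTorus} should follow in the bundle setting by the same chain of implications.
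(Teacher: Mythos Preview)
Your proposal is correct and follows essentially the same approach as the paper: both observe that the argument of \cite{EL03} is purely fiberwise and carries over once the leafwise-measure machinery is in place. The paper's proof is in fact a single sentence, pointing to the partition $\cP$ of Lemma~\ref{SubordExist} as the only technical ingredient that needs to be supplied in the bundle setting; your local-trivialization setup serves the same purpose, though the global construction in Lemma~\ref{SubordExist} is slightly more direct.
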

\begin{proof}The proof in \cite{EL03} works here without change, using the partition $\cP$ in Lemma \ref{SubordExist}.\end{proof}

\begin{proof}[Proof of Proposition \ref{IrrTorus}]The space $X$ is a torus bundle over $\dX$ with fibers isomorphic to $Z/(Z\cap\Gamma)\cong\bT^d$, where $Z\cong\bR^d$. Moreover, if $v\in Z$ then $\talpha^\bfn(v.\tx)=(\alpha^\bfn v).(\talpha^\bfn.\tx)$. Hence Property \ref{TransInvProp} holds by Proposition \ref{IrrTorusBundle}, as we assume $h_\tmu(\talpha^\bfn|\cB_\dtX)>0$ for some $\bfn$.\end{proof}

\section{Case of lower rank fibers}\label{SecRk1Fiber}

The next case to treat is: \begin{proposition}\label{PropRk1Fiber}In the settings of \S \ref{SecOutline}, if the induced action $\alpha$ on $Z$ is virtually cyclic, then Property \ref{TransInvProp} holds.
\end{proposition}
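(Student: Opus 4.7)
The plan is to locate an element $\bfn\in\bZ^r$ whose unstable subgroup $G_\bfn^u$ avoids the central direction $Z$ altogether, and then to sandwich the entropy of $\tmu$ between the full entropy already present on the factor $\dtmu=\rmm_\dX\times\nu$ and the Margulis--Ruelle bound, thereby forcing $\tmu$ to be left-$G_\bfn^u$-invariant. Since $\alpha|_Z$ is virtually cyclic and the induced action on the totally irreducible torus $Z/(Z\cap\Gamma)$ is by automorphisms, the lemma preceding Corollary~\ref{NonPropLya} forces every Lyapunov exponent of $\alpha$ on $\goz$ to be proportional to a single functional $\chi_0\in(\bR^r)^*$ (with $\chi_0\equiv 0$ when $\alpha|_Z$ is of finite order). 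The no-virtually-cyclic-factor hypothesis, via Corollary~\ref{NonPropLya}, furnishes a Lyapunov exponent $\chi$ for $\alpha$ on $\gog$ whose class $[\chi]$ is not proportional to $[\chi_0]$. Since $\ker\chi$ and $\ker\chi_0$ are then distinct subspaces of $\bR^r$, one can pick $\bfn\in\bZ^r$ with $\chi_0(\bfn)=0$ and (after a sign flip if necessary) $\chi(\bfn)>0$.

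For this $\bfn$, every Lyapunov exponent $\chi'$ that is positive at $\bfn$ satisfies $[\chi']\neq[\chi_0]$, so $\gov^{[\chi']}\cap\goz=\{0\}$ by Remark~\ref{QuotientEigenRmk}; summing gives $\gog_\bfn^u\cap\goz=\{0\}$, and the projection $\gog\to\dot\gog$ restricts to a linear isomorphism $\gog_\bfn^u\to\dot\gog_\bfn^u$. In particular,
\[\log\bigl|\det\alpha^\bfn|_{\gog_\bfn^u}\bigr|=\log\bigl|\det\alpha^\bfn|_{\dot\gog_\bfn^u}\bigr|.\]
Since $(\dtX,\dtmu,\talpha)$ is a measure-theoretic factor of $(\tX,\tmu,\talpha)$ with the conditioning algebra $\cB_Y$ sitting inside both $\cB_\tX$ and $\cB_\dtX$, the standard factor inequality yields $h_\tmu(\talpha^\bfn|\cB_Y)\geq h_\dtmu(\talpha^\bfn|\cB_Y)$. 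The inductive hypothesis \eqref{InductiveEq} asserts $\dtmu=\rmm_\dX\times\nu$, which is $\dG_\bfn^u$-invariant, so Proposition~\ref{EntContri}(2)--(3) applied downstairs with $V=\dG_\bfn^u$ gives $h_\dtmu(\talpha^\bfn|\cB_Y)=\log|\det\alpha^\bfn|_{\dot\gog_\bfn^u}|$. Upstairs, the same proposition gives $h_\tmu(\talpha^\bfn|\cB_Y)=h_\tmu(\talpha^\bfn,G_\bfn^u)\leq\log|\det\alpha^\bfn|_{\gog_\bfn^u}|$. Combined with the determinant identity above, this produces
\[\log|\det\alpha^\bfn|_{\dot\gog_\bfn^u}|\;\leq\;h_\tmu(\talpha^\bfn|\cB_Y)\;\leq\;\log|\det\alpha^\bfn|_{\gog_\bfn^u}|\;=\;\log|\det\alpha^\bfn|_{\dot\gog_\bfn^u}|,\]
which must be an equality throughout.

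In particular $h_\tmu(\talpha^\bfn,G_\bfn^u)=\log|\det\alpha^\bfn|_{\gog_\bfn^u}|$, and the equality case of Proposition~\ref{EntContri}(3) then forces $\tmu$ to be invariant under left translation by the whole of $G_\bfn^u$, hence in particular by the non-trivial coarse Lyapunov subgroup $V^{[\chi]}\subset G_\bfn^u$. Consequently $\tmu_\tx^{V^{[\chi]}}$ is $\tmu$-a.e.\ proportional to the Haar measure on $V^{[\chi]}$, which is right-invariant by every element, so Property~\ref{TransInvProp} holds (in the strong form anticipated by its footnote) with $V=V^{[\chi]}$ and any non-zero $v\in V^{[\chi]}$. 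The main technical delicacy I foresee is verifying the factor inequality $h_\tmu(\talpha^\bfn|\cB_Y)\geq h_\dtmu(\talpha^\bfn|\cB_Y)$ together with the identification $h_\tmu(\talpha^\bfn|\cB_Y)=h_\tmu(\talpha^\bfn,G_\bfn^u)$ in this skew-product setting; both should fit within the conditional entropy machinery developed in Section~\ref{SecPrelim}, but each requires careful handling of the two layers of conditioning $\cB_Y\subset\cB_\dtX\subset\cB_\tX$.
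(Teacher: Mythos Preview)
Your argument is correct and takes a genuinely different route from the paper's. The paper proceeds by a direct leafwise-measure argument: it picks $\bfn\in\Xi$ (the subgroup acting trivially on $T=Z/(Z\cap\Gamma)$) with $V^{[\chi]}\subset G_\bfn^s$, then uses Luzin's theorem together with the contraction along $V^{[\chi]}$ to show that the fiber conditional measure $\tmu_\tx^\T$ is constant along $V^{[\chi]}$-orbits, which yields $\tmu$-invariance under translation by any fixed $v\in V^{[\chi]}$. Your approach shares the key observation that one can choose $\bfn$ so that $\gog_\bfn^u$ is disjoint from $\goz$, but closes via entropy instead: the inductive hypothesis $\dtmu=\rmm_\dX\times\nu$ makes the factor entropy saturate the Margulis--Ruelle bound on $\dot\gog_\bfn^u$, and since $\gog_\bfn^u\cong\dot\gog_\bfn^u$ equivariantly, the Abramov--Rokhlin chain forces the upstairs bound to saturate too, whence the equality case of Proposition~\ref{EntContri}(3) gives full $G_\bfn^u$-invariance. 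Your method yields a stronger intermediate conclusion (invariance under all of $G_\bfn^u$, not merely a single translation) and is more in tune with the entropy machinery already assembled in \S\ref{SecPrelim}; the paper's argument is more hands-on and bypasses entropy for this case.

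Two minor points worth tightening. First, the existence of an \emph{integer} vector $\bfn$ with $\chi_0(\bfn)=0$ and $\chi(\bfn)\neq0$ is not automatic from $\ker\chi\neq\ker\chi_0$ alone; you should invoke that $\ker\chi_0\cap\bZ^r$ contains the rank-$(\geq r-1)$ subgroup $\Xi$ of elements acting trivially on $Z$, and that $\chi|_\Xi\not\equiv0$ (else $\Xi$ would span both $\ker\chi$ and $\ker\chi_0$, forcing proportionality). Second, the two ``delicate'' points you flag are in fact immediate within the paper's framework: the factor inequality is the Abramov--Rokhlin identity $h_\tmu(\talpha^\bfn|\cB_Y)=h_\tmu(\talpha^\bfn|\cB_\dtX)+h_\dtmu(\talpha^\bfn|\cB_Y)$, used verbatim in the proof of Corollary~\ref{EntBasePositive}, and the identification $h_\tmu(\talpha^\bfn|\cB_Y)=h_\tmu(\talpha^\bfn,G_\bfn^u)$ is exactly the equality clause of Proposition~\ref{EntContri}(2).
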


\subsection{Leafwise measures along fibers} The quotient $T:=Z/(Z\cap\Gamma)$ is a group isomorphic to some $\bT^d$ as $Z$ is a connected abelian Lie group. 

Note that $T$ acts on $X$, and the action is free.  To see this, it suffices to show that for all $g\in G$, the stabilizer of $gx_0\in X$ inside $Z$ is $Z\cap\Gamma$, where $x_0$ denotes the origin of $G/\Gamma$; or equivalently $Z\cap g\Gamma g^{-1}=Z\cap\Gamma$. This follows easily from the condition that $Z$ is in the center of $G$. 

Hence $T$ acts on $\tX$ freely as well, and each orbit is a fiber of the projection $\pi$ to $\dtX$.

Therefore we can talk about the leafwise measure $\tmu_\tx^\T$. In this case, it is more convenient to define it through the conditional measure with respect to the Borel $\sigma$-algebra $\cB_{\dtX}$.

Let $\tmu_\tx^{\cB_\dtX}$ be the conditional measure of $\tmu$ with respect to $\cB_\dtX$, which is a priori defined only for $\tmu$-almost every $\tx$. It is a probability measure supported on $[\tx]_{\cB_\dtX}=T.\tx$ and \begin{equation}\label{FiberCondEq1}\tmu_\tx^{\cB_\dtX}=\tmu_{\tx'}^{\cB_\dtX}\text{ if }\tx'\in T.\tx.\end{equation} Imposing equation \eqref{FiberCondEq1}, one can actually define  $\tmu_\tx^{\cB_{\dtX}}$ for $\dtmu$-almost every $\dtx$ and every $\tx\in\pi^{-1}(\dtx)$. Note that after this extension, it is no longer true that almost every $\tx$ belongs to the support of $\tmu_\tx^{\cB_\dtX}$. The map $\tx\mapsto \tmu_\tx^{\cB_\dtX}$ is $\cB_\dtX$-measurable.

So for almost every $\dtx\in\dtX$ we can define $\tmu_\dtx^{\cB_{\dtX}}=\tmu_\tx^{\cB_\dtX}$ using any $\tx\in\pi^{-1}(\dtx)$. Then \begin{equation}\label{TFiberDesEq}\tmu=\int_{x\in\tX}\tmu_\tx^{\cB_\dtX}\di\tmu(x)
=\int_{x\in\tX}\tmu_{\pi(\tx)}^{\cB_\dtX}\di\tmu(x)
=\int_{\dx\in\dtX}\tmu_\dx^{\cB_\dtX}\di\dtmu(\dx).\end{equation}

For $\tx\in\tX$, define $\tmu_\tx^\T$ to be the pullback of $\tmu_\tx^{\cB_{\dtX}}$ from $T.\tx$ to $T$ with respect to the homeomorphism $t\mapsto t.\tx$. Then \eqref{FiberCondEq1} implies \begin{equation}\label{FiberCondEq2}\tmu_\tx^\T=(\tau_t)_*\tmu_{t.\tx}^\T,\end{equation} for all points in almost every fiber. The support of  $\tmu_\tx^\T$ does not have to contain identity. $\tx\mapsto \tmu_\tx^\T$ is a Borel measurable map from $\tX$ to the space of probability measures on $T$.

Since the action $\talpha$ sends fibers to fibers, $\tmu_{\talpha^\bfn.\tx}^{\cB_\dtX}=\talpha^\bfn_*\tmu_\tx^{\cB_\dtX}$. Because of this and the fact hat  $\talpha^\bfn(t.\tx)=(\alpha^\bfn t).(\talpha^\bfn.\tx)$ for all $t\in T$, we have, similarly to Lemma \ref{LeafAction}, \begin{equation}\label{FiberCondEq3}\tmu_{\talpha^\bfn.\tx}^\T=\alpha^\bfn_*\tmu_\tx^\T.\end{equation}

\subsection{Invariance along the stable foliation} Let $\Xi\subset\bZ^r$ be the subgroup consisting of all $\bfn\in\bZ^r$ such that $\alpha^\bfn$ acts on $Z$, hence on $T$, trivially. The hypothesis that the action induced by $\alpha$ on $Z$ is virtually cyclic implies that $\rank\Xi\geq (r-1)$.

Note that $\dX$ is not trivial. Otherwise, the action $\alpha$ on $X=Z/(Z\cap\Gamma)$ is virtually cyclic. This contradicts the condition in Theorem \ref{MeasureInductive} that all algebraic factors of the action $\alpha:\bZ^r\curvearrowright X$ has rank $2$ or higher.

The projected action on $\dX$ has no virtually cyclic algebraic factors. Therefore by Corollary \ref{NonPropLya}, its Lyapunov exponents are not all proportional. In particular, there is at least one coarse Lyapunov exponent $\chi$ of $\alpha$ on $\dG$ whose kernel does not contain $\Xi$. By Remark \ref{QuotientEigenRmk} and equation \eqref{LyaEigenEq}, this is also a Lyapunov exponent of $\alpha$ on $G$. 

Therefore, we may fix a Lyapunov subgroup $V^{[\chi]}\subset G$, and some $\bfn$, such that:\begin{itemize}
\item $V^{[\chi]}$ is non-trivial and has non-trivial projection with respect to $\dG$;
\item $\chi(\bfn)<0$, or equivalently $V^{[\chi]}\in G_\bfn^s$;
\item $\alpha^\bfn$ acts trivially on $T$.
\end{itemize}

\begin{proof}[Proof of Proposition \ref{PropRk1Fiber}]
Fix $\epsilon\in (0,\frac13)$, by Luzin's Theorem, there is a compact subset $A_0\subset\tX$ with $\tmu(A_0)>1-\epsilon^2$ such that $\tmu_\tx^\T$ is uniformly continuous in $\tx$ on $A_0$. Let $\dA=\pi(A_0)$, then $\dtmu(\dA)>1-\epsilon^2$, and thanks to \eqref{FiberCondEq2},  $\tmu_\tx^\T$ is uniformly continuous on $A=\pi^{-1}(\dA)$, which consists of entire fibers.

By the maximal ergodic theorem, $\dtmu\big(\{\dtx: \sup_{n\geq 1}\frac1n\sum_{k=0}^{n-1}{\bf 1}_{\dA^c}(\talpha^{k\bfn}.\dtx)>\epsilon\}\big)<\epsilon$. Denote by $\dot B$ the complement of this set.

Recall that by inductive hypothesis, $\dtmu$ is invariant under left translation by $\dG$. Fix any $v\in V^{[\chi]}$ with non-zero projection $\dv\in\dG$, then $\dtmu(\dv^{-1}.\dot B)=\dtmu(\dot B)>1-\epsilon$.

Consider $\dtx\in\dv^{-1}.\dot B\cap \dot B$ and $\tx\in\pi^{-1}(\dtx)$. Let $\tx'=v.\tx$, then $\pi(\tx')=\dv.\dtx\in \dot B$. Thus by choice of $\dot B$, there are respectively two subsequences $S$, $S'$, both of asymtoptic density $>1-\epsilon$, in $\bN$, such that $\talpha^{k\bfn}\dtx$ (resp. $\talpha^{k\bfn}\dtx'$) belongs to $\dA$ if $k\in S$ (resp. $S'$). Since $2(1-\epsilon)>1$, the intersection $S\cap S'$ is an infinite sequence. Then for $k\in S\cap S'$, the points $\talpha^{k\bfn}.\tx$ and $\talpha^{k\bfn}.\tx'$ are both in $A$; and the distance between them decays exponentially fast as $k$ grows since $\alpha$ contracts $V$ exponentially fast. Hence $\dist(\tmu_{\talpha^{k\bfn}.\tx}^\T,\tmu_{\talpha^{k\bfn}.\tx'}^\T)\rightarrow 0$. But as $\alpha^\bfn$ acts trivially on $T$, using equation \eqref{FiberCondEq3}, it follows that $\tmu_\tx^\T=\tmu_{\tx'}^\T$. By the construction of $\tmu_\tx^\T$ as well as the fact that $V$ commutes with $Z$ in $G$, this implies $\tmu_{\tx'}^{\cB_{\dtX}}=(\iota_v)_*\tmu_\tx^{\cB_{\dtX}}$, where $\iota_v$ denotes the left-translation by $v$. Or equivalently \begin{equation}\label{CondTransInvEq}\tmu_{\dv.\dtx}^{\cB_{\dtX}}=(\iota_v)_*\tmu_\dtx^{\cB_{\dtX}}. \end{equation}

\eqref{CondTransInvEq} holds for all $\dtx$ from $\dv^{-1}.B\cap B$. Since $\dtmu(\dv^{-1}.B\cap B)>1-2\epsilon$, by letting $\epsilon\rightarrow 0$, this actually holds for $\dtmu$-almost all $\dtx$.

Hence by \eqref{TFiberDesEq},
\begin{equation}\begin{split}\tmu=&\int_{\dtx\in\dtX}\tmu_\dtx^{\cB_{\dtX}}\di\dtmu(\dtx)=\int_{\dtx\in\dtX}\tmu_{\dv.\dtx}^{\cB_{\dtX}}\di\dtmu(\dtx)\\
=&(\iota_v)_*\int_{\dtx\in\dtX}\tmu_(\dtx)^{\cB_{\dtX}}\di\dtmu(\dtx)=(\iota_v)_*\tmu,
\end{split}\end{equation}
where we used the fact that $\dtmu$ is invariant under $\dtx\mapsto\dv.\dtx$.

So we proved $\tmu$ is invariant under the left translation by the non-trivial element $v\in V^{[\chi]}$, which implies Property \ref{TransInvProp} with respect to $V=V^{[\chi]}$ by Proposition \ref{LeafMeas}.(2).\end{proof}

\section{Case of non-isometric leafwise measures}\label{SecShear}

We now show that if $\tmu$ does not have isometric support along a coarse Lyapunov subgroup $V^{[\chi]}$ (see Definition \ref{LeafIsomDef}), then one gets translational invariance along $V^{[\chi]}_0$. 

\begin{proposition}\label{ShearProp}In the settings of \S\ref{SecOutline},  assume that for some coarse Lyapunov subgroup $V^{[\chi]}$, $\tmu$ does not have isometric support along $V^{[\chi]}$.
 
Then Property \ref{TransInvProp} holds. Actually, for $\tmu$-almost every $\tx\in\tX$, there exists a non-zero element $v\in V_0^{[\chi]}$ such that $\tmu_\tx^{V_0^{[\chi]}}=(\tau_v)_*\tmu_\tx^{V_0^{[\chi]}}$.
\end{proposition}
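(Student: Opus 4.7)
The approach is a shearing argument in the suspension flow, in the spirit of Ratner and the treatment of leafwise measures in \cite{EL10}. By assumption there is an element $\bfp\in\ker\chi$ and a set of positive $\tmu$-measure on which $\supp\tmu_\tx^{V^{[\chi]}}\not\subseteq V_0^{[\chi]}$. I would work in the suspension $(\tX_S,\tmu_S,\talpha_S)$ so that the one-parameter flow $\talpha_S^{t\bfp}$, $t\in\bR$, becomes available together with the equivariance \eqref{SuspActionEq} and the quasi-invariance \eqref{SuspLeafTransEq}.

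The crucial algebraic input is extracted from the proof of Lemma \ref{IsomFixed}. For any $\bfm\in\bZ^r$ arising as the integer part of $t\bfp$ (so $\bfm-t\bfp\in(-1,1)^r$), we may decompose $\alpha^\bfm|_{\gov^{[\chi]}}=Z^\bfm U^{t\bfp}U^{\bfm-t\bfp}$: both $Z^\bfm$ and $U^{\bfm-t\bfp}$ range over fixed compact subsets of $\GL(\gov^{[\chi]})$ because $\bfp\in\ker\chi$, while the unipotent one-parameter subgroup $U^{t\bfp}=\exp(tN)$, $N=\log U^\bfp$, acts polynomially. For $v\in\gov^{[\chi]}\setminus\gov_0^{[\chi]}$ let $k$ be maximal with $N^kv\neq 0$; then the leading order of $U^{t\bfp}v$ is $\tfrac{t^k}{k!}N^kv$, and since $N^{k+1}v=0$ this leading direction lies in $\ker N=\gov_0^{[\chi]}$. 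Consequently $\alpha^{\bfm_n}v$ escapes to infinity, but with direction asymptotically trapped inside $\gov_0^{[\chi]}$.

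With this in hand, I would run the shearing as follows. Use Luzin's theorem to choose a compact $K\subset\tX_S$ with $\tmu_S(K)>1-\epsilon$ on which $\tx_S\mapsto(\tmu_S)_{\tx_S}^{V^{[\chi]}}$ is continuous; by Poincar\'e recurrence for $\talpha_S^\bfp$ a conull subset of $K$ admits return times $t_n\to\infty$ with $\talpha_S^{t_n\bfp}\tx_S\in K$. Fix such a $\tx_S$ together with an element $v\in\supp(\tmu_S)_{\tx_S}^{V^{[\chi]}}\setminus V_0^{[\chi]}$. The goal is to extract two sequences $v_n,v_n'\in\supp(\tmu_S)_{\tx_S}^{V^{[\chi]}}$ with $v_n,v_n'\to v$ at a rate chosen so that $\alpha^{\bfm_n}(v_n'v_n^{-1})$ remains in a compact subset of $V^{[\chi]}$ and converges to a non-trivial limit $w_\infty$. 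The polynomial-escape property of the preceding paragraph forces $w_\infty\in V_0^{[\chi]}$. Passing to the limit along the recurrence subsequence, and combining \eqref{SuspActionEq}, \eqref{SuspLeafTransEq} with continuity of the leafwise measures on $K$, one obtains at a limit point $\tx_{S,\infty}\in K$ the identity $(\tmu_S)_{\tx_{S,\infty}}^{V^{[\chi]}}\simeq(\tau_{w_\infty})_*(\tmu_S)_{\tx_{S,\infty}}^{V^{[\chi]}}$. Projecting back to $\tX$ and restricting to $V_0^{[\chi]}$ yields translation invariance of $\tmu_\tx^{V_0^{[\chi]}}$ by $w_\infty\neq e$ on a positive measure set of $\tx$; $\talpha$-ergodicity of $\tmu$ then upgrades this to almost every $\tx$, which also recovers Property \ref{TransInvProp}.

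The main obstacle is the construction of the pair $(v_n,v_n')$ at the correct scale. Since $\supp(\tmu_S)_{\tx_S}^{V^{[\chi]}}$ is a priori an arbitrary closed subset, one must argue that it is rich enough, at scale comparable to $\|U^{t_n\bfp}|_{\gov^{[\chi]}\setminus\gov_0^{[\chi]}}\|^{-1}$, to admit such perturbations. The standard device is to take a small ball $B$ in the support and flow it by $\talpha_S^{t\bfp}$ until its image has a prescribed diameter, then extract a limit of the translated leafwise measures by compactness in $\cM_1^0(V^{[\chi]})$ and continuity of the assignment on $K$. The non-abelianness of $V^{[\chi]}$ is controlled by Baker--Campbell--Hausdorff estimates at the perturbation scale; ensuring that the accumulated $w_\infty$ is not swallowed by cancellation is where the assumption $v\notin V_0^{[\chi]}$ is used essentially, via the nilpotent Jordan structure of $N$ on $\gov^{[\chi]}$.
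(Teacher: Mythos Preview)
Your overall shearing strategy in the suspension is correct and matches the paper's, including the algebraic observation that the polynomial drift under $U^{t\bfp}$ pushes vectors $v\in\gov^{[\chi]}\setminus\gov_0^{[\chi]}$ asymptotically into $\gov_0^{[\chi]}=\ker(U^\bfp-\Id)$. The gap is exactly the one you flag as ``the main obstacle,'' and your proposed fix does not close it.

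You try to work at a \emph{fixed} base point $\tx_S$ and to extract pairs $v_n,v_n'$ from $\supp(\tmu_S)_{\tx_S}^{V^{[\chi]}}$ near a chosen $v$. But that support can be an arbitrary closed set (a finite set, a Cantor set), so there is no reason it contains pairs at the scale dictated by $\|U^{t_n\bfp}\|^{-1}$; ``flowing a small ball'' presupposes a ball in the support that need not exist. Moreover, even if you produce such pairs, your recurrence argument only puts $\tx_S$ (not the perturbed partner $v_n.\tx_S$) back into the Luzin set $K$ along $t_n$, so you cannot pass to the limit on \emph{both} leafwise measures simultaneously.

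The paper resolves both issues differently. First, it proves (via Poincar\'e recurrence under an element $\bfn$ with $\chi(\bfn)>0$) that $\tmu_\tx^{V^{[\chi]}}$ gives mass to $B_l^V\setminus V_0^{[\chi]}$ for every $l>0$, almost surely; so one may choose displacements $v_k\in\gov^{[\chi]}\setminus\gov_0^{[\chi]}$ with $|v_k|\to 0$. Second, rather than fixing a base point, it uses a $V$-subordinate, $\talpha_S^\bfn$-increasing $\sigma$-algebra $\cA_S$ and a martingale maximal inequality on the tower $\talpha_S^{-k\bfn}\cA_S$ to locate \emph{pairs} of points $\tx_{S,k},\ \ty_{S,k}=(\exp v_k).\tx_{S,k}$ that both lie in a set $\Omega_2\subset K$ of points whose $\talpha_S^{t\bfp}$-orbit spends a $(1-\epsilon_2)$-fraction of time in $K$. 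Third, the drift is controlled by a quantitative H-principle (not just the leading-order asymptotic you state): for a $(1-\epsilon)$-proportion of $t\in[0,T]$ one has $U^{t\bfp}v=w+w^\bot$ with $w\in\gov_0^{[\chi]}$, $|w|\in[\kappa,\kappa^{-1}]$ and $|w^\bot|\lesssim|v|^{1/d}$. Intersecting this good set of times with the recurrence sets of \emph{both} $\tx_{S,k}$ and $\ty_{S,k}$ gives a $t$ for which all three constraints hold; passing to the limit yields $\tx_S',\ty_S'\in K$ with $\ty_S'=(\exp u).\tx_S'$, $u\in\gov_0^{[\chi]}\setminus\{0\}$, and the desired $(\tau_{\exp u})$-invariance of $(\tmu_S)_{\tx_S'}^{V_0^{[\chi]}}$, contradicting the standing assumption. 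The missing ingredients in your write-up are thus: the ``mass touches $e$ from outside $V_0$'' step, the $\sigma$-algebra/martingale mechanism for producing pairs both in the good set, and the uniform H-principle replacing your asymptotic direction statement.
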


In the rest of this section, we assume the conditions from the proposition, and simply write $V$, $V_0$ respectively for $V^{[\chi]}$ and the $\alpha_S^\bfp$-isometric subgroup $V^{[\chi]}_0\subset V^{[\chi]}$. Let $\gov$, $\gov_0$ be the corresponding Lie algebras.

In order to prove the proposition, we will work with the suspension flow $\talpha_S\curvearrowright\tX_S$. Construct $\tmu_S$ as in \eqref{SuspMeasEq}, then it is $\talpha_S$-invariant and ergodic as $\tmu$ is $\talpha$-ergodic. 
 
From now on, we assume for the sake of contradiction that there is a positive portion of points $\tx\in\tX$ at which $\tmu_\tx^{V_0}$ has no translational invariance property as in Proposition \ref{ShearProp}. Equivalently, in the suspension $(\tX_S,\tmu_S)$, there is a set of positive measure on which $(\tmu_S)_{\tx_S}^{V_0}$ is not invariant under the right translation by any non-zero element from $V_0$. By \eqref{SuspActionEq} and Corollary \ref{IsomInv}, this set is $\talpha_S$-invariant modulo a null set. Therefore by ergodicity, for  $\tmu_S$-almost every $\tx_S$, $(\tmu_S)_{\tx_S}^{V_0}$ does not have this invariance property.

\subsection{Mass out of isometric leaves} Whenever $\tmu_\tx^{V}$ is defined, let \begin{equation}l(\tx)=\inf\{l>0: \tmu_\tx^{V}\big(\{v\in V\backslash V_0, \|\log v\|<l\}\big)>0\}.\end{equation}

By Lemma \ref{LeafAction} and Corollary \ref{IsomInv}, 
\begin{equation}\begin{split}
&\tmu_{\talpha^\bfn.\tx}^{V}\big(\{v\in V\backslash V_0, \|\log v\|<l\}\big)\\
= &\alpha^\bfn_*\tmu_\tx^{V}\big(\{v\in V\backslash V_0, \|\log v\|<l\}\big)\\
\geq &\tmu_\tx^{V}\big(\{v\in V\backslash V_0, \|\log v\|<\|\alpha^\bfn|_{V}\|^{-1}\cdot l\}\big).
\end{split}\end{equation}
Thus,  $l(\talpha^\bfn.\tx)\geq \|\alpha^\bfn|_{V}\|^{-1}\cdot l(\tx)$ for all $\bfn$. Thus $\{\tx: l(\tx)<\infty\}\subset \tX$ is $\talpha$-invariant. Moreover, by assumptions in Proposition \ref{ShearProp}, the subset $\{\tx: l(\tx)<\infty\}$ has positive measure.  Hence by ergodicity,
$l(\tx)<\infty$ for $\tmu$-almost every $\tx$.

\begin{lemma}\label{MassTouch}$l(\tx)=0$ for $\tmu$-almost every $\tx$. \end{lemma}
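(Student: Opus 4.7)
By the recurrence inequality $l(\talpha^\bfn \tx) \geq \|\alpha^\bfn|_V\|^{-1} l(\tx)$ derived just before the lemma, the set $\{l>0\}$ is $\talpha$-invariant modulo null sets, so I argue by contradiction: if this set has positive measure, ergodicity of $\tmu$ makes it of full measure, and together with $l<\infty$ a.e.\ (already established) I have $l(\tx)\in(0,\infty)$ for $\tmu$-almost every $\tx$. By Luzin's theorem I can fix a compact $K_0\subset \tX_S$ with $\tmu_S(K_0)>7/8$ on which $\tx_S\mapsto(\tmu_S)_{\tx_S}^V$ is continuous into $\cM_1^0(V)$ and $l_0\leq l(\tx_S)\leq L_0$ for some $0<l_0\leq L_0<\infty$.

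The strategy is a shearing argument along the suspension flow $\talpha_S^{t\bfp}$, where $\bfp\in\ker\chi$ is the element for which the isometric subgroup $V_0\subsetneq V$ was defined and for which, by the hypothesis of the section, $(\tmu_S)_{\tx_S}^V$ is not supported in $V_0$. By Birkhoff's theorem, for a typical $\tx_S\in K_0$ the return times to $K_0$ have density $>7/8$. Pick $t_k\to\infty$ with $\talpha_S^{t_k\bfp}\tx_S\in K_0$; passing to a subsequence, $\tx_S^k:=\talpha_S^{t_k\bfp}\tx_S\to\tx_S^\infty\in K_0$ and continuity on $K_0$ gives $\eta^k:=(\tmu_S)_{\tx_S^k}^V\to\eta^\infty:=(\tmu_S)_{\tx_S^\infty}^V$ in $\cM_1^0(V)$. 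By equivariance \eqref{SuspActionEq}, $\eta^k\simeq\alpha^{\bfm_k}_*\eta$ where $\eta=(\tmu_S)_{\tx_S}^V$ and $\bfm_k-t_k\bfp\in(-1,1)^r$. The decomposition $\alpha^{\bfm_k}|_V=Z^{\bfm_k}U^{\bfm_k-t_k\bfp}U^{t_k\bfp}$ from \S\ref{IsomSec} shows that $Z^{\bfm_k}$ and $U^{\bfm_k-t_k\bfp}$ take values in a fixed compact subset of $\GL(\gov)$ (using Lemma \ref{IsomCpct} and boundedness of $\bfm_k-t_k\bfp$), while $U^{t_k\bfp}$ fixes $V_0$ pointwise and, by Lemma \ref{IsomFixed}, induces a unipotent map on $V/V_0$ with no fixed direction, hence of polynomially divergent operator norm.

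The contradiction then comes from tracing how the mass of $\eta$ is redistributed by $\alpha^{\bfm_k}$. On the $V_0$-side the pushforward is uniformly bi-bounded, so the $V_0$-portion of $\eta$ contributes to $\eta^\infty|_{V_0}$. On the $V/V_0$-side any positive-scale mass is pushed to scales tending to infinity, so after the compact renormalization in $\cM_1^0(V)$ the limit $\eta^\infty$ is supported on $V_0$. But $\tx_S^\infty\in K_0$ implies $l(\tx_S^\infty)\leq L_0<\infty$, forcing $\eta^\infty$ to carry positive mass on $(V\setminus V_0)\cap B_{L_0+1}^V$, a contradiction. The step I expect to require the most care is controlling the preimages $\alpha^{\bfm_k}^{-1}(B_{L_0+1}^V)\cap(V\setminus V_0)$, which are "thickened" along $V_0$-cosets: one must rule out that mass of $\eta$ supported on $V\setminus V_0$ with very small $V/V_0$-coordinate but arbitrarily large $V_0$-coordinate survives the limit and contributes to $\eta^\infty|_{V\setminus V_0}$. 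This can be handled by combining the local finiteness built into $\cM_1^0(V)$ with the uniform boundedness of $\alpha^{\bfm_k}|_{V_0}$ (Lemma \ref{IsomCpct}), which fixes the natural scale against which the $V/V_0$-shearing is measured.
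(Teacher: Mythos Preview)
Your approach is far more elaborate than necessary and the central step has a genuine gap. The paper's argument is a five-line Poincar\'e recurrence: since $[\chi]\neq 0$, pick any $\bfn\in\bZ^r$ with $\chi(\bfn)>0$, so that $\alpha^{-k\bfn}$ uniformly contracts all of $V$. Because $V_0$ is $\alpha$-invariant, one gets directly $l(\talpha^{-k\bfn}\tx)\leq\|\alpha^{-k\bfn}|_V\|\cdot l(\tx)\to 0$; hence every orbit under $\talpha^{-\bfn}$ eventually leaves $\{l>M\}$ permanently, contradicting Poincar\'e recurrence for any $M$ with $\tmu(\{l>M\})>0$. No suspension, no Luzin set, no shearing is needed---the point is to use the \emph{hyperbolic} direction $\bfn$ rather than the neutral direction $\bfp$.

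Your route via $\bfp\in\ker\chi$ and the unipotent shear $U^{t\bfp}$ fails at the claim that $\eta^\infty$ is supported on $V_0$. The flow $U^{t\bfp}$ does not push mass on $V\setminus V_0$ radially to infinity; it translates each $V_0$-coset within itself (in the simplest Jordan-block picture $(a,b)\mapsto(a+tb,b)$, the $V/V_0$-coordinate $b$ is preserved). If $\eta$ carries, say, a $\bZ$-periodic array of unit atoms along a single coset $\{b=1\}$---a perfectly legitimate Radon measure with $l(\eta)\geq 1$---then for every fixed ball $B_R^V$ the pushforward $\alpha^{\bfm_k}_*\eta$ retains a uniformly positive fraction of its mass on $V\setminus V_0$, and the $\cM_1^0(V)$-limit is not supported on $V_0$. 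Your stated concern (mass at very small $V/V_0$-coordinate but large $V_0$-coordinate) misidentifies the obstruction: the real issue is approximate translation-invariance of $\eta|_{V\setminus V_0}$ along the shear direction, and neither local finiteness in $\cM_1^0(V)$ nor the boundedness of $\alpha^{\bfm_k}|_{V_0}$ excludes it.
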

\begin{proof}Suppose this is not true, then for some $M>0$, $\tX_M:=\{\tx: l(\tx)>M\}$ has positive measure. As $\chi\neq 0$, we can fix some $\bfn$ with $\chi(\bfn)>0$. Reversing the equation above, we get $l(\talpha^{-k\bfn}.\tx)\leq\|\alpha^{-k\bfn}|_{V}\|\cdot l(\tx)$. However, as $\chi(\bfn)>0$, $\|\alpha^{-k\bfn}|_{V}\|$ decays to $0$ as $k\rightarrow\infty$. Hence for every $\tx$, $\talpha^{-k\bfn}.\tx$ eventually leaves $\tX_M$ and never returns, which cannot happen because of Poincar\'e's Recurrence Theorem. This proves the claim.\end{proof}

We still fix $\bfn\in\bZ^r$ with $\chi(\bfn)>0$. By Lemma \ref{SubordExist} one can find a $V$-subordinate $\talpha^\bfn$-increasing $\sigma$-algebra $\cA$ of $\tX$. Actually, the proposition allows to assume that for almost every $\tx$, $[\tx]_\cA\subset B_1^V.\tx$. 

Denote by $\cA_S$ the image of the product $\sigma$-algebra $\cB_{[0,1)^r}\times\cA$ under the bijection $(\bfeta,\tx)\mapsto \overline{(\bfeta,\tx)}$. Then $\cA_S$ is a $V$-subordinate $\talpha_S^\bfn$-increasing $\sigma$-algebra of $\tX_S$.

Furthermore, notice $[\tx_S]_{\cA_S}=\big\{\overline{(\bfeta,\tx')}:\tx'\in[\tx]_\cA\big\}$ in this case. Therefore, since $\tmu_\tx^V.\tx$ is proportional to $\tmu_\tx^\cA$ on $[\tx]_\cA$, $(\tmu_S)_{\tx_S}^V.\tx_S$ is proportional to $(\tmu_S)_{\tx_S}^{\cA_S}$ on $[\tx_S]_{\cA_S}$.

By Lemma \ref{MassTouch} and equation \eqref{SuspLeafEq}, for $\tmu_S$-almost every $\tx_S$ and any $l>0$, $(\tmu_S)_{\tx_S}^V\big(B_l^V\backslash V_0\big)>0$. Since for almost every $\tx_S$, $[\tx_S]_{\cA_S}$ is a bounded open  neighborhood of $\tx_S$ in $V.\tx_S$, it follows that \begin{equation}(\tmu_S)_{\tx_S}^{\cA_S}\big([\tx_S]_{\cA_S}\backslash(V_0.\tx_S)\big)>0,\text{ for }\tmu_S\text{-a.e. }\tx_S.\end{equation}

It follows that for all $\epsilon_1>0$, there exists a constant $\delta_1>0$ and a subset $\Omega_1\subset\tX_S$ with $\tmu_S(\Omega_1)>1-\epsilon_1$, such that for every $\tx_S\in\Omega_1$, \begin{equation}\label{ShearGeneEq1}(\tmu_S)_{\tx_S}^{\cA_S}\big([\tx_S]_{\cA_S}\backslash(V_0.\tx_S)\big)>\delta_1.\end{equation}

\subsection{Generic points} As the leafwise measure $(\tmu_S)_{\tx_S}^{V_0}$ is a measurable function in $\tx_S$, by Luzin's Theorem, for all $\epsilon_2>0$, there is a compact set $K\subset\Omega$ on which $(\tmu_S)_{\tx_S}^{V_0}$ varies uniformly continuously, with $\tmu_S(K)>1-\frac{\epsilon_2^3}2$.

In addition to this, we may assume for every $\tx_S\in K$, \begin{equation}\label{ShearNoTransEq}(\tmu_S)_{\tx_S}^{V_0}\not\simeq(\tau_v)_*(\tmu_S)_{\tx_S}^{V_0}, \forall v\in V_0,\end{equation} as this is supposed to be true for almost every $\tx_S\in\tX_S$.

Define a subset $\Omega_2\subset\tX_S$ by \begin{equation}\label{RecurLuzinEq}\Omega_2=\Big\{\tx_S: \inf_{T\geq 1}\frac1T\int_{t=0}^{T}\bfone_K(\talpha_S^{t\bfp}.\tx_S)>1-\epsilon_2\Big\}\cap K,\end{equation} where $\bfp$ is as in Proposition \ref{ShearProp}.
By the maximal ergodic theorem, the $\tmu_S$-measure of the first set on the right hand side is greater than $1-C\cdot\frac{\tmu_S(K^c)}{\epsilon}\geq \frac{\epsilon_2^2}2$ where $C\geq 1$ is some absolute constant. Thus $\tmu_S(\Omega_2)>\tmu_S(K)-\frac{C\epsilon_2^2}2>1-\frac{\epsilon_2^3}2-\frac{C\epsilon_2^2}2>1-C\epsilon_2^2$.

Since $\cA_S$ is $\talpha_S^\bfn$-increasing, $\cA_S\supset\talpha_S^{-\bfn}.\cA_S\supset\talpha_S^{-2\bfn}.\cA_S\supset\cdots$. And the sequence $\big\{\bE_{\tmu_S}(\bfone_{\Omega_2^c}|\talpha_S^{-k\bfn}.\cA_S)\big\}_{k=0}^\infty$ forms a martingale. By the martingale maximal inequality,
\begin{equation}\tmu_S\Big(\Big\{\tx_S:\sup_{k\in\bN}\bE_{\tmu_S}(\bfone_{\Omega_2^c}|\talpha_S^{-k\bfn}.\cA_S)\geq\epsilon_2\Big\}\Big)
\leq\frac{\tmu_S(\Omega_2^c)}{\epsilon_2}<C\epsilon_2.\end{equation}
In other words, if we denote
 \begin{equation}\Omega_3=\Big\{\tx_S\in\tX_S:\inf_{k\in\bN}(\tmu_S)_{\tx_S}^{\talpha_S^{-k\bfn}.\cA_S}(\Omega_2)>1-\epsilon_2\Big\},\end{equation} then $\tmu_S(\Omega_3)>1-C\epsilon_2$.
 
We fix $\epsilon_1>0$, which decides the value of $\delta_1$. Choose $0<\epsilon_2<\min(\frac12, \delta_1,\frac{1-\epsilon_1}{2C})$. Since for any $k>0$, $\tmu_S(\talpha_S^{-k\bfn}.\Omega_1)=\tmu_S(\Omega_1)>1-\epsilon_1$,  $$\tmu_S\big((\talpha_S^{-k\bfn}.\Omega_1)\cap\Omega_2\cap\Omega_3\big)>1-\epsilon_1-C\epsilon_2^2-C\epsilon_2>1-\epsilon_1-2C\epsilon_2>0,$$ and in particular there is a sequence of points $\tx_{S,k}\in(\talpha_S^{-k\bfn}.\Omega_1)\cap\Omega_2\cap\Omega_3$.

Consider the probability measure $(\tmu_S)_{\tx_{S,k}}^{\talpha_S^{-k\bfn}.\cA_S}$ on $[\tx_{S,k}]_{\talpha_S^{-k\bfn}.\cA_S}$. With respect to this probability, at least a $\delta_1$-portion of points are out of $V_0.\tx_{S,k}$ by the construction of $\Omega_1$, and at most an $\epsilon_2$-portion is out of $\Omega_2$ as $\tx_S\in\Omega31$. Because $\epsilon_2<\delta_1$, there is $\ty_{S,k}\in [\tx_{S,k}]_{\talpha_S^{-k\bfn}.\cA_S}$ such that $\ty_{S,k}\notin V_0.\tx_{S,k}$ but $\ty_{S,k}\in\Omega_2$.

Because $[\tx_{S,k}]_{\talpha_S^{-k\bfn}.\cA_S}\subset(\alpha^{-k\bfn}.B_1^V).\tx_{S,k}$ and  $\alpha^\bfn$ exponentially expands $V$, we actually proved:
\begin{lemma}\label{ShearPair}There are pairs of points $\tx_{S,k}, \ty_{S,k}\in\Omega_2$ such that $\ty_{S,k}\in (B_{a^{-k}}^V\backslash V_0).\tx_{S,k}$ for some $a>1$.\end{lemma}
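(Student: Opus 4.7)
The plan is to collect the construction developed in the preceding paragraphs and close it with a Lyapunov contraction estimate. First I would recall the pairs that have already been produced: for each $k$ there is $\tx_{S,k}\in (\talpha_S^{-k\bfn}.\Omega_1)\cap\Omega_2\cap\Omega_3$ (the intersection is non-empty because the three measures sum to more than $1$ thanks to the choice $\epsilon_2<\min(\delta_1,(1-\epsilon_1)/(2C))$), and a partner $\ty_{S,k}$ lying in the $\talpha_S^{-k\bfn}.\cA_S$-atom of $\tx_{S,k}$ with $\ty_{S,k}\in\Omega_2$ and $\ty_{S,k}\notin V_0.\tx_{S,k}$. The existence of such a $\ty_{S,k}$ was forced by the two-sided pigeonhole between the lower bound $\delta_1$ on the mass outside $V_0.\tx_{S,k}$ (from $\tx_{S,k}\in\talpha_S^{-k\bfn}.\Omega_1$ and \eqref{ShearGeneEq1}) and the upper bound $\epsilon_2<\delta_1$ on the mass outside $\Omega_2$ (from $\tx_{S,k}\in\Omega_3$). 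So both points lie in $\Omega_2$ by construction and it remains only to estimate the $V$-displacement of $\ty_{S,k}$ from $\tx_{S,k}$.

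Next I would use the subordination properties built into $\cA_S$ via Lemma \ref{SubordExist}, which guarantee that atoms of $\cA_S$ sit inside $B_1^V$-neighborhoods of their base points. Pushing forward by $\talpha_S^{-k\bfn}$ gives $[\tx_{S,k}]_{\talpha_S^{-k\bfn}.\cA_S}\subset (\alpha^{-k\bfn}.B_1^V).\tx_{S,k}$, so $\ty_{S,k}=v_k.\tx_{S,k}$ for a unique $v_k\in\alpha^{-k\bfn}.B_1^V$. Since $\ty_{S,k}\notin V_0.\tx_{S,k}$, we also have $v_k\notin V_0$.

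Finally I would invoke the coarse Lyapunov structure on $V=V^{[\chi]}$: by Definition \ref{CoarseLya} every Lyapunov exponent $\chi'$ of $\alpha$ appearing in $\gov$ is a positive multiple of $\chi$, and since $\chi(\bfn)>0$ these all satisfy $\chi'(\bfn)>0$. Letting $\theta>0$ be strictly smaller than $\min_{\chi'\in[\chi]}\chi'(\bfn)$, Proposition \ref{LyaDecomp}(2) applied to $\alpha^{-\bfn}$ yields a constant $C_0>0$ with $\alpha^{-k\bfn}.B_1^V\subset B_{C_0 e^{-k\theta}}^V$ for all $k\geq 1$. Setting $a:=e^{\theta/2}>1$ (and absorbing $C_0$ by enlarging $a$ at the cost of shifting $k$) gives $\alpha^{-k\bfn}.B_1^V\subset B_{a^{-k}}^V$ for all large $k$, hence $v_k\in B_{a^{-k}}^V\setminus V_0$, which is exactly the statement.

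The only genuine obstacle is upstream and has already been dealt with: it is the choice of $\epsilon_1,\epsilon_2,\delta_1$ that makes $(\talpha_S^{-k\bfn}.\Omega_1)\cap\Omega_2\cap\Omega_3$ non-empty and simultaneously forces the conditional-measure pigeonhole producing $\ty_{S,k}$. Once this is in place, the lemma itself is essentially a repackaging plus the standard exponential-contraction estimate for $\alpha^{-\bfn}$ on the coarse Lyapunov subgroup $V^{[\chi]}$.
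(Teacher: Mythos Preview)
Your proposal is correct and follows essentially the same line as the paper: the construction of $\tx_{S,k}$ and $\ty_{S,k}$ via the pigeonhole between the $\delta_1$ lower bound from $\Omega_1$ and the $\epsilon_2$ upper bound from $\Omega_3$ is exactly what precedes the lemma, and the containment $[\tx_{S,k}]_{\talpha_S^{-k\bfn}.\cA_S}\subset(\alpha^{-k\bfn}.B_1^V).\tx_{S,k}$ combined with exponential contraction of $\alpha^{-\bfn}$ on $V$ is precisely the concluding step. Your write-up just makes explicit the Lyapunov estimate (via Proposition~\ref{LyaDecomp}(2)) that the paper compresses into the phrase ``$\alpha^\bfn$ exponentially expands $V$''.
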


\subsection{The H-principle}\label{SecHprin} We are interested in how two points from the same $V$-foliation drift apart from each other under the dynamics of $\talpha_S^\bfp$. Such drifting is controled by Ratner's H-principle as described in the general lemma below.

\begin{lemma}\label{HPrinciple} Suppose $\{U^t\}$ is a unipotent one-parameter subgroup of $\SL(d,\bR)$. Then for all $\epsilon>0$, there is a constant $\kappa\in(0,1)$, such that for all $v\in \bR^d$ with $0<|v|\ll 1$, if $v$ is not fixed by $\{U^t\}$ then there exists $T>0$ and a subset $B\subset [0,T]$ with Lebesgue measure greater than $(1-\epsilon)T$, such that for all $t\in B$, $U^t.v$ can be written as $w+w^\bot$, where:\begin{enumerate}
\item $w$ is a fixed vector of the one-parameter group $\{U^t\}$;
\item $|w|\in[\kappa,\kappa^{-1}]$; and
\item $|w^\bot|\lesssim_d|v|^{\frac1d}$.\end{enumerate}\end{lemma}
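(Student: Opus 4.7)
The plan is to analyze the polynomial $U^tv$ explicitly and to isolate its fixed-vector component, using the nilpotency of the generator.

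First I would write $U^t=\exp(tN)$ for some nilpotent $N\in\mathfrak{sl}(d,\bR)$ with $N^d=0$, so that $U^tv$ is a polynomial of degree $<d$ in $t$:
\begin{equation*}
U^tv \;=\; \sum_{k=0}^{d-1}\frac{t^k}{k!}N^kv.
\end{equation*}
Since $v$ is not fixed by $\{U^t\}$, we have $Nv\neq 0$, so the integer $k_0:=\max\{k\ge 0:N^kv\neq 0\}$ satisfies $1\le k_0\le d-1$. Fix a linear projection $\pi_f:\bR^d\to\ker N$ along some chosen complementary subspace, with complement $\pi_c=\mathrm{Id}-\pi_f$, and set
\begin{equation*}
w(t):=\pi_f\bigl(U^tv\bigr),\qquad w^\perp(t):=\pi_c\bigl(U^tv\bigr).
\end{equation*}
Since $\ker N$ is precisely the fixed subspace of $\{U^t\}$, condition (1) holds automatically; moreover, the top-degree term $\frac{t^{k_0}}{k_0!}N^{k_0}v$ already lies in $\ker N$ (because $N^{k_0+1}v=0$), so it contributes entirely to $w(t)$.

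Next, I would normalize by choosing $T>0$ so that the leading fixed term has unit size, namely
\begin{equation*}
\frac{T^{k_0}}{k_0!}\bigl|N^{k_0}v\bigr| \;=\; 1,
\end{equation*}
and rescale $s=t/T\in[0,1]$. In this parametrization, $w(sT)$ becomes a polynomial $P(s)$ of degree $k_0$ in $s$ whose top coefficient is a unit vector times $1$, while $w^\perp(sT)$ is a polynomial $Q(s)$ of degree at most $k_0-1$. For the lower-order coefficients of both $P$ and $Q$, the bound $|N^j v|\le\|N\|^j|v|$ combined with the definition of $T$ yields
\begin{equation*}
\Bigl|\tfrac{(sT)^j}{j!}N^jv\Bigr| \;\lesssim_d\; T^{j-k_0}\bigl|N^{k_0}v\bigr|^{\,j/k_0-1}\cdot|v| \;\lesssim_d\; |v|^{1-j/k_0}
\end{equation*}
for $j<k_0$, using $|N^{k_0}v|\le\|N\|^{k_0}|v|$ to absorb negative powers. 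In particular $|w^\perp(t)|\lesssim_d|v|^{1/k_0}\le|v|^{1/d}$ for every $t\in[0,T]$, giving condition (3) unconditionally.

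The remaining task, and the real content, is to produce the set $B$ on which $|w(t)|\in[\kappa,\kappa^{-1}]$. The upper bound is easy: $|P(s)|$ is a polynomial with coefficients of size $\lesssim_d 1$, so $|w(t)|\le\kappa^{-1}$ throughout $[0,T]$ for $\kappa$ small enough depending only on $d$. The lower bound is the subtle step, and I would obtain it from a Remez-type inequality: a real polynomial of degree $\le d-1$ whose leading coefficient has size $\ge c>0$ cannot be smaller than a threshold $\kappa>0$ except on a set whose Lebesgue measure is bounded by a constant $C_d\kappa^{1/(d-1)}$ times the length of the interval. Applied to each scalar component of $P(s)$ (or, more cleanly, to $|P(s)|^2$ which is a polynomial with a definite top coefficient), this shows that the set of $s\in[0,1]$ with $|P(s)|<\kappa$ has measure $\le \epsilon$ provided $\kappa$ is chosen small enough in terms of $\epsilon$ and $d$. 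Letting $B\subset[0,T]$ correspond to the complement finishes the proof.

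The main obstacle is the Remez-style lower bound for $|w(t)|$: one must rule out the possibility that $P(s)$ spends a long time near zero, and this has to be done uniformly in the (bounded but otherwise arbitrary) lower-order coefficients of $P$. Everything else is bookkeeping with the nilpotent expansion, the choice of $T$, and the projection decomposition.
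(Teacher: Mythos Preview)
Your overall strategy (decompose $U^tv$ into its $\ker N$ component and a complement, pick $T$ to normalize, then use a polynomial smallness estimate for the lower bound on $|w|$) is the right one, and your Remez-type argument for the set $B$ is a perfectly good substitute for the Vandermonde argument the paper uses. The genuine gap is in your choice of $T$ and the resulting bound on $w^\perp$.

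You define $T$ by $\frac{T^{k_0}}{k_0!}|N^{k_0}v|=1$, normalizing only the top-degree term. Your displayed estimate for the lower-order terms then relies on $|v|\lesssim|N^{k_0}v|$, which is the wrong direction: only $|N^{k_0}v|\le\|N\|^{k_0}|v|$ holds, and the ratio $|v|/|N^{k_0}v|$ is unbounded. Concretely, take $d=4$, a single Jordan block with basis $e_0,\dots,e_3$ and $Ne_i=e_{i-1}$, and let $v=\eta e_2+\eta\delta e_3$ with $\delta\ll\eta^{1/2}$. Then $k_0=3$, $|N^{k_0}v|=\eta\delta$, and your $T=(6/(\eta\delta))^{1/3}$. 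The $e_1$-coefficient of $U^Tv$ (which lies in $w^\perp$) is $\eta T+\tfrac12\eta\delta T^2\ge \eta T\asymp\eta^{2/3}\delta^{-1/3}$, and this blows up as $\delta\to 0$ while $|v|^{1/d}\asymp\eta^{1/4}$ stays fixed. So with your $T$, conclusion (3) fails.

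The repair, which is what the paper does, is to choose $T$ as the first time \emph{any} of the monomial contributions reaches unit size: working in a Jordan basis with $v=\sum v_je_j$, set $T$ to be the smallest $t>0$ with $\max_j \frac{|v_j|t^j}{j!}=1$. This forces $T\le (j!)^{1/j}|v_j|^{-1/j}$ for every nonzero $v_j$, which is exactly the inequality needed to bound each term of $w^\perp$ by $|v|^{1/d}$. In the example above this gives $T=\sqrt{2/\eta}$ rather than your much larger value, and the bound on $w^\perp$ goes through. After fixing $T$ this way, your Remez argument for the lower bound on $|w|$ applies without change.
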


The lemma is pretty standard , for completeness we sktech the proof here, following the treatment in \cite{E06}. 

\begin{proof}By properly choosing a basis $U^1$ splitts into a direct sum $\oplus_{l=1}^LU_l^1$ on a splitting $\bR^d=\bigoplus_{l=1}^LF_l$ where each $U_l^1$ is a Jordan block with eigenvalue $1$. 

For each $l$, fix a basis $e_{l,0},\cdots,e_{l,d_l-1}$ of $F_l$ with $U^1.e_{l,i}=e_{l,i}+e_{l,i-1}$ unless $i=0$, in which case $U^1.e_{l,0}=e_{l,0}$.

Decompose $v=\sum_{l,i}v_{l,i}e_{l,i}$. Then $U^t.v=\sum_{l,i}(\sum_{j=i}^{ d_l-1}c_{j-i}v_{l,j}t^{j-i})e_{l,i}$ where $c_n=\frac1{n!}$. Denote the coefficient $\sum_{j=i}^{ d_l-1}c_{j-i}v_{l,j}t^{j-i}$ of $e_{l,i}$ by $f_{l,i}(t)$.

For sufficiently short $v\neq 0$, $|v_{l,i}|$ is short for all $1\leq l\leq L$ and $0\leq i\leq d_l-1$, and at least one $v_{l,i}$ is non-zero. Hence $|c_jv_{l,j}t^j|\ll 1$ at $t=0$. Let $T$ be the first positive value at which some $c_jv_{l,j}t^j$ has absolute value $1$.

It is a priori possible that $T=\infty$. In this case $v_{l,j}=0$ for all pairs $0\leq i<j\leq d_l-1$, which in turn means $v_{l,j}=0$ unless $j=0$. That implies $U^1.v=v$ and $U^t.v=v$ for all $t$, which has been excluded by assumption. Hence $T<\infty$.

Set $w(t)=\sum_lf_{l,0}(t)e_{l,0}$ and $w^\bot(t)=\sum_l\sum_{j=1}^{d_l}f_{l,j}(t)e_{l,j}$. Then $U^t.v=w(t)+w^\bot(t)$. Furthermore, as $e_{l,0}$ is fixed by $\{U^t\}$, so is $w(t)$. 

One estimates first the size of $f_{l,i}(t)$ for $0<i\leq d_l-1$.  For $i\leq j\leq d_l-1$, if $v_{l,j}\neq 0$ then $|c_jv_{l,j}T^j|\leq 1$. Therefore $T\leq c_j^{-\frac 1j}|v_{l,j}|^{-\frac 1j}$. Thus, if $i\geq 1$ and $t\in[0,T]$, then $|c_{j-i}v_{l,j}t^{j-i}|\lesssim_d |v_{l,j}|^{\frac ij}\leq |v_{l,j}|^{\frac 1j}\leq |v|^{\frac 1d}$. So $|f_{l,i}(t)|\lesssim_d|v|^{\frac1d}$,  for $0<i\leq d_l-1$ and $t\in[0,T]$. And therefore, $|w^\bot(t)|\lesssim_d|v|^{\frac1d}$ for $t\in[0,T]$.

It remains to show $1\lesssim_{d,\epsilon}|w(t)|\lesssim_{d,\epsilon}1$ for most values $t\in [0,T]$. Recall that y the choice of $T$, there is an index $l$ such that $|c_jv_{l,j}T^j|\leq 1$ for all $0\leq j\leq d_l-1$ and the equality holds for at least one $j$. The right part of the desired inequality follows trivially for all $t\in[0,T]$.

In order to obtain the left part, for $\kappa>0$ consider $B=\{t\in[0,T]: |f_{l,0}(t)|>\kappa\}$. We need that for some $\kappa=\kappa(\epsilon)>0$, the lebesugue measure of $B$ is at least $(1-\epsilon)T$. 

If the Lebesgue measure of $B$ is bounded by $(1-\epsilon)T$ then there are at least $2d_l-1$ disjoint subintervals in $[0,T]$ of length $\frac{\epsilon T}{2d_l}$ that intersect $B^c$. Order them in the usual way and pick all the ones who get odd numberings. Find a point from the intersection of $B^c$ with each of the $d_l$ subintervals picked. Then the chosen points $t_0<t_1<\cdots<t_{d_l-1}$ satisfy \begin{equation}\label{TestPtsGapEq}t_j-t_{j-1}>\frac{\epsilon T}{2d_l}.\end{equation}

Denote $s_j=\frac{t_j}T$, and write the Vandermonde matrix $A=(a_{jk})_{j,k=0}^{d_l-1}$ where $a_{jk}=s_j^k$. Let $b$ be a row vector with $b_j=c_jv_{l,j}T^j$, then $Ab=\big(f_{l,0}(t_0),\cdots,f_{l,0}(t_{d_l-1})\big)$. So all entries in $Ab$ have absolute values bounded by $\kappa$ since $t_j\notin B$. In other words $\frac{|Ab|}{|b|}\leq\kappa$ for at least one non-zero vector $b$. Furthermore, since $s_j\leq 1$, $\|A\|\lesssim_d 1$. It follows that $|\det A|\lesssim_d\kappa$. 

On the other hand, \begin{equation}\det A=\prod_{0\leq j< j'\leq d_l-1}(s_{j'}-s_j).\end{equation} By \eqref{TestPtsGapEq}, $s_{j'}-s_j>\frac\epsilon{2d_l}$ and thus $\det A\gtrsim_d\epsilon^{\frac{d_l(d_l-1)}2}$. Comparing this with the previous paragraph gives $\kappa\gtrsim_d \epsilon^{\frac{d_l(d_l-1)}2}$. So for $\kappa= C\epsilon^{\frac{d(d-1)}2}$ where $C$ is some constant depending only on $d$, the measure of  $B$ is at least $(1-\epsilon)T$. This completes the proof.\end{proof}

\subsection{The shearing argument} Assume two points $\tx_S, \ty_S\in\Omega_2\subset\tX_S$ are such that $\ty_S=(\exp v).\tx_S$, where $v\in\gov\backslash\gov_0$ and $|v|$ is very small. 

For $\bfp$ as in Proposition \ref{ShearProp}, we compare the difference between $\talpha_S^{t\bfp}.\tx_S$ and $\talpha_S^{t\bfp}.\ty_S$ for properly chosen values of $t$.

One can write $\tx_S=\overline{(\bfeta,\tx)}$ and $\ty_S=\overline{(\bfeta,(\exp v).\tx)}$ for some $\eta\in[0,1)^r$. For each $t$ there is $\bfm_t$ such that $\bfeta+t\bfp-\bfm_t\in[0,1)^r$. Then  $\talpha_S^{t\bfp}$ acts by $\talpha^{\bfm_t}$ on the $\tX$ fiber containing $\tx_S$. And thus $\talpha_S^{t\bfp}.\tx_S$ and $\talpha_S^{t\bfp}.\ty_S$ differ by $\alpha^{\bfm_t}.\exp(v)=\exp(\alpha^{\bfm_t}.v)$.

$\alpha^{\bfm_t}|_\gov=Z^{\bfm_t}U^{\bfm_t-t\bfp}U^{t\bfp}$ as in \S\ref{IsomSec}, and
\begin{equation}\label{ShearDiffEq}\talpha_S^{t\bfp}.\ty_S=\exp(Z^{\bfm_t}U^{\bfm_t-t\bfp}U^{t\bfp}.v).(\talpha_S^{t\bfp}.\tx_S)\end{equation} 

We obtain that:

\begin{lemma}\label{ShearLemma}There exist a compact subset $D\in\gov_0\backslash\{0\}$ and a constant $C>0$, both of which are independent of $\tx_S$ and $\ty_S$, such that given the pair $\tx_S$, $\ty_S$, there exists $t>0$, satisfying that:
\begin{enumerate}
\item $\talpha_S^{t\bfp}.\tx_S\in K$, $\talpha_S^{t\bfp}.\ty_S\in K$;
\item $\talpha_S^{t\bfp}.\ty_S=\exp(\alpha^{\bfm_t}w+u').(\talpha_S^{t\bfp}.\tx_S)$, where $w\in D$, and $|u'|\leq C|v|^\frac1d$;
\end{enumerate}\end{lemma}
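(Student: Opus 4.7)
The plan is to apply the H-principle (Lemma~\ref{HPrinciple}) to the unipotent one-parameter subgroup $\{U^{t\bfp}\}_{t\in\bR}$ acting on $\gov := \gov^{[\chi]}$, and to combine its output with the recurrence property built into $\Omega_2$. Since $v \in \gov \setminus \gov_0$ and $\gov_0 = \ker(U^\bfp - \Id)$ by Lemma~\ref{IsomFixed}, $v$ is not fixed by $\{U^{t\bfp}\}_{t\in\bR}$. Applying Lemma~\ref{HPrinciple} with $\epsilon = \epsilon_2$ yields $T > 0$ — which is large once $|v|$ is taken small enough, since the construction forces $T \gtrsim |v|^{-1/d}$ — and a set $B \subset [0,T]$ of Lebesgue measure at least $(1-\epsilon_2)T$ on which $U^{t\bfp}.v = w(t) + w^\bot(t)$ with $w(t) \in \gov_0$, $|w(t)| \in [\kappa,\kappa^{-1}]$, and $|w^\bot(t)| \lesssim_d |v|^{1/d}$. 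I would then take $D := \{w \in \gov_0 : \kappa \leq |w| \leq \kappa^{-1}\}$, a compact subset of $\gov_0 \setminus \{0\}$.

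The next step is to massage \eqref{ShearDiffEq}. Because $U^{-t\bfp}$ fixes $w(t)$ and $\bfq \mapsto U^\bfq$ is a group morphism, $U^{\bfm_t - t\bfp} w(t) = U^{\bfm_t} U^{-t\bfp} w(t) = U^{\bfm_t} w(t)$. Substituting this into \eqref{ShearDiffEq} yields
\begin{equation*}
\alpha^{\bfm_t}.v \;=\; Z^{\bfm_t} U^{\bfm_t - t\bfp}\bigl(w(t) + w^\bot(t)\bigr) \;=\; \alpha^{\bfm_t} w(t) \;+\; u'(t),
\end{equation*}
where $u'(t) := Z^{\bfm_t} U^{\bfm_t - t\bfp} w^\bot(t)$. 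To bound $|u'(t)|$, I use that both $Z^{\bfm_t}$ and $U^{\bfm_t - t\bfp}$ have operator norm on $\gov$ bounded independently of $t$: each eigenvalue of $Z^{\bfm_t}$ has modulus $e^{\chi'(\bfm_t)}$ with $\chi'$ proportional to $\chi$, and $\chi'(\bfm_t) = \chi'(\bfm_t - t\bfp)$ since $\bfp \in \ker\chi$, while $\bfm_t - t\bfp \in (-1,1)^r$ is bounded; and $U^{\bfm_t - t\bfp}$ is bounded since $\bfq \mapsto U^\bfq$ is continuous. Hence $|u'(t)| \leq C |v|^{1/d}$ for a constant $C$ independent of the pair $\tx_S, \ty_S$.

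Finally, to select $t$, I would intersect $B$ with the two return sets $R_x := \{t \in [0,T] : \talpha_S^{t\bfp}.\tx_S \in K\}$ and $R_y := \{t \in [0,T] : \talpha_S^{t\bfp}.\ty_S \in K\}$. Each of the latter has Lebesgue measure at least $(1-\epsilon_2)T$ by \eqref{RecurLuzinEq}, provided $T \geq 1$, which is guaranteed once $|v|$ is small enough. Since $3\epsilon_2 < 1$, the triple intersection $B \cap R_x \cap R_y$ has positive measure, and any $t > 0$ from it satisfies both conditions (1) and (2) of the lemma. The main obstacle is the algebraic reconciliation between the H-principle's decomposition, which is internal to the $\bR$-action $\{U^{t\bfp}\}$, and the $\bZ^r$-dynamics $\alpha^{\bfm_t}$; the key identity $Z^{\bfm_t} U^{\bfm_t - t\bfp} w(t) = \alpha^{\bfm_t} w(t)$ ultimately rests on $w(t)$ being fixed by the entire one-parameter group, i.e., on the identification in Lemma~\ref{IsomFixed}.
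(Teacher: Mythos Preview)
Your proof is correct and follows essentially the same approach as the paper's: apply the H-principle (Lemma~\ref{HPrinciple}) to $\{U^{t\bfp}\}$ acting on $v$, intersect the resulting good set with the recurrence sets from $\Omega_2$, and bound the error term $u'$ using the compactness of $Z^{\bfm_t}U^{\bfm_t-t\bfp}$ in $\GL(\gov)$. The only cosmetic differences are that the paper introduces a separate parameter $0<\epsilon<1-2\epsilon_2$ (whereas you take $\epsilon=\epsilon_2$, which is fine since $3\epsilon_2<1$), and you make the identity $Z^{\bfm_t}U^{\bfm_t-t\bfp}w(t)=\alpha^{\bfm_t}w(t)$ more explicit than the paper does.
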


\begin{proof}Fix $0<\epsilon<1-2\epsilon_2$, and some $\kappa<1$ that is decided by $\epsilon$, Lemma \ref{HPrinciple} gives a value $T>0$ such that for at least a $(1-\epsilon)$-portion $B$ of $t\in[0,T]$, $U^{t\bfp}.v$ can be written as $w+w'$ where $w\in\gov_0$ with $|w|\in[\kappa,\kappa^{-1}]$, and  $w'\in O_d(|v|^{\frac1d})$.  $T>1$ if $|v|$ is sufficiently small.

On the other hand, because $\tx_S,\ty_S\in\Omega_2$, by \eqref{RecurLuzinEq} there is a $(1-\epsilon_2)$-portion $B_{\tx_S}$ of $t\in[0,T]$ for which $\talpha_S^{t\bfp}.\tx_S\in K$, and the same is true for a $(1-\epsilon_2)$-portion $B_{\ty_S}$ if one replaces $\tx_S$ by $\ty_S$. As $\epsilon+2\epsilon_2<1$, it follows that $B\cap B_{\tx_S}\cap B_{\ty_S}$ is non-empty.

Take a value $t$ from this intersection. Then part (1) of the lemma is satisfied.

Finally, let $u'=Z^{\bfm_t}U^{\bfm_t-t\bfp}w'$. Since $Z^{\bfm_t}U^{\bfm_t-t\bfp}$ and $w$ take value from compact sets respectively in $\GL(\gov)$ (by the proof of Lemma \ref{IsomInv}) and $\gov_0\backslash\{0\}$, and $w'\lesssim_d|v|^{\frac1d}$, part (2) follows.
\end{proof}

\begin{proof}[Proof of Proposition \ref{ShearProp}] Now by Lemma \ref{ShearPair}, we can apply Lemma \ref{ShearLemma} to a sequence of pairs $(\tx_{S,k}, \ty_{S,k})$ with $\ty_{S,k}=(\exp v_k).\tx_{S,k}$ where $v_k\in\gov\backslash\gov_0$ and $|v_k|\rightarrow 0$ as $k$ grows. Let $t_k$, $w_k$ and $u'_k$ be as given by the lemma. 

Then $u'_k\rightarrow 0$ because $|u'_k|\leq C|v_k|^{\frac1d}$. For all $k$, $\alpha^{\bfm_{t_k}}|_{\gov_0}$ is from some given compact set $D_*\subset\Aut(\gov_0)$ by Lemma \ref{IsomCpct}, and $u_k\in D$ for all $k$. By passing to a subsequence we can assume they have limits $A\in D_*$ and $u\in D$.

On the other hand, $\talpha_S^{t_k\bfp}.\tx_{S,k}, \talpha_S^{t_k\bfp}.\ty_{S,k}\in K$ for all $k$, again by taking a subsequence, we assume there are limits $\tx'_S$ and $\ty'_S$. 

Therefore because $\talpha_S^{t_k\bfp}.\ty_{S,k}=\exp(\alpha^{\bfm_{t_k}}|_{\gov_0}w+u'_k).(\talpha_S^{t_k\bfp}.\tx_{S,k})$, by taking limit we have
\begin{equation}\ty'_S=(\exp u).\tx'_S.\end{equation}          

It follows from this, \eqref{SuspLeafEq} and Proposition \ref{LeafMeas} that
\begin{equation}\label{ShearLeafEq0}(\tmu_S)_{\tx'_S}^{V_0}\simeq(\tau_{\exp u})_*(\tmu_S)_{\ty'_S}^{V_0}.\end{equation}

By \eqref{SuspActionEq}, $\alpha^{-\bfm_{t_k}}_*(\tmu_S)_{\talpha_S^{t_k\bfp}.\tx_{S,k}}=(\tmu_S)_{\tx_{S,k}}^{V_0}$ and $\alpha^{-\bfm_{t_k}}_*(\tmu_S)_{\talpha_S^{t_k\bfp}.\ty_{S,k}}=(\tmu_S)_{\ty_{S,k}}^{V_0}$. Recall that $\tx_{S,k},\ty_{S,k}\in\Omega_2\subset K$ and $(\tmu_S)_{\tx_S}^{V_0}$ varies uniformly continuously for $\tx_S\in K$. Thus because $|v_k|\rightarrow 0$, \begin{equation}\dist_{\cM_1^0(V_0)}\Big(\alpha^{-\bfm_{t_k}}_*(\tmu_S)_{\talpha_S^{t_k\bfp}.\tx_{S,k}}^{V_0}, \alpha^{-\bfm_{t_k}}_*(\tmu_S)_{\talpha_S^{t_k\bfp}.\ty_{S,k}}^{V_0}\Big)\rightarrow 0.\end{equation}

Because the group automorphism $\alpha^{\bfm_{t_k}}|_{V_0}$ takes value from a compact subset of $\Aut(V_0)$ for all $t$, it follows that \begin{equation}\label{ShearLeafEq1}\dist_{\cM_1^0(V_0)}\Big((\tmu_S)_{\talpha_S^{t_k\bfp}.\tx_{S,k}}^{V_0}, (\tmu_S)_{\talpha_S^{t_k\bfp}.\ty_{S,k}}^{V_0}\Big)\rightarrow 0.\end{equation}

Moreover, since $\talpha_S^{t_k\bfp}.\tx_{S,k}, \talpha_S^{t_k\bfp}.\ty_{S,k}$ are also in $K$ and converge respectively to $\tx'_S$ and $\ty'_S$, $(\tmu_S)_{\tx'_S}^{V_0}=\lim_{k\rightarrow\infty}(\tmu_S)_{\talpha_S^{t_k\bfp}.\tx_{S,k}}^{V_0}$, and a similar equality holds at $\ty'_S$. Therefore we deduce from \eqref{ShearLeafEq1}
that \begin{equation}\label{ShearLeafEq2}(\tmu_S)_{\tx'_S}^{V_0}=(\tmu_S)_{\ty'_S}^{V_0}.\end{equation}

Comparing \eqref{ShearLeafEq2} with \eqref{ShearLeafEq0}, we see $(\tmu_S)_{\tx'_S}^{V_0}$ is invariant in the $\simeq$ sense by the right translation by $\exp u$. As $\tx'_S\in K$ and $u\in\gov_0\backslash\{0\}$, this contradicts  \eqref{ShearNoTransEq}.

We conclude that the main assumption in this section, namely that $\tmu_\tx^{V_0}$ has no translational invariance property for a $\tmu$-positive portion of $\tx\in\tX$, cannot hold. This establishes Proposition \ref{ShearProp}.\end{proof}

\section{Case of zero entropy fibers}\label{SecEnt0Fiber}

The last case we need to treat is:
 
\begin{proposition}\label{Ent0Fiber}In the settings of \S\ref{SecOutline}, assume in addition that:
\begin{enumerate}
\item $h_\tmu(\talpha^\bfn|\cB_\dtX)=0$ for all $\bfn\in\bZ^r$;
\item $\tmu$ has isometric support along all coarse Lyapunov subgroups.
\end{enumerate}
Then Property \ref{TransInvProp} holds.
\end{proposition}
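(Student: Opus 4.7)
The plan is to pin down the structure of $\tmu$ forced by the three hypotheses in play, and then read off the desired translation invariance. The inductive hypothesis gives $\dtmu=\rmm_\dX\times\nu$, so the base is Haar in the $\dX$-factor; the zero conditional entropy hypothesis forces the leafwise measures in the vertical direction $Z$ to be Dirac; and combined with isometric support, the leafwise measure along some non-trivial coarse Lyapunov subgroup ends up supported on a measurable section pushing Haar forward. A recurrence argument along the suspension flow then extracts the translation.

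First I would record the entropy input. Proposition \ref{EntContri}(2) has an evident analogue with $\cB_\dtX$ in place of $\cB_Y$: for any $\alpha$-invariant closed subgroup $V\subset Z\cap G_\bfn^u$, one has $h_\tmu(\talpha^\bfn,V)\le h_\tmu(\talpha^\bfn\mid\cB_\dtX)$, proved by the same subordinate $\sigma$-algebra argument since $V\subset Z$ acts vertically on the factor $\pi:\tX\to\dtX$. Combined with hypothesis (1) and Proposition \ref{EntContri}(1), this yields $\tmu_\tx^V=\delta_e$ almost surely. Varying $\bfn$ and applying Corollary \ref{NonPropLya} to the higher-rank action $\alpha|_Z$ (we are not in Case II, so $\alpha|_Z$ is not virtually cyclic), I obtain $\tmu_\tx^{V^{[\chi]}\cap Z}=\delta_e$ for every coarse Lyapunov class $[\chi]$.

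Next, fix a non-zero coarse Lyapunov class $[\chi]$ whose coarse Lyapunov subgroup $V^{[\chi]}$ projects non-trivially to $\dG$. Such a class exists because $\dG\ne\{e\}$ (else the action on $X$ would be virtually cyclic, which is excluded) and every coarse Lyapunov of the factor action on $\dG$ lifts. The standard compatibility of leafwise measures with equivariant factor maps (obtained by descending a $V^{[\chi]}$-subordinate $\sigma$-algebra from Lemma \ref{SubordExist} to a subordinate one on $\dtX$) gives $\pi_*\tmu_\tx^{V^{[\chi]}}\propto\dtmu_\dtx^{\dot V^{[\chi]}}$, and the right-hand side is Haar on $\dot V^{[\chi]}$ because $\dtmu=\rmm_\dX\times\nu$ is translation invariant in the $\dX$-factor. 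Together with hypothesis (2), which gives $\supp\tmu_\tx^{V^{[\chi]}}\subset V_0^{[\chi]}$, and the triviality from the previous step applied to $V_0^{[\chi]}\cap Z$ (the kernel of $\pi|_{V_0^{[\chi]}}$), disintegration over the Haar projection yields Dirac fibre measures on cosets of $V_0^{[\chi]}\cap Z$; consequently $\tmu_\tx^{V^{[\chi]}}=(s_\tx)_*\mathrm{Haar}_{\dot V^{[\chi]}}$ for a measurable section $s_\tx:\dot V^{[\chi]}\to V_0^{[\chi]}$ of $\pi$.

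The final step is to extract a non-trivial translation from this section structure. Choose $\bfp\in\ker\chi$; by Lemma \ref{IsomCpct} the automorphisms $\alpha^{\bfm_t}|_{V_0^{[\chi]}}$ range in a compact subgroup $K\subset\Aut(V_0^{[\chi]})$, and by \eqref{SuspActionEq} the family $\tx\mapsto s_\tx$ is equivariant along $\talpha_S^{t\bfp}$ via conjugation by these bounded automorphisms. Combining Luzin's theorem applied to $\tx\mapsto s_\tx$ with Poincar\'e recurrence for $\talpha_S^{t\bfp}$, I can pass to a subsequential limit and obtain an identity $A\circ s_\tx\circ A^{-1}=s_\tx$ for some $A\in K$, accompanied by a section-level comparison of the form $s_\tx(\dot w\cdot\pi(v))=s_\tx(\dot w)\cdot v$ for an element $v\in V^{[\chi]}$ read off from the limiting displacement. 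This second identity is exactly the invariance $(\tau_v)_*\tmu_\tx^{V^{[\chi]}}\simeq\tmu_\tx^{V^{[\chi]}}$ needed for Property \ref{TransInvProp}, provided $v\ne e$. The main obstacle is the decoupling of the automorphism $A$ from the translation $v$: one must select recurrence times so that the limiting $A$ is the identity while $v$ remains non-trivial. I would arrange this by exploiting the higher-rank structure of $\alpha|_Z$ (from being in Case IV, i.e.\ Case A = No) to select several independent directions $\bfp$ in $\ker\chi$, and use the rational independence of the associated unit eigenvalues (as in the root-of-unity argument in the proof of the lemma before Corollary \ref{NonPropLya}) to kill the $K$-component of the limit while retaining a non-zero displacement.
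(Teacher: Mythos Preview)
Your approach has a genuine gap in the final step. The claim ``we are not in Case~II, so $\alpha|_Z$ is not virtually cyclic'' is incorrect: Proposition~\ref{Ent0Fiber} assumes only conditions (1) and (2), corresponding to answers B$=$Yes and C$=$Yes in the case table of \S\ref{SecOutline}, and those four cases are not mutually exclusive (each combination of answers is covered by \emph{at least one} case). Hence in proving Case~IV you may not assume $\alpha|_Z$ has rank $\geq 2$. But this is precisely the hypothesis your Step~3 invokes to resolve what you yourself identify as the ``main obstacle'': choosing recurrence times so that the limiting automorphism $A\in K$ becomes the identity while the displacement $v$ remains non-trivial. Without higher rank on $Z$ your proposed fix evaporates, and even with it the argument is only a heuristic---you give no mechanism ensuring that the two demands (killing $A$, keeping $v\neq e$) can be met simultaneously.

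The paper takes an entirely different route and never attempts a section-over-Haar description of $\tmu_\tx^{V^{[\chi]}}$. It fixes $V=V^{[\chi]}$ with $h_\tmu(\talpha^\bfn,V)>0$, introduces the $\sigma$-algebra $\cU$ generated by $\tx\mapsto\tmu_\tx^V$, and passes to the graph extension $(\tX_\cU,\tmu_\cU)$ over an auxiliary factor $Y_\cU$. Choosing $\bfm$ close to $\ker\chi$ (Lemma~\ref{AlmostFreeze}) makes $h_\tmu(\talpha^\bfm,V)<(\dim V)\epsilon|\bfm|$ while every algebraic factor of $\dX$ retains entropy $\geq\theta|\bfm|$; this lets the inductive hypothesis apply to $(\dtX_\cU,\dtmu_\cU)$, forcing it to be Haar in the $\dX$-component. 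Matching total entropies---this is where hypothesis~(1) enters, via $h_\tmu(\talpha^\bfm|\cB_Y)=h_{\rmm_\dX}(\alpha^\bfm)$---then yields $h_{\tmu_\cU}(\talpha_\cU^\bfm,V)=h_\tmu(\talpha^\bfm,V)>0$, so $(\tmu_\cU)_{\Phi(\tx)}^V$ is non-atomic. Since $\cU$-atoms are level sets of $\tmu_\cdot^V$, any non-trivial $v\in V$ with $\Phi(v.\tx)$ in the support of this leafwise measure and in $\Phi(\tX')$ satisfies $\tmu_{v.\tx}^V\simeq\tmu_\tx^V$, whence $(\tau_v)_*\tmu_\tx^V\simeq\tmu_\tx^V$ by Proposition~\ref{LeafMeas}(2).
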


In the rest of this section, we will work under the assumptions of the proposition.

\subsection{The product structure}\label{SecProd} Making use of the isometric assumption on each coarse Lyapunov subgroup, we may get a product-lemma of Einsiedler-Katok type \cite{EK03}. Especially, this will show that the total entropy of any group element can be decomposed as the sum of entropy contributions from all unstable coarse Lyapunov subgroups.

\begin{definition}\label{Admissible} A pair $(\gov,\gou)$, where $\gov=\gov^{[\chi]}$ is a coarse Lypunov subspace, and $\gou$ is a direct sum of coarse Lyapunov subspaces, is called admissible if:
\begin{enumerate}
\item $\gou$ is a Lie subalgebra;
\item $\gou$ is normalized by $\gov$;
\item There exists $\bfp\in\ker\chi$ such that $\chi'(\bfp)>0$ for all non-trivial coarse Lyapunov subgroups $\gov^{[\chi']}\subset\gou$.
\end{enumerate} \end{definition}

Let $[\chi]$ be the Lyapunov exponent class of $V=V^{[\chi]}$. Since $\gov$ normalizes $\gou$, $VU=UV$ is also a Lie subgroup. 

\begin{lemma}\label{ProdLemma}Suppose $(\gov,\gou)$ is an admissible pair and $\tmu$ is a $\talpha$-invariant probability measure on $\tX$ that has isometric support along $V$.

Then there is a subset $\tX'\subset\tX$ with $\tmu(\tX')=1$, such that:\begin{enumerate}
\item If $u\in U$, and $\tx$ and $u.\tx$ both belong to $\tX'$, then $\tmu_\tx^V\simeq\tmu_{u.\tx}^V$;
\item If $\tx\in\tX'$, then $\tmu_\tx^{VU}\simeq\tmu_\tx^V\tmu_\tx^U$, by which we mean the pushforward of $\tmu_\tx^V\times \tmu_\tx^U$ by $(v,u)\mapsto vu$.
\item If $\bfn\in\bZ^r$ satisfies $V, U\subset G_\bfn^u$, then $h_\tmu(\talpha^\bfn|VU)=h_\tmu(\talpha^\bfn,V)+h_\tmu(\talpha^\bfn,U)$.
\end{enumerate}\end{lemma}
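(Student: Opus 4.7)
The overall strategy is a standard Einsiedler--Katok-type product lemma. By admissibility there exists $\bfp\in\ker\chi$ such that $\chi'(\bfp)>0$ for every coarse Lyapunov exponent $[\chi']$ occurring in $\gou$, while the isometric-support assumption forces $\supp\tmu_\tx^V\subset V_0^{[\chi]}$. Thus under the suspension flow $\talpha_S^{-t\bfp}$ (which I will use throughout, so that $U$ is stable and $V$ essentially isometric), two points on the same $U$-orbit are brought exponentially close, while the fiberwise actions on $V_0^{[\chi]}$ stay in a fixed compact subset of $\Aut(V_0^{[\chi]})$ by Lemma \ref{IsomCpct}. This is exactly the configuration that allows a Luzin/recurrence argument.

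For part (1), I would first choose a Luzin set $K\subset\tX_S$ of arbitrarily large $\tmu_S$-measure on which $\tx_S\mapsto (\tmu_S)_{\tx_S}^V$ is uniformly continuous in $\cM_1^0(V)$, then use the maximal ergodic theorem for $\talpha_S^{-t\bfp}$ to define an auxiliary set $\Omega\subset K$ of points that spend most of their forward $\bfp$-orbit in $K$. Given $\tx,u.\tx$ in a full-measure set $\tX'$, an elementary two-set argument (as in Proposition \ref{PropRk1Fiber}) produces a sequence $t_k\to+\infty$ such that both $\talpha_S^{-t_k\bfp}.\tx_S$ and $\talpha_S^{-t_k\bfp}.(u.\tx_S)$ lie in $K$. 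Since $\alpha^{\bfm_{-t_k}}.u$ decays to $0$ (because $\chi'(\bfm_{-t_k})\to-\infty$ for every $\gov^{[\chi']}\subset\gou$), uniform continuity yields $(\tmu_S)_{\talpha_S^{-t_k\bfp}.\tx_S}^V=(\tmu_S)_{\talpha_S^{-t_k\bfp}.(u.\tx_S)}^V$ in the limit. Pulling back via \eqref{SuspActionEq} and extracting a convergent subsequence of $\alpha^{\bfm_{-t_k}}|_{V_0}$ inside the compact set of Lemma \ref{IsomCpct} gives $\tmu_\tx^V\simeq\tmu_{u.\tx}^V$.

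For part (2), I would characterize $\tmu_\tx^{VU}$ by the uniqueness in Proposition \ref{LeafMeas}. Since $\gou$ is a Lie subalgebra normalized by $\gov$, the product map $(v,u)\mapsto vu$ identifies $V\times U$ with $VU$ as a smooth manifold, and $B_\delta^V\cdot B_\delta^U$ is comparable to $B_\delta^{VU}$. Let $\tnu_\tx$ denote the pushforward of $\tmu_\tx^V\times\tmu_\tx^U$ under multiplication. Using part (1) together with Proposition \ref{LeafMeas}.(2) applied separately to $V$ and $U$, I verify that $\tx\mapsto\tnu_\tx$ satisfies the transitivity relation $(\tau_h)_*\tnu_{h.\tx}\simeq\tnu_\tx$ for $h\in VU$ almost everywhere; the only new input beyond the individual equivariances of $\tmu_\tx^V$ and $\tmu_\tx^U$ is the $U$-invariance of $\tmu_\tx^V$ modulo proportionality, which is exactly (1). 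Combined with a $VU$-subordinate $\sigma$-algebra obtained by intersecting a $V$-subordinate one with a $U$-subordinate one (as in Lemma \ref{SubordExist}), uniqueness of leafwise measures then forces $\tmu_\tx^{VU}\simeq\tnu_\tx$.

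For part (3), the additivity of entropy contributions follows directly from (2) and Lemma \ref{EntContriAtX}: since $\talpha^{-k\bfn}.B_1^{VU}$ is comparable to $(\talpha^{-k\bfn}.B_1^V)\cdot(\talpha^{-k\bfn}.B_1^U)$ up to uniformly bounded multiplicative constants in the respective Haar metrics, and $\tmu_\tx^{VU}$ is the pushforward of $\tmu_\tx^V\times\tmu_\tx^U$, we obtain
\[
-\tfrac{1}{k}\log\tmu_\tx^{VU}(\talpha^{-k\bfn}.B_1^{VU})
=-\tfrac{1}{k}\log\tmu_\tx^{V}(\talpha^{-k\bfn}.B_1^{V})-\tfrac{1}{k}\log\tmu_\tx^{U}(\talpha^{-k\bfn}.B_1^{U})+o(1),
\]
and integrating the resulting equality $D_\tmu(\talpha^\bfn,VU)=D_\tmu(\talpha^\bfn,V)+D_\tmu(\talpha^\bfn,U)$ in $\tx$ yields the additivity.

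I expect the main obstacle to be the rigorous proof of (2): one must handle null-set issues carefully when invoking (1) inside the characterization of $\tmu_\tx^{VU}$, since the condition ``$u.\tx\in\tX'$'' fails on a codimension-$\dim U$ subset of each $VU$-leaf, and one has to show this is compatible with the Radon structure of the leafwise measures. The cleanest path is probably to follow the treatment in \cite{EL10}*{\S8} adapted to the present conditional setting (with the $Y$-factor being ignored by all $V$- and $U$-directions), using a Fubini argument at the level of the subordinate $\sigma$-algebras rather than arguing pointwise on leaves.
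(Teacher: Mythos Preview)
Your proposal is correct and follows essentially the same approach as the paper: part~(1) is proved exactly via the suspension/Luzin/maximal-ergodic-theorem argument you describe, invoking Lemma~\ref{IsomCpct} to control $\alpha^{\bfm_{-t_k}}|_{V_0}$; part~(2) is deferred to \cite{EL10}*{Cor.~8.8} (precisely the \S8 Fubini argument you anticipate); and part~(3) is obtained from~(2) by comparing $\alpha^{-k\bfn}.B_1^{VU}$ with the product ball $(\alpha^{-k\bfn}.B_1^V)(\alpha^{-k\bfn}.B_1^U)$ just as you outline.
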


\begin{proof}(1) Passing to the suspension $\tX_S$, let $\tmu_S$ be defined by \eqref{SuspMeasEq}. We show that there is a full measure subset $\tX'_S\subset\tX_S$, such that if  $\tx_S, u.\tx_S$ both lie in $\tX'_S$ for some $u\in U$, then $(\tmu_S)_{\tx_S}^{V_0}=(\tmu_S)_{u.\tx_S}^{V_0}$. The proof is similar to that of Proposition \ref{PropRk1Fiber}.

Fix $\epsilon>0$, there is a compact Luzin set $K\subset\tX_S$ such that $(\tmu_S)_{\tx_S}^V$ varies uniformly continuously on $K$ and $\tmu_S(K)>1-\epsilon^2$. Let $\Omega=\big\{\tx_S: \inf_{T\geq 1}\frac1T\int_{t=0}^{T}\bfone_K(\talpha_S^{-t\bfp}.\tx_S)>1-\epsilon\big\}$. By the isometric support hypothesis on $\tmu_\tx^V$ for $\tmu$-almost every $\tx$,  we may modify $\Omega$ by removing a null set, such that for every $\tx_S\in\Omega$, $(\tmu_S)_{\tx_S}^V$ is supported on the $\talpha_S^\bfp$-isometric subgroup $V_0\subset V$.  

$\tmu_S\Big(\big\{\tx_S: \sup_{T\geq 1}\frac1T\int_{t=0}^{T}\bfone_{K^c}(\talpha_S^{-t\bfp}.\tx_S)\geq \epsilon\big\}\Big)\leq C\cdot\frac{\tmu_S(K^c)}\epsilon<C\epsilon$ for an absolute constant $C$ by the maximal ergodic theorem. So $\tmu_S(\Omega)\geq 1-C\epsilon$.

Suppose $\tx_S$ and $u.\tx_S$ are both in $\Omega$, then for all $T$, there is a $1-2C\epsilon$ portion of $t\in[0,T]$ such that both $\talpha_S^{-t\bfp}.\tx_S$ and $\talpha_S^{-t\bfp}.(u.\tx_S)$ are in $K$. In particular, there is some $t\geq (1-2C\epsilon)T$ for which this is true. If $\epsilon<\frac1{2C}$, by letting $T\rightarrow\infty$ there are arbitrarily large values of $t$ that satisfy this requirement.

Choose a sequence of such $t_k$'s that grow to $\infty$. By discussions in \S\ref{SecSuspension}, for each $t_k$, there is an integer vector $\bfm_{-t_k}\in-t_k\bfp+(-1,1)^r$, such that $\talpha^{-t_k\bfp}$ sends the $\tX$ fiber containing $\tx_S$ to the one containing $\talpha_S^{-t_k\bfp}.\tx_S$ by $\talpha^{\bfm_{-t_k}}$. Especially,  $\talpha_S^{-t_k\bfp}.(u.\tx_S)=(\alpha^{\bfm_{-t_k}}.u).(\talpha_S^{-t_k\bfp}.\tx_S)$.  It follows from the remark at the end of \S\ref{SecSuspension} that, for all coarse Lyapunov exponents $\chi$ on $\gou$, $\chi(\bfm_{-t_k})$ is of bounded distance from $-\bft_k\chi(\bfp)$, and thus tends to $-\infty$ as $\chi(\bfp)<0$. So for $u\in U$, $\alpha^{\bfm_{-t_k}}.u\rightarrow e$ as $k\rightarrow\infty$, and \begin{equation}\label{ProdLemmaEq1}\dist\big(\talpha_S^{-t_k\bfp}.\tx_S,\talpha_S^{-t_k\bfp}.(u.\tx_S)\big)\rightarrow0.\end{equation}

By \eqref{SuspActionEq},
\begin{equation}\label{ProdLemmaEq2}\alpha^{\bfm_{-t_k}}_*(\tmu_S)_{\tx_S}^V\simeq(\tmu_S)_{\talpha_S^{-t\bfp}.\tx_S}^V;\end{equation}  
\begin{equation}\label{ProdLemmaEq3}\alpha^{\bfm_{-t_k}}_*(\tmu_S)_{u.\tx_S}^V\simeq(\tmu_S)_{\talpha_S^{-t\bfp}.(u.\tx_S)}^V.\end{equation}

Since $\talpha_S^{-t\bfp}.\tx_S$ and $\talpha_S^{-t\bfp}.(u.\tx_S)$ belong to the set $K$, where the leafwise measure along $V$ is a uniform continuous function, \eqref{ProdLemmaEq1}, \eqref{ProdLemmaEq2} and \eqref{ProdLemmaEq3} together imply that
\begin{equation}\label{ProdLemmaEq4}\dist_{\cM_1^0(V)}\big(\alpha^{\bfm_{-t_k}}_*(\tmu_S)_{\tx_S}^V,\alpha^{\bfm_{-t_k}}_*(\tmu_S)_{u.\tx_S}^V\big)\rightarrow 0.\end{equation}

Whereas, $(\tmu_S)_{\tx_S}^V$ and $(\tmu_S)_{\tx_S}^V$ are supported on $V_0$. Similar to what happens in the proof of Proposition \ref{ShearProp}, due to Lemma \ref{IsomCpct}, $\alpha^{\bfm_{-t_k}}|_{V_0}$ is from a fixed compact set of group automorphisms and may even be assumed to converge to some limit $A\subset\Aut(V_0)$. Then \eqref{ProdLemmaEq4} shows \begin{equation}\label{ProdLemmaEq5}(\tmu_S)_{\tx_S}^V\simeq(\tmu_S)_{u.\tx_S}^V,\text{ whenever }u\in U, \tx_S, u.\tx_S\in\Omega.\end{equation}

By taking a sequence $\epsilon_n\rightarrow 0$, there is a sequence $\{\Omega_n\}$ with $\tmu_S(\Omega_n)\rightarrow 1$ satisfying \eqref{ProdLemmaEq5}.   $\tX'_S=\bigcup_{n=1}^\infty\Omega_n$ gives us the subset we want. 

By the construction \eqref{SuspMeasEq}, for almost every $\bfeta\in[0,1)^r$, the intersection of $\tX'_S$ with the $\tX$ fiber $\{\overline{(\bfeta,\tx)}:\tx\in\tX\}$ has full measure with respect to $\tmu$. Part (1) of the lemma follows immediately, thanks to \eqref{SuspLeafEq}.

(2) The second part of the Lemma is \cite{EL10}*{Cor. 8.8}. 

(3) Define a neighborhood $B$ of identity in $VU$ by $B=B_1^V\cdot B_1^U$. Then \begin{equation}\alpha^{-k\bfn}.B=\big(\alpha^{-k\bfn}.B_1^V\big)\big(\alpha^{-k\bfn}.B_1^U\big).\end{equation}
Thanks to the product structure in (2), it follows that \begin{equation}\label{ProdLemmaEq31}\lim_{k\rightarrow\infty}-\frac1k\log\tmu_{\tx}^{VU}(\talpha^{-k\bfn}.B)=h_\tmu(\talpha^\bfn,V)+h_\tmu(\talpha^\bfn,U).\end{equation} Observe that because $\alpha^\bfn$ has positive Lyapunov exponents on $VU$, there exists a constant $s\in\bN$ such that $\talpha^{-s\bfn}.B\subset B_1^{VU}\subset \talpha^{s\bfn}.B$ and thus \begin{equation}\label{ProdLemmaEq32}\begin{split}\lim_{k\rightarrow\infty}-\frac1k\log\tmu_{\tx}^{VU}(\talpha^{-k\bfn}.B)=&\lim_{k\rightarrow\infty}-\frac1k\log\tmu_{\tx}^{VU}\Big(\talpha^{-k\bfn}.B_1^{VU}\Big)\\
=&h_\tmu(\talpha^\bfn, VU).\end{split}\end{equation}
Part (3) is deduced from \eqref{ProdLemmaEq31}, \eqref{ProdLemmaEq32}.
\end{proof}

In order to apply such product properties, we have the next lemma which produces admissible pairs.

\begin{lemma}\label{AdmiDecomp}Suppose $\gow\subset\gog$ is a Lie subalgebra that can be written as the direct sum of coarse Lyapunov subalgebras. If there is $\bfp\in\bR^r$ such that $\chi'(\bfp)>0$ for all $\gog^{[\chi']}\subset\gow$, then $\gow$ can be decomposed as $\gov\oplus\gou$ where $(\gov,\gou)$ is an admissible pair.\end{lemma}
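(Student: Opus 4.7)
The plan is to single out a coarse Lyapunov class that is \emph{extremal} in the convex cone spanned by all classes appearing in $\gow$, take $\gov$ to be the corresponding coarse Lyapunov summand, and collect all remaining summands into $\gou$.

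Concretely, I will write $\gow = \bigoplus_{[\chi'] \in \cL_\gow} \gov^{[\chi']}$, where $\cL_\gow \subset \cL$ denotes the set of coarse Lyapunov classes appearing in $\gow$. The hypothesis $\chi'(\bfp) > 0$ for all $[\chi'] \in \cL_\gow$ places these finitely many rays in an open half-space of $(\bR^r)^*$, so the cone $K = \operatorname{cone}(\cL_\gow)$ is pointed and polyhedral, hence admits at least one extremal ray, which must coincide with one of its generators. I choose $[\chi] \in \cL_\gow$ to be such an extremal ray and set $\gov := \gov^{[\chi]}$ and $\gou := \bigoplus_{[\chi'] \in \cL_\gow \setminus \{[\chi]\}} \gov^{[\chi']}$, so that $\gow = \gov \oplus \gou$.

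Conditions (1) and (2) of Definition \ref{Admissible} should follow routinely from Proposition \ref{LyaDecomp}(3). A non-trivial bracket $[\gov^{[\chi_1]}, \gov^{[\chi_2]}]$ lies in $\gov^{[\chi_1+\chi_2]}$, and because $\gow$ is a subalgebra that is already decomposed as a direct sum of coarse Lyapunov subspaces, non-triviality forces $[\chi_1+\chi_2] \in \cL_\gow$. Extremality of $[\chi]$ in $K$ will rule out $[\chi_1+\chi_2] = [\chi]$ when $[\chi_1], [\chi_2] \in \cL_\gow \setminus \{[\chi]\}$ (yielding condition (1) that $\gou$ is a subalgebra), while the non-proportionality of distinct coarse Lyapunov rays will rule out $[\chi + \chi_2] = [\chi]$ (yielding condition (2) that $\gov$ normalizes $\gou$).

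The substantive part is condition (3): producing $\bfq \in \ker\chi$ with $\chi'(\bfq) > 0$ for every $[\chi'] \in \cL_\gow \setminus \{[\chi]\}$. For this I will invoke convex separation. Extremality of $[\chi]$ in $K$ means $\chi$ lies outside the closed convex cone $K' = \operatorname{cone}\{\chi' : [\chi'] \in \cL_\gow \setminus \{[\chi]\}\}$, so the separating hyperplane theorem yields $\bfq_0 \in \bR^r$ with $\chi(\bfq_0) < 0$ and $\chi'(\bfq_0) \geq 0$ for every $[\chi'] \in \cL_\gow \setminus \{[\chi]\}$. A small perturbation $\bfq_\epsilon := \bfq_0 + \epsilon \bfp$ upgrades the weak inequalities to strict ones (using $\chi'(\bfp) > 0$) without spoiling $\chi(\bfq_\epsilon) < 0$, and the unique convex combination of $\bfp$ and $\bfq_\epsilon$ annihilated by $\chi$ — namely $\bfq = (1-s)\bfp + s\bfq_\epsilon$ with $s = \chi(\bfp)/(\chi(\bfp) - \chi(\bfq_\epsilon)) \in (0,1)$ — then lies in $\ker\chi$ and still satisfies $\chi'(\bfq) > 0$ for all $[\chi'] \neq [\chi]$ by convexity. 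This convex-geometric step is the only place the argument is not purely formal, and I expect it to be the main obstacle; once the extremal class $[\chi]$ is chosen correctly, everything else is bookkeeping.
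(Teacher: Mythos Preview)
Your proof is correct and is the convex-dual version of the paper's argument. The paper works in $\bR^r$: it considers the open polyhedral cone $C=\bigcap_{[\chi']\in\cL_\gow}\{\chi'>0\}$, picks a facet of $\overline C$ (which lies in $\ker\chi$ for some $[\chi]\in\cL_\gow$), and takes $\bfp'$ in the relative interior of that facet, so that $\chi(\bfp')=0$ and $\chi'(\bfp')>0$ for all other $[\chi']$ come for free. You work in $(\bR^r)^*$: you choose $[\chi]$ as an extremal ray of the cone $K$ generated by the $\chi'$, which by cone duality is exactly the same choice (facets of $C$ correspond to extremal rays of $K$), and then recover the paper's $\bfp'$ as your $\bfq$ via a separating-hyperplane-plus-perturbation step. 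The paper's route is marginally shorter for condition~(3), and it dispatches conditions~(1) and~(2) in one stroke via $(\chi_1+\chi_2)(\bfp')>0$, whereas you argue each separately from extremality; but the two arguments have identical content and difficulty.
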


\begin{proof} For each $[\chi']$ appearing in the decomposition of $\gow$, $\bfp$ lies on the half-space $\{\bfp:\chi'(\bfp)>0\}$. The intersection of all these half-spaces is a open convex simplicial cone with finitely many faces, and $\bfp$ is in the interior of this cone. Fix an arbitrary face of the cone, which is itself a cone inside $\ker\chi$ for some $\gog^{[\chi]}\subset\gow$. Let $\bfp'$ be an arbitrary vector from the interior of this face, then the line segment $\bfp\rightarrow\bfp'$ does not intersect any other $\ker\chi'$. Hence for $\gog^{[\chi']}\subset\gow$, $\chi'(\bfp')$ is positive as $\chi(\bfp)$ is, unless $\chi'=\chi$. Let $\gov=\gov^{[\chi]}$ and $\gou=\bigoplus_{\substack{\gog^{[\chi']}\subset\gow\\ \chi'\neq\chi}}\gog^{[\chi']}$.

Clearly $\gow=\gov\oplus\gou$. We now check that $(\gov,\gou)$ is admissible. Suppose $\gov^{[\chi_1]}$ and $\gov^{[\chi_2]}$ are respectively two coarse Lypunov subspaces inside $\gow$ and $\gou$. Then $\chi_1(\bfp')\geq 0$ and $\chi_2(\bfp')>0$. By Proposition \ref{LyaDecomp}.(3), $[\gov^{[\chi_1]},\gov^{[\chi_2]}]$, if non-trivial, is contained in $\gov^{[\chi_1+\chi_2]}$. Note $\chi_1+\chi_2$ has positive value at $\bfp'$, and  $[\gov^{[\chi_1]},\gov^{[\chi_2]}]\subset\gow$. Thus $[\gov^{[\chi_1]},\gov^{[\chi_2]}]$ must be inside $\gou$. This verifies both (1) and (2) in Definition \ref{Admissible}. Part (3) holds with respect to $\bfp'$.\end{proof}

\begin{corollary}\label{BigProdLemma}For all $\bfn\in\bZ^r$, there is an ordering $V^{[\chi_1]}, \cdots, V^{[\chi_l]}$ of all coarse Lypunov subgroups inside $G_\bfn^u$,  with $G=V^{[\chi_1]}V^{[\chi_2]}\cdots V^{[\chi_l]}$, such that:
\begin{enumerate}
\item  For each $j$, $V^{[\chi_{j+1}]}\cdots V^{[\chi_l]}$ is a Lie subgroup and normalizes $V^{[\chi_j]}$;
\item  There is a subset $\tX'\subset\tX$ with $\tmu(\tX')=1$, such that for all $\tx\in\tX'$, $\tmu_\tx^{G_\bfn^u}$ is proportional to $\tmu_\tx^{V^{[\chi_1]}}\tmu_\tx^{V^{[\chi_2]}}\cdots\tmu_\tx^{V^{[\chi_l]}}$, by which we mean the pushforward of $\tmu_\tx^{V^{[\chi_1]}}\times\cdots\times\tmu_\tx^{V^{[\chi_l]}}$ by $(v_1,\cdots,v_l)\mapsto v_1\cdots v_l$;
\item $h_\tmu(\talpha^\bfn|\cB_Y)=\sum_{i=1}^l h_\tmu(\talpha^\bfn, V^{[\chi_i]})$.
\end{enumerate} \end{corollary}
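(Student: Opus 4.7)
The plan is to iterate Lemmas \ref{AdmiDecomp} and \ref{ProdLemma} on the Lie algebra $\gog_\bfn^u$. First, observe that $\gog_\bfn^u = \bigoplus_{\chi \in \cL,\, \chi(\bfn)>0} \gov^{[\chi]}$ is a Lie subalgebra (by Proposition \ref{LyaDecomp}(3), since the sum of two exponents positive on $\bfn$ is again positive on $\bfn$, hence lies again in the unstable direction or is zero). Moreover it is a direct sum of coarse Lyapunov subalgebras all of whose exponents are strictly positive at $\bfn$, so the hypothesis of Lemma \ref{AdmiDecomp} is satisfied with $\bfp = \bfn$.

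I would begin by applying Lemma \ref{AdmiDecomp} to $\gog_\bfn^u$ to produce an admissible pair $(\gov^{[\chi_1]}, \gou_1)$ with $\gou_1 = \bigoplus_{i\geq 2}\gov^{[\chi_i]}$. By admissibility, $\gou_1$ is itself a Lie subalgebra which is a direct sum of coarse Lyapunov subalgebras, all of whose exponents remain positive at $\bfn$. Iterating Lemma \ref{AdmiDecomp}, applied in turn to $\gou_1$, $\gou_2 = \bigoplus_{i\geq 3}\gov^{[\chi_i]}$, and so on, produces the desired ordering $[\chi_1], \ldots, [\chi_l]$ of all coarse Lyapunov classes inside $G_\bfn^u$, together with Lie subgroups $U_j := V^{[\chi_{j+1}]} \cdots V^{[\chi_l]} = \exp \gou_j$. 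Item (1) of the corollary then follows, since admissibility guarantees that each pair $(\gov^{[\chi_j]}, \gou_j)$ is one in which $\gov^{[\chi_j]}$ normalizes $\gou_j$, making $V^{[\chi_j]} U_j$ a Lie subgroup with $U_j$ as a normal subgroup (so the product is two-sided and rearranges appropriately).

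To establish items (2) and (3), I would apply Lemma \ref{ProdLemma} to the admissible pair $(V^{[\chi_j]}, U_j)$ at each level $j = 1, \ldots, l-1$. Hypothesis (2) of Proposition \ref{Ent0Fiber} supplies the isometric support of $\tmu$ along every coarse Lyapunov subgroup $V^{[\chi_j]}$, which is precisely what Lemma \ref{ProdLemma} requires. Conclusion (2) of that lemma gives $\tmu_\tx^{V^{[\chi_j]} U_j} \simeq \tmu_\tx^{V^{[\chi_j]}}\,\tmu_\tx^{U_j}$ on a full-measure set $\tX'_j$; taking the countable intersection $\tX' = \bigcap_j \tX'_j$, still of full measure, and unwinding the product formulas from $j=1$ outward yields $\tmu_\tx^{G_\bfn^u} \simeq \tmu_\tx^{V^{[\chi_1]}}\tmu_\tx^{V^{[\chi_2]}} \cdots \tmu_\tx^{V^{[\chi_l]}}$ for every $\tx \in \tX'$, establishing (2). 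Similarly, Lemma \ref{ProdLemma}(3) gives the stepwise additivity $h_\tmu(\talpha^\bfn, V^{[\chi_j]} U_j) = h_\tmu(\talpha^\bfn, V^{[\chi_j]}) + h_\tmu(\talpha^\bfn, U_j)$, which telescopes to $h_\tmu(\talpha^\bfn, G_\bfn^u) = \sum_{i=1}^l h_\tmu(\talpha^\bfn, V^{[\chi_i]})$. Combined with Proposition \ref{EntContri}(2), which identifies $h_\tmu(\talpha^\bfn, G_\bfn^u)$ with $h_\tmu(\talpha^\bfn | \cB_Y)$, this yields (3).

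No serious obstacle is anticipated: the whole argument is an induction on the number of coarse Lyapunov classes inside $G_\bfn^u$, driven entirely by the two previously established lemmas. The one small point worth noting is that one can reuse the same vector $\bfp = \bfn$ throughout every application of Lemma \ref{AdmiDecomp}, which is automatic because every coarse Lyapunov exponent under consideration is positive on $\bfn$; this ensures the iterated decompositions are compatible.
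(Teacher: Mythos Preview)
Your proposal is correct and follows essentially the same approach as the paper: iterate Lemma \ref{AdmiDecomp} to peel off coarse Lyapunov subgroups one at a time from $\gog_\bfn^u$, then apply Lemma \ref{ProdLemma} at each stage (using the isometric support hypothesis from Proposition \ref{Ent0Fiber}) to get the product structure and additivity of entropy contributions, with Proposition \ref{EntContri}(2) identifying the total with $h_\tmu(\talpha^\bfn|\cB_Y)$. One minor remark: in Definition \ref{Admissible} it is $\gov$ that normalizes $\gou$, so what you actually obtain is that $V^{[\chi_j]}$ normalizes $U_j$ (not the reverse as stated in item (1) of the corollary); this is exactly what is needed for Lemma \ref{ProdLemma}, and the paper's own proof sketch has the same wording issue.
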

\begin{proof} Start with $\gou_0=\gog_\bfn^u$ and repeat applying Lemma \ref{AdmiDecomp}. We get admissible splittings $\gou_0=\gov^{[\chi_1]}\oplus\gou_1$, $\gou_1=\gov^{[\chi_2]}\oplus \gou_2$, $\cdots$. Let $U_j=\exp\gou_j$. Then $U_j=V^{[\chi_{j+1}]}\cdots V^{[\chi_l]}$ and normalizes $V^{[\chi_j]}$, which proves (1). By Lemma \ref{ProdLemma}, $\tmu_\tx^{U_j}\propto\tmu_\tx^{V^{[\chi_{j+1}]}}\tmu_\tx^{U_{j+1}}$. Part (2) follows immediately by iterating this. Given Proposition \ref{EntContri}.(2), Part (3) is a repeated application of Lemma \ref{ProdLemma}.(3).
\end{proof}

\subsection{Fibrating the action}

In this part we construct a new measure-preserving system, which is essentially a fibration of $(\tX, \talpha, \cB_\tX)$ with respect to values of the map $\tx\mapsto\tmu_\tx^V$ for some coarse Lyapunov subgroup $V$.

By assumption in Theorem \ref{MeasureInductive} $\tX$ has positive entropy for some $\bfn$ unless $X$ is trivial.  By Corollary \ref{BigProdLemma}, this in particular implies there is at least one non-trivial coarse Lyapunov subgroup $V^{[\chi]}$ with positive entropy contribution $h_\tmu(\talpha^\bfn,V^{[\chi]})$ for at least one $\bfn\in\bZ^r$. 

Suppose $[\chi']$ is another homothety class of exponents appearing in the coarse Lyapunov decomposition such that $\chi'$ is not negatively proportional to $\chi$. Then $\ker\chi'\cap\ker\chi$ is a proper subspace in $\ker\chi$. Since there are only finitely many choices of $[\chi']$, there is $\bfp\in\ker\chi$ such that $\chi'(\bfp)\neq 0$ for all $[\chi']$'s that are non-proportional to $[\chi]$.

Let $\gou=\bigoplus_{[\chi']:\chi'(\bfp)>0}\gov^{[\chi']}$ and $\gos=\bigoplus_{[\chi']:\chi'(\bfp)<0}\gov^{[\chi']}$. Then by Proposition \ref{LyaDecomp}.(3), $\gou$ and $\gos$ are Lie subalgebras. 

\begin{lemma}\label{AlmostFreeze}Given $\bfp\in\bR^r$ as above, let $\theta>0$ be any constant less than $\min \chi'(\frac{\bfp}{\|\bfp\|})$ where the minimum is taken over all $\chi'$'s with $V^{\chi'}$ appearing in the Lyapunov decomposition and $\chi'(\bfp)>0$. 

Then for any $\epsilon>0$, one can pick $\bfm\in\bZ^r$ such that \begin{enumerate}
\item $0<\chi'(\bfm)<\epsilon|\bfm|$ for all non-trival Lyapunov subpace $\gov^{\chi'}\subset V^{[\chi]}$;
\item $\chi'(\bfm)>\theta|\bfm|$ for all non-trival Lyapunov subspace $\gov^{\chi'}\subset \gou$;
\item $\chi'(\bfm)<0$ for all non-trival Lyapunov subspace  $\gov^{\chi'}\subset \gos$.
\end{enumerate}
\end{lemma}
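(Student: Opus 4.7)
The plan is to construct $\bfm$ as an integer approximation to a large multiple of a small perturbation of $\bfp$. Since conditions (1)--(3) only involve the direction $\bfm/|\bfm|$ together with certain sign conditions, it suffices to produce a real vector $\bfp' \in \bR^r$ for which the corresponding inequalities hold strictly, and then to realize a rational multiple of the direction of $\bfp'$ by an integer vector.

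First I would pick any $\bfq \in \bR^r$ with $\chi(\bfq) > 0$. Because every Lyapunov exponent $\chi'$ with $\gov^{\chi'} \subset \gov^{[\chi]}$ is a positive multiple of $\chi$, such $\bfq$ automatically satisfies $\chi'(\bfq) > 0$ for all such $\chi'$. Set $\bfp' = \bfp + \delta \bfq$ for a small parameter $\delta > 0$ to be chosen. For $\chi'$ with $\gov^{\chi'} \subset \gov^{[\chi]}$ one has $\chi'(\bfp) = 0$, so $\chi'(\bfp')/\|\bfp'\| = \delta\chi'(\bfq)/\|\bfp'\|$ is strictly positive and can be made smaller than $\epsilon/2$ by taking $\delta$ small. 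For $\chi'$ with $\gov^{\chi'} \subset \gou$, the hypothesis $\theta < \min \chi'(\bfp/\|\bfp\|) =: \theta_0$ together with continuity in $\delta$ gives $\chi'(\bfp')/\|\bfp'\| > \theta$ once $\delta$ is small. For $\chi'$ with $\gov^{\chi'} \subset \gos$ the strict inequality $\chi'(\bfp) < 0$ is preserved after a small perturbation. Since there are only finitely many relevant $\chi'$, fix $\delta$ small enough that all three properties hold simultaneously with uniform slack.

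Next I would pass from $\bfp'$ to an integer vector. Since $\bZ^r$ is a cocompact lattice in $\bR^r$, for each integer $N \geq 1$ there exists $\bfm_N \in \bZ^r$ with $\|\bfm_N - N\bfp'\| \leq C_r$ for a constant $C_r$ depending only on $r$. Then $|\bfm_N| = N\|\bfp'\| + O(1)$ and $\chi'(\bfm_N) = N\chi'(\bfp') + O(1)$ for each of the finitely many Lyapunov exponents $\chi'$, so the ratios $\chi'(\bfm_N)/|\bfm_N|$ converge to $\chi'(\bfp')/\|\bfp'\|$ as $N \to \infty$. Condition (1), namely $0 < \chi'(\bfm_N) < \epsilon|\bfm_N|$, then follows: strict positivity from $\chi'(\bfp') > 0$ and $N$ large, and the upper bound from the choice of $\delta$. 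Conditions (2) and (3) follow in the same way from the corresponding strict inequalities at $\bfp'$. Taking $\bfm = \bfm_N$ for any sufficiently large $N$ completes the construction.

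The argument is essentially a linear-algebraic perturbation, and I do not anticipate a genuine obstacle: the only subtlety is coordinating the two-step perturbation $\bfp \mapsto \bfp + \delta\bfq \mapsto \bfm_N$ so that all finitely many strict inequalities survive, which is handled by choosing $\delta$ small first (to upgrade the non-strict inequalities involving $[\chi]$ while preserving those involving $\gou$ and $\gos$) and then $N$ large.
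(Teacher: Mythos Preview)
Your proof is correct and takes essentially the same approach as the paper: the paper's one-line argument is to ``pick $\bfm$ sufficiently close to the line $\bR\bfp$, on the correct side of $\ker\chi$,'' and your perturbation $\bfp'=\bfp+\delta\bfq$ with $\chi(\bfq)>0$ followed by an integer approximation of $N\bfp'$ is exactly a careful execution of that idea.
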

\begin{proof}It is enough to pick $\bfm$ sufficiently close to the line $\bR\bfp$, on the correct side of $\ker\chi$. \end{proof}

We fix the choice of $\bfm$ from now on in this section.

Thanks to the choice of $\bfp$, any Lyapunov subspace $V^{\chi'}$ is either in $\gou$ or $\gos$, or such that $\chi'$ is proportional to $\chi$. It is thus easy to see that $\gog_\bfm^u=\gov^{[\chi]}\oplus\gou$. Furthermore, Lemma \ref{LyaDecomp}.(3) implies that $[\gov^{[\chi]},\gou]\subset\gou$. 

For simplicity, write $\gov=\gov^{[\chi]}$, $V=V^{[\chi]}$ and $U=\exp\gou$. Let $V_0\subset V$ be the $\talpha_S^\bfp$-isometric subgroup and $\gov_0$ be the corresponding Lie subalgebra. 

Then the unstable subgroup $G_\bfm^u=VU$ and  $V $ normalizes $U$. Moreover $(\gov,\gou)$ is an admissible pair via $\bfp$.  By Lemma \ref{ProdLemma}, there is a subset $\tX'\subset\tX$ with $\tmu(\tX')=1$, such that if $u\in U$ and $\tx, u.\tx\in\tX'$, then $\tmu_\tx^V=\tmu_{u.\tx}^V$.

By replacing $\tX'$ with $\bigcap_{\bfn\in\bZ^r}\talpha^\bfn_*\tX'$ one may assume $\tX'$ is $\talpha$-invariant.

Let $\cU'$ be the smallest $\sigma$-algebra that renders the map $\tx\mapsto\tmu_\tx^V$ from $\tX$ to $\cM_1^0(V)$ measurable. Keep in mind that, points in $\cM_1^0(V)$ are equivalence classes of positive proportional measures.

\begin{corollary}\label{LeafTribu}There is a countably generated, $\talpha$-invariant $\sigma$-algebra $\cU$ on $\tX$, such that:\begin{enumerate}
\item $\cU$ is equivalent to $\cU'$ modulo $\mu$-null sets;
\item Each $\cU$-atom is a union of full $U$-leaves. 
\end{enumerate} \end{corollary}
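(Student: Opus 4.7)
The plan is to produce $\cU$ as a $U$-saturated measurable hull of a countable generating family for $\cU'$. First I would verify that $\cU'$ is already countably generated and $\talpha$-invariant. The target space $\cM_1^0(V)$ is a separable metric space, so its Borel $\sigma$-algebra has a countable generating family $\{B_n\}_{n \geq 1}$ (e.g.\ open balls of rational radii around a countable dense set). Writing $F(\tx) := \tmu_\tx^V$, the preimages $A_n := F^{-1}(B_n)$ form a countable generating family of $\cU'$. Moreover $\cU'$ is $\talpha$-invariant: by Lemma \ref{LeafAction}, $F\circ\talpha^\bfn = \alpha^\bfn_*\circ F$ as maps into $\cM_1^0(V)$, and $\alpha^\bfn_*$ is a homeomorphism of $\cM_1^0(V)$, so it preserves its Borel structure.

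Next I would pass to $U$-saturations using the $\talpha$-invariant full-measure Borel set $\tX'$ produced just before the statement. On $\tX'$, Lemma \ref{ProdLemma}(1) says that $F$ is constant along intersections of $U$-orbits with $\tX'$, so $A_n \cap \tX'$ is a union of such pieces. I would set
\begin{equation*}
\tilde A_n := U \cdot (A_n \cap \tX'),
\end{equation*}
which is $U$-invariant by construction. The symmetric difference $\tilde A_n \triangle A_n$ is $\tmu$-null: one inclusion is tautological, and if $\tx \in \tilde A_n \cap \tX'$ then $\tx = u.\ty$ for some $\ty \in A_n \cap \tX'$, whence $F(\tx) = F(\ty) \in B_n$ and so $\tx \in A_n$, giving $\tilde A_n \triangle A_n \subset \tX \setminus \tX'$.

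I would then take $\cU$ to be the $\sigma$-algebra generated by $\{\tilde A_n\}_{n \geq 1}$. This is countably generated by construction, has $U$-saturated atoms, and agrees with $\cU'$ modulo $\tmu$-null sets. Its $\talpha$-invariance follows from two observations: $U$ is $\alpha$-invariant because $\gou$ is a direct sum of coarse Lyapunov subspaces, each of which is $\alpha$-stable; and $\tX'$ is $\talpha$-invariant. Hence $\talpha^\bfn \tilde A_n = U\cdot(\talpha^\bfn A_n \cap \tX')$, and once $\talpha^\bfn A_n$ is expressed modulo null sets through the generating family of the $\talpha$-invariant $\sigma$-algebra $\cU'$, the right hand side is generated by the $\tilde A_m$'s up to a $\tmu$-null set.

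The one real obstacle is Borel measurability of $\tilde A_n$: the $U$-saturation of a Borel set need not be Borel. However, since $U$ is a $\sigma$-compact Polish Lie group acting continuously on $\tX$, $\tilde A_n$ is analytic, hence universally (and in particular $\tmu$-)measurable. I would then extract a Borel $U$-invariant representative agreeing with $\tilde A_n$ modulo a $\tmu$-null set, for example by applying Fubini along the $U$-action to remove a null set of bad orbits and then intersecting with the resulting $U$-invariant full-measure Borel subset of $\tX$. This measurability/selection step is the main technical point; once it is in place the construction yields the desired $\cU$.
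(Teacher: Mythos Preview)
Your proposal is correct and follows essentially the same approach as the paper: take a countable generating family $\{A'_n\}$ for $\cU'$, saturate each $A'_n\cap\tX'$ by $U$ over the $\talpha$-invariant full-measure set $\tX'$, and let $\cU$ be the $\sigma$-algebra the saturated sets generate. The paper's proof is terser and in fact glosses over exactly the measurability issue you flag (Borel-ness of the $U$-saturation), as well as the verification of $\talpha$-invariance; your treatment of both points is more careful, though the Borel-selection step you sketch would still need to be carried out to produce a \emph{Borel} $U$-saturated representative rather than merely a universally measurable one. Incidentally, the paper's write-up contains what is clearly a typo: it saturates by $V$ rather than $U$, which would make no sense for the conclusion about $U$-leaves; your version has this right.
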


\begin{proof}
$\cU'$ is countably generated since $\tx\mapsto\tmu_\tx^V$ maps into the compact metric space $\cM_1^0(V)$. By Lemma \ref{LeafAction}, $\cU'$ is $\talpha$-invariant. 

By the lemma, for all $\tx$ and $A\in\cU'$, $V.\tx\cap\tX'$ completely lies in either $A$ or $A^c$. 

One can take a countable generating family $A'_i$ of $\cU'$, and modify each $A'_i$ to get a new set $A_i=V.(A'_i\cap\tX')$. Then by the property above and the fact that $\tX'$ has full measure, $\tmu(A_i)=\tmu(A'_i\cap\tX')=\tmu(A'_i)$. In addition, this modification preserves the set-theoretic relations between the sets. 

It suffices to take $\cU$ to be the $\sigma$-algebra generated by the $A_i$'s.
\end{proof}

Let $\beta_\cU:\bZ^r\curvearrowright Y_\cU$ be a continuous realization of the factor with respect to $\cU$, that is, a compact metric space equipped with a continuous $\bZ^r$-action such that $(Y_\cU,\beta_\cU, \cB_{Y_{\cU}})$ is isomorphic to the dynamical system $(\tX, \talpha, \cU)$.  Denote by $\phi:\tX\mapsto Y_\cU$ the corresponding conjugacy.

Write $\tX_\cU=\tX\times Y_\cU=X\times Y\times Y_\cU$ and $\talpha_\cU=\talpha\times\beta_\cU=\alpha\times\beta\times\beta_\cU$. Remark that, while working with the action $\tX_\cU$ we are still within the product scheme introduced at the beginning of \S\ref{SecInductive}, with $(Y,\beta)$ being replaced with $(Y\times Y_\cU,\beta\times\beta_\cU)$.

The map $\Phi:\tX\mapsto\tX_\cU$, $\Phi(\tx)=\big(\tx,\phi(\tx)\big)$ interwines the actions $\talpha$ and $\talpha_\cU$. Let $\tmu_\cU$ be the pushforward of $\tmu$ by $\Phi$, which is invariant and ergodic under $\talpha_\cU$. Constructions in \S\ref{SecPrelim} such as leafwise measures all work for $\tmu_\cU$ as well.

Notice that $\Phi$ is injective and is hence just an isomorphism between $\tX$ and $\Phi(\tX)$.

Our first observation is that, since $\cU$ does not break $U$-leaves. The leafwise structures along $U$ are preserved under the map $\Phi$.

\begin{lemma}\label{SameLeaf}For $\tmu$-almost every $\tx$, $\tmu_\tx^U=(\tmu_\cU)_{\Phi(\tx)}^U.$\end{lemma}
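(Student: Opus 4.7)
The plan is to promote $\Phi$ to a $U$-equivariant measurable isomorphism between $(\tX,\tmu)$ and $(\Phi(\tX),\tmu_\cU)$, and then transfer leafwise measures through it using Proposition~\ref{LeafMeas}.

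First I would verify the $U$-equivariance of $\Phi$. The $U$-action on $\tX_\cU=\tX\times Y_\cU$ is $u.(\tx,y_\cU)=(u.\tx,y_\cU)$, since $U\subset G$ acts only on the $X$-coordinate of $\tX$ by left translation and $Y_\cU$ is an independent factor. By Corollary~\ref{LeafTribu}(2), every $\cU$-atom is a union of $U$-leaves, so $\phi(u.\tx)=\phi(\tx)$ for all $u\in U$ and all $\tx\in\tX$. Therefore $\Phi(u.\tx)=(u.\tx,\phi(u.\tx))=(u.\tx,\phi(\tx))=u.\Phi(\tx)$. Note also that $\Phi$ is injective, so it is a Borel isomorphism onto $\Phi(\tX)$, which has full $\tmu_\cU$-measure.

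Next I would produce compatible $U$-subordinate $\sigma$-algebras. Pick $\bfn\in\bZ^r$ with $U\subset G_\bfn^u$ (which exists since $U$ is a sum of coarse Lyapunov subspaces with common positive halfspace) and apply Lemma~\ref{SubordExist} to $(\tX_\cU,\talpha_\cU,\tmu_\cU)$, producing a countably generated, $U$-subordinate, $\talpha_\cU^\bfn$-increasing $\sigma$-algebra $\cA_\cU$ on $\tX_\cU$. Let $\cA=\Phi^{-1}(\cA_\cU)$. Because $\Phi$ is an injection onto a full-measure set and intertwines the $U$-actions, the atom $[\Phi(\tx)]_{\cA_\cU}\cap\Phi(\tX)$ equals $\Phi([\tx]_\cA)$ and the $U$-neighborhood bounds transfer verbatim: $\cA$ is $U$-subordinate with respect to $\tmu$.

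Finally, I would conclude by comparing conditional measures. Applying Proposition~\ref{LeafMeas}(3) on both sides, for $\tmu$-a.e.~$\tx$,
\[
(\tmu_\tx^U).\tx \;\simeq\; \tmu_\tx^\cA \quad\text{on } [\tx]_\cA,\qquad ((\tmu_\cU)_{\Phi(\tx)}^U).\Phi(\tx) \;\simeq\; (\tmu_\cU)_{\Phi(\tx)}^{\cA_\cU} \quad\text{on } [\Phi(\tx)]_{\cA_\cU}.
\]
Since $\tmu_\cU=\Phi_*\tmu$ and $\cA=\Phi^{-1}\cA_\cU$, the defining property of conditional measures forces $\tmu_\tx^\cA=\Phi^{-1}_*(\tmu_\cU)_{\Phi(\tx)}^{\cA_\cU}$ for $\tmu$-a.e.~$\tx$. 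Pulling this identity back along the $U$-equivariant parametrization $u\mapsto u.\tx$ versus $u\mapsto u.\Phi(\tx)$ yields $\tmu_\tx^U\simeq(\tmu_\cU)_{\Phi(\tx)}^U$ as elements of $\cM_1^0(U)$, which is the claimed equality.

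The only delicate point is confirming that $U$-subordination survives the pullback through $\Phi$; this is the step where the $U$-saturation of $\cU$-atoms (Corollary~\ref{LeafTribu}(2)) is essential. Everything else is formal pushforward/pullback bookkeeping.
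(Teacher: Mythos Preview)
Your proof is correct and follows essentially the same strategy as the paper: exploit that $\Phi$ is $U$-equivariant (because $\cU$-atoms are $U$-saturated), transfer a $U$-subordinate $\sigma$-algebra across $\Phi$, and invoke Proposition~\ref{LeafMeas}(3). The only cosmetic difference is the direction of transfer: the paper starts with a $U$-subordinate $\sigma$-algebra $\cA$ on $\tX$ and pushes it forward to $\Phi_*\cA$ on $\tX_\cU$, whereas you start with $\cA_\cU$ on $\tX_\cU$ and pull it back to $\Phi^{-1}\cA_\cU$ on $\tX$; both arguments rest on the same equivariance observation and are equally valid.
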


\begin{proof}Let $\cA$ be a $U$-subordinate $\sigma$-algebra on $\tX$, which exists by Lemma \ref{SubordExist}. Let $\Phi_*(\cA)$ be the $\sigma$-algebra generated by the images of $\cA$-measurable sets and the complement $\tX_\cU\backslash\Phi(\tX)$.

If two $\tmu$-generic points $\tx,\ty$ are in the same $\cA$-atom, then $\ty=u.\tx$ for some $u$ from a bounded neighborhood of identity in $U$, and hence they are in the same $\cU$-atom. Thus $\phi(\tx)=\phi(\ty)$ and \begin{equation}\label{SameLeafEq1}\Phi(\ty)=\big(u.\tx, \phi(\tx)\big)=u.\Phi(\tx).\end{equation} Therefore, for $\tmu$-almost every $\tx$, the image of $[\tx]_\cA$ by $\Phi$, which is just $[\Phi(\tx)]_{\Phi_*\cA}$, is still a bounded neighborhood in $U$-leaves. Thus $\Phi_*\cA$ is $U$-subordinate with respect to $\tmu_\cU$. (Recall that by Definition \ref{Subord}, being subordinate is an almost everywhere property with respect to a measure.) 

With this, the lemma is deduced from Proposition \ref{LeafMeas}.(3) and relation \eqref{SameLeafEq1}.
\end{proof}

One can also comment about the leafwise structure of $\tmu$ along $V$.

\begin{lemma}$\tmu_\cU$ has isometric support along $V$.\end{lemma}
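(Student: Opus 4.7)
The plan is to transfer the isometric support property from $\tmu$ to $\tmu_\cU=\Phi_*\tmu$ via the key identification that, along any $V$-orbit, the $\cU$-atom of $\tx$ consists precisely of the translates of $\tx$ by the right-translation invariance group of the equivalence class of $\tmu_\tx^V$.

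Fix $\bfp\in\ker\chi$, let $V_0\subset V$ be the corresponding $\talpha_S^\bfp$-isometric subgroup, and set
\begin{equation*}
\cJ(\tx)=\{v\in V:\phi(v.\tx)=\phi(\tx)\}.
\end{equation*}
Since $\phi$ realises the factor $\cU$, and $\cU$ agrees modulo null sets with the $\sigma$-algebra generated by $\tx\mapsto\tmu_\tx^V$ (Corollary \ref{LeafTribu}), one has $\phi(v.\tx)=\phi(\tx)$ iff $\tmu_{v.\tx}^V\simeq\tmu_\tx^V$. By Proposition \ref{LeafMeas}(2) this is equivalent to $(\tau_v)_*\tmu_\tx^V\simeq\tmu_\tx^V$, so $\cJ(\tx)$ is the right-translation invariance group of the equivalence class of $\tmu_\tx^V$. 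Because $e\in\supp\tmu_\tx^V$ by Proposition \ref{LeafMeas}(1), every $v\in\cJ(\tx)$ must itself lie in $\supp\tmu_\tx^V$; combined with the isometric assumption on $\tmu$, this gives $\cJ(\tx)\subset\supp\tmu_\tx^V\subset V_0$ for $\tmu$-a.e.\ $\tx$.

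It remains to show $\supp(\tmu_\cU)_{\Phi(\tx)}^V\subset\cJ(\tx)$ almost everywhere. Pick $\bfn\in\bZ^r$ with $\chi(\bfn)>0$, and let $\cA$ be a $V$-subordinate $\talpha^\bfn$-increasing $\sigma$-algebra of $(\tX,\tmu)$ as provided by Lemma \ref{SubordExist}. Then $\cA_\cU:=\cA\otimes\cB_{Y_\cU}$, viewed on $(\tX_\cU,\tmu_\cU)$, is $V$-subordinate: its atom at $\Phi(\tx)$ equals $[\tx]_\cA\times\{\phi(\tx)\}$, which is contained in $V.\Phi(\tx)$ because $[\tx]_\cA\subset V.\tx$. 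Since $\tmu_\cU(\Phi(\tX))=1$, the conditional measure $(\tmu_\cU)_{\Phi(\tx)}^{\cA_\cU}$ is supported on
\begin{equation*}
\big([\tx]_\cA\times\{\phi(\tx)\}\big)\cap\Phi(\tX)=\Phi\big([\tx]_\cA\cap\cJ(\tx).\tx\big),
\end{equation*}
and Proposition \ref{LeafMeas}(3) translates this into the assertion that the portion of $(\tmu_\cU)_{\Phi(\tx)}^V$ on the bounded neighborhood of identity in $V$ corresponding to $[\tx]_\cA$ is concentrated on $\cJ(\tx)$.

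To globalize, apply Lemma \ref{LeafAction} inside $\tX_\cU$: for any $v\in\supp(\tmu_\cU)_{\Phi(\tx)}^V$, the element $\alpha^{-k\bfn}(v)$ lies in $\supp(\tmu_\cU)_{\Phi(\talpha^{-k\bfn}.\tx)}^V$, and since $\alpha^{-\bfn}$ exponentially contracts $V$, for $k$ sufficiently large this element sits inside the local window produced above at $\talpha^{-k\bfn}.\tx$. Restricting to the full-measure set of $\tx$ for which the local statement holds at every $\talpha^{-k\bfn}.\tx$, we conclude that $\alpha^{-k\bfn}(v)\in V_0$, hence $v\in V_0$ by the $\alpha$-invariance of $V_0$ (Corollary \ref{IsomInv}). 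Together with the first paragraph this yields $\supp(\tmu_\cU)_{\Phi(\tx)}^V\subset V_0$, which is precisely the isometric support of $\tmu_\cU$ along $V$. The main conceptual step is the identification of $\cJ(\tx)$ with the right-translation invariance group of $\tmu_\tx^V$; the remainder is routine manipulation of leafwise measures using the tools collected in \S\ref{SecPrelim}.
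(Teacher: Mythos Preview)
Your proof is correct and follows essentially the same route as the paper: both construct the $V$-subordinate $\sigma$-algebra $\cA_\cU=\cA\otimes\cB_{Y_\cU}$ on $\tX_\cU$, show that the conditional measure $(\tmu_\cU)_{\Phi(\tx)}^{\cA_\cU}$ is supported on $V_0.\Phi(\tx)$, and then globalize along the $\alpha^\bfn$-orbit. The only difference is cosmetic: the paper observes directly that $\Phi^{-1}(\cA_\cU)=\cA\vee\cU$ and hence $\supp\tmu_\tx^{\cA\vee\cU}\subset\supp\tmu_\tx^\cA\subset V_0.\tx$, whereas you repackage this via the set $\cJ(\tx)$ and the observation that the right-translation invariance group of $\tmu_\tx^V$ lies in $\supp\tmu_\tx^V$. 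One small caution: the identification of $\{v:\phi(v.\tx)=\phi(\tx)\}$ with the invariance group of $\tmu_\tx^V$ holds only after restricting to a full-measure set (since $\cU$ and $\cU'$ agree only modulo null sets), but since the conditional measures give full mass to such a set this causes no trouble.
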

\begin{proof}Let $\cA$ be a $V$-subordinate $\sigma$-algebra on $\tX$, instead. Then $\cA_\cU=\cA\times\cB_{Y_\cU}$ is a $V$-subordinate, $\talpha_\cU$-increasing $\sigma$-algebra on $\tX_\cU$. As $\Phi=\Id\times\phi$, $\Phi^{-1}(\cA_\cU)=\cA\vee\phi^{-1}(\cB_{Y_\cU})=\cA\vee\cU$. So for $\tmu$-almost every $\tx$,  $(\tmu_\cU)_{\Phi(\tx)}^{\cA_\cU}=(\Phi_*\tmu)_{\Phi(\tx)}^{\cA_\cU}=\Phi_*(\tmu_\tx^{\cA\vee\cU})$.

Because $\tmu$ has isometric support along $V$, $\supp\tmu_\tx^{\cA\vee\cU}\subset\supp\tmu_\tx^\cA\subset V_0.\tx$. And therefore $\supp (\tmu_\cU)_{\Phi(\tx)}^{\cA_\cU}\subset \Phi(V_0.\tx)$. Thus, every point in $\supp (\tmu_\cU)_{\Phi(\tx)}^{\cA_\cU}$ can be written as $\big(\ty,\phi(\ty)\big)$ where $\ty\in V_0.\tx$. Since $\cA_\cU$ refines $\cB_{Y_\cU}$, $\phi(\ty)$ must coincide with $\phi(\tx)$ in $Y_\cU$. Thus $\big(\ty,\phi(\ty)\big)=\big(v.\tx, \phi(\tx)\big)$ with $v\in V_0$, and therefore $\supp (\tmu_\cU)_{\Phi(\tx)}^{\cA_\cU}\subset V_0.\Phi(\tx)$. 

So for $\tmu_\cU$-almost every point $\tx_\cU$,  $\supp (\tmu_\cU)_{\tx_\cU}^{\cA_\cU}\subset V_0.\tx_\cU$ as $\tx_\cU$ can almost surely be written as $\Phi(\tx)$ for some $\tmu$-generic point $\tx$. By Proposition \ref{LeafMeas}.(3), $(\tmu_\cU)_{\tx_\cU}^V\big|_{[\tx_\cU]_{\cA_\cU}}$ is almost surely supported on $V_0.\tx_\cU$. By the same recurrence argument involving coarser and coarser $V$-subordinate $\sigma$-algebras $\alpha^{k\bfn}.\cA$ as in the proof of Lemma \ref{LeafAction}, one can deduce that $(\tmu_\cU)_{\tx_\cU}^V$ is completely supported on $V_0.\tx_\cU$ for almost every $\tx_\cU$.\end{proof}

\subsection{Entropy comparison}

Consider all non-trivial algebraic factor actions $\alpha|_\Sigma:\Sigma\curvearrowright X_0$ of the action $\alpha|_\Sigma$ on $\dX$, where $\Sigma$ is any finite index subgroup of $\bZ^r$.  Let $\tX_0=X_0\times Y$ and $\alpha|_\Sigma$, $\talpha|_\Sigma$ be defined accordingly on $X_0$ and $\tX_0$.  Let $\tmu_0$ be the projection of $\dtmu$ to $\tX_0$, which also descends from the measure $\tmu$ on $\tX$.

Without loss of generality, we may assume $\bfm\in\Sigma$ by replacing it with a positive integer multiple of itself if necessary, this would not effect the properties required in Lemma \ref{AlmostFreeze}.

The aim of this part is to compare entropies of $\talpha^\bfm$ and $\talpha_\cU^\bfm$. For this purpose we need $\talpha|_\Sigma$-ergodic measures to work with.

Since $\Sigma$ has finite index, $\tmu$ has a finite ergodic decomposition $\tmu=\frac1N\sum_{i=1}^N\tmu^i$ where the $\tmu^i$'s are distinct, ergodic under $\talpha|_\Sigma$ and transitively permuted among themselves by $\talpha$. Denote $\tmu_\cU^i=\Phi_*\tmu^i$. Let $\dtmu^i$ and $\dtmu_0^i$ respectively be the projections of $\tmu^i$ to $\dtX$ and $\tX_0$, and let $\tmu_{0,\cU}^i$ be the projection of $\tmu_\cU^i$ to $\tX_{0,\cU}=\tX_0\times Y_\cU=X_0\times Y\times Y_\cU$.

\begin{lemma}\label{DXUnifI}For each $i$, there is an ergodic $\beta|_\Sigma$-invariant probability measure $\nu^i$ on $Y$ such that $\dtmu^i=\rmm_\dX\times\nu^i$ and $\tmu_0^i=\rmm_{X_0}\times\nu^i$.\end{lemma}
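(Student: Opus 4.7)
The plan is to combine the inductive hypothesis \eqref{InductiveEq} with the product-ergodicity statement Corollary \ref{ProdErgo} and the uniqueness of ergodic decompositions. By \eqref{InductiveEq}, $\dtmu=\rmm_\dX\times\nu$ for some $\beta$-ergodic probability measure $\nu$ on $Y$. Let $\nu=\frac{1}{N'}\sum_{j=1}^{N'}\nu^j$ be its (finite) $\beta|_\Sigma$-ergodic decomposition.

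The next step is to observe that the induced action $\alpha|_\Sigma\curvearrowright\dX$ inherits the no-virtually-cyclic-algebraic-factor property from $\alpha\curvearrowright X$: any algebraic factor of $\alpha|_\Sigma$ on $\dX$ pulls back to an $\alpha|_\Sigma$-equivariant algebraic factor of $X$ (the preimage in $G$ of the offending rational normal subgroup of $\dG$ is a rational normal subgroup of $G$ containing $Z$), which is precluded by assumption (1) of Theorem \ref{MeasureInductive} together with Remark \ref{Rk2ResRmk}. Consequently Corollary \ref{ProdErgo}(2), applied to $\alpha|_\Sigma\curvearrowright\dX$ and $\beta|_\Sigma\curvearrowright(Y,\nu)$, yields the $\talpha|_\Sigma$-ergodic decomposition
$$
\dtmu=\rmm_\dX\times\nu=\tfrac{1}{N'}\sum_{j=1}^{N'}\rmm_\dX\times\nu^j.
$$

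On the other hand, projecting $\tmu=\frac{1}{N}\sum_i\tmu^i$ through the equivariant map $\tX\to\dtX$ gives $\dtmu=\frac{1}{N}\sum_i\dtmu^i$, and each $\dtmu^i$ is again $\talpha|_\Sigma$-ergodic since equivariant pushforwards of ergodic measures remain ergodic. Uniqueness of the ergodic decomposition (up to relabeling and multiplicity) forces each $\dtmu^i$ to coincide with $\rmm_\dX\times\nu^{j(i)}$ for some index $j(i)$; set $\nu^i:=\nu^{j(i)}$, which is automatically $\beta|_\Sigma$-ergodic. The identity $\tmu_0^i=\rmm_{X_0}\times\nu^i$ then follows because the natural projection $\dX\to X_0$ pushes the Haar measure $\rmm_\dX$ to $\rmm_{X_0}$, so pushing $\dtmu^i=\rmm_\dX\times\nu^i$ further through $\dtX\to\tX_0$ produces $\rmm_{X_0}\times\nu^i$.

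There is no serious obstacle in this lemma: it is essentially bookkeeping, matching the two a priori distinct $\talpha|_\Sigma$-ergodic decompositions of $\dtmu$ given by the $\tmu^i$'s and by Corollary \ref{ProdErgo}. The only point requiring mild care is transferring the no-virtually-cyclic-factor hypothesis to $(\alpha|_\Sigma,\dX)$, and this is immediate since $\dX$ is itself an algebraic factor of $X$ and factors of algebraic factors are algebraic factors.
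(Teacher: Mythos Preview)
Your proof is correct and follows essentially the same approach as the paper: invoke \eqref{InductiveEq} to write $\dtmu=\rmm_\dX\times\nu$, apply Corollary~\ref{ProdErgo}(2) to identify its $\talpha|_\Sigma$-ergodic decomposition, match it with the projections $\dtmu^i$, and then project to $\tX_0$. The paper's proof is simply a terse two-line version of your argument, leaving implicit the verification that $\alpha|_\Sigma\curvearrowright\dX$ has no virtually cyclic factors (which you correctly spell out) and the ergodic-decomposition matching step.
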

\begin{proof}As $\dtmu$ is equal to $\rmm_\dX\times\nu$ by \eqref{InductiveEq}, the statement about $\dtmu^i$ is just Corollary \ref{ProdErgo}.(2).
Projecting to $\tX_0$, the second claim follows.\end{proof}

Since each $\tmu^i$ has positive weight in the decomposition of $\tmu$, every $\tmu$-null set is also a $\tmu^i$-null set. Hence the $\sigma$-algebras $\cA$ in the proof of Lemma \ref{SameLeaf} is also $U$-subordinate with respect to $\tmu^i$, and similarly $\Phi_*\cA$ is $U$-subordinate to $\tmu_\cU^i$. So, the same argument yields

\begin{lemma}\label{SameLeafI}For $\tmu^i$-almost every $\tx$, $(\tmu^i)_\tx^U=(\tmu_\cU^i)_{\Phi(\tx)}^U$.\end{lemma}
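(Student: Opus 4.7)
The plan is to simply transcribe the argument used for Lemma \ref{SameLeaf}, observing at each step that the only ingredient about the measure is that $\tmu$-null sets are null, and that this property is inherited by $\tmu^i$ since $\tmu^i \leq N\tmu$.

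First I would take the same $U$-subordinate $\talpha^\bfm$-increasing $\sigma$-algebra $\cA$ on $\tX$ produced by Lemma \ref{SubordExist}. Being $U$-subordinate is an almost-everywhere property, so since $\tmu^i$ is absolutely continuous with respect to $\tmu$ (with Radon--Nikodym derivative bounded by $N$), $\cA$ remains $U$-subordinate with respect to $\tmu^i$. Next I would consider $\Phi_*\cA$ on $\tX_\cU$, defined as the $\sigma$-algebra generated by the images under $\Phi$ of $\cA$-measurable sets together with the complement $\tX_\cU\setminus\Phi(\tX)$.

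The key geometric fact, already used in the proof of Lemma \ref{SameLeaf}, is that if $\tx$ and $\ty=u.\tx$ lie in a common $\cA$-atom, then (for $\tmu$-generic, hence $\tmu^i$-generic, $\tx$) they lie in a common $\cU$-atom, so $\phi(\tx)=\phi(\ty)$ and therefore
\begin{equation}\label{SameLeafIEq1}
\Phi(\ty)=\bigl(u.\tx,\phi(\tx)\bigr)=u.\Phi(\tx).
\end{equation}
Thus for $\tmu^i$-a.e. $\tx$ the image $\Phi([\tx]_\cA)=[\Phi(\tx)]_{\Phi_*\cA}$ is a bounded neighborhood of $\Phi(\tx)$ inside its $U$-orbit, and $\Phi_*\cA$ is $U$-subordinate with respect to $\tmu_\cU^i=\Phi_*\tmu^i$.

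Finally I would apply Proposition \ref{LeafMeas}(3) to both sides. On the $\tX$-side it gives $(\tmu^i)_\tx^U.\tx\propto (\tmu^i)_\tx^{\cA}$ on $[\tx]_\cA$; on the $\tX_\cU$-side it gives $(\tmu_\cU^i)_{\Phi(\tx)}^U.\Phi(\tx)\propto (\tmu_\cU^i)_{\Phi(\tx)}^{\Phi_*\cA}$ on $[\Phi(\tx)]_{\Phi_*\cA}$. Since $\Phi$ is injective and $\tmu_\cU^i=\Phi_*\tmu^i$, the conditional measure on the right is just the pushforward of $(\tmu^i)_\tx^\cA$ by $\Phi$, and by \eqref{SameLeafIEq1} this pushforward acts on the $U$-leaf exactly as the identification $u.\tx\leftrightarrow u.\Phi(\tx)$. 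Normalizing the leafwise measures at $e$ (as in $\cM_1^0(U)$), this identification yields $(\tmu^i)_\tx^U=(\tmu_\cU^i)_{\Phi(\tx)}^U$ for $\tmu^i$-a.e. $\tx$. No step presents a genuine obstacle; the content is just checking that the proof of Lemma \ref{SameLeaf} is insensitive to replacing $\tmu$ by a component $\tmu^i$ dominated by $N\tmu$.
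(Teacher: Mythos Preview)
Your proposal is correct and follows exactly the paper's own approach: the paper simply remarks that since $\tmu^i$ has positive weight in the decomposition of $\tmu$, every $\tmu$-null set is $\tmu^i$-null, so the $\sigma$-algebra $\cA$ from the proof of Lemma \ref{SameLeaf} remains $U$-subordinate with respect to $\tmu^i$ (and likewise $\Phi_*\cA$ with respect to $\tmu_\cU^i$), whence the same argument applies verbatim. Your write-up is just a more explicit spelling-out of this.
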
 

$\tmu^i$ inherits the isometric property of $\tmu$ as well.

\begin{lemma}\label{IsomI}$\tmu^i$ has isometric support along $V$.\end{lemma}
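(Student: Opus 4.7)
\emph{Proof outline.} The plan is to transport the isometric property from $\tmu$ to each component $\tmu^i$ via the compatibility between leafwise measures and ergodic decomposition. Since $\tmu^i\leq N\tmu$, any $\tmu$-null set is also $\tmu^i$-null, so for each $\bfp\in\ker\chi$ the conclusion $\supp\tmu_\ty^V\subset V_0^{[\chi]}$ (valid for $\tmu$-a.e.\ $\ty$ by hypothesis) automatically transfers to $\tmu^i$-a.e.\ $\ty$. It therefore suffices to prove the identification
$$(\tmu^i)_\ty^V\simeq\tmu_\ty^V\quad\text{for }\tmu^i\text{-a.e.\ }\ty.$$

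For this, pick $\bfn\in\Sigma$ with $V\subset G_\bfn^u$; such an $\bfn$ exists because $V=V^{[\chi]}$ is a non-trivial coarse Lyapunov subgroup and the half-space $\{\bfn:\chi(\bfn)>0\}$ meets any finite-index subgroup of $\bZ^r$. Let $\cE_\bfn$ denote the $\sigma$-algebra of $\talpha^\bfn$-invariant sets. Because $\cE_{\talpha|_\Sigma}\subset\cE_\bfn$ and $\tmu^i$ is itself $\talpha^\bfn$-invariant, transitivity of conditional expectation yields $(\tmu^i)_\tx^{\cE_\bfn}=\tmu_\tx^{\cE_\bfn}$ for $\tmu^i$-a.e.\ $\tx$. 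Now apply \cite{EL10}*{Prop.~7.22} (the same tool used in the proof of Proposition \ref{EntContri}) first to $\tmu$ and then to $\tmu^i$, giving $(\tmu_\tx^{\cE_\bfn})_\ty^V\simeq\tmu_\ty^V$ and $((\tmu^i)_\tx^{\cE_\bfn})_\ty^V\simeq(\tmu^i)_\ty^V$ on appropriate full-measure sets. Chaining these three identities produces the desired equality of leafwise measures, and the lemma follows.

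The only mildly subtle ingredient is the transitivity identity $(\tmu^i)_\tx^{\cE_\bfn}=\tmu_\tx^{\cE_\bfn}$, but this is standard: the finite $\talpha|_\Sigma$-ergodic decomposition $\{\tmu^1,\dots,\tmu^N\}$ is refined by the $\talpha^\bfn$-ergodic decomposition of $\tmu$, and conditioning commutes with further refinement. No new difficulty arises beyond the bookkeeping already carried out in Lemma \ref{SameLeafI}.
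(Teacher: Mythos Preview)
Your argument is correct and in fact establishes more than the paper does: you obtain the identification $(\tmu^i)_\ty^V\simeq\tmu_\ty^V$ for $\tmu^i$-a.e.\ $\ty$, whereas the paper only shows the support inclusion $\supp(\tmu^i)_\tx^V\subset\supp\tmu_\tx^V$. The paper's route is more elementary: pick any $V$-subordinate $\sigma$-algebra $\cA$ (subordinate with respect to $\tmu$, hence also with respect to $\tmu^i$ since $\tmu^i\leq N\tmu$); the inequality $\tmu^i\leq N\tmu$ forces $(\tmu^i)_\tx^\cA\leq N\,\tmu_\tx^\cA$ at $\tmu^i$-a.e.\ $\tx$, so the support of the former is contained in that of the latter, which by Proposition~\ref{LeafMeas}(3) translates to $\supp(\tmu^i)_\tx^V\subset\supp\tmu_\tx^V\subset V_0$. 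This avoids invoking \cite{EL10}*{Prop.~7.22} and the transitivity-of-disintegration bookkeeping. Your approach, on the other hand, yields a cleaner structural statement that could be reused elsewhere (e.g.\ it would immediately give the analogue of Lemma~\ref{SameLeafI} for $V$ in place of $U$), at the cost of slightly heavier machinery. Both are valid; for the purpose of this lemma the paper's two-line support argument is all that is needed.
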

\begin{proof}Let $\cA$ be an arbitrary $V$-subordinate $\sigma$-algebra with respect to $\tmu$, then it is also $V$-subordinate with respect to $\tmu^i$. And for $\tmu^i$-almost every $\tx$, $(\tmu^i)_\tx^\cA$ is bounded by $N\tmu_\tx^\cA$.  By Proposition \ref{LeafMeas}.(3), this implies that for $\tmu^i$-almost every $\tx$, $\supp(\tmu^i)_\tx^V\subset\supp\tmu_\tx^V$, which is contained in $V_0.\tx$\end{proof}

Using the lemmas above, we are ready to link entropies in the two different spaces $\tX$ and $\tX_\cU$.

\begin{lemma}\label{EntLoss}Let $\epsilon>0$ and $\bfm\in\Sigma$ be as in Lemma \ref{AlmostFreeze}. Then $$h_{\tmu_\cU^i}(\talpha_\cU^\bfm|\cB_{Y\times Y_\cU})>h_{\tmu^i}(\talpha^\bfm|\cB_Y)-(\dim V)\epsilon|\bfm|.$$ \end{lemma}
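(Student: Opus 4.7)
The plan is to decompose both conditional entropies along the coarse Lyapunov structure and show that the only piece which can differ between $\tmu^i$ and $\tmu_\cU^i$ is the contribution from $V$, whose size is controlled by Lemma~\ref{AlmostFreeze}.(1).

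First I would identify a splitting of the unstable subgroup. By Lemma~\ref{AlmostFreeze}, the element $\bfm$ satisfies $\chi'(\bfm)>0$ on every non-trivial Lyapunov subspace of $V=V^{[\chi]}$ and on every coarse Lyapunov subspace of $U=\exp\gou$, while $\chi'(\bfm)<0$ on the complementary subspaces. Hence $G_\bfm^u=VU$, and the pair $(\gov,\gou)$ is admissible in the sense of Definition~\ref{Admissible} using the same $\bfp\in\ker\chi$ fixed earlier. Before applying Lemma~\ref{ProdLemma}, I would verify that $\tmu^i$ and $\tmu_\cU^i$ have isometric support along $V$; the argument is the one used in Lemma~\ref{IsomI}, since $\tmu^i\leq N\tmu$ (as conditional measures on any subordinate $\sigma$-algebra) and likewise $\tmu_\cU^i\leq N\tmu_\cU$, so the isometric support property transfers from $\tmu$ and $\tmu_\cU$ respectively.

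Next I would decompose both entropies. By Proposition~\ref{EntContri}.(2) applied in each product scheme,
\begin{equation*}
h_{\tmu^i}(\talpha^\bfm|\cB_Y)=h_{\tmu^i}(\talpha^\bfm, VU),\qquad h_{\tmu_\cU^i}(\talpha_\cU^\bfm|\cB_{Y\times Y_\cU})=h_{\tmu_\cU^i}(\talpha_\cU^\bfm, VU).
\end{equation*}
Lemma~\ref{ProdLemma}.(3), applicable thanks to admissibility and the isometric support just verified, then splits each side as $h(\cdot,V)+h(\cdot,U)$. Lemma~\ref{SameLeafI} gives $(\tmu^i)_\tx^U=(\tmu_\cU^i)_{\Phi(\tx)}^U$ for $\tmu^i$-a.e.\ $\tx$, hence $D_{\tmu^i}(\talpha^\bfm,U)=D_{\tmu_\cU^i}(\talpha_\cU^\bfm,U)\circ\Phi$ almost everywhere; integrating against $\tmu^i$ and using $\tmu_\cU^i=\Phi_*\tmu^i$ yields $h_{\tmu^i}(\talpha^\bfm,U)=h_{\tmu_\cU^i}(\talpha_\cU^\bfm,U)$. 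Subtracting,
\begin{equation*}
h_{\tmu^i}(\talpha^\bfm|\cB_Y)-h_{\tmu_\cU^i}(\talpha_\cU^\bfm|\cB_{Y\times Y_\cU})=h_{\tmu^i}(\talpha^\bfm,V)-h_{\tmu_\cU^i}(\talpha_\cU^\bfm,V).
\end{equation*}

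Finally I would bound the right-hand side. By Proposition~\ref{EntContri}.(3),
\begin{equation*}
h_{\tmu^i}(\talpha^\bfm,V)\leq\log\bigl|\det\alpha^\bfm|_\gov\bigr|=\sum_{\chi'}\chi'(\bfm)\dim\gov^{\chi'},
\end{equation*}
where the sum is over Lyapunov exponents appearing in $\gov$. Lemma~\ref{AlmostFreeze}.(1) gives $\chi'(\bfm)<\epsilon|\bfm|$ strictly for each such $\chi'$, so $\log|\det\alpha^\bfm|_\gov|<(\dim V)\epsilon|\bfm|$ and thus $h_{\tmu^i}(\talpha^\bfm,V)<(\dim V)\epsilon|\bfm|$. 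Combined with $h_{\tmu_\cU^i}(\talpha_\cU^\bfm,V)\geq0$, the displayed difference is strictly less than $(\dim V)\epsilon|\bfm|$, which rearranges to the claimed inequality.

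The only non-routine step is confirming the product decomposition and the $U$-leafwise coincidence descend from $\tmu$ to the $\talpha|_\Sigma$-ergodic component $\tmu^i$ (and from $\tmu_\cU$ to $\tmu_\cU^i$); once the almost-sure domination by $N$ copies of the original measure is invoked, everything else is a direct application of results already in place.
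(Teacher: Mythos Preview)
Your proof is correct and follows essentially the same approach as the paper: split the unstable group as $VU$, match the $U$-contributions via Lemma~\ref{SameLeafI}, and bound the $V$-contribution by $(\dim V)\epsilon|\bfm|$ using Proposition~\ref{EntContri}.(3) and Lemma~\ref{AlmostFreeze}.(1). The paper is marginally more economical in that it only uses the inequality $h_{\tmu_\cU^i}(\talpha_\cU^\bfm|\cB_{Y\times Y_\cU})\geq h_{\tmu_\cU^i}(\talpha_\cU^\bfm,U)$ from Proposition~\ref{EntContri}.(2) rather than the full product decomposition on the $\tmu_\cU^i$ side, thereby avoiding the need to check isometric support for $\tmu_\cU^i$; your route works just as well once that check is made.
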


\begin{proof}By Proposition \ref{EntContri}.(2) and Lemma \ref{SameLeafI}, $h_{\tmu_\cU^i}(\talpha_\cU^\bfm, U)=h_{\tmu^i}(\talpha^\bfm, U)$. It follows that  \begin{equation}\label{EntLossEq0}h_{\tmu_\cU^i}(\talpha_\cU^\bfm|\cB_{Y\times Y_\cU})\geq h_{\tmu^i}(\talpha^\bfm, U).\end{equation} 

On the other hand, as $\tmu^i$ has isometric support along $V$ and $(\gov,\gou)$ is admissible. By Lemma \ref{ProdLemma}.(3),   \begin{equation}\label{EntLossEq1}h_{\tmu^i}(\talpha^\bfm|\cB_Y)=h_{\tmu^i}(\talpha^\bfm, V)+h_{\tmu^i}(\talpha^\bfm, U).\end{equation}

Comparing \eqref{EntLossEq0} and \eqref{EntLossEq1}, it is clear that to prove the lemma one only needs \begin{equation}\label{EntLossEq2}h_{\tmu^i}(\talpha^\bfm, V)<(\dim V)\epsilon|\bfm|.\end{equation} 

Using Proposition \ref{EntContri}.(3), we see $h_\tmu(\talpha^\bfm, V)\leq\log\big|\det\alpha^\bfm|_\gov\big|$. By the choice of $\bfm$ in Lemma \ref{AlmostFreeze}, all eigenvalues of $\alpha^\bfm|_\gov$ are less than $\exp(\epsilon|\bfm|)$. Hence  \eqref{EntLossEq2} is verified. \end{proof}

\begin{lemma}\label{EntBase}If $\bfm\in\Sigma$ and $\theta>0$ are as in Lemma \ref{AlmostFreeze}, then $h_{\tmu_0^i}(\talpha^\bfm|\cB_Y)\geq\theta|\bfm|$.\end{lemma}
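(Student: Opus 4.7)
My plan is to combine the product structure of $\tmu_0^i$ with the classical entropy formula for automorphisms of nilmanifolds. First, Lemma~\ref{DXUnifI} yields $\tmu_0^i = \rmm_{X_0} \times \nu^i$. Because the measure factors as a direct product and $\talpha^\bfm = \alpha^\bfm \times \beta^\bfm$ acts coordinatewise, the standard identity for conditional entropies of product systems gives
$$h_{\tmu_0^i}\bigl(\talpha^\bfm \,\big|\, \cB_Y\bigr) \;=\; h_{\rmm_{X_0}}(\alpha^\bfm),$$
so the problem reduces to bounding the Haar entropy of the nilmanifold automorphism $\alpha^\bfm$ on $X_0$ from below.

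Second, I would invoke the Pesin--Bowen formula for nilmanifold automorphisms, which expresses this Haar entropy as the log of the determinant of the derivative restricted to the unstable subspace, or equivalently
$$h_{\rmm_{X_0}}(\alpha^\bfm) \;=\; \sum_{\chi''(\bfm)>0}\chi''(\bfm)\cdot\dim\gog_0^{\chi''},$$
summed over the Lyapunov exponents $\chi''$ of $\alpha$ on $\gog_0=\mathrm{Lie}(G_0)$. By Remark~\ref{QuotientEigenRmk}, these exponents on $\gog_0$ are inherited, with multiplicities, from those on $\gog$ via the projection $\gog\twoheadrightarrow\gog_0$.

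Third, I would exploit the expansion property guaranteed by Lemma~\ref{AlmostFreeze}(2): every Lyapunov exponent $\chi'$ of $\alpha$ with $\gov^{\chi'}\subset\gou$ satisfies $\chi'(\bfm)>\theta|\bfm|$. For the algebraic factor $X_0$ at hand, the unstable subalgebra $\gou$ projects non-trivially into $\gog_0$, so at least one Lyapunov exponent coming from $\gou$ survives and contributes a summand exceeding $\theta|\bfm|$ in the Pesin--Bowen formula. Discarding all other (non-negative) contributions yields the desired bound $h_{\rmm_{X_0}}(\alpha^\bfm)\geq\theta|\bfm|$.

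The main technical point, and the likely obstacle, is the bookkeeping step of verifying that $\gou$ indeed projects non-trivially to $\gog_0$ for the relevant $X_0$; equivalently, that the rational normal subgroup defining $X_0$ does not absorb all of $\gou$. This should be arranged by the surrounding setup, which selects $X_0$ precisely to expose the strongly expanding directions provided by $\gou$.
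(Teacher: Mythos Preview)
Your overall strategy is essentially the same as the paper's: reduce to the Haar entropy of $\alpha^\bfm$ on $X_0$ via Lemma~\ref{DXUnifI}, then extract a single Lyapunov exponent from $\gou$ that survives in $\gog_0$ and contributes at least $\theta|\bfm|$. The paper phrases the last step through entropy contributions (Proposition~\ref{EntContri}) rather than the full Pesin--Bowen formula, but the content is the same.

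There is, however, a genuine gap at precisely the point you flag as ``the likely obstacle''. Your guess that ``the surrounding setup \dots\ selects $X_0$ precisely to expose the strongly expanding directions provided by $\gou$'' is incorrect: in this section $X_0$ ranges over \emph{all} non-trivial $\alpha|_\Sigma$-equivariant algebraic factors of $\dX$ (see the paragraph preceding Lemma~\ref{DXUnifI} and the application in Corollary~\ref{EntBasePositive}). Nothing about the choice of $X_0$ guarantees a priori that $\gou$ projects non-trivially to $\gog_0$; one could imagine a factor whose Lie algebra sits entirely inside $\gov^{[\chi]}\oplus\gov^{[-\chi]}\oplus\gos$.

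The paper supplies the missing argument as follows. Since $X_0$ is an algebraic factor of $X$, it inherits from Theorem~\ref{MeasureInductive} the hypothesis that no finite-index restriction has a virtually cyclic algebraic factor. By Corollary~\ref{NonPropLya}, the Lyapunov exponents of $\alpha$ on $\gog_0$ are therefore not all proportional to $\chi$. By the choice of $\bfp\in\ker\chi$ (made so that $\chi'(\bfp)\neq 0$ for every $[\chi']\neq[\pm\chi]$), at least one exponent on $\gog_0$ is non-zero at $\bfp$. Finally, because $\alpha^\bfm$ is a nilmanifold automorphism and hence volume-preserving, the Lyapunov exponents on $\gog_0$ sum to zero; so if one is non-zero at $\bfp$, there must be one with $\chi'(\bfp)>0$, i.e.\ one coming from $\gou$. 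That exponent then satisfies $\chi'(\bfm)>\theta|\bfm|$ by Lemma~\ref{AlmostFreeze}(2), and your entropy bound goes through. You should incorporate this chain of reasoning explicitly rather than deferring it to the setup.
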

\begin{proof}
$X_0$ inherits from $X$ the property that, for any finite index subgroup $\Sigma'$, there is no $\alpha|_{\Sigma'}$-equivariant algebraic factor on which the action is virtually cyclic. In the Lyapunov decomposition on $X_0$, by Corollary \ref{NonPropLya}, not all the Lyapunov exponents are proportional to $\chi$. Thanks to the choice of $\bfp$, these exponents do not all vanish at $\bfp$. Furthermore, since the action is by nilmanifold automorphisms, which are volume preserving, the sum of the exponents is identically zero. Hence there must be exponents of both positive and negative values at $\bfp$. In particular, there is at least one exponent $\chi'$ from $\gou$ that also appears in the Lyapunov decomposition on $X_0$. By Lemma \ref{AlmostFreeze},  $\chi'(\bfm)>\theta|\bfm|$.

$h_{\tmu_0^i}(\talpha^\bfm|\cB_Y)$ is at least  the entropy contribution $h_{\tmu_0^i}(\talpha^\bfm, G_0^{\chi'})$ of the Lyapunov subgroup $V_0^{\chi'}\subset G_0$. However, by Lemma \ref{DXUnifI}, $\tmu_0^i$ is the product of the Lebesgue measure $\rmm_{X_0}$ on $X_0$ with some measure $\nu^i$ on $Y$. In particular, the leafwise measure $(\tmu_0^i)_{\tx_0}^{ V_0^{\chi'}}$ is the Haar measure for almost every $\tx_0\in\tX_0$. It follows that  $h_{\tmu_0^i}(\talpha^\bfm, V_0^{\chi'})\geq \dim V_0^{\chi'}\cdot\chi'(\bfm)\geq\theta|\bfm|$. The proof is complete.\end{proof}

\begin{corollary}\label{EntBasePositive}If $(\dim V)\epsilon<\theta$ then $h_{\tmu_{0,\cU}^i}(\talpha_\cU^\bfm|\cB_{Y\times Y_\cU})>0$.\end{corollary}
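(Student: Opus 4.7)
The plan is to combine Lemmas \ref{EntLoss} and \ref{EntBase} by executing the argument of Lemma \ref{EntLoss} at the level of the factor system $(\tX_{0,\cU}, \tmu_{0,\cU}^i)$. Let $V_0, U_0\subset G_0$ denote the images of $V, U$ under the quotient map $G\to G_0$. The Lyapunov exponents of $\alpha^\bfm$ on $V_0$ (respectively $U_0$) are inherited from those on $V$ (respectively $U$), so $V_0U_0\subset G_{0,\bfm}^u$ and $(\gov_0,\gou_0)$ remains an admissible pair via the same $\bfp$ (admissibility passes to quotients since it is formulated purely in terms of Lyapunov functionals). Furthermore, the isometric support of $\tmu^i$ along $V$ descends to isometric support of $\tmu_0^i$ along $V_0$: the image of the $\talpha_S^\bfp$-isometric subgroup of $V$ under $G\to G_0$ is contained in the analogous isometric subgroup of $V_0$, since Definition \ref{IsomDef} is compatible with factor maps.

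The core computation is as follows. By Proposition \ref{EntContri}(2),
$$h_{\tmu_{0,\cU}^i}(\talpha_\cU^\bfm|\cB_{Y\times Y_\cU})\geq h_{\tmu_{0,\cU}^i}(\talpha_\cU^\bfm, U_0).$$
I claim a factor analog of Lemma \ref{SameLeafI} yields $h_{\tmu_{0,\cU}^i}(\talpha_\cU^\bfm, U_0)=h_{\tmu_0^i}(\talpha^\bfm, U_0)$. Granting this and applying Lemma \ref{ProdLemma}(3) to $(V_0,U_0)$ in $G_0$ (which is legitimate thanks to the isometric support and admissibility above),
$$h_{\tmu_0^i}(\talpha^\bfm|\cB_Y)=h_{\tmu_0^i}(\talpha^\bfm,V_0)+h_{\tmu_0^i}(\talpha^\bfm,U_0).$$
By Proposition \ref{EntContri}(3) together with the eigenvalue bound from Lemma \ref{AlmostFreeze}, $h_{\tmu_0^i}(\talpha^\bfm,V_0)\leq(\dim V_0)\epsilon|\bfm|\leq(\dim V)\epsilon|\bfm|$, since the eigenvalues of $\alpha^\bfm|_{V_0}$ form a sub-multiset of those of $\alpha^\bfm|_V$. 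Combined with Lemma \ref{EntBase}, this gives
$$h_{\tmu_{0,\cU}^i}(\talpha_\cU^\bfm|\cB_{Y\times Y_\cU})\geq h_{\tmu_0^i}(\talpha^\bfm,U_0)\geq\bigl(\theta-(\dim V)\epsilon\bigr)|\bfm|>0$$
under the hypothesis $(\dim V)\epsilon<\theta$.

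The main obstacle is establishing the factor analog of Lemma \ref{SameLeafI}, namely $h_{\tmu_{0,\cU}^i}(\talpha_\cU^\bfm, U_0)=h_{\tmu_0^i}(\talpha^\bfm, U_0)$. I expect this to follow from Corollary \ref{LeafTribu}, which tells us that atoms of $\cU$ are unions of full $U$-leaves; since the quotient $G\to G_0$ sends $U$-orbits onto $U_0$-orbits, any $U_0$-subordinate $\sigma$-algebra $\cA_0$ on $\tX_0$ can be combined with $\cB_{Y_\cU}$ to yield a $U_0$-subordinate $\talpha^\bfm_\cU$-increasing $\sigma$-algebra on $\tX_{0,\cU}$ whose conditional measures pull back (under projecting away $Y_\cU$) to those of $\cA_0$ with respect to $\tmu_0^i$. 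This would give pointwise equality of $U_0$-leafwise measures and hence equality of entropy contributions by the definition via Lemma \ref{EntContriAtX}.
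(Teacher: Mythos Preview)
Your overall strategy has the right spirit, but the ``factor analog of Lemma \ref{SameLeafI}'' is where the argument breaks down, and your justification in the last paragraph does not establish it. The claim that the conditional measures of $\cA_0\vee\cB_{Y_\cU}$ with respect to $\tmu_{0,\cU}^i$ ``pull back to those of $\cA_0$ with respect to $\tmu_0^i$'' would require the $Y_\cU$-coordinate to be $\tmu_{0,\cU}^i$-almost surely constant along $U_0$-orbits in $\tX_0$. But Corollary \ref{LeafTribu} only tells you that $\cU$-atoms on $\tX$ are $U$-saturated. A $U_0$-orbit in $\tX_0$ pulls back to a $U\cdot K$-coset in $\tX$, where $K$ is the kernel of $G\to G_0$; since $K$ contains at least the central subgroup $Z\neq\{e\}$, there is no reason $\cU$-atoms are saturated by this larger group. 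In general, for fixed $\tx_0$ the conditional measures $(\tmu_{0,\cU}^i)_{(\tx_0,y_\cU)}^{\cA_0\vee\cB_{Y_\cU}}$ vary with $y_\cU$, and only their \emph{average} (over the relevant disintegration) equals $(\tmu_0^i)_{\tx_0}^{\cA_0}$. Thus you get neither the claimed leafwise equality nor an obvious inequality in the direction you need. (A secondary concern is that the isometric-support claim for $\tmu_0^i$ along $V_0$ also requires knowing how leafwise measures behave under the factor map $\tX\to\tX_0$, which is not the same as projecting the leafwise measures on $\tX$; you did not address this.)

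The paper sidesteps all of this by working purely with conditional entropies via the Abramov--Rokhlin formula, never touching leafwise structure on the factor. One rewrites everything as conditional entropies of $\talpha^\bfm$ on $(\tX,\tmu^i)$ relative to the sub-$\sigma$-algebras $\cB_Y$, $\cB_Y\vee\cU$, $\cB_{X_0}\vee\cB_Y$, and $\cB_{X_0}\vee\cB_Y\vee\cU$. Lemmas \ref{EntLoss} and \ref{EntBase} become
\[
h_{\tmu^i}(\talpha^\bfm|\cB_Y\vee\cU)>h_{\tmu^i}(\talpha^\bfm|\cB_Y)-(\dim V)\epsilon|\bfm|,\qquad
h_{\tmu^i}(\talpha^\bfm|\cB_Y)-h_{\tmu^i}(\talpha^\bfm|\cB_{X_0}\vee\cB_Y)\geq\theta|\bfm|,
\]
and adding gives $h_{\tmu^i}(\talpha^\bfm|\cB_Y\vee\cU)-h_{\tmu^i}(\talpha^\bfm|\cB_{X_0}\vee\cB_Y)>0$. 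By Abramov--Rokhlin and monotonicity of conditional entropy in the conditioning $\sigma$-algebra,
\[
h_{\tmu_{0,\cU}^i}(\talpha_\cU^\bfm|\cB_{Y\times Y_\cU})
= h_{\tmu^i}(\talpha^\bfm|\cB_Y\vee\cU)-h_{\tmu^i}(\talpha^\bfm|\cB_{X_0}\vee\cB_Y\vee\cU)
\geq h_{\tmu^i}(\talpha^\bfm|\cB_Y\vee\cU)-h_{\tmu^i}(\talpha^\bfm|\cB_{X_0}\vee\cB_Y)>0.
\]
This approach is shorter and avoids the delicate question of how $\cU$ interacts with the projection $\tX\to\tX_0$.
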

\begin{proof} We have the following diagram that interwines actions $\talpha|_\Sigma$ on the left column and $\talpha_\cU|_\Sigma$ on the right column:
\begin{equation}\begin{CD}
(\tX,\tmu^i) @>>> (\tX_\cU,\tmu_\cU^i)\\
@VVV @VVV\\
(\tX_0,\tmu_0^i) @. (\tX_{0,\cU},\tmu_{0,\cU}^i)
\end{CD}\end{equation}

If we identify factor maps between dynamical systems with inclusions between action-invariant $\sigma$-algebras, then the diagram is equivalent to:
\begin{equation}\begin{CD}
(\tX,\cB_X\vee\cB_Y,\tmu^i) @>>> (\tX, \cB_X\vee\cB_Y\vee\cU,\tmu^i)\\
@VVV @VVV\\
(\tX,\cB_{X_0}\vee\cB_Y,\tmu^i) @. (\tX,\cB_{X_0}\vee\cB_Y\vee\cU,\tmu^i)
\end{CD}\end{equation}
Here by $\cB_{X_0}$, $\cB_X$ and $\cB_Y$ are all viewed as $\sigma$-subalgebras of $\cB_\tX=\cB_X\vee\cB_Y$. Note that in the upper-right corner of the diagram, $\cB_X\vee\cB_Y\vee\cU$ is actually just $\cB_X\times\cB_Y$, which corresponds to the fact that $\Phi$ is an isomorphism between $\tmu$ and $\tmu_\cU$.

Using Abramov-Rokhlin formula, what was proved in Lemmas \ref{EntLoss} and \ref{EntBase} can be respectively written as:
\begin{equation}h_{\tmu^i}(\talpha^\bfm|\cB_Y\vee\cU)>h_{\tmu^i}(\talpha^\bfm|\cB_Y)-(\dim V)\epsilon|\bfm|;\end{equation}
\begin{equation}h_{\tmu^i}(\talpha^\bfm|\cB_Y)-h_{\tmu^i}(\talpha^\bfm|\cB_{X_0}\vee\cB_Y)>\theta|\bfm|.\end{equation}

Adding the two inequalities yields
\begin{equation}h_{\tmu^i}(\talpha^\bfm|\cB_Y\vee\cU)-h_{\tmu^i}(\talpha^\bfm|\cB_{X_0}\vee\cB_Y)>\big(\theta-(\dim V)\epsilon\big)|\bfm|>0.\end{equation}

Again by Abramov-Rokhlin formula,
\begin{equation}\begin{split}h_{\tmu_0^i}(\talpha_\cU^\bfm|\cB_{Y\times Y_\cU})
=&h_{\tmu^i}(\talpha^\bfm|\cB_Y\vee \cU)-h_{\tmu^i}(\talpha^\bfm|\cB_{X_0}\vee\cB_Y\vee\cU)\\
\geq&h_{\tmu^i}(\talpha^\bfm|\cB_Y\vee\cU)-h_{\tmu^i}(\talpha^\bfm|\cB_{X_0}\vee\cB_Y)>0,\end{split}\end{equation} which is the corollary.\end{proof}

\subsection{Getting invariance}

We can now conclude the proof of Proposition \ref{Ent0Fiber}.

\begin{proof}[Proof of Proposition \ref{Ent0Fiber}]

Recall that, in Lemma \ref{AlmostFreeze} the constant $\theta$ is independent of the choice of $\epsilon$. Hence by choosing $\epsilon<\frac\theta{\dim X}$, Corollary \ref{EntBasePositive} becomes applicable to all non-trivial $\alpha|_\Sigma$-equivariant algebraic factors $X_0$ of $\dX$, where $
\Sigma$ is any finite index subgroup. Any such factor, being also non-trivial factors of $X$, is equipped with an induced action with rank at least $2$ by assumption (2) of Theorem \ref{MeasureInductive}, and positive entropy with respect to (possibly some integer multiple of) $\bfm$ by the corollary. Therefore the assumptions in Theorem \ref{MeasureInductive} hold for $(\talpha_\cU, \dtX_\cU,\dtmu_\cU)$, where $\dtX_\cU=\dtX\times Y_\cU=\dX\times Y\times Y_\cU$ and $\dtmu_\cU$ is the projection of $\dmu_\cU$.

As $\dim\dX<\dim X$, by inductive hypothesis $\dtmu_\cU$ is the Haar measure $\rmm_\dX$ in the $\dX$ component. In particular, \begin{equation}h_{\dtmu_\cU}(\talpha_\cU^\bfm|\cB_{Y\times Y_\cU})=h_{\rmm_\dX}(\alpha^\bfm)=h_{\dtmu}(\talpha^\bfm|\cB_Y).\end{equation}

On one hand, by assumption (1) in Proposition \ref{Ent0Fiber}, \begin{equation}\label{Ent0FiberEq1}\begin{split}h_\tmu(\talpha^\bfm|\cB_Y)=&h_{\dtmu}(\talpha^\bfm|\cB_Y)+h_\tmu(\talpha^\bfm|\cB_\dtX)=h_{\rmm_\dX}(\alpha^\bfm)+0\\
=&h_{\rmm_\dX}(\alpha^\bfm).\end{split}\end{equation}

On the other hand, \begin{equation}\label{Ent0FiberEq2}\begin{split}h_\tmu(\talpha^\bfm|\cB_Y)=&h_{\tmu_\cU}(\talpha_\cU^\bfm|\cB_Y)\geq h_{\tmu_\cU}(\talpha_\cU^\bfm|\cB_{Y\times Y_\cU})\\
\geq & h_{\dtmu_\cU}(\talpha_\cU^\bfm|\cB_{Y\times Y_\cU})=h_{\rmm_\dX}(\alpha^\bfm).\end{split}\end{equation} Here the first equality is because $\Phi$ is a measure-theoretic isomorphism between $(\tX, \tmu)$ and $(\tX_\cU, \tmu_\cU)$ preserving the $Y$ component. The last inequality is because entropy cannot increase on a  factor.

Comparing with \eqref{Ent0FiberEq1}, we see that both inequalities in \eqref{Ent0FiberEq2} must be equalities. In particular, $h_\tmu(\talpha^\bfm|\cB_Y)=h_{\tmu_\cU}(\talpha_\cU^\bfm|\cB_{Y\times Y_\cU})$.

As $\tmu$ and $\tmu_\cU$ both have isometric supports along $V$, by Lemma \ref{ProdLemma} the equation above can be rewritten as \begin{equation}h_\tmu(\talpha^\bfm,V)+h_\tmu(\talpha^\bfm,U)=h_{\tmu_\cU}(\talpha_\cU^\bfm,V)+h_{\tmu_\cU}(\talpha_\cU^\bfm,U).\end{equation} But by \eqref{EntLossEq1}, the two contributions along $U$ coincide, thus so do the two contributions along $V$. Recall that, by our assumption, $V$ has positive entropy contribution $h_\tmu(\talpha^\bfn,V)$ for some $\bfn$ where $V\subset G_\bfn^u$. By Lemma \ref{EntPosForAll}, $h_{\tmu_\cU}(\talpha_\cU^\bfm,V)=h_\tmu(\talpha^\bfm,V)>0$.

By Proposition \ref{EntContri}.(1), the leafwise measure $(\tmu_\cU)_{\tx_\cU}^V$ is not the atomic mass $\delta_e$ at identity for a positive portion of $\tx_\cU\in\tX_\cU$ with respect to $\tmu_\cU$. By Lemma \ref{LeafAction}, this property is preserved by the action $\talpha_\cU$, hence must hold $\tmu_\cU$-almost everywhere because of ergodicity. Or equivalently, $(\tmu_\cU)_{\Phi(\tx)}^V(\{e\})=0$ almost everywhere with respect to $\tmu$. Let $\cA_\cU$ be a $V$-subordinate $\sigma$-algebra on $\tX_\cU$, then $(\tmu_\cU)_{\Phi(\tx)}^{\cA_\cU}(\{\Phi(\tx)\})=0$ for $\tmu$-almost every $\tx$. 

On the other hand, by the construction given in Corollary \ref{LeafTribu}, if $\tx, \tx'\in\tX'$ and $[\tx]_\cU=[\tx']_\cU$, then $\tmu_\tx^V\simeq\tmu_{\tx'}^V$ where $\tX'\subset\tX$ is some fixed set of full measure. Furthermore, we may require that $\tX'$ is contained in the set $\tX^*$ in Proposition \ref{LeafMeas}.(2).

Since $\tmu_\cU\big(\Phi(\tX')\big)=1$, $\tmu_{\Phi(\tx)}^{\cA_\cU}\big(\Phi(X')\big)=1$ for almost every $\tx$. Therefore, for almost every $\tx$ (which may be required to belong to $\tX'$), there exists $\tx'_\cU\in[\Phi(\tx)]_{\cA_\cU}\backslash\{\Phi(\tx)\}$ with $\tx'_\cU\in\Phi(\tX')$. 

Fix $\tx'\in\tX'$ with $\Phi(\tx')=\tx'_\cU$. Then \begin{equation}\big(\tx',\phi(\tx')\big)=\Phi(\tx')=v.\Phi(\tx)=\big(v.\tx,\phi(\tx)\big)\end{equation} for some non-trivial $v\in V$. So $\tx$ and $\tx'$ satisfy: \begin{enumerate}\item both belong to $\tX'$; \item $\phi(\tx')=\phi(\tx)$, in other words, $[\tx']_\cU=[\tx]_\cU$;  \item $\tx'=v.\tx$.\end{enumerate} The first two properties imply $\tmu_\tx^V\simeq\tmu_{\tx'}^V$, and the last one yields $\tmu_\tx^V\simeq(\tau_v)_*\tmu_{\tx'}^V$ by Proposition \ref{LeafMeas}.(2).  Therefore, $\tmu_\tx^V\simeq(\tau_v)_*\tmu_\tx^V$, as Proposition \ref{Ent0Fiber} claims.
\end{proof}

\section{Consequence of translational invariance}\label{SecRatner}

This section is devoted to establish:

\begin{proposition}\label{TIMeas} In the settings of Theorem \ref{MeasureInductive}, if Property \ref{TransInvProp} holds, then there is a finite index subgroup $\Sigma\subset\bZ^r$, such that in the ergodic decomposition $\tmu=\frac1N\sum_{i=1}^N\tmu^i$ of $\tmu$ under the restricted action $\alpha|_\Sigma$, for each ergodic component $\tmu^i$, there is a non-trivial normal rational subgroup $L^i\subset G$ that is invariant under $\alpha|_\Sigma$, such that $\tmu^i$ is invariant under left translations by $L^i$.
\end{proposition}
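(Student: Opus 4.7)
The plan is to extract from the pointwise translational invariance of Property \ref{TransInvProp} a global left-translation invariance of ergodic components of $\tmu$, then enlarge the invariance group to a normal rational subgroup stable under a finite-index subaction.

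First I would introduce the leafwise stabilizer $S(\tx) := \{v \in V : (\tau_v)_*\tmu_\tx^V \simeq \tmu_\tx^V\}$, a closed subgroup of $V = V^{[\chi]}$ by continuity of the convolution action on $\cM_1^0(V)$. Property \ref{TransInvProp} guarantees $S(\tx) \neq \{e\}$ almost everywhere, and Lemma \ref{LeafAction} yields the equivariance $S(\talpha^\bfn.\tx) = \alpha^\bfn S(\tx)$. To upgrade to a nontrivial \emph{connected} stabilizer $L(\tx) := S(\tx)^\circ$, I would fix $\bfn$ with $\chi(\bfn) > 0$ (so $\alpha^{-\bfn}$ contracts $V$ exponentially toward $e$) and run a Luzin-set plus Poincar\'e recurrence argument parallel to the one in the proof of Proposition \ref{PropRk1Fiber}: on returns of $\talpha^{-k\bfn}.\tx$ to a fixed large compact set, $S(\talpha^{-k\bfn}.\tx) = \alpha^{-k\bfn} S(\tx)$ contains nontrivial elements approaching $e$, and Hausdorff upper semicontinuity of closed subgroups forces $L(\tx) \neq \{e\}$ for $\tmu$-a.e.\ $\tx$.

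Second, I would show that $L(\tx)$ is a.e.\ constant on a finite-index ergodic component and equal to an $\alpha|_\Sigma$-invariant rational subgroup of $V$. The dimension of $L(\tx)$ is $\talpha$-invariant, hence constant by ergodicity. By a Ratner-type orbit-closure argument on the nilmanifold $X$, the connected stabilizer of a generic leafwise measure is a rational subgroup of $V$; there are only countably many such subgroups, so the image of $\tx \mapsto L(\tx)$ lies in a single $\talpha$-orbit in this discrete set, and any ergodic $\bZ^r$-invariant probability on a countable discrete set must be supported on a finite orbit. Letting $\Sigma \subset \bZ^r$ be the finite-index stabilizer of one representative and $\tmu = \frac{1}{N}\sum_i \tmu^i$ the $\alpha|_\Sigma$-ergodic decomposition, $L(\tx)$ equals a common $\alpha|_\Sigma$-invariant rational subgroup $L_0^i \subset V$ for $\tmu^i$-a.e.\ $\tx$. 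A subordinate-$\sigma$-algebra computation in the spirit of \cite{EL10}*{\S 7} and the proof of Proposition \ref{PropRk1Fiber} then converts this uniform leafwise right-invariance into global invariance of $\tmu^i$ under left translations by $L_0^i$.

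The main obstacle will be enlarging $L_0^i$ to a subgroup of $G$ that is \emph{normal}, a difficulty absent from the toral case of \cite{EL03} where normality is automatic. My plan is to exploit the nilpotent structure of $G$ together with the high-entropy commutator technique of Einsiedler--Katok. The full left-translation stabilizer $T^i := \{g \in G : g_* \tmu^i = \tmu^i\}$ is closed and contains $L_0^i$, and its identity component $T^{i,\circ}$ is $\alpha|_\Sigma$-invariant because $\alpha^\bfn$ normalizes $T^i$ for $\bfn \in \Sigma$. By repeatedly commuting $T^{i,\circ}$ with coarse Lyapunov subgroups along admissible pairs (Lemma \ref{AdmiDecomp}), using Baker--Campbell--Hausdorff to verify that the resulting commutators still stabilize $\tmu^i$, and inducting along the lower central series of $G$, I expect to produce a subgroup $L^i \lhd G$ contained in $T^{i,\circ}$, still $\alpha|_\Sigma$-invariant, and rational by Ratner's orbit-closure principle applied to the $L^i$-action on $X$; this $L^i$ is the subgroup demanded by the proposition.
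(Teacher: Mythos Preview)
Your first two paragraphs track the paper's opening moves in \S8.1 closely: the leafwise stabilizer, its connectedness via Poincar\'e recurrence, and the passage through a countable family to obtain a finite-index $\Sigma$ and rational $L^i$. One imprecision: the paper does not argue that the connected stabilizer $L(\tx)\subset V$ is itself rational. Instead it disintegrates $\tmu$ according to $\tx\mapsto V_\tx$, then further into $V_\tx$-ergodic components, and applies Ratner's theorem on $X$ to each such component. This produces a measurable map $\tx\mapsto\txi_\tx\in\cE(\tX)$ into homogeneous measures on compact orbits of rational subgroups $L^i\subset G$ (not necessarily inside $V$), and it is the countability of $G\backslash\cE(X)$ that yields the finite orbit. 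Your formulation ``the connected stabilizer of a generic leafwise measure is a rational subgroup of $V$'' conflates these two objects and is not justified as stated.

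The serious gap is your third step. The paper does \emph{not} obtain normality of $L^i$ by an Einsiedler--Katok commutator mechanism, and I do not see how that mechanism could succeed here: you only control leafwise measures along a single coarse Lyapunov direction $V^{[\chi]}$, so there is no reason commutators $[T^{i,\circ},V^{[\chi']}]$ with other Lyapunov subgroups should stabilize $\tmu^i$, nor does the product structure of \S\ref{SecProd} (which requires the isometric-support hypothesis) apply in the generality of this proposition. The paper's route is entirely different and crucially uses the \emph{inductive hypothesis} of Theorem~\ref{MeasureInductive}, which your proposal never invokes. Concretely: one embeds $G/N_G(L^i)$ into $\wedge^d\gog$ via $g\mapsto(\wedge^d\Ad_g)\theta_0$, uses Rosenlicht's theorem on unipotent orbits and Lemma~\ref{CpctLinear} to show the pushforward of $\tmu^i$ under $\tx\mapsto g_\tx N_G(L^i)$ has compact support, and then compares with the fact (from the inductive hypothesis, via Lemma~\ref{AbUnif}) that the projection of $\tmu^i$ to $X_\ab$ is Lebesgue. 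These two facts are incompatible unless the image of $N_G(L^i)$ in $G_\ab$ is all of $G_\ab$; a short induction up the lower central series then gives $N_G(L^i)=G$, i.e.\ $L^i\lhd G$. Without the inductive hypothesis that $\dtmu$ is Haar in the $\dX$-component, this argument --- and hence the proposition --- does not go through.
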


To prove Proposition \ref{TIMeas}, we begin with an argument from \cite{BQ13}*{\S8}, then use facts about algebraic groups.

\subsection{Decomposition into Ratner measures}

Let $\tX$, $\talpha$, $\tmu$ and $\bfn$ be as in \S\ref{MeasureInductive}, and $V^{[\chi]}$ be the coarse Lyapunov subgroup in Property \ref{TransInvProp}. As $\chi\neq 0$, we can fix $\bfn\in\bZ^r$ with $\chi(\bfn)>0$, or in other words ,$V^{[\chi]}\subset G_\bfn^u$. Again, we denote $V^{[\chi]}$ by $V$ and $\gov^{[\chi]}$ by $\gov$ for simplicity.

Define a subgroup of $V$ at $\tmu$-almost every $\tx\in\tX$ by \begin{equation}V_\tx=\{v\in V: (\tau_v)_*\tmu_\tx^V\simeq \tmu_\tx^V\},\end{equation} and let $\gov_\tx=\{\log v: v\in V_\tx\}\subset\gov.$ In other words, $V_\tx$ is the stabilizer of $\tmu_\tx^V$ in $\cM_1^0(V)$ with respect to right translations. In particular, $V_\tx$ is a closed subgroup. It is non-trivial by Property \ref{TransInvProp}, which we assume throughout this section.

It follows from the equivariance property in Lemma \ref{LeafAction} that \begin{equation}\label{LeafStabEq}\alpha^\bfn.V_\tx=V_{\talpha^\bfn.\tx},  \text{ for }\tmu\text{-a.e. }x.\end{equation}

\begin{lemma}\label{LeafStabConn} For $\tmu$-almost every $\tx\in\tX$, $V_\tx$ is connected.\end{lemma}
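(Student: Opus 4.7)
The plan is to exploit the equivariance relation \eqref{LeafStabEq} together with the expansion of $\alpha^\bfn$ on $V \subset G_\bfn^u$. First I would note that since $\alpha^\bfn$ acts on $V$ by a Lie group automorphism, it sends identity components to identity components, so \eqref{LeafStabEq} lifts to $V_{\talpha^\bfn.\tx}^\circ = \alpha^\bfn.V_\tx^\circ$ for $\tmu$-a.e.\ $\tx$. The set $E = \{\tx : V_\tx \neq V_\tx^\circ\}$ is therefore $\talpha^\bfn$-invariant modulo null sets, so by ergodicity of $\tmu$ it has measure $0$ or $1$, and my goal is to rule out measure $1$.

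Assuming for contradiction that $\tmu(E) = 1$, I would quantify how badly $V_\tx$ fails to be connected by introducing
\begin{equation*}
g(\tx) = \inf\bigl\{|\log v| : v \in V_\tx \setminus V_\tx^\circ\bigr\}.
\end{equation*}
Since $V_\tx/V_\tx^\circ$ is discrete ($V_\tx^\circ$ being open in $V_\tx$), we have $0 < g(\tx) < \infty$ for a.e.\ $\tx \in E$. Measurability of $g$ reduces to the measurability of $\tx\mapsto V_\tx$, which follows from the measurability of $\tx\mapsto\tmu_\tx^V\in\cM_1^0(V)$ together with the fact that taking the right-translation stabilizer, and then the identity component, are Borel operations on closed subgroups of $V$ in the Chabauty topology.

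Next I would propagate $g$ forward under $\talpha^\bfn$. Because $V$ is simply connected nilpotent, $\exp:\gov\to V$ is a global diffeomorphism and $\alpha^\bfn.\exp w=\exp(\alpha^\bfn.w)$, so $|\log\alpha^{k\bfn}.v|=|\alpha^{k\bfn}.\log v|$. Combined with the identity-component version of \eqref{LeafStabEq}, this gives
\begin{equation*}
g(\talpha^{k\bfn}.\tx) \geq c_k\cdot g(\tx), \qquad c_k := \bigl\|(\alpha^{k\bfn}|_\gov)^{-1}\bigr\|^{-1},
\end{equation*}
and $c_k\to\infty$ because every eigenvalue of $\alpha^\bfn|_\gov$ has modulus strictly greater than $1$ (as $V\subset G_\bfn^u$).

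Finally, I would close the argument via Poincar\'e recurrence for $\talpha^\bfn$: for any $0<m<M<\infty$ the sublevel set $A_{m,M}=\{m\leq g\leq M\}$ must have measure zero, since otherwise almost every point of $A_{m,M}$ would return to $A_{m,M}$ under infinitely many iterates of $\talpha^\bfn$, contradicting $g(\talpha^{k\bfn}.\tx)\geq c_km > M$ for $k$ large. As $E$ is a countable union of such $A_{m,M}$ (with $m,M$ ranging over positive rationals), this forces $\tmu(E)=0$, the desired contradiction. The step I expect to be most subtle is not conceptual but the measurability of $\tx\mapsto V_\tx^\circ$, and hence of $g$; however, given the framework already in place for leafwise measures valued in the compact metric space $\cM_1^0(V)$, this should amount to standard Borel theory on the Chabauty space of closed subgroups of $V$ and should not present a real obstacle.
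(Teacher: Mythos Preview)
Your proof is correct and follows essentially the same Poincar\'e recurrence argument as the paper: define the distance from $V_\tx^\circ$ to the nearest non-identity-component element, use \eqref{LeafStabEq} to show this quantity escapes any bounded range under iteration of $\talpha^\bfn$ (you go forward using expansion, the paper goes backward using contraction), and contradict recurrence. One small remark: the ergodicity step reducing to $\tmu(E)=1$ is unnecessary (and as stated requires $\talpha$-invariance of $E$, not just $\talpha^\bfn$-invariance), since your recurrence contradiction already kills $\tmu(E)>0$ directly.
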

\begin{proof} Assume the opposite, then $\gov_\tx$ is disconnected, and denote by $\gov_{\tx,0}$ its identity component. The function $l(\tx):=\inf_{w\in\gov_\tx\backslash\gov_{\tx,0}}\|w\|$ is positive at a $\tmu$-positive portion of $\tx$.  In particular, there is some $M>0$ such that $\tX_M=\{\tx: M<l(\tx)\leq 2M\}$ has positive measure. Moreover, by our assumption, $l(\tx)<\infty$ almost everywhere. By \eqref{LeafStabEq},  \begin{equation}\label{LeafStabConnEq1}l(\talpha^{k\bfn} x)=\inf_{w\in \gov_\tx\backslash\gov_{\tx,0}}\|\alpha^{k\bfn}.w\|.\end{equation} 

As $V\subset G_\bfn^u$,
$\|\alpha^{-k\bfn}\|\rightarrow 0,\text{ as }k\rightarrow\infty$. Thus for almost all $\tx$ with $l(\tx)>0$, $\frac{l(\alpha^{-k\bfn}.\tx)}{l(\tx)}$ decays uniformly to $0$ as $k$ grows. In particular, for sufficiently large $k$, $\talpha^{-k\bfn}.\tX_M$ is disjoint from $\tX_M$, which violates Poincar\'e's recurrence theorem. So $V_\tx$ must be connected.\end{proof}

Therefore $V_\tx=\exp\gov_\tx$ with $\gov_\tx$ being a Lie subalgebra in $\gov$. 

We emphasize that Property \ref{TransInvProp} is only invariance in a proportional sense and is weaker than actual invariance $\tmu_\tx^V=(\tau_V)_*\tmu_\tx^V$. We now show that the two are in fact equivalent.

\begin{lemma}For $\tmu$-almost every $\tx$ and all $v\in V_\tx$, $(\tau_v)_*\tmu_\tx^V=\tmu_\tx^V$.\end{lemma}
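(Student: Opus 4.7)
The plan is to fix a canonical Borel representative of the equivalence class $\tmu_\tx^V\in\cM_1^0(V)$, reduce the desired equality to showing that the proportionality scalar $c(v,\tx)>0$ defined by $(\tau_v)_*\tmu_\tx^V=c(v,\tx)\tmu_\tx^V$ is identically $1$, and then force $c(v,\tx)$ arbitrarily close to $1$ via an equivariance-plus-recurrence argument. Concretely, I would pick a continuous compactly supported $f:V\to[0,\infty)$ with $f(e)>0$; by Proposition \ref{LeafMeas}.(1), $e\in\supp\tmu_\tx^V$ for $\tmu$-a.e.\ $\tx$, so within each equivalence class I may normalize the representative so that $\int_V f\,\di\tmu_\tx^V=1$. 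Then for $v\in V_\tx$,
\begin{equation*}
c(v,\tx)=\int_V f(hv)\,\di\tmu_\tx^V(h),
\end{equation*}
which is continuous in $v$ by uniform continuity of $f$, Borel in $\tx$, and defines a continuous character $V_\tx\to\bR_{>0}$.

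The next step is to record the equivariance $c(\alpha^\bfn v,\talpha^\bfn.\tx)=c(v,\tx)$ for every $\bfn\in\bZ^r$. This follows from the identity $\alpha^\bfn\circ\tau_v=\tau_{\alpha^\bfn v}\circ\alpha^\bfn$ on $V$ combined with Lemma \ref{LeafAction}, which yields $\alpha^\bfn_*\tmu_\tx^V=C(\bfn,\tx)\tmu_{\talpha^\bfn.\tx}^V$ for a positive scalar $C(\bfn,\tx)$; pushing $(\tau_v)_*\tmu_\tx^V=c(v,\tx)\tmu_\tx^V$ forward under $\alpha^\bfn$ and comparing both sides cancels $C(\bfn,\tx)$ and gives the claim. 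Note that $v\in V_\tx$ if and only if $\alpha^\bfn v\in V_{\talpha^\bfn.\tx}$ by \eqref{LeafStabEq}, so this makes sense.

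Now choose $\bfn$ with $\chi(\bfn)>0$, so that $V\subset G_\bfn^u$ and $\alpha^{-k\bfn}$ contracts $V$ to $\{e\}$ exponentially fast. By Luzin's theorem applied to the Borel map $\tx\mapsto\tmu_\tx^V\in\cM_1^0(V)$, for every $\delta>0$ there is a compact set $K\subset\tX$ with $\tmu(K)>1-\delta$ on which this map is continuous. Since $f$ is uniformly continuous with compact support, the integral $(v,\tx)\mapsto\int_V f(hv)\,\di\tmu_\tx^V(h)$ is jointly continuous on $V\times K$, and at $v=e$ it equals $1$. Hence for every $\epsilon>0$ there exists a neighborhood $\cU_\epsilon$ of $e$ in $V$ such that $|c(v,\tx)-1|<\epsilon$ whenever $\tx\in K$ and $v\in\cU_\epsilon\cap V_\tx$.

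Finally, combine the two ingredients. For $\tmu$-a.e.\ $\tx$ and every $v\in V_\tx$, Poincar\'e recurrence for the $\tmu$-preserving map $\talpha^{-\bfn}$ produces a sequence $k_j\to\infty$ with $\talpha^{-k_j\bfn}.\tx\in K$; at the same time $\alpha^{-k_j\bfn}v\to e$ in $V$, so eventually $\alpha^{-k_j\bfn}v\in\cU_\epsilon$. By equivariance,
\begin{equation*}
c(v,\tx)=c(\alpha^{-k_j\bfn}v,\talpha^{-k_j\bfn}.\tx),
\end{equation*}
and the right-hand side lies within $\epsilon$ of $1$. Letting $\epsilon\downarrow 0$ yields $c(v,\tx)=1$, which is the desired equality. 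The main obstacle is the joint continuity and normalization issue in step three: one must avoid the pitfall of measuring $\tmu_\tx^V$ on a set whose boundary the leafwise measure may charge. Working with a continuous test function $f$ in place of an indicator $\mathbf{1}_{B_1^V}$ circumvents this and makes the uniform estimate on $K$ immediate from weak-$*$ continuity of the representatives.
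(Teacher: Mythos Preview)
Your proof is correct. The core mechanism is the same as the paper's---equivariance of the proportionality character under $\alpha$, exponential contraction of $V$ under $\alpha^{-\bfn}$ for $\chi(\bfn)>0$, and Poincar\'e recurrence---but the two executions are organized differently.

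The paper argues by contradiction at the Lie algebra level: it passes from the character $\hat\rho_\tx:V_\tx\to\bR_+$ to its derivative $\rho_\tx:\gov_\tx\to\bR$, observes the equivariance $\rho_{\alpha^{k\bfn}.\tx}=\rho_\tx\circ\alpha^{-k\bfn}$, and deduces that $\|\rho_{\alpha^{k\bfn}.\tx}\|/\|\rho_\tx\|\to 0$ uniformly. This forces the level set $\{M<\|\rho_\tx\|\le 2M\}$ to be non-recurrent under $\talpha^{\bfn}$, contradicting Poincar\'e. This mirrors exactly the proof of the preceding Lemma~\ref{LeafStabConn} on connectedness of $V_\tx$, and in fact uses that connectedness (so that $\hat\rho_\tx\equiv 1$ is equivalent to $\rho_\tx\equiv 0$).

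Your route is more direct: you normalize a representative of $\tmu_\tx^V$ against a fixed $f\in C_c(V)$, realize $c(v,\tx)$ as a concrete integral, and use Luzin plus joint continuity to make $c$ close to $1$ near $v=e$ uniformly on a large compact $K$. Recurrence to $K$ together with $\alpha^{-k\bfn}v\to e$ then pins $c(v,\tx)$ at $1$. This avoids the Lie-algebra reduction and does not rely on $V_\tx$ being connected, so it is slightly more self-contained; the paper's version, on the other hand, is shorter and stylistically parallel to the lemma just before it.
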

\begin{proof}By definition of the relation $\simeq$, there is a continuous function $\hat\rho_\tx:V_\tx\mapsto\bR_+$ such that for almost every $\tx$ and $v\in V_\tx$, $(\tau_v)_*\tmu_\tx^V=\hat\rho_\tx(v)\tmu_\tx^V$. One see easily that $\hat\rho_\tx$ is a group morphism into the multiplicative group $\bR_+$. Let $\rho_\tx:\gov_\tx\mapsto\bR$ be the Lie algebra morphism given by the derivative of $\hat\rho_\tx$. 

Assume $\rho_\tx$ is non-trivial at a positive portion of points. Then at these $\tx$, $\rho_\tx$ is a non-trivial linear map with $\|\rho_\tx\|>0$. Similar to the proof of Lemma \ref{LeafStabConn}, there is $M$ such that $\tX_M :=\{\tx: M<\|\rho_\tx\|\leq 2M\}$ has positive measure.

On the otherhand, \eqref{LeafStabEq} implies $\rho_{\alpha^{k\bfn}.\tx}=\rho_\tx\circ\alpha^{-k\bfn}$. Using the same argument in the proof of Lemma  \ref{LeafStabConn}, we see that $\frac{\left\|\rho_{\alpha^{k\bfn}.\tx}\right\|}{\|\rho_\tx\|}$ decays uniformly for almost all $\tx$ as $k\rightarrow\infty$. This implies $\alpha^{k\bfn}.\tX_M$ becomes disjoint from $\tX_M$ for all large $k$ and again contradicts Poincar\'e's recurrence theorem. 

Hence at almost every point $\rho_\tx\equiv 0$ and $\hat\rho_\tx\equiv 1$. In other words, $(\tau_v)_*\tmu_\tx^V=\tmu_\tx^V$.\end{proof}

Desintegrating $\tmu$ with respect to the measurable map $\tx\mapsto\gov_\tx$ from $\tX$ to the Grassmanian of $\gov$, we get a decomposition 
\begin{equation}\label{StabDesintEq}\tmu=\int\tmu_\tx\di\tmu(x)\end{equation}

\begin{lemma}\label{LocalTransInv}For $\tmu$-almost every $\tx$, the probability measure $\tmu_\tx$ on $\tX$ is invariant under left translations by $V_\tx$.
\end{lemma}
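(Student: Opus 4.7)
The plan is to transfer the right-invariance of $\tmu_\tx^V$ under $V_\tx$, which the preceding lemma has upgraded from a $\simeq$ relation to an actual equality, first to the leafwise measures of the conditionals $\tmu_\tx$, and then to convert this into left-translational invariance of $\tmu_\tx$ itself.

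First I would invoke the compatibility between disintegration and leafwise measures -- namely the fact $(\tmu_\tx)_y^V = \tmu_y^V$ for $\tmu$-a.e.\ $\tx$ and $\tmu_\tx$-a.e.\ $y$, which is \cite{EL10}*{Prop.~7.22} and was already used in the proof of Proposition \ref{EntContri}. We apply it here to the $\sigma$-algebra $\cV$ generated by the measurable map $\tx \mapsto \gov_\tx$ into the Grassmanian of $\gov$. By the very definition of $\cV$, the stabilizer $V_y = V_\tx$ is constant on the $\cV$-atom through $\tx$; call this common subgroup $V_*$. The preceding lemma then gives that $(\tmu_\tx)_y^V = \tmu_y^V$ is genuinely right-invariant under $V_*$ for $\tmu_\tx$-a.e.\ $y$.

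Next I would invoke a standard invariance principle from the theory of leafwise measures -- the appropriate conditional analogue of \cite{EL10}*{Prop.~8.5}: if $\nu$ is a probability measure on $\tX$ (with $V$ acting freely by left translations) and for $\nu$-a.e.\ $y$ the leafwise measure $\nu_y^V$ is invariant under right translation by a fixed closed subgroup $V_* \subset V$, then $\nu$ itself is invariant under left translation by $V_*$. The argument goes through a $V$-subordinate $\talpha^\bfn$-increasing $\sigma$-algebra $\cA$ from Lemma \ref{SubordExist}: one has $\nu_y^\cA \propto (\nu_y^V).y$ on $[y]_\cA$, so right $V_*$-invariance of $\nu_y^V$ immediately translates to local left $V_*$-invariance of $\nu_y^\cA$ along the $V$-leaf; iterating $\talpha^{k\bfn}$ to refine $\cA$ exhausts arbitrary bounded neighborhoods in $V.y$, and a standard limiting argument upgrades local to global left $V_*$-invariance of $\nu$. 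Applied with $\nu = \tmu_\tx$ and $V_* = V_\tx$, this gives the lemma.

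The main subtlety is that $V_*$ varies measurably from one $\cV$-atom to another, so one is really applying the invariance principle fiberwise in the disintegration rather than for a single fixed subgroup. This is consistent because the $\cV$-atoms themselves are preserved under left translation by $V_\tx$: from the relation $V_{v.\tx} = v^{-1}V_\tx v$ (which follows from Proposition \ref{LeafMeas}(2) and the definition of $V_\tx$) and the fact that $v \in V_\tx$ normalizes its own subgroup $V_\tx$, we obtain $\gov_{v.\tx} = \gov_\tx$ for $v \in V_\tx$. Consequently the whole argument can be carried out $\tmu$-a.e.\ with $V_*$ depending measurably on $\tx$.
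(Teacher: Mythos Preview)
Your approach is correct and in fact reconstructs what the paper simply cites: the paper's entire proof is a reference to \cite{BQ11}*{Prop.~4.3} (with a footnote noting that the passage from $\bR^d$ to a nilpotent $V$ goes through by unimodularity). Your two-step strategy---pass to the conditionals $\tmu_\tx$ so that the stabiliser becomes a fixed subgroup $V_*$, then invoke the single-subgroup invariance principle---is exactly how one proves that proposition.

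A few small points in your write-up deserve tightening. First, the conjugation formula is backward: from Proposition~\ref{LeafMeas}(2) one gets $V_{v.\tx}=vV_\tx v^{-1}$, not $v^{-1}V_\tx v$; this does not affect your conclusion since for $v\in V_\tx$ the conjugate is $V_\tx$ either way. Second, in your sketch of the invariance principle you speak of ``iterating $\talpha^{k\bfn}$'' while working with the measure $\nu=\tmu_\tx$, which is not $\talpha$-invariant. This is harmless---you only need the family $\{\talpha^{k\bfn}\cA\}_k$ as a sequence of $V$-subordinate $\sigma$-algebras whose atoms exhaust $V$-leaves, and that is a purely geometric fact independent of $\nu$---but it would read better if you said so explicitly. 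Third, the phrase ``right $V_*$-invariance of $\nu_y^V$ immediately translates to local left $V_*$-invariance'' hides a genuine computation: one must combine $(\tau_v)_*\nu_y^V=\nu_y^V$ with the compatibility $\nu_{v.y}^V\simeq(\tau_{v^{-1}})_*\nu_y^V$ from Proposition~\ref{LeafMeas}(2) to see that the induced measure on the orbit $V.y$ is left-$V_*$-invariant. This is where unimodularity of $V$ (or rather of $V_*$) enters in the nilpotent case, matching the paper's footnote.
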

\begin{proof}This follows from \cite{BQ11}*{Prop. 4.3}\footnote{In the statement of \cite{BQ11}*{Prop. 4.3}, the acting group, which we denote by $V$ here, is $\bR^d$. But as in \cite{BQ13}, the switch to a nilpotent Lie group $V$ does not impose any difference, thanks to the unimodularity of nilpotent Lie groups.}.\end{proof}

Equation \eqref{StabDesintEq} and Lemma \ref{LocalTransInv} allow us to desintegrate $\tmu$ into ergodic $V_\tx$-invariant components.

\begin{definition} Let $\cE(X)$ be the collection of Ratner measures on $X$. These are all probability measures on $X$ which can be written as the unique $H$-invariant probability measure on a compact orbit $H.x$ of some connected closed subgroup $H\subset G$.

Define $\cE(\tX):=\{\xi\times\delta_y: \xi\in\cE(X), y\in Y\}$, which is the collection of uniform probability measures on compact orbits in $\tX$.
\end{definition}

\begin{proposition}\label{RatnerDecomp}\begin{enumerate}\item $\tmu$ admits a decomposition
$$\tmu=\int_{\tX}\txi_\tx\di\tmu(\tx),$$
where $\tx\mapsto\txi_\tx$ is a measurable map from $\tX$ to $\cE(\tX)$ that is defined for $\tmu$-almost every $\tx$. In addition, $\tx\in\supp\txi_\tx$.
\item $\txi_{\alpha^\bfm.\tx}=\alpha_*^\bfm\txi_\tx$ for $\tmu$-almost every $\tx$.
\end{enumerate}
\end{proposition}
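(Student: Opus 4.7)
My plan is to build the Ratner decomposition in two stages: first use the fiberwise translational invariance from Lemma \ref{LocalTransInv} and Poincar\'e-style recurrence arguments already in place, and then pass to $V_\tx$-ergodic components, which should automatically be homogeneous thanks to the fact that $V_\tx \subset V \subset G$ is a connected nilpotent Lie subgroup acting by left translation on the nilmanifold $X$, trivially on $Y$.

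Starting from the decomposition \eqref{StabDesintEq}, Lemma \ref{LocalTransInv} guarantees that for $\tmu$-a.e.\ $\tx$, the fiber measure $\tmu_\tx$ is an honest $V_\tx$-invariant probability measure on $\tX$. I would then further decompose $\tmu_\tx$ into its $V_\tx$-ergodic components $\txi_\tx$, using a measurable selection along the fibers of the ergodic decomposition. For each such ergodic component $\txi$, the $V_\tx$-action on $\tX=X\times Y$ is trivial on the $Y$-coordinate, so every $\cB_Y$-measurable set is $V_\tx$-invariant; ergodicity forces $(\pi_Y)_*\txi=\delta_y$ for a single $y\in Y$, and hence $\txi=\xi\times\delta_y$ where $\xi$ is a $V_\tx$-ergodic probability measure on the nilmanifold $X=G/\Gamma$.

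The crucial input is then the classification of ergodic measures for nilpotent subgroup actions on nilmanifolds: since $V_\tx$ is a connected closed subgroup of the nilpotent Lie group $G$, every $V_\tx$-ergodic probability measure on $G/\Gamma$ is the uniform probability measure on a compact orbit $H.z$ of some connected closed subgroup $H\supset V_\tx$ (this is the nilpotent case of Ratner's theorem, going back to Parry's work on nilflows and available for general connected subgroups by Ratner). Consequently $\xi\in\cE(X)$ and $\txi\in\cE(\tX)$, which establishes part (1). Measurability of $\tx\mapsto\txi_\tx$ follows from the standard measurable structure of the ergodic decomposition, and the property $\tx\in\supp\txi_\tx$ is automatic from the Fubini-type identification of $\txi_\tx$ with the ergodic component containing $\tx$.

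For part (2), I would use the equivariance \eqref{LeafStabEq}, which yields $\alpha^\bfm.V_\tx=V_{\talpha^\bfm.\tx}$, together with $\talpha^\bfm$-invariance of $\tmu$. Since the partition into fibers of $\tx\mapsto\gov_\tx$ is $\talpha^\bfm$-equivariant, the conditional measures satisfy $\talpha^\bfm_*\tmu_\tx=\tmu_{\talpha^\bfm.\tx}$ almost surely; the pushforward $\talpha^\bfm_*$ also carries $V_\tx$-ergodic components to $V_{\talpha^\bfm.\tx}$-ergodic components, giving the identity $\talpha^\bfm_*\txi_\tx=\txi_{\talpha^\bfm.\tx}$. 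The only genuinely delicate step is verifying that the ergodic components really fall in the class $\cE(\tX)$ of uniform measures on compact orbits; this is where I would be most careful in citing the Ratner/Parry classification for nilpotent actions and in ensuring that the relevant subgroup $H$ of $G$ is connected with $H\cap\mathrm{Stab}(z)$ cocompact in $H$, so that the orbit is compact and the measure is truly the $H$-invariant uniform probability on it.
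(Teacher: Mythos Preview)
Your proposal is correct and follows essentially the same approach as the paper: disintegrate $\tmu$ via \eqref{StabDesintEq} and Lemma \ref{LocalTransInv}, further decompose each $\tmu_\tx$ into $V_\tx$-ergodic components, observe that the $Y$-coordinate is a Dirac mass, and invoke Ratner's theorem for the connected nilpotent group $V_\tx$ acting on $X$; equivariance from \eqref{LeafStabEq} then gives part (2). The paper's own proof is in fact terser than yours---it cites Ratner \cite{R91} directly without spelling out the $\delta_y$ factorization or the measurability of the ergodic decomposition---so your added detail is harmless and arguably clarifying.
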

\begin{proof}(1) By \eqref{StabDesintEq}, it suffices to prove the claim for $\tmu_\tx$ instead of $\tmu$. However, by Lemma \ref{LocalTransInv}, $\tmu_\tx$ can be desintegrated into ergodic invariant components under left translations by $V_\tx$. Since $V_\tx$ is a connected nilpotent Lie group that acts only on the $X$ component of $\tX=X\times Y$, by Ratner's Theorem \cite{R91}, each of these components belongs to the family $\cE(\tX)$.

(2) It follows from \eqref{LeafStabEq} that the desintegration of $\tmu$ into $\tmu_\tx$, as well as the decomposition of $\tmu_\tx$ into $V_\tx$-ergodic components, are equivariant under the group action.
\end{proof}

If $\xi\in\cE(X)$, then $g_*\xi\in\cE(X)$ as well for all $g\in G$, where $g_*$ is the left translation pushforward by $g$. This defines a left $G$-action on $\cE(X)$. 

Hereafter, $x_0$ stands for the origin in $G/\Gamma$. 

\begin{remark}\label{RatnerRep}As the uniform measure on a compact orbit $L.gx_0$ of some closed subgroup $L\subset G$ is a left translate of $g^{-1}Lg.x_0$, every class in $G\backslash\cE(X)$ can be represented by the uniform measure on the compact orbit $L.x_0$ of a rational subgroup $L$.
\end{remark}

Recall a rational subgroup $L$ is connected and its Lie algebra is a rational subspace in $\gog$ with respect to the lattice $\exp{-1}(\Gamma)$. Due to the fact that there are only countably many rational subspaces in a real vector space with a $\bQ$-structure, the family of rational subgroups are countable. Thanks to the bijection between rational subgroups $L$ and the class in $G\backslash\cE(X)$ represented by $L.x_0$, one has the following lemma.

\begin{lemma}\label{RatnerCountable} $G\backslash \cE(X)$  is countable.\end{lemma}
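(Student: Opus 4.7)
The plan is to reduce the lemma to a counting problem about rational Lie subalgebras of $\gog$ and then observe that rational subspaces of a real vector space carrying a fixed $\bQ$-structure form a countable set.

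First I would invoke Remark \ref{RatnerRep}: every class in $G\backslash\cE(X)$ admits a representative of the form $\mu_{L.x_0}$, the unique $L$-invariant probability measure on the compact orbit $L.x_0$, where $L\subset G$ is a rational subgroup. Consequently the assignment $L\mapsto[\mu_{L.x_0}]$ from the set $\cR$ of rational subgroups of $G$ onto $G\backslash\cE(X)$ is surjective, and it suffices to show that $\cR$ is countable.

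Next I would use the structure theory recalled in \S\ref{SecNilfiber}: since $G$ is simply connected and nilpotent, $\exp:\gog\to G$ is a diffeomorphism, so a connected closed subgroup $L\subset G$ is uniquely determined by its Lie algebra $\gol\subset\gog$; and $L$ is rational with respect to $\Gamma$ precisely when $\gol$ is a rational subspace of $\gog$ with respect to the $\bQ$-structure $\gog_\bQ:=\exp^{-1}(\Gamma)\otimes_\bZ\bQ$. Thus it is enough to bound the number of $\bQ$-subspaces of the finite-dimensional $\bQ$-vector space $\gog_\bQ$.

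Finally I would observe that $\gog_\bQ$ is countable as a set, so the collection of finite tuples of elements of $\gog_\bQ$ is countable, and every $\bQ$-subspace is the $\bQ$-span of some such finite tuple (indeed of at most $\dim_\bR\gog$ vectors). Hence $\gog_\bQ$ has only countably many $\bQ$-subspaces, proving $|\cR|\leq\aleph_0$ and completing the argument. There is no real obstacle here; the statement is essentially a bookkeeping consequence of Remark \ref{RatnerRep} combined with the countability of the Grassmannian of a $\bQ$-vector space, and the only thing to be careful about is citing the precise correspondence between rational subgroups and rational subspaces established in \cite{R72}.
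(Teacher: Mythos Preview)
Your proposal is correct and follows essentially the same route as the paper: the paragraph preceding the lemma already spells out that rational subgroups correspond to rational subspaces of $\gog$, that these are countable, and that via Remark~\ref{RatnerRep} this bounds $G\backslash\cE(X)$. The paper's formal proof then simply notes this is a special case of a general fact due to Ratner and others; your argument is the same, written out in slightly more detail.
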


This is a special case of a general theorem fact, due to Ratner, and other authors, on general homogeneous spaces.

On $\cE(\tX)$, we define an equivalence relation $\sim$: $\xi\times\delta_y\sim\xi'\times\delta_{y'}$ if and only if $\xi'=g_*\xi$ for some $g\in G$. Then $\cE(\tX)/\sim$ is identified naturally with $G\backslash \cE(X)$. 

Notice for $\xi\times\delta_y\in\cE(\tX)$,  $\talpha_*^\bfm(\xi\times\delta_y)=\alpha_*^\bfm\xi\times\delta_{\beta^\bfm y}$ is also in $\cE(\tX)$. Moreover, $\alpha_*^\bfm(g_*\xi)=(\alpha_*^\bfm g)_*(\alpha_*^\bfm\xi)$. Hence $\talpha_*$ induces a $\bZ^r$-action on $\cE(\tX)/\sim$, which we still denote by $\talpha_*$.

Consider the map $P:x\mapsto [\txi_\tx]$, where $[\txi_\tx]$ is the class containing $\txi_\tx$ in $\cE(X)/\sim$, and the pushforward  $P_*\tmu$ on the countable space $\cE(X)/\sim$. By Proposition \ref{RatnerDecomp}.(2), $P_*\tmu$ is $\talpha_*$-invariant and ergodic as $\tmu$ is. However, as $\cE(X)/\sim$ is countable by Lemma \ref{RatnerCountable}, $P_*\tmu$ must be an atomic measure uniformly distributed on finitely many points. In light of Remark \ref{RatnerRep}, this yields

\begin{lemma}\label{RatnerFiniteDecomp} There is a finite index subgroup $\Sigma\subset\bZ^r$, with respect to which $\tmu$ has an ergodic decomposition $\frac1N\sum_{i=1}^N\tmu^i$, so that for each $i$, there exist a rational subgroup $L^i\subset G$, and a decomposition
$$\tmu^i=\int_\tX\txi_\tx^i\di\tmu^i(x),$$
such that:\begin{enumerate}
\item $\tmu^i$ is $\talpha|_\Sigma$-invariant and ergodic, and the $\tmu^i$'s are permuted transitively by $\talpha$;
\item For $\tmu^i$-almost every $\tx\in\tX$, $\tx\in\supp\txi_\tx^i$. And there exists $g_\tx\in G$ and $y_\tx\in Y$ such that $\txi_\tx^i=(g_\tx)_*\xi^i\times\delta_{y_\tx}$, where $\xi^i$ is the unique $L^i$-invariant probability measure on $L^i. x_0$.
\end{enumerate}
\end{lemma}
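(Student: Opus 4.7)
My plan is to read the lemma as a direct consequence of the preceding discussion, organizing the pieces cleanly. The setup has already established (in Proposition~\ref{RatnerDecomp} and Lemma~\ref{RatnerCountable}) that the measurable assignment $\tx\mapsto[\txi_\tx]$ pushes $\tmu$ forward to an $\talpha_*$-invariant, ergodic probability measure on the countable set $G\backslash\cE(X)$. Such a measure must be uniformly supported on a single finite orbit; call its classes $[\xi^1],\dots,[\xi^N]$, transitively permuted by $\talpha_*$. By Remark~\ref{RatnerRep}, each class $[\xi^i]$ is represented by the unique $L^i$-invariant probability measure on the compact orbit $L^i.x_0$, where $L^i\subset G$ is a rational subgroup. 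This gives the rational subgroups named in the statement.

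Next, I would set
\begin{equation*}
\Sigma:=\{\bfn\in\bZ^r:\talpha_*^\bfn[\xi^i]=[\xi^i]\ \text{for every }i\},
\end{equation*}
i.e.\ the stabilizer of any (equivalently, every) class in the orbit. Since the orbit has size $N$, $\Sigma$ has index $N$ in $\bZ^r$. The preimages $\tX^i:=P^{-1}([\xi^i])$ then form a measurable partition of $\tX$ (modulo a $\tmu$-null set) into $\talpha|_\Sigma$-invariant pieces, each of $\tmu$-mass $1/N$, which are permuted transitively by any transversal of $\Sigma$ in $\bZ^r$ via the action $\talpha$. Set $\tmu^i:=N\,\tmu|_{\tX^i}$.

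The step that needs a short verification is ergodicity of each $\tmu^i$ under $\talpha|_\Sigma$. Since $\tmu$ is $\talpha$-ergodic and $[\bZ^r:\Sigma]=N$, the $\talpha|_\Sigma$-ergodic decomposition of $\tmu$ consists of at most $N$ components that are transitively permuted by $\talpha/\Sigma$. The $\talpha|_\Sigma$-invariant partition $\{\tX^i\}$ refines this decomposition, but as it already contains $N$ atoms transitively permuted by $\talpha$, equality forces each $\tmu^i$ to be a single $\talpha|_\Sigma$-ergodic component. This is the main (though modest) subtlety, and is resolved purely by a cardinality-matching argument.

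Finally, to produce the desintegration $\tmu^i=\int\txi_\tx^i\,\di\tmu^i(\tx)$, I would restrict the disintegration from Proposition~\ref{RatnerDecomp} to $\tX^i$ and set $\txi_\tx^i:=\txi_\tx$ for $\tx\in\tX^i$. On $\tX^i$ we have $[\txi_\tx]=[\xi^i]$ by construction, so by definition of the equivalence relation there exist $g_\tx\in G$ and $y_\tx\in Y$ with $\txi_\tx^i=(g_\tx)_*\xi^i\times\delta_{y_\tx}$; measurability of $\tx\mapsto(g_\tx,y_\tx)$ (up to the stabilizer of $\xi^i$ in $G$) follows from a standard measurable selection on the orbit of $\xi^i$ under $G$. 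The condition $\tx\in\supp\txi_\tx^i$ is inherited from Proposition~\ref{RatnerDecomp}(1). This completes the plan.
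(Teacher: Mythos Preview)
Your proposal is correct and follows exactly the approach the paper intends: the paper itself gives no separate proof of this lemma, simply noting that since $P_*\tmu$ is an ergodic $\talpha_*$-invariant probability on the countable set $G\backslash\cE(X)$, it is uniform on a finite orbit, and then invoking Remark~\ref{RatnerRep}. You have filled in precisely the details this leaves implicit, including the orbit--stabilizer identification of $\Sigma$ and the cardinality argument for ergodicity of each $\tmu^i$. One small wording slip: you write that the partition $\{\tX^i\}$ \emph{refines} the $\talpha|_\Sigma$-ergodic decomposition, but it is the other way around (the ergodic decomposition is the finest $\talpha|_\Sigma$-invariant partition, so each ergodic component lies in some $\tX^i$); your counting argument then correctly forces equality.
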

Since $\tx\in\supp\txi_\tx^i$, $y_\tx$ is just the $y$-component of $\tx$.

As $\tmu$ has large projection on factors, we expect the same is true for $\tmu^i$ because the $\tmu^i$'s form a finite decomposition. 

Recall the commutator subgroup $[G,G]\subset G$ is normal, connected and invariant under automorphisms of $G$. It is also rational (see \cite{CG90}*{Cor. 5.22}). 

Denote the maximal abelian factor $X/[G,G]$ by $X_\ab$, and $\tX/[G,G]=X_\ab\times Y$ by $\tX_\ab$. They are $\alpha$-equivariant algebraic factors respectively of $X$ and $\tX$.

Let $\tmu_\ab$ and $\tmu_\ab^i$ respectively be the projections of $\tmu$ and $\tmu^i$ to $\tX_\ab$.

\begin{lemma}\label{AbUnif}If $G$ is not abelian. Then the projection of each $\tmu^i$ to $\tX_\ab$ can be written as $\rmm_{X_\ab}\times\nu^i$, where $\nu^i$ is some ergodic $\beta|_\Sigma$-invariant probability measure on $Y$. \end{lemma}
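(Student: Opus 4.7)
The plan is to invoke the inductive hypothesis of Theorem \ref{MeasureInductive} on the strictly smaller nilmanifold $X_\ab$ (which is legitimate because $G$ non-abelian forces $\dim X_\ab < \dim X$), and then to deduce the claim about individual components by uniqueness of ergodic decomposition combined with Corollary \ref{ProdErgo}.

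First, I would show that $\tmu_\ab = \rmm_{X_\ab}\times\nu$. The measure $\tmu_\ab$ is $\talpha$-ergodic because $\tmu$ is. The two hypotheses of Theorem \ref{MeasureInductive} transfer cleanly from $\tmu$ on $\tX$ to $\tmu_\ab$ on $\tX_\ab$: for any finite-index $\Sigma\subset\bZ^r$ and any proper $\alpha|_\Sigma$-invariant rational normal subgroup $\bar H$ of $G/[G,G]$, its preimage $H:=\pi^{-1}(\bar H)$ in $G$ is a proper $\alpha|_\Sigma$-invariant rational normal subgroup containing $[G,G]$, and the factor action on $X_\ab/\bar H$ is identified with that on $X/H$. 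Hence non-virtual-cyclicity and the positive-entropy condition for $\tmu_\ab$ on $\tX_\ab/\bar H$ follow directly from the corresponding hypotheses for $\tmu$ on $\tX/H$, noting that every $\talpha|_\Sigma$-ergodic component of $\tmu_\ab$ is the projection of some $\talpha|_\Sigma$-ergodic component of $\tmu$. Applying the inductive hypothesis then yields $\tmu_\ab=\rmm_{X_\ab}\times\nu'$ for some $\beta$-ergodic probability measure $\nu'$ on $Y$, and projecting onto $Y$ identifies $\nu'=\nu$.

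Next, I would pass to the individual $\tmu^i_\ab$ by uniqueness of ergodic decomposition. Projection gives $\tmu_\ab=\frac1N\sum_i\tmu^i_\ab$, where each $\tmu^i_\ab$ is $\talpha|_\Sigma$-ergodic as the projection of the ergodic $\tmu^i$. On the other hand, since the action $\alpha|_\Sigma$ on $X_\ab$ still has no virtually cyclic algebraic factor (Remark \ref{Rk2ResRmk}), Corollary \ref{ProdErgo}.(2) supplies the $\talpha|_\Sigma$-ergodic decomposition $\rmm_{X_\ab}\times\nu=\frac1{N'}\sum_j\rmm_{X_\ab}\times\nu^j$, where $\{\nu^j\}$ are the $\beta|_\Sigma$-ergodic components of $\nu$. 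Matching the two ergodic decompositions of $\tmu_\ab$ forces each $\tmu^i_\ab$ to coincide with one of the $\rmm_{X_\ab}\times\nu^j$; setting $\nu^i:=\nu^{j(i)}$ then yields the desired form.

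The substantive step is the first one, and it is essentially a verification that the hypotheses of Theorem \ref{MeasureInductive} descend to the abelianization. There are no serious obstacles, only careful bookkeeping of the correspondence $\bar H\leftrightarrow H=\pi^{-1}(\bar H)$ between rational normal subgroups above and below $[G,G]$.
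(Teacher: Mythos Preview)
Your proposal is correct and follows essentially the same route as the paper: apply the inductive hypothesis to $\tmu_\ab$ on the strictly lower-dimensional $X_\ab$ (after checking that the hypotheses of Theorem~\ref{MeasureInductive} descend via the correspondence $\bar H\leftrightarrow \pi^{-1}(\bar H)$), then use Corollary~\ref{ProdErgo}.(2) together with uniqueness of ergodic decomposition to identify each $\tmu^i_\ab$. Your write-up is slightly more explicit than the paper's in verifying hypothesis~(2) for ergodic components of $\tmu_\ab$, but the argument is the same.
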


\begin{proof}Take any finite index subgroup $\Sigma_0$, and any $\alpha|_{\Sigma_0}$-equivariant algebraic factor $X_0$ of $X_\ab$. As $X_0$ is also a factor of $X$, by assumptions in Theorem \ref{MeasureInductive} the induced action on $X_0$ is not virtually cyclic, and the projection $\tmu_0$ of $\tmu$ to $\tX_0=X_0\times Y$ has positive conditional entropy $h_{\tmu_0}(\talpha^\bfn|\cB_Y)$ for some $\bfn\in\Sigma_0$.  As $G$ is non-abelian, $\dim X_\ab=\dim X-\dim[G,G]<\dim X$. So by inductive hypothesis, $\tmu_\ab=\rmm_{X_\ab}\times\nu$, where $\nu$ is the projection of $\tmu$ on $Y$.

As $\tmu^i$ is $\talpha|_\Sigma$-ergodic, so is $\tmu_\ab^i$. So $\tmu_\ab^i$ is one of the $\talpha|_\Sigma$-ergodic components of $\rmm_{X_\ab}\times\nu$. The lemma follows from Corollary \ref{ProdErgo}.\end{proof}

\subsection{Reducing support to the normalizer}

From now on we fix a component $\tmu^i$ from Lemma \ref{RatnerFiniteDecomp}. In this part, we prove  $\tmu^i$ is invariant under left translations by a non-trivial subgroup of $G$ and its support is characterized by the normalizer of this subgroup up to a compactly supported factor.

For $\bfm\in\Sigma$, $\alpha^\bfm$ sends $\xi_\tx$ to $(\alpha^\bfm g_\tx)_*(\alpha_*^\bfm\xi^i)$, which must be in the same $\cE(\tX)/\sim$ equivalence class as $\xi^i$. Thus $\alpha_*^\bfm\xi^i\sim\xi^i$.

On the other hand, $\alpha_*^\bfm\xi^i$ is the uniform probability measure on the compact homogeneous subspace $(\alpha^\bfm.L^i).x_0$.
Therefore, the group $\alpha^\bfm.L^i$ must coincide with $L^i$ for all $\bfm\in\Sigma$, in other words, $L^i$ is $\alpha|_\Sigma$-invariant.

Let $\hL^i$ be the stabilizer in $G$ of $\xi^i$. Then $\hL^i$ is $\alpha|_\Sigma$-invariant and closed in $G$. Furthermore $L^i\lhd\hL^i$ and $L^i$ is the identity component of $\hL^i$.

Notice that given $\tx$, $g_\tx$ is uniquely defined up to right translation by $\hL^i$. Hence  $G/\hL^i$ can be naturally identified with the equivalence class $[\xi^i]\subset \cE(X)$ with respect to the $G$-action. As $\xi^i$ is $\alpha|_\Sigma$-invariant, $\Sigma$ acts on $[\xi^i]$ by $\alpha_*$. This is identified with the natural $\Sigma$-action on $G/\hL^i$ by $\alpha|_\Sigma$.

Let $d=\dim L^i$. The $\bZ^r$-action $\alpha$ naturally gives rise to an action $\wedge^d\alpha:\bZ^r\curvearrowright \wedge^d\gog$. Define a base point $\theta_0\in\wedge^d\gog$ by fixing a volume form on the Lie algebra $\gol^i$ of $L^i$. Then $\theta_0$ is preserved by $(\wedge^d\alpha)|_\Sigma$ up to orientation. By replacing $\Sigma$ with a subgroup of index $2$ (and hence doubling the cardinality $N$ in Lemma \ref{RatnerFiniteDecomp}) if necessary, we may assume $\theta_0$ is actually preserved. 

Given $x$, consider the pushforward $\theta_\tx\in\wedge^d\gog$ of $\theta$ by $\wedge^d\Ad_{g_\tx}$, which is in the $d$-th exterior product space of the Lie subalgebra $\Ad_{g_\tx}\gol^i$. Because $\hL^i$ normalizes $L^i$ and $G$ is nilpotent, $\Ad_g$ fixes $\theta_0$ if $g\in\hL^i$. So $\theta_\tx$ is well defined, as $g_\tx$ is unique up to right translation by $\hL^i$.

For any $\bfm\in\Sigma$, one can choose $g_{\talpha^\bfm.\tx}=\alpha^\bfm.g_\tx$. Then for $\bfm\in\Sigma$, using the invariance of $\theta_0$ under $\wedge^d\alpha^\bfm$, we have
\begin{equation}\begin{split}\theta_{\alpha^\bfm.\tx}=&(\wedge^d\Ad_{\alpha^\bfm g_\tx})_*\theta_0=(\wedge^d\alpha^\bfm \circ\wedge^d\Ad_{g_\tx}\circ\wedge^d\alpha^{-\bfm}).\theta_0\\
=&(\wedge^d\alpha^\bfm \circ\wedge^d\Ad_{g_\tx}).\theta_0=(\wedge^d\alpha^\bfm).\theta_\tx.\end{split}\end{equation}

Hence the maps \begin{equation}\label{ProjMaps}\begin{array}{ccccc}\tx&\mapsto& g_\tx\hL^i&\mapsto &\theta_\tx\\
                  \tX&\mapsto& G/\hL^i&\mapsto&\wedge^d\gog
                 \end{array}\end{equation}
interwines the actions $\talpha|_{\Sigma}$ on $\tX$, the induced one on $G/\hL^i$ and $(\wedge^d\alpha)|_{\Sigma}$ on $\wedge^d\gog$.

Recall that any simply connected nilpotent Lie group $G$ is an algebraic group, and its adjoint representation is algebraic. Consider two $G$-actions by regular maps on algebraic varieties: one is the representation $\wedge^d\Ad$ of $G$ on the affine variety $\wedge^d\gog$; the other is its projectivization, which we denoted by $\wedge^d\Ad$ indifferently, on the projective variety $\bP(\wedge^d\gog)$. Let $\pi_\bP:\theta\mapsto[\theta]$ be the projection from $\wedge^d\gog$ to $\bP(\wedge^d\gog)$. Notice that $\pi_\bP$ commutes the two actions.

$[\theta_0]$ is the normalized volume form on $\gol^i$. 

The stabilizers of both $\theta_0$ and $[\theta_0]$ in $G$ are $N_G(L^i)$. For $[\theta_0]$ this is obvious. For $\theta_0$, this is because, since the ambient group is nilpotent, any element normalizing $L^i$ also preserves the invariant volume on it.  Denote respectively by $\cO$ and $\cO_\bP$ the $G$-orbits of $\theta_0$ and $[\theta_0]$, both of which are set-theoretically identified with $G/N_G(L^i)$. Then because the stabilizers are the same and $\pi_\bP$ interchanges the two actions, $\pi_\bP$ is a bijection between $\cO$ and $\cO_\bP$.

It is more than that: $\pi_\bP$ is an algebraic morphism from $\cO$ to $\cO_\bP$. For this purpose we need to explain first $\cO$ and $\cO_\bP$ are varieties.

A theorem of Rosenlicht \cite{R61}*{Thm. 2} says all orbits of an algebraic action by a unipotent algebraic group on an affine variety are closed. In our case, this says $\cO$ is a Zariski closed subset of the affine variety $\wedge^d\gog$. In particular, it is also closed in the Hausdorff sense.

On the other hand, $\cO_\bP$ is a constructible subset in $\bP(\wedge^d\gog)$, and, with the natural Zariski topology, is a quasi-projective variety isomorphic to the homogeneous space $G/N_G(L^i)$. In fact, one can use $\cO_\bP$ as a definition of the quotient variety  $G/N_G(L^i)$, see \cite{H75}*{\S 12}.

Let $N_\ab$ be the projection of $N_G(L^i)$ to the maximal abelian quotient group $G_\ab=G/[G,G]$. It is a general fact about nilpotent Lie groups that $N_G(L^i)$ is connected, rational and closed in $G$.  Therefore $N_\ab$ remains a connected rational Lie subgroup (with respect to the lattice $\Gamma_\ab$, which is the projection of $\Gamma$).  Furthermore, $G_\ab$, $N_\ab$ and $G_\ab/N_\ab$ are all connected torsion-free abelian groups, or equivalently, vector spaces over $\bR$.

From now on, denote by $\pi$ the natural projection from $G$ to $G_\ab/N_\ab$.

There is a canonical quotient map $\pi_{\bP,\ab}$ between homogeneous spaces from $G/N_G(L^i)$ to $G_\ab/N_\ab\cong G/N_G(L^i)[G,G]$, which is a regular morphism and projects the coset $g N_G(L^i)$ to $\pi(g)=gN_G(L^i)[G,G]$. As the Zariski structure of $G/N_G(L^i)$ is intrinsic and does not depend on the model $\cO_\bP$, we can view $\pi_{\bP,\ab}$ as a regular map from $\cO_\bP$ to $G_\ab/N_\ab$, that sends $\big[(\wedge^d\Ad_g).\theta_0\big]$ to the natural projection $\pi(g)$ of $g$.

Define a new map $\iota=\pi_{\bP,\ab}\circ\pi_\bP$ from $\cO$ to $G_\ab/N_\ab$, then from the discussion above, we know immediately $\iota$ has the following properties:

\begin{lemma}\label{ContiProj}\begin{enumerate}
\item $\iota\big((\wedge^d\Ad_g).\theta_0\big)=\pi(g)$ is the natural projection of $g\in G$ in $G_\ab/N_\ab$.
\item $\iota$ is regular in Zariski topology, and hence continuous in Hausdorff topology.
\end{enumerate}\end{lemma}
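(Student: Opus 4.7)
My plan is to treat the two parts separately. Part (1) should follow immediately by unwinding the identifications made just before the lemma, while part (2) will be established by exhibiting $\iota$ as a composition of two regular morphisms.

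For part (1), I would trace through the identifications step by step. The map $\pi_\bP$ sends $(\wedge^d\Ad_g).\theta_0$ to its projective class $[(\wedge^d\Ad_g).\theta_0]$, which under the identification $\cO_\bP\cong G/N_G(L^i)$ corresponds to the coset $gN_G(L^i)$; this identification is valid because, as noted just above the lemma, $N_G(L^i)$ is the common stabilizer of both $\theta_0$ and $[\theta_0]$. Applying the canonical quotient $\pi_{\bP,\ab}$ then sends $gN_G(L^i)$ to $gN_G(L^i)[G,G]$, which is $\pi(g)\in G_\ab/N_\ab$ by definition of $\pi$. Composing these two steps gives the claimed equality.

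For part (2), I plan to decompose $\iota=\pi_{\bP,\ab}\circ\pi_\bP$ and verify each factor is regular. The first factor $\pi_\bP|_\cO$ is the restriction of the canonical projectivization morphism $\wedge^d\gog\setminus\{0\}\to\bP(\wedge^d\gog)$; this restriction is well defined because $\theta_0\neq 0$ (it is a nonzero top form on $\gol^i$) so $\cO$ avoids the origin, and it is regular as the restriction of a regular morphism to the Zariski-closed subvariety $\cO\subset\wedge^d\gog$. The second factor $\pi_{\bP,\ab}\colon G/N_G(L^i)\to G/N_G(L^i)[G,G]=G_\ab/N_\ab$ is the canonical morphism between quotient homogeneous spaces, which is a regular morphism by the standard theory of algebraic homogeneous spaces invoked in the paragraph preceding the lemma. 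Composing yields that $\iota$ is regular, and since regular morphisms between real algebraic varieties are automatically continuous in the Hausdorff topology, the second assertion follows as well.

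The only somewhat delicate point will be confirming that the set-theoretic identification $\cO_\bP\leftrightarrow G/N_G(L^i)$ is genuinely an isomorphism of quasi-projective varieties, so that $\pi_{\bP,\ab}$ really makes sense as a regular morphism on $\cO_\bP$ rather than merely on $G/N_G(L^i)$. This is exactly the content of the standard construction of $G/H$ as a quasi-projective variety via a faithful projective orbit, cited from \cite{H75}*{\S 12} in the paragraph just above the lemma, so once that input is accepted no genuine obstacle remains.
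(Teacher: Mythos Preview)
Your proposal is correct and is precisely the unwinding of the ``discussion above'' that the paper invokes; the paper gives no separate proof but states the lemma as an immediate consequence of the preceding paragraphs, and your argument simply makes those two compositions explicit. The only addition you make beyond the paper's text is the observation that $\cO$ avoids the origin so that $\pi_\bP|_\cO$ is well defined and regular, which is a helpful clarification rather than a departure.
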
 

Finally,  construct another map $\psi$ from $\tX$ to $G_\ab/N_\ab$ by \begin{equation}\label{ModuloNEq}\psi:\tx\mapsto\pi(g_\tx).\end{equation} Again, since $g_\tx$ is defined up to right translation by $\hL^i\subset N_G(L^i)\subset\ker\pi$, $\psi$ is well defined.

The relations between the maps we are discussing are clarified in the commutative diagrams below:

\begin{equation}\begin{diagram}
&&\cO&&\\
&\ruTo^{g\mapsto(\wedge^d\Ad_g).\theta_0}&\dTo_{\pi_\bP}&\luTo^{\theta}&\\
G&\rTo^{g\mapsto gN_G(L^i)}&\large\substack{\cO_\bP\\=G/N_G(L^i)}&\lTo^{\tx\mapsto g_\tx N_G(L^i)}&\tX\\
&\rdTo_\pi&\dTo_{\pi_{\bP,\ab}}&\ldTo_{\psi}&\\
&&G_\ab/N_\ab&&
\end{diagram}\end{equation}

\begin{lemma}\label{CpctModN}$\supp\psi_*\tmu^i$ is compact in $G_\ab/N_\ab$.\end{lemma}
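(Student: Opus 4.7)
The plan is to factor $\psi$ through a $\Sigma$-equivariant measurable map into a finite-dimensional real vector space, and then invoke the general fact that a $\Sigma$-invariant probability measure on such a vector space with commuting linear action must be compactly supported. Specifically, I would consider the map $\theta:\tX \to \cO \subset \wedge^d\gog$ defined by $\theta(\tx) := \theta_\tx = (\wedge^d\Ad_{g_\tx})\theta_0$. This is well-defined on a $\tmu^i$-conull set since the $\hL^i$-ambiguity in $g_\tx$ lies in $N_G(L^i) = \Stab_G(\theta_0)$, and one has $\psi = \iota \circ \theta$ with $\iota$ continuous by Lemma \ref{ContiProj}. Moreover, $\theta$ is $\Sigma$-equivariant with respect to the linear action $\wedge^d\alpha|_\Sigma$ on $\wedge^d\gog$. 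It therefore suffices to show that $\theta_*\tmu^i$ has compact support in $\wedge^d\gog$.

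Next, I would prove the following general fact: every $\Sigma$-invariant Borel probability measure $\nu$ on a finite-dimensional real vector space $V$, equipped with a commuting linear $\Sigma$-action, is supported on the joint central subspace $W_0 \subset V$, namely the subspace of vectors whose entire $\Sigma$-orbit is bounded. For each $\bfm \in \Sigma$, the real generalized eigenspace decomposition of the action of $\bfm$ on $V$ isolates the generalized eigenspaces for eigenvalues of modulus different from $1$, as well as the nontrivial Jordan blocks for unit-modulus eigenvalues; on each such piece some iterate of $\bfm$ or its inverse sends a nonzero vector off to infinity, so by Poincar\'e recurrence the set of vectors with nonzero component in any such piece is $\nu$-null. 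Intersecting these constraints over all $\bfm \in \Sigma$ forces $\supp(\nu) \subset W_0$.

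Applying this to $\nu = \theta_*\tmu^i$ and $V = \wedge^d\gog$, one sees that $\theta_*\tmu^i$ lives on $W_0 \cap \cO$. On $W_0$, each $\wedge^d\alpha^\bfm$ is diagonalizable over $\bC$ with spectrum on the unit circle, so the commuting family $\{\wedge^d\alpha^\bfm|_{W_0}\}_{\bfm\in\Sigma}$ has relatively compact image in $\GL(W_0)$, and every $\Sigma$-orbit in $W_0$ is relatively compact. Since $\tmu^i$ is $\talpha|_\Sigma$-ergodic by Lemma \ref{RatnerFiniteDecomp}.(1), the pushforward $\theta_*\tmu^i$ is $\Sigma$-ergodic, hence concentrated on a single compact orbit closure inside $W_0 \cap \cO$. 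Applying the continuous map $\iota$ then yields that $\supp\psi_*\tmu^i$ is compact in $G_\ab/N_\ab$. The most delicate point is verifying that unit-modulus eigenvalues with nontrivial Jordan blocks truly contribute polynomial growth, so that Poincar\'e recurrence excludes them from the support.
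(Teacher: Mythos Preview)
Your proposal is correct and follows essentially the same route as the paper: factor $\psi=\iota\circ\theta$, push $\tmu^i$ forward by $\theta$ to obtain an ergodic $(\wedge^d\alpha)|_\Sigma$-invariant probability measure on $\wedge^d\gog$, use Poincar\'e recurrence to confine this measure to the joint central subspace $W_0$ (eliminating both non-unit-modulus eigenvalues and nontrivial Jordan blocks), and then use ergodicity plus compactness of orbit closures in $W_0$ to get compact support, which $\iota$ carries to $G_\ab/N_\ab$. The paper packages the linear-algebra step as a separate Lemma~\ref{CpctLinear}, whose proof is exactly your Poincar\'e-recurrence argument; the only point you leave implicit is that the compact support of $\theta_*\tmu^i$ actually lies in $\cO$ (where $\iota$ is defined and continuous), which follows from Rosenlicht's theorem that $\cO$ is Zariski (hence Hausdorff) closed, already recorded in the paper just before Lemma~\ref{ContiProj}.
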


We need the following fact from linear algebra to prove the lemma.

\begin{lemma}\label{CpctLinear}Suppose $W$ is a real vector space, $\phi$ is a $\bZ^r$-action on $W$ by linear isomorphisms, and $\rho$ is an ergodic $\phi$-invariant probability measure on $W$. Then there exist a $\phi$-invariant subspace $W_0$ of $W$, and a non-degenerate inner product $q_0$ on $W_0$, such that:\begin{enumerate}
\item The restriction of $\phi$ to $W_0$ acts as a subgroup of $\SO(W_0,q_0)$;
\item $\rho$ is supported on some sphere $\{w\in W_0: \|w\|_{q_0}=R\}$ inside $W_0$.\end{enumerate}\end{lemma}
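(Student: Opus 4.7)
The plan is to first pin down a candidate subspace $W_0$ as the set of vectors with bounded $\phi$-orbit, prove that $\rho$ lives on it, and then exploit boundedness to produce the invariant inner product.

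\textbf{Step 1 (Identifying the bounded locus).} I would complexify and take the joint generalized eigenspace decomposition $W\otimes_{\bR}\bC=\bigoplus_{\chi}W_\chi$ for the commuting operators $\phi^\bfn$. Writing $\phi^\bfn|_{W_\chi}=\zeta_\chi^\bfn U_\chi^\bfn$ with $U_\chi^\bfn$ unipotent and $\bfn\mapsto\zeta_\chi^\bfn$ a character, let $W_b\subset W$ be the real subspace extracted from the sum of those $W_\chi$ satisfying $|\zeta_\chi^\bfn|=1$ and $U_\chi^\bfn=\Id$ for every $\bfn$; this is exactly the locus of vectors with bounded $\phi$-orbit. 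The direct-sum complement $W_u$ from the remaining eigenspaces is $\phi$-invariant, so $W=W_b\oplus W_u$.

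\textbf{Step 2 (Reducing to $W_b$).} Let $\pi\colon W\to W_u$ be the $\phi$-equivariant projection. I would show $\pi_*\rho=\delta_0$, forcing $\supp\rho\subset W_b$. Split $W_u$ into (i) eigenspaces carrying some $|\zeta_\chi^\bfn|\neq 1$, and (ii) eigenspaces whose eigenvalues all have modulus $1$ but with nontrivial unipotent part. In case (i), pick $\bfn$ so that $\phi^\bfn$ is strictly expanding on the corresponding $\phi$-invariant summand; then $\rho(B_R)=\rho(\phi^{-k\bfn}B_R)$ and the right-hand side tends to $\rho(\{0\})$ as $k\to\infty$, so the projected measure concentrates at $0$. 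In case (ii) I would use the self-contained claim: \emph{any probability measure invariant under a unipotent operator $U$ is supported on $\ker(U-\Id)$.} I would prove this by induction on dimension: pick a maximal Jordan chain $e_0,\dots,e_{d-1}$ with $Ue_i=e_i+e_{i-1}$, and dual functionals $\ell,\ell'$ (dual to $e_{d-1}$ and $e_0$) so that $(\ell,\ell')(Uv)=(\ell(v)+\ell'(v),\,\ell'(v))$; the pushforward measure on $\bR^2$ is invariant under the shear $(s,t)\mapsto(s+t,t)$, which forces it to live on $\{t=0\}$, restricting $\rho$ to the proper $U$-invariant subspace $\ker\ell'$, and induction closes the loop. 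Commutativity of the family $\{U_\chi^\bfn\}$ lets me iterate over $\bfn$ to land in the common fixed locus, reducing case (ii) to the origin as well.

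\textbf{Step 3 (Compact closure and the inner product).} Setting $W_0=W_b$, the formula $\|w\|_*:=\sup_\bfn\|\phi^\bfn w\|$ defines a $\phi$-invariant norm on $W_0$, so $\phi|_{W_0}$ lies in the isometry group of $(W_0,\|\cdot\|_*)$, which is compact. Let $K$ be the closure of $\phi(\bZ^r)|_{W_0}$ in $\GL(W_0)$: a compact abelian Lie group. Averaging an arbitrary positive-definite inner product over Haar measure on $K$ yields $q_0$, invariant under all of $K$ and hence under $\phi|_{W_0}$. The $\SO$ (rather than $\mathrm{O}$) refinement may be imposed either by restricting to the index-$\leq 2$ sublattice on which $\det\phi^\bfn=+1$ (the sphere-concentration conclusion is unaffected) or simply by observing that the two-sheeted structure is swallowed by the sphere in Step 4.

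\textbf{Step 4 (Sphere concentration).} The $q_0$-norm $w\mapsto\|w\|_{q_0}$ is $\phi$-invariant on $W_0$, hence $\rho$-almost everywhere equal to a constant $R$ by the ergodicity hypothesis; this places $\supp\rho$ on the sphere $\{\|w\|_{q_0}=R\}$.

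\textbf{Main obstacle.} Steps 1, 3 and 4 are essentially routine linear algebra plus a standard compact-group averaging; the real work is the unipotent half of Step 2. The reduction to a planar shear and the ensuing induction on dimension are clean once written down, but choosing the right Jordan chain and dual functionals, and then passing from a single unipotent $U$ to the commuting family $\{U_\chi^\bfn\}_\bfn$, is where the argument must be laid out with care.
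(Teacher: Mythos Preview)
Your overall architecture is sound, and Steps 1, 3, 4 work (the Haar-averaging in Step 3 is a legitimate alternative to the paper's explicit choice $q_0=\sum_j|w_j|^2$ in a common eigenbasis). The real issue is Step 2, case~(ii). On a generalized eigenspace $W_\chi$ with $|\zeta_\chi^\bfn|=1$ for all $\bfn$ but some $U_\chi^\bfn\neq\Id$, the operator $\phi^\bfn|_{W_\chi}=\zeta_\chi^\bfn\,U_\chi^\bfn$ is \emph{not} unipotent: it is a modulus-one scalar times a unipotent. The projected measure is invariant under $\phi^\bfn$, not under $U_\chi^\bfn$ alone, so your ``self-contained claim'' about a unipotent $U$ does not apply as stated. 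Concretely, with a correct choice of functionals (your indices are off: for $Ue_i=e_i+e_{i-1}$ one needs $\ell'=e_{d-1}^*$ and $\ell=e_{d-2}^*$, not $e_{d-1}^*$ and $e_0^*$), the planar pushforward is invariant under the \emph{twisted} shear $(s,t)\mapsto\bigl(\zeta(s+t),\zeta t\bigr)$, not the pure shear $(s,t)\mapsto(s+t,t)$.

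The fix is short: on $\{t\neq0\}$ the ratio $s/t$ still obeys $s/t\mapsto s/t+1$ (the $\zeta$'s cancel), and no probability measure on $\bC$ is invariant under translation by $1$, so $t=0$ almost surely and your induction can proceed. The paper sidesteps all of this with a single appeal to Poincar\'e recurrence: any $w$ with a nonzero component outside $W_b$ has unbounded orbit $\{\phi^{k\bfm}w\}_k$ for some $\bfm$ (exponential growth if some $|\zeta|\neq1$, polynomial growth from the Jordan block otherwise), hence is wandering, hence lies outside $\supp\rho$. This handles your cases (i) and (ii) uniformly in one line and makes the unipotent bookkeeping unnecessary.
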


\begin{proof}
By Poincare's recurrence theorem, for any $\bfm\in\bZ^r$, $\phi$ can only be supported on the non-wandering points of $\phi^\bfm$. 

As the action is commutative, we can decompose $W\otimes_\bR\bC$ into a direct sum $\bigoplus_{j=1}^JW_j$ of common generalized eigenspaces for $\phi^\bfm$, with eigenvalue $\theta_j^\bfm\in\bC^\times$ for $\bfm\in\Sigma$ on $W_j$. Suppose $q=\bigoplus_j q_j$ is a non-wandering point in $W$. We claim that each $W_j$-component $w_j$ is non-zero only if $w_j$ is a common eigenvector, with all eigenvalues $\theta_j^\bfm$ from the unit circle $\{|\theta|=1\}$.

In fact, if $w_j\neq 0$ and for some $\bfm$, \begin{itemize}
\item either $w_j$ is not an eigenvector of the Jordan block $\phi^\bfm|_{W_j}$,
\item or $|\theta_j^\bfm|>1$,
\end{itemize}
then $\phi^{k\bfm}.w_j\rightarrow\infty$ as $k$ grows, which prevents $w_j$, and thus $w$, from being non-wandering. Since $w_j$ should be non-wandering for $\phi^{-\bfm}$ as well,  $|\theta_j^\bfm|<1$ is also an obstruction. This shows the claim.

Therefore the set of non-wandering points are characterized by the direct sum $W_0^\bC$ of the common eigenspaces of $\phi$ with eigenvalues only from the unit circle. Take $W_0=W_0^\bC\cap W$, and $q_0=\sum_{j: W_j\subset W_0^\bC}|w_j|^2$. Then part (1) is satisfied.

Given part (1), any point $q\in\supp\rho$ must belong to $W_0$, and thus has its orbit under $\phi$ contained in a sphere. Since $\rho$ is ergodic, it is supported on the closure of a single orbit, which is sufficient to complete the proof.\end{proof}

\begin{proof}[Proof of Lemma \ref{CpctModN}] Let $\theta_*\tmu^i$ be the pushforward by $\theta:\tx\mapsto \theta_\tx$ on $\wedge^d\gog$. Then $\theta_*\tmu^i$ is invariant and ergodic under $(\wedge^d\alpha)|_{\Sigma}$. By Lemma \ref{CpctLinear}, $\theta_*\tmu^i$ is compactly supported in $\wedge^\gog$. On the other hand, by Rosenlicht's theorem, $\cO$ is Zariski closed, and hence Hausdorff closed as well, in $\wedge^\gog$. Because $\psi$ maps into $\cO$ and  the support of $\psi_*\tmu^i$ is actually a compact subset of $\cO$.

Moreover, one can easily check that $\psi=\iota\circ\theta$. Because $\iota:\cO\mapsto G_\ab/N_\ab$ is continuous by Lemma \ref{ContiProj}, $\psi_*\tmu^i=\iota_*(\theta_*\tmu^i)$ is compactly supported. \end{proof}

\subsection{Normalizer is full} Provided Lemma \ref{CpctModN}, we show that \begin{lemma}\label{FullN}$N_G(L^i)$ is equal to $G$.\end{lemma}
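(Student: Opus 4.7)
The plan is to first establish $N_\ab = G_\ab$, and then to upgrade this to $N_G(L^i) = G$ by exploiting the nilpotency of $G$. If $G$ is abelian there is nothing to prove, since $L^i$ is automatically normal; I therefore assume $G$ is non-abelian, which is exactly what is needed to apply Lemma \ref{AbUnif}.

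Setting $k = \dim(G_\ab/N_\ab)$, I would identify $G_\ab/N_\ab$ with $\bR^k$, let $\Gamma' \subset \bR^k$ be the image of $\Gamma_\ab$ (a cocompact lattice, since $N_\ab$ is rational), and denote by $q \colon \bR^k \to \bR^k/\Gamma'$ the quotient map. The core of the argument is to compute $q_*\psi_*\tmu^i$ in two ways. First, a $\tmu^i$-generic point $\tx$ lies in $g_\tx L^i.x_0 \times \{y_\tx\}$ with $L^i \subset N_G(L^i) \subset \ker\pi$, so $q \circ \psi$ coincides on a set of full measure with the natural composition $\tX \to X \to X_\ab \to \bR^k/\Gamma'$; together with Lemma \ref{AbUnif} this identifies $q_*\psi_*\tmu^i$ with Haar measure on $\bR^k/\Gamma'$. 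Second, because $N_\ab$ is $\alpha|_\Sigma$-invariant, $\alpha|_\Sigma$ descends to a linear action on $\bR^k$ that is intertwined with $\talpha|_\Sigma$ by $\psi$; hence $\psi_*\tmu^i$ is an $\alpha|_\Sigma$-ergodic, compactly supported (Lemma \ref{CpctModN}) probability measure on $\bR^k$, and Lemma \ref{CpctLinear} concentrates it on a single sphere $\{w \in W_0 : \|w\|_{q_0} = R\}$ inside some invariant subspace $W_0 \subseteq \bR^k$.

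These two conclusions are mutually incompatible unless $k = 0$: for $k \geq 1$, the support of $\psi_*\tmu^i$ lies in a Lebesgue-null subset of $\bR^k$ (either because $W_0$ is a proper subspace, or because a round sphere in $\bR^k$, or the singleton $\{0\}$ when $R = 0$, is null), so the pushforward $q_*\psi_*\tmu^i$ is carried by a set of Haar measure zero on $\bR^k/\Gamma'$, contradicting the previous paragraph. Hence $k = 0$ and $N_\ab = G_\ab$.

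It remains to pass from $N_\ab = G_\ab$ to $N_G(L^i) = G$. I would translate the former into the Lie-algebra identity $\gon + [\gog,\gog] = \gog$, where $\gon$ is the Lie algebra of $N_G(L^i)$, and run the standard induction along the lower central series: for $k \geq 2$, expanding $\gog^2 = [\gon+\gog^k,\ \gon+\gog^k]$ and using $[\gon,\gon] \subseteq \gon$, $[\gon,\gog^k] \subseteq \gog^{k+1}$ and $[\gog^k,\gog^k] \subseteq \gog^{2k} \subseteq \gog^{k+1}$ propagates $\gog = \gon + \gog^k$ to $\gog = \gon + \gog^{k+1}$, and nilpotency then forces $\gon = \gog$. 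The main subtlety I expect to have to check is the compatibility between the two projections of $\tmu^i$ onto $\bR^k/\Gamma'$, i.e.\ that $q \circ \psi$ and the natural composition $\tX \to X \to X_\ab \to \bR^k/\Gamma'$ agree on a full-measure set; once that factorization is in place, the rest is a clean combination of Lemma \ref{CpctLinear} with the nilpotent Lie-algebra induction.
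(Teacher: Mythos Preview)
Your proposal is correct and follows essentially the same approach as the paper: the paper also reduces to showing $N_\ab=G_\ab$ via the commutative diagram comparing $q_*\psi_*\tmu^i$ (supported on the projection of a sphere by Lemmas \ref{CpctModN} and \ref{CpctLinear}) with the Haar measure coming from Lemma \ref{AbUnif}, and then climbs the lower central series. The only cosmetic difference is that the paper runs the induction at the group level (showing $N_G(L^i)G_j=G$ for all $j$ via commutator manipulations), whereas you do the equivalent computation at the Lie-algebra level; both arguments are valid and yield the same conclusion.
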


Note that, if $G$ is abelian then the statement trivially holds. Therefore we focus on the non-abelian case, in which case we show that:

\begin{lemma}\label{AbFullN}If $G$ is not abelian, then $N_\ab=G_\ab$.\end{lemma}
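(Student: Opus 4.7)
The plan is to argue by contradiction from the assumption $N_\ab \subsetneq G_\ab$, ultimately producing an $\alpha|_\Sigma$-equivariant torus factor of $X$ on which the action is virtually cyclic, contradicting the hypotheses of Theorem~\ref{MeasureInductive}.

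First, I would set $V := G_\ab/N_\ab$, a non-trivial real vector space of some dimension $d \geq 1$. Since $N_G(L^i)$ is rational (as noted just before Lemma~\ref{CpctModN}), so is $N_\ab$ in $G_\ab$, and thus $\pi(\Gamma) \subset V$ is discrete. The induced continuous surjection $X_\ab = G_\ab/\Gamma_\ab \twoheadrightarrow V/\pi(\Gamma)$ has compact image, forcing $V/\pi(\Gamma)$ to be compact; hence $\pi(\Gamma)$ is in fact a full lattice in $V$, and $\bT^d := V/\pi(\Gamma)$ is a torus. Because $L^i$, and therefore $N_G(L^i)$, is $\alpha|_\Sigma$-invariant, this $\bT^d$ is an $\alpha|_\Sigma$-equivariant algebraic torus factor of $X$; by the standing hypothesis of Theorem~\ref{MeasureInductive}, the induced $\alpha|_\Sigma$-action on $\bT^d$ is not virtually cyclic.

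Next, I would observe that $\psi$ is $\Sigma$-equivariant with respect to the natural linear action $\alpha_\ab|_\Sigma$ on $V$: using the choice $g_{\talpha^\bfm.\tx} = \alpha^\bfm g_\tx$ already exploited earlier in this section, one gets $\psi(\talpha^\bfm.\tx) = \pi(\alpha^\bfm g_\tx) = \alpha_\ab^\bfm \psi(\tx)$. Consequently, $\psi_*\tmu^i$ is an $\alpha_\ab|_\Sigma$-invariant probability measure on $V$, ergodic by ergodicity of $\tmu^i$ under $\talpha|_\Sigma$, and compactly supported by Lemma~\ref{CpctModN}. Applying Lemma~\ref{CpctLinear} yields a subspace $W_0 \subset V$ and an inner product $q_0$ on $W_0$ such that $\alpha_\ab|_\Sigma$ acts through $\SO(W_0, q_0)$ and $\supp \psi_*\tmu^i$ lies on some sphere in $(W_0, q_0)$.

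The decisive step uses Lemma~\ref{AbUnif}: the projection of $\tmu^i$ to $X_\ab$ is $\rmm_{X_\ab}$, so pushing further down to $\bT^d$ yields the Haar measure, which has full support. Since the composition $\tX \xrightarrow{\psi} V \twoheadrightarrow V/\pi(\Gamma) = \bT^d$ coincides with the natural factorization $\tX \to X \to X_\ab \to \bT^d$, this forces the set $\supp\psi_*\tmu^i + \pi(\Gamma)$ to be all of $V$. As $\pi(\Gamma)$ is countable and $\supp\psi_*\tmu^i \subset W_0$, this set is a countable union of $(\dim W_0)$-dimensional affine subspaces of $V \cong \bR^d$, which cannot exhaust $V$ unless $\dim W_0 = d$, i.e.\ $W_0 = V$. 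Thus $\alpha_\ab|_\Sigma$ acts orthogonally on all of $V$ while preserving the lattice $\pi(\Gamma)$; since $\SO(V, q_0) \cap \Aut(\pi(\Gamma))$ is both compact and discrete, hence finite, some finite-index subgroup of $\Sigma$ acts trivially on $\bT^d$. This contradicts the non-virtual-cyclicity of the action on $\bT^d$ established above, proving $N_\ab = G_\ab$.

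I expect the main subtlety to lie in the rigidity step forcing $W_0 = V$: one must combine the compact support of $\psi_*\tmu^i$ with the full support of its Haar projection on $\bT^d$, using in an essential way that $\pi(\Gamma)$ is a lattice rather than merely a dense subgroup. Once $W_0 = V$, the remaining implication---that an integral orthogonal action is finite---is standard.
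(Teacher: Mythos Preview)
Your proof is correct, and it shares its entire setup with the paper's argument: the $\Sigma$-equivariance of $\psi$, the application of Lemma~\ref{CpctLinear} to $\psi_*\tmu^i$, Lemma~\ref{AbUnif} giving Haar projection on $X_\ab$, and the commutativity of the two routes from $\tX$ down to the torus $X_\ab/T=V/\pi(\Gamma)$. The divergence is only in how the contradiction is extracted at the end.

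The paper argues more directly with the \emph{sphere} rather than the subspace $W_0$: since $\supp\psi_*\tmu^i$ lies on a sphere in $W_0$, it has dimension at most $\dim W_0-1\leq d-1<d$; the projection of this compact set to the $d$-torus still has dimension $<d$ and therefore cannot carry Haar measure. This finishes without ever invoking the no-virtually-cyclic-factor hypothesis. Your route instead discards the sphere, uses only $\supp\psi_*\tmu^i\subset W_0$ to force $W_0=V$ via a Baire/measure-zero argument, and then appeals to the finiteness of $\SO(V,q_0)\cap\Aut(\pi(\Gamma))$ together with hypothesis~(1) of Theorem~\ref{MeasureInductive}. Both are short and valid; the paper's version is slightly more self-contained (it does not consume the rank hypothesis at this step), while yours has the minor expository virtue of exhibiting $X_\ab/T$ explicitly as the kind of factor governed by the standing assumptions.
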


\begin{proof} As $[G,G]$ is invariant under all automorphisms of $G$ and $N_G(L^i)$ is $\alpha|_\Sigma$ invariant, there is a natural induced action $\alpha|_\Sigma$ on $G_\ab/N_\ab$. For a $\tmu^i$-generic point $\tx\in\tX$ and $\bfm\in\Sigma$, one can choose $g_{\talpha^\bfm.\tx}=\alpha^\bfm.g_\tx$. Thus \begin{equation}\psi(\talpha^\bfm.\tx)=\pi(\alpha^\bfm.g_\tx)=
\alpha^\bfm.\pi(g_\tx)=\alpha^\bfm.\psi(\tx).\end{equation}

It follows that $\psi_*\tmu^i$ is an ergodic $\alpha|_\Sigma$-invariant probability measure on $G_\ab/N_\ab$.  However, as we remarked before, $G_\ab/N_\ab$ is just a vector space. And the $\alpha^\bfm$'s, descending from automorphisms of $G$, act as linear
isomorphisms on $G_\ab/N_\ab$. So by Lemma \ref{CpctLinear}, there is a linear subspace $W_0$ of $G_\ab/N_\ab$, equipped with a non-degenerate inner-product $q_0$, such that $\psi_*\tmu^i$ is supported on a single sphere $S=\{w\in W_0: \|w\|_{q_0}=R\}$ with respect to the norm $q_0$. If $G_\ab/N_\ab$ is not trivial, then $\dim S<\dim(G_\ab/N_\ab)$.
 
 Recall that $N_\ab$ is rational with respect to the projection $\Gamma_\ab$ of $\Gamma$ to $G_\ab$. Let $x_{0,ab}=\Gamma_\ab$ be the origin in the torus $G_\ab/\Gamma_\ab$. Then $N_\ab x_{0,ab}$ is a compact orbit, and hence must be a subtorus $T$ in the torus $X_\ab=G_\ab/\Gamma_\ab$. Especially, $T$ is a connected closed subgroup of $X_\ab$ and is invariant under $\alpha|_\Sigma$. 
 
 Consider the quotient $X_\ab/T$, it is a lower dimensional torus and can also be written as $G_\ab/(N_\ab\Gamma_\ab)$. The diagram below, with all unmarked arrows representing natural projections, is commutative in a $\tmu^i$-almost everywhere sense:
 \begin{equation}\label{FullNDiagram}\begin{diagram}
 \tX&\rTo&\tX_\ab&\rTo&X_\ab\\
 \dTo^{\psi}&&&&\dTo(0,1)\\
 G_\ab/N_\ab&\rTo&G_\ab/(N_\ab\Gamma_\ab)&\rEq&X_\ab/T
 \end{diagram}\end{equation}
 
Indeed, for $\tmu^i$-almost every point $\tx$, choose any $g_\tx$ that satisfies Lemma \ref{RatnerFiniteDecomp}. Then $\psi(\tx)$ is the natural projection $\pi(g_\tx)$ of $g_\tx$ from $G$ to $G_\ab/N_\ab$, and hence the image of $\tx$ through $G_\ab/N_\ab$ to $X_\ab/T$ is the natural projection of $\pi(g_\tx)$.

On the other hand,  by  Lemma \ref{RatnerFiniteDecomp}, $\tx$ can be written as $(x,y_\tx)$, where $x\in\supp(g_\tx)_*\xi^i=g_\tx.(L^i.x_0)\subset g_\tx.N_G(L^i).x_0$. Therfore the projection of $\tx$ to $\tX_\ab$ is inside $g_\tx.N_\ab x_{0,ab}\times\{y_\tx\}$. When one further projects to $X_\ab$, the image is inside $g_\tx.N_\ab x_{0,ab}$, which is just $\pi(g_\tx).T$. Therefore, the projection through the upper-right path in the diagram also sends $\tx$ to the natural projection of $\pi_(g_\tx)$ in $X_\ab/T$. So the diagram \eqref{FullNDiagram} commutes.

Now call the map from the diagram by $\Pi: \tX\mapsto X_\ab/T$. On one hand, by Lemma \ref{AbUnif}, the pushforward of $\tmu^i$ to $X_\ab$ is $\rmm_{X_\ab}$, and therefore $\Pi_*\tmu^i$ is the uniform probability measure on the quotient torus $X_\ab/T$.   

Whereas, when one takes the other path, unless $G_\ab=N_\ab$, $\psi_*\tmu_i$ is supported on a lower dimensional sphere $S$ inside the vector space $G_\ab/N_\ab$. Moreover, $X_\ab/T=G_\ab/(N_\ab\Gamma_\ab)=(G_\ab/N_\ab)\Big/\big(\Gamma_\ab/(\Gamma_\ab\cap N_\ab)\big)$ is a quotient torus of $G_\ab/N_\ab$ by a lattice. The further projection of the compact sphere $S$ to  $X_\ab/T$ has the same dimension as $S$, which is less than $\dim X_\ab/T$.   Thus $\Pi_*\tmu_i$, which is the pushforward of $\psi_*\tmu_i$ and  is supported on the projection of $S$, cannot be the uniform measure.

This contradiction forces $N_\ab$ to be equal to $G_\ab$.\end{proof}

\begin{proof}[Proof of Lemma \ref{FullN}] As remarked earlier, a proof is required only if $G$ is not abelian. Let $G_0=G$, $G_1=[G,G]$, $G_2=[G, G_1]$, $\cdots$, $G_k=\{e\}$ be the lower central series of $G$. And denote by $N_j$ the projection of $N_G(L^i)$ to the quotient group $G/G_j$. We claim that $N_j=G/G_j$ for all $j\geq 1$.

For $j=1$ this is the content of Lemma \ref{AbFullN}. Assuming $j\geq 2$ and the claim holds for $j-1$, we prove by induction.

Because $G/G_{j-1}=(G/G_j)\Big/(G_{j-1}/G_j)$ and the projection of $N_j$ to $G/G_{j-1}$ is $N_{j-1}=G/G_{j-1}$, it suffices to show that $N_j$ contains $G_{j-1}/G_j$. 

As $G_{j-1}=[G, G_{j-2}]$, for this purpose we only need that for all $h\in G$ and $h'\in G_{j-2}$, $N_G(L^i)$ has non-trivial intersection with $[h,h']G_j$. Since $N_G(L^i)$ has full projection modulo $G_{j-1}$, we can pick elements $g\in hG_{j-1}$ and $g'\in h'G_{j-1}$ from $N_G(L^i)$. Then $[g,g']$ is congruent to $[h,h']$ modulo $[G_{j-1},G_{j-1}]\subset G_j$. This proves the claim.

 The lemma is the $j=k$ case of the claim.\end{proof}

The implication from Lemma \ref{FullN} to Proposition \ref{TIMeas} is straightforward.
\begin{proof}[Proof of Proposition \ref{TIMeas}] By Lemma \ref{FullN}, $L^i \lhd G$. In Lemma \ref{RatnerFiniteDecomp}, for $\tmu^i$-almost every $\tx$, the measure component $\txi_\tx^i$ at $\tx$ equals $(g_\tx)_*\xi^i\times\delta_{y_\tx}$. Note $(g_\tx)_*\xi^i$ is $g_\tx L^i g_\tx^{-1}$-invariant, but $g_\tx L^i g_\tx^{-1}=L^i$ as $L^i$ is normal. So almost every component $\txi_\tx^i$ is $L^i$-invariant. By the decomposition in Lemma \ref{RatnerFiniteDecomp}, $\tmu^i$ is $L^i$-invariant. Recall that $L^i$ is a non-trivial rational subgroup, so Proposition \ref{TIMeas} follows.
\end{proof}

\section{Example of non-rigidity}\label{SecHeisen}
\subsection{The Heisenberg nilmanifold}
Write $(\bfx,\bfy,z)$ for the coordinates on $\bR^{13}=\bR^6\oplus\bR^6\oplus\bR$. Define the Lie bracket by
\begin{equation}\label{HeisenBracketEq}\big[(\bfx,\bfy,z),(\bfx',\bfy',z)\big]=\big(\bfzero,\bfzero,2\bfx^\T\bfy'-2(\bfx')^\T\bfy\big).\end{equation}
This turns $\bR^{13}$ into the 13-dimensional Heisenberg Lie algebra, which is two-step nilpotent. 

Denote this Lie algebra by $\gog$ and let $G=\exp\gog$ be the corresponding Heisenberg Lie group. $\exp$ is a diffeomorphism between $\gog$ and $G$. We denote $\exp(\bfx,\bfy,z)\in G$ indifferently by $(\bfx,\bfy,z)$. Then $G$ is set-theoretically $\bR^{13}$, with identity $(\bfzero,\bfzero,0)$. Using Baker-Campbell-Hausdorff formula, the group rule is
\begin{equation}\label{HeisenRuleEq}\begin{split}&(\bfx,\bfy,z)\cdot (\bfx',\bfy',z)\\=&(\bfx,\bfy,z)+(\bfx',\bfy',z)+\frac12\big[(\bfx,\bfy,z),(\bfx',\bfy',z)\big]\\
=&(\bfx+\bfx',\bfy+\bfy',z+z'+\bfx^\T\bfy'-(\bfx')^\T\bfy).\end{split}\end{equation}
In particular, $(\bfx,\bfy,z)^{-1}=(-\bfx,-\bfy,-z)$.

 The integer points $\Gamma=\{(\bfx,\bfy,z)\in G: \bfx,\bfy\in\bZ^6, z\in\bZ\}$ is a cocompact lattice in $G$. The resulting quotient $X=G/\Gamma$  is the 13-dimensional Heisenberg nilmanifold.
 
Let $\{\bfx\}\in[-\frac12,\frac12)^6$ and $\{z\}\in[-\frac12,\frac12)$ be the unique representatives that are respectively congruent to $\bfx\in\bR^6$ and $z\in\bR$,  modulo $\bZ^6$ and $\bZ$.  Write $[\bfx]=\bfx-\{\bfx\}$ and $[z]=z-\{z\}$ for the integer parts.

There is a unique integer part\footnote{Depending on the context, the symbol should not cause confusion with the Lie bracket.} $[(\bfx,\bfy,z)]\in\Gamma$ for every $(\bfx,\bfy,z)\in G$, such that the difference $\{(\bfx,\bfy,z)\}=(\bfx,\bfy,z)[(\bfx,\bfy,z)]^{-1}$ lies in $[-\frac12,\frac12)^{13}$. With simple calculations, one find
\begin{equation}\label{HeisenIntEq}\begin{split}[(\bfx,\bfy,z)]=&\Big([\bfx],[\bfy],\big[z-\bfx^\T[\bfy]+[\bfx]^\T\bfy\big]\Big)\\
=&\Big([\bfx],[\bfy],\big[z+\bfx^\T\{\bfy\}-\{\bfx\}^\T\bfy\big]\Big);\end{split}\end{equation}
\begin{equation}\label{HeisenResEq}\begin{split}\{(\bfx,\bfy,z)\}=&\Big(\{\bfx\},\{\bfy\},\big\{z-\bfx^\T[\bfy]+[\bfx]^\T\bfy\big\}\Big)\\
=&\Big(\{\bfx\},\{\bfy\},\big\{z+\bfx^\T\{\bfy\}-\{\bfx\}^\T\bfy\big\}\Big).\end{split}\end{equation}
So $[-\frac12,\frac12)^{13}\subset G$ is a fundamental domain of $\Gamma$, and $\{(\bfx,\bfy,z)\}$ is the representative of $(\bfx,\bfy,z)$ in this domain.
 
\subsection{The action} We now equip $X$ with an action by automorphisms. 

We make use of the following partially hyperbolic irreducible $\bZ^2$ action by $6$-dimensional toral automorphisms discovered by S. Katok. The concrete construction can be found in \cite{KN11}*{Example 2.2.20}.

There are two commuting matrices $A,B\in\SL(6,\bZ)$, which can be diagonalized over $\bC$ simutaneouly, respectively into $\diag{\zeta_1^A,\cdots,\zeta_6^A}$ and $\diag{\zeta_1^B,\cdots,\zeta_6^B}$, such that:
\begin{enumerate}
\item $A$ is irreducible over $\bQ$, that is, its characteristic polynomial is irreducible;
\item $\zeta_1^A=\overline{\zeta_2^A}$ and $\zeta_1^B=\overline{\zeta_2^B}$ are conjugate pairs of imaginary eigenvalues, and both pairs have absolute value $1$;
\item All the other eigenvalues $\zeta_3^A,\cdots,\zeta_6^A$, $\zeta_3^B,\cdots,\zeta_6^B$ are real;
\item $\zeta_3^A,\cdots,\zeta_6^A$ have distinct absolute values;
\item $A$ and $B$ are multiplicatively independent, i.e., $A^{n_1}B^{n_2}\neq\Id$ for all non-trivial integer pairs $(n_1,n_2)$.
\end{enumerate}

Therefore there is a two-dimensional subspace $W\subset\bR^6$ on which $A$ and $B$ acts as twisted isometries. More precisely, $W$ is preserved by both $A$ and $B$, and after a change of basis on $W$, $A$ and $B$ respectively act on $W$ by rotations of angles $\arg\zeta_1^A$ and $\arg\zeta_1^B$. Since $\zeta_1^A$ is not a root of unity, these angles are both irrational. 
 
 Let $\beta$ be the $\bZ^2$-action on $\bT^6=\bR^6/\bZ^6$ generated by the multiplications by $A$ and $B$. Namely, $\beta^{(n_1,n_2)}$ is the integer matrix $A^{n_1}B^{n_2}$. Let $\hbeta:\bZ^2\curvearrowright\bR^6$ be the action in which $\bfn$ acts by $\hbeta^\bfn=(\beta^{-\bfn})^\T$.
  
 Define an action $\alpha$ on $G$ by
 \begin{equation}\label{HeisenActionEq}\alpha^\bfn.(\bfx,\bfy,z)=(\beta^\bfn\bfx,\hbeta^\bfn\bfy,z).\end{equation}
 
From \eqref{HeisenBracketEq}, we get
\begin{equation}\begin{split}&\big[\alpha^\bfn.(\bfx,\bfy,z),\alpha^\bfn.(\bfx',\bfy',z')\big]\\
=&\Big(\bfzero,\bfzero,2\bfx^\T(\beta^\bfn)^\T(\beta^{-\bfn})^\T\bfy'-2(\bfx')^\T(\beta^\bfn)^T(\beta^{-\bfn})^T\bfy\big)\Big)\\ 
=&\big(\bfzero,\bfzero,2\bfx^\T\bfy'-2(\bfx')^\T\bfy)\big)=\big[(\bfx,\bfy,z),(\bfx',\bfy',z)\big].
\end{split}\end{equation}

Because the action $\alpha$ preserves the linear structure as well as the Lie bracket $[\cdot,\cdot]$, it also preserves the group rules \eqref{HeisenRuleEq}. Thus $\alpha$ consists of group automorphisms of $G$.

Moreover, since $\alpha$ acts by integer matrices, it leaves the lattice $\Gamma$ invariant. So $\alpha$ descends to an action on $X$ by nilmanifold automorphisms, which we still denote by $\alpha$.

Notice that, the center of $G$ is $[G,G]=\{(\bfzero,\bfzero,z)\}$. The abelianization $G_\ab=G/[G,G]$ is $\bR^6\oplus\bR^6$, parametrized by $(\bfx,\bfy)$. The maximal abelian factor $X_\ab$ is just $\bT^6\times\bT^6$, and the action $\alpha$ projects to the product action $\beta\times\hbeta$ on $X_\ab$.

\subsection{The invariant measure} As remarked earlier, $\beta$ acts on the two dimensional plane $W\subset\bR^6$ by rotations, modulo a change of basis. Let $S_0\subset W$ be a non-trivial circle centered at the origin of small radius, that is preserved by these rotations. We may assume \begin{equation}\label{SmallCircleEq}S_0\subset[-\frac12,\frac12)^6\end{equation} with respect to the original metric of $\bR^6$. There is a unqiue rotation-invariant uniform probability measure $\rmm_{S_0}$ on $S_0$. The projection of $S_0$ from $\bR^6$ into $\bT^6$ is injective because of the small radius. Let $S$ and $\rmm_S$ be respectively the projections of $S_0$ and $\rmm_{S_0}$.  

\begin{lemma}\label{CircleErgo} $\rmm_S$ is $\beta$-invariant. Furthermore, for any finite index subgroup $\Sigma\subset\bZ^2$, $\rmm_S$ is ergodic under $\beta|_\Sigma$.
\end{lemma}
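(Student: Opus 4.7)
The plan is to handle invariance and ergodicity separately, both by reducing to classical one-dimensional rotation dynamics via the identification of $S_0$ with a round circle in $W$.

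First I would check invariance. By construction, after the change of basis on $W$, the restrictions $A|_W$ and $B|_W$ are genuine Euclidean rotations, and $S_0$ was chosen to be a rotation-invariant circle. The measure $\rmm_{S_0}$ is then exactly the Haar measure for the rotation group on the circle $S_0$ and is therefore preserved by both $A|_W$ and $B|_W$. Because $A$ and $B$ are integer matrices, the canonical projection $\pi:\bR^6\to\bT^6$ intertwines the linear action with $\beta$; and by \eqref{SmallCircleEq}, $\pi$ restricts to a homeomorphism from $S_0$ onto $S$. Pushing forward gives $\rmm_S=\pi_*\rmm_{S_0}$ and $\beta^\bfn_*\rmm_S=\pi_*(A^{n_1}B^{n_2})_*\rmm_{S_0}=\pi_*\rmm_{S_0}=\rmm_S$.

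Next I would prove ergodicity under $\beta|_\Sigma$ for an arbitrary finite index subgroup $\Sigma\subset\bZ^2$. Any such $\Sigma$ contains $k\bZ^2$ for some positive integer $k$, so in particular it contains $\bfn_0=(k,0)$. Under the identification of $S_0$ with a standard circle (via the basis in which $A|_W$ is a rotation), the map $\beta^{\bfn_0}|_{S_0}$ is rotation by the angle $k\arg\zeta_1^A$. The key point to verify is that this rotation is irrational, i.e., that $(\zeta_1^A)^k$ is not a root of unity. This follows from the hypothesis that $A$ has irreducible characteristic polynomial over $\bQ$: if $\zeta_1^A$ were a root of unity, then the minimal polynomial of $\zeta_1^A$ over $\bQ$ would be a cyclotomic polynomial and would coincide with the characteristic polynomial, forcing every $\zeta_j^A$ to have absolute value $1$; but the property that $\zeta_3^A,\dots,\zeta_6^A$ have distinct real absolute values rules this out. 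Hence $\zeta_1^A$ has infinite multiplicative order, and so does $(\zeta_1^A)^k$.

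Finally, a single irrational rotation on the circle is ergodic with respect to the Haar measure by the standard Fourier argument. Therefore $\beta^{\bfn_0}$ is ergodic on $(S,\rmm_S)$, and ergodicity of one element of a group action immediately implies ergodicity of the whole action $\beta|_\Sigma$. I do not foresee any genuine obstacle here; the only thing to be careful about is checking that the characteristic polynomial condition really rules out $\zeta_1^A$ being a root of unity, but this is a short algebraic argument as sketched.
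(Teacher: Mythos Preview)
Your proposal is correct and follows essentially the same approach as the paper: reduce to $\rmm_{S_0}$, find a nontrivial power of $A$ in $\Sigma$, and observe it acts as an irrational circle rotation. The paper simply asserts that $\arg\zeta_1^A$ is irrational (equivalently, $\zeta_1^A$ is not a root of unity), having stated this in the construction preceding the lemma, whereas you supply a short justification from properties (1) and (4); your added argument is valid and not a departure in method.
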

\begin{proof}As $\rmm_S$ is the $\beta$-equivariant projection of $\rmm_{S_0}$, it suffices to prove the same properties for $\rmm_{S_0}$. The invariance is obvious. Observe that $\beta|_\Sigma$ contains a non-trivial power of $A$, which acts by an irrational rotations as $\arg\zeta_1^A$ is irrational. The ergodicity follows.\end{proof}

\begin{lemma}\label{TotalIrrT6} The actions $\beta$ and $\hbeta:\bZ^2\curvearrowright\bT^6$ is totally irreducible and not virtually cyclic.\end{lemma}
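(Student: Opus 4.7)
The proof will be short and essentially a direct verification using the arithmetic structure of $A$ and $B$ listed at the beginning of the section.

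\textbf{Overall strategy.} I plan to reduce total irreducibility of $\beta|_\Sigma$ to total irreducibility of the cyclic subaction $\langle\beta^{(k,0)}\rangle=\langle A^k\rangle$, where $k=[\bZ^2:\Sigma]$ (every finite index subgroup of $\bZ^2$ contains $k\bZ^2$). For $\hbeta$, the same reduction to $\langle(A^{-1})^\T\rangle$ works because $(A^{-1})^\T$ has the same characteristic polynomial as $A^{-1}$, which is again irreducible over $\bQ$ (the reciprocal of the char.\ poly.\ of $A$ is irreducible when the original is, since $\det A=\pm 1$). So everything boils down to showing that for every $k\geq 1$, no nontrivial proper $\bQ$-rational subspace of $\bR^6$ is $A^k$-invariant.

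\textbf{Key step: eigenvalues of $A^k$ remain distinct.} The six eigenvalues $\zeta_1^A,\dots,\zeta_6^A$ of $A$ are distinct because $A$ is irreducible over $\bQ$ (separability in characteristic $0$). I will check that the six numbers $(\zeta_i^A)^k$ are also distinct. Using the listed data:
\begin{itemize}
\item For $i\in\{3,\dots,6\}$, the $|\zeta_i^A|$ are distinct, and none equals $1$ (otherwise $\zeta_i^A=\pm 1$ would be a rational eigenvalue, contradicting irreducibility of $A$). Hence $|(\zeta_i^A)^k|$ are distinct for $i\in\{3,\dots,6\}$, and different from $|(\zeta_1^A)^k|=|(\zeta_2^A)^k|=1$.
\item For the conjugate pair: $(\zeta_1^A)^k=(\zeta_2^A)^k$ would force $(\zeta_1^A/\overline{\zeta_1^A})^k=(\zeta_1^A)^{2k}=1$, making $\zeta_1^A$ a root of unity. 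But $\arg\zeta_1^A$ is an irrational multiple of $\pi$ (because $A$ acts on $W$ as an irrational rotation, as recalled just above the lemma), so $\zeta_1^A$ is not a root of unity.
\end{itemize}

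\textbf{Ruling out rational invariant subspaces.} Since $A^k$ has six distinct eigenvalues, its eigenspaces in $\bC^6$ are precisely the common eigenlines $\bC e_1,\dots,\bC e_6$ of $A$. Any $A^k$-invariant subspace of $\bC^6$ is therefore a direct sum of a subset of these eigenlines. A rational subspace $V\subset\bQ^6$ is $A$-invariant as a $\bQ$-vector space iff $V\otimes_\bQ\bC$ is Galois-invariant (for the Galois action on the splitting field $K=\bQ(\zeta_1^A)$). Since the characteristic polynomial of $A$ is irreducible, $\Gal(K/\bQ)$ permutes $\{\zeta_i^A\}$ transitively, hence permutes the eigenlines $\{\bC e_i\}$ transitively. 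The only Galois-invariant unions of eigenlines are $\emptyset$ and the full set, so $V=\{0\}$ or $V=\bQ^6$. This gives total irreducibility of $\beta$, and the same argument (applied to $(A^{-1})^\T$ in place of $A$) gives it for $\hbeta$.

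\textbf{Not virtually cyclic.} The acting group on $\bT^6$ for $\beta$ is the image of $(n_1,n_2)\mapsto A^{n_1}B^{n_2}$; by multiplicative independence of $A$ and $B$ this map is injective, so the image has torsion-free rank $2$, which by definition means $\beta$ is not virtually cyclic. The same applies to $\hbeta$ since $\bfn\mapsto\hbeta^\bfn=(\beta^{-\bfn})^\T$ is also injective. No serious obstacle arises here; the only mild technicality is the eigenvalue-distinctness verification above, which is immediate from the qualitative properties of the Katok action recalled at the start of the section.
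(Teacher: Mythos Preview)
Your proof is correct and follows essentially the same route as the paper: reduce to showing that every nonzero power $A^k$ leaves no proper rational subspace invariant, verify that the eigenvalues $(\zeta_i^A)^k$ remain distinct using the absolute-value data and the fact that $\zeta_1^A$ is not a root of unity, and deduce irreducibility via the transitive Galois action on the roots of the (irreducible) characteristic polynomial of $A$. The paper phrases the last step as ``$A^m$ has irreducible characteristic polynomial'' rather than your eigenline formulation, but these are equivalent; one small wording issue is that your sentence ``$V\subset\bQ^6$ is $A$-invariant iff $V\otimes_\bQ\bC$ is Galois-invariant'' should really say that the \emph{index set} $I$ of eigenlines is invariant under the Galois action on the splitting field (not just $\bQ(\zeta_1^A)$), and only the forward implication is needed.
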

\begin{proof} $\beta$ is generated by $A$ and $B$, while $\hbeta$ is generated by $(A^{-1})^T$ and $(B^{-1})^T$. We prove the lemma for $\beta$, and the $\hbeta$-part is similary. 

$\beta$ is not virtually cyclic since $A$ and $B$ are multiplicatively independent.

To show that $\beta$ is totally irreducible, notice that every finite index subaction contains a non-trivial power of $A$. It suffices to show that for $m\neq 0$, $A^m$ is irreducible over $\bQ$.

Suppose $A^m$ is not irreducible. That means there are repeated roots among the eigenvalues $(\zeta_1^A)^m, \cdots, (\zeta_6^A)^m$ of $A^m$. Since $\zeta_3^A,\cdots,\zeta_6^A$ are irrational real numbers of distinct absolute values, and $\zeta_1^A$ and $\zeta_2^A$ are conjugate imaginary numbers from the unit circle, this may happen only if $(\zeta_1^A)^m=(\zeta_2^A)^m$, which is not the case because $\zeta_1^A$ and $\zeta_2^A$ are not roots of unity. So $A^m$ must be irreducible.\end{proof}

\begin{lemma}\label{CircleProdErgo}The measure $\rmm_S\times\rmm_{\bT^6}$ is $\beta\times\hbeta$-invariant and is ergodic under any restriction $(\beta\times\hbeta)|_\Sigma$ to a finite index subgroup.\end{lemma}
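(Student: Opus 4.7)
The plan is to reduce the claim to a direct application of Corollary \ref{ProdErgo}, using the two preceding lemmas as inputs. Invariance is immediate: each factor is invariant under its respective action by Lemma \ref{CircleErgo} and the fact that $\rmm_{\bT^6}$ is the Haar measure on $\bT^6$. So the work is in proving ergodicity under every $(\beta \times \hbeta)|_\Sigma$.

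The key trick is to apply Corollary \ref{ProdErgo} with the roles reversed relative to how one might first think about it: take the ``nilmanifold action'' to be $\hbeta : \bZ^2 \curvearrowright \bT^6$ (equipped with Lebesgue measure $\rmm_{\bT^6}$), and take the ``auxiliary ergodic action'' to be $\beta : \bZ^2 \curvearrowright (S, \rmm_S)$. To invoke the corollary I would verify two hypotheses. First, $\hbeta$ has no virtually cyclic algebraic factor: by Lemma \ref{TotalIrrT6}, $\hbeta$ is totally irreducible and not virtually cyclic; total irreducibility forces every algebraic factor on a subgroup of finite index to be either trivial or $\bT^6$ itself, and the non-trivial one inherits non-virtual-cyclicity. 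Second, $\rmm_S$ is ergodic under $\beta|_\Sigma$ for every finite index $\Sigma \subset \bZ^2$, which is exactly Lemma \ref{CircleErgo}.

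With these hypotheses in hand, Corollary \ref{ProdErgo}.(2) gives that the $(\hbeta \times \beta)|_\Sigma$-ergodic decomposition of $\rmm_{\bT^6} \times \rmm_S$ takes the form $\tfrac{1}{N}\sum_{i=1}^{N} \rmm_{\bT^6} \times \rmm_S^{i}$, where the $\rmm_S^{i}$ are the $\beta|_\Sigma$-ergodic components of $\rmm_S$. But Lemma \ref{CircleErgo} asserts $\rmm_S$ is itself $\beta|_\Sigma$-ergodic, so $N = 1$ and this decomposition is trivial. Swapping the order of factors (which does not affect ergodicity), we conclude that $\rmm_S \times \rmm_{\bT^6}$ is ergodic under $(\beta \times \hbeta)|_\Sigma$, as required.

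There is no real obstacle here: the lemma is essentially a bookkeeping consequence of Corollary \ref{ProdErgo} combined with the two preparatory lemmas. The only point to be careful about is confirming that the hypotheses of Corollary \ref{ProdErgo} apply in the ``swapped'' configuration, namely that $\hbeta$ (rather than $\beta$) plays the role of the nilmanifold action with no virtually cyclic factors, while $(S, \rmm_S, \beta|_\Sigma)$ supplies the ergodic auxiliary system.
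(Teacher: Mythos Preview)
Your proposal is correct and follows essentially the same route as the paper: invoke Corollary~\ref{ProdErgo} with $\hbeta$ in the role of the nilmanifold action (so that the ``no virtually cyclic factor'' hypothesis comes from Lemma~\ref{TotalIrrT6}) and $(S,\rmm_S,\beta)$ as the auxiliary system, then use Lemma~\ref{CircleErgo} to see that the ergodic decomposition under $(\beta\times\hbeta)|_\Sigma$ is trivial. One cosmetic remark: the hypothesis of Corollary~\ref{ProdErgo} only requires $\rmm_S$ to be ergodic under the full $\beta$-action (which you get from Lemma~\ref{CircleErgo} with $\Sigma=\bZ^2$); the stronger ergodicity under every $\beta|_\Sigma$ is what forces $N=1$ in the conclusion of part~(2), exactly as you use it.
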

\begin{proof}The invariance is clear; we prove the ergodicity.

Lemma \ref{TotalIrrT6} contains that the action $\hbeta$ on $\bT^6$ has no virtually cyclic factor. By Corollary \ref{ProdErgo}, any ergodic component of $\rmm_{S}\times\rmm_{\bT^6}$ with respect to $(\beta\times\hbeta)|_\Sigma$ can be written as $\nu\times\rmm_{\bT^6}$ where $\nu$ is an $\beta|_\Sigma$-ergodic component of $\rmm_S$. But $\nu=\rmm_S$ by Lemma \ref{CircleErgo},  completing the proof.\end{proof}

Write respectively $\overline{(\bfx,\bfy)}$ and $\overline{(\bfx,\bfy,z)}$ for the points represented by $(\bfx,\bfy)$ and $(\bfx,\bfy,z)$ in $X_\ab=\bT^6\times\bT^6$ and $X$. Let $\pi:\overline{(\bfx,\bfy,z)}\mapsto\overline{(\bfx,\bfy)}$ be the projection from $X$ to $X_\ab$.

Notice that every point in the $\beta\times\hbeta$-invariant set $S\times\bT^6\subset X_\ab$ is uniquely represented by some point from $S_0\times[-\frac12,\frac12)^6$. Define a map $\psi: S\times\bT^6\mapsto X$ by:
\begin{equation}\label{SectionMapEq}\psi\big(\overline{(\bfx,\bfy)}\big)=\overline{(\bfx,\bfy,\bfx^\T\bfy)}, \forall \bfx\in S_0,\forall\bfy\in[-\frac12,\frac12)^6.\end{equation}
Then $\psi$ is a piecewise continuous section map over $S\times\bT^6$, as \begin{equation}\label{ProjSectIdEq}\pi\circ\psi=\Id|_{S\times\bT^6}.\end{equation}

\begin{lemma}For all $\bfn\in\bZ^2$, $\psi\circ(\beta^\bfn\times\hbeta^\bfn)=\alpha^\bfn\circ\psi$.\end{lemma}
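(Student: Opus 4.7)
The plan is to verify the identity pointwise on $S \times \bT^6$ by direct computation using the explicit group law \eqref{HeisenRuleEq} and the relation $(\beta^\bfn)^\T \hbeta^\bfn = \Id$ built into the definition of $\hbeta$.

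Fix a representative $\overline{(\bfx,\bfy)} \in S \times \bT^6$ with $\bfx \in S_0$ and $\bfy \in [-\frac12,\frac12)^6$. Since $\beta$ preserves $W$ and acts as a rotation there, the circle $S_0 \subset W$ is $\beta$-invariant, so $\beta^\bfn \bfx \in S_0$ and no adjustment is needed on the $\bfx$-coordinate. On the other hand, $\hbeta^\bfn \bfy$ need not lie in $[-\frac12,\frac12)^6$, so write $\bfy' = \{\hbeta^\bfn\bfy\}$ and $\bfk = \hbeta^\bfn \bfy - \bfy' \in \bZ^6$. By definition of $\psi$, then,
\[
\psi\big((\beta^\bfn \times \hbeta^\bfn).\overline{(\bfx,\bfy)}\big) = \psi\big(\overline{(\beta^\bfn \bfx, \hbeta^\bfn \bfy)}\big) = \overline{\big(\beta^\bfn \bfx,\ \bfy',\ (\beta^\bfn \bfx)^\T \bfy'\big)},
\]
while
\[
\alpha^\bfn \circ \psi\big(\overline{(\bfx,\bfy)}\big) = \alpha^\bfn.\overline{(\bfx, \bfy, \bfx^\T \bfy)} = \overline{\big(\beta^\bfn \bfx,\ \hbeta^\bfn \bfy,\ \bfx^\T \bfy\big)}.
\]

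The next step is to check that the two lifts differ by right-multiplication by the element $(\bfzero, \bfk, 0) \in \Gamma$. Using the group law \eqref{HeisenRuleEq},
\[
\big(\beta^\bfn \bfx,\ \bfy',\ (\beta^\bfn \bfx)^\T \bfy'\big)\cdot(\bfzero,\bfk,0) = \big(\beta^\bfn \bfx,\ \bfy'+\bfk,\ (\beta^\bfn \bfx)^\T \bfy' + (\beta^\bfn \bfx)^\T \bfk\big),
\]
which equals $\big(\beta^\bfn \bfx,\ \hbeta^\bfn \bfy,\ (\beta^\bfn \bfx)^\T \hbeta^\bfn \bfy\big)$ since $\bfy' + \bfk = \hbeta^\bfn \bfy$. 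The identity $(\beta^\bfn)^\T \hbeta^\bfn = (\beta^\bfn)^\T (\beta^{-\bfn})^\T = \Id$ then gives
\[
(\beta^\bfn \bfx)^\T \hbeta^\bfn \bfy = \bfx^\T (\beta^\bfn)^\T \hbeta^\bfn \bfy = \bfx^\T \bfy,
\]
so the right-hand side is exactly the lift of $\alpha^\bfn \circ \psi(\overline{(\bfx,\bfy)})$ that we wrote above. This proves the desired equality in $X$.

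There is no real obstacle here; this is essentially a bookkeeping exercise. The only place where the nilpotent structure enters nontrivially is the bilinear correction $\bfx^\T \bfy'$ in the definition of $\psi$, and it is precisely the pairing between the $\bfx$-subspace (on which $\beta$ acts) and the $\bfy$-subspace (on which $(\beta^{-\bfn})^\T$ acts) that makes this correction $\alpha$-equivariant. In particular, had we used a different section, such as $\overline{(\bfx,\bfy)} \mapsto \overline{(\bfx,\bfy,0)}$, the equivariance would fail precisely by the term $(\beta^\bfn\bfx)^\T \bfk$ which has no a priori reason to be integral.
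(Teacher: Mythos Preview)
Your proof is correct and follows essentially the same approach as the paper: a direct computation exploiting that $\beta^\bfn\bfx\in S_0\subset[-\tfrac12,\tfrac12)^6$ requires no integer correction and that $(\beta^\bfn)^\T\hbeta^\bfn=\Id$. The only cosmetic difference is that the paper reduces both sides to the fundamental domain via \eqref{HeisenResEq} and compares coordinates, whereas you exhibit the explicit lattice element $(\bfzero,\bfk,0)$ relating the two lifts; these are the same computation packaged differently.
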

\begin{proof}Given $(\bfx,\bfy)\in S_0\times\bT^6$, we calculate separately the image of $\overline{(\bfx,\bfy)}$ under both maps. The left hand side gives
\begin{equation}\label{InvSectionEq1}\begin{split}
&\psi\big((\beta^\bfn\times\hbeta^\bfn).\overline{(\bfx,\bfy)}\big)\\
=\ &\psi\Big(\overline{\big(\beta^\bfn\bfx,(\beta^{-\bfn})^\T\bfy\big)}\Big)\\
=\ &\psi\Big(\overline{\big(\beta^\bfn\bfx,\{(\beta^{-\bfn})^\T\bfy\}\big)}\Big)\\
\overset{\eqref{SectionMapEq}}=&\ \overline{\Big(\beta^\bfn\bfx,\big\{(\beta^{-\bfn})^\T\bfy\big\},\bfx^\T(\beta^\bfn)^\T\big\{(\beta^{-\bfn})^\T\bfy)\big\}\Big)}.
\end{split}\end{equation} Here one can apply the definition of $\psi$ in the last equality because $\beta^\bfn\bfx\in S_0$ and $\big\{(\beta^{-\bfn})^\bfy\big\}\in[-\frac12,\frac12)^6$.

On the other hand, the image by the map on the right hand side is
\begin{equation}\label{InvSectionEq2}\begin{split}
&\alpha^\bfn.\psi\big(\overline{(\bfx,\bfy)}\big)\\
=\ &\alpha^\bfn.\overline{(\bfx,\bfy,\bfx^\T\bfy)}=\overline{(\beta^\bfn\bfx,(\beta^{-\bfn})^\T\bfy,\bfx^\T\bfy)}\\
\overset{\eqref{HeisenResEq}}=&\ \overline{\Big(\beta^\bfn\bfx,\big\{(\beta^{-\bfn})^\T\bfy\big\},\bfx^\T\bfy+\bfx^\T(\beta^\bfn)^\T\big\{(\beta^{-\bfn})^\T\bfy\big\}-\{\beta^\bfn\bfx\}^\T(\beta^{-\bfn})^\T\bfy\Big)}\\
\end{split}\end{equation} However, notice that $\{\beta^\bfn\bfx\}=\beta^\bfn\bfx$,
because $\beta^\bfn\bfx\in[-\frac12,\frac12)^6$ by \eqref{SmallCircleEq}.  We deduce that
$\{\beta^\bfn\bfx\}^\T(\beta^{-\bfn})^\T\bfy=(\beta^\bfn\bfx)^T(\beta^{-\bfn})^\T\bfy=\bfx^\T\bfy$.
Therefore, $\eqref{InvSectionEq1}=\eqref{InvSectionEq2}$ and the two maps do coincide.\end{proof} 

Therefore, over the subset $S\times\bT^6$, $\psi$ interwines $\alpha$ with its projection $\beta\times\hbeta$ to $X_\ab$. 

\begin{proof}[Proof of Theorem \ref{Heisenberg}] Let $\mu=\psi_*(\rmm_S\times\rmm_{\bT^6})$. We verify that $\mu$ satisfies the requirements of Theorem \ref{Heisenberg}.

Since $\psi$ commutes $\alpha$ and $\beta\times\hbeta$ and $\rmm_S\times\rmm_{\bT^6}$ is $\beta\times\hbeta$-invariant, $\mu$ is $\alpha$-invariant.

By \eqref{ProjSectIdEq}, $\pi_*\mu=\rmm_S\times\rmm_{\bT^6}$, it further projects to $\rmm_{\bT^6}$ on the second copy of $\bT^6$, on which $\alpha$ projects to $\hbeta$. By Lemma \ref{TotalIrrT6}, the action $\hbeta$ on $\bT^6$ is totally irreducible and not virtually cyclic. Property (1) from Theorem \ref{Heisenberg} is hence verified. In addition, since $\beta$ is also totally irreducible and not virtually cylic,  $\alpha$ does not have any virtually cyclic factor. 

Moreover, by restricting both $\alpha$ and $\beta\times\hbeta$ to an arbitrary finite index subgroup $\Sigma\subset\bZ^2$, one deduce from Lemma \ref{CircleProdErgo} that $\mu$ is ergodic under $\alpha|_\Sigma$. This proves the second part of the theorem.

It remains to check part (3) of the theorem. Suppose, in order to derive a contradiction, that for some subgroup $\Sigma\subset\bZ^2$ of finite index, $H\subset G$ is an $\alpha|_\Sigma$-invariant connected closed subgroup such that $\mu$ desintegrates into $H$-invariant components supported by compact orbits of the left translation action by $H$. In particular, $\mu$ is $H$-invariant, and $\mu$-almost every point has a compact $H$-orbit.

Consider first the projection $H_\ab$ of $H$ to the maximal abelian factor $\bR^6\oplus\bR^6$. The left translation by $H$ must preserve the projection $\rmm_S\times\rmm_{\bT^6}$. But inside the first $\bT^6$ component, the circle measure $\rmm_S$ is not invariant under any translations except those by integer vectors. Since $H$ is connected, this implies that $H_\ab$  lives in $\{\bfzero\}\times\bR^6$.

As $H$ is $\alpha|_\Sigma$-invariant, $H_\ab$, while regarded as a subgroup of $\bR^6$,  is $\hbeta|_\Sigma$-invariant. Moreover, since $H$ has compact orbit for almost every point. $H_\ab$ has compact orbit for almost every point in $X_\ab$, or equivalently, for every point on the second $\bT^6$ component. 

The $H_\ab$-orbit of the origin in $\bT^6$ has to be a $\hbeta|_\Sigma$-invariant subtorus. Whereas the $\hbeta$-action on $\bT^6$ is totally irreducible, forcing this $H_\ab$-orbit to be the full torus $\bT^6$. Thus $H_\ab$ is $\bR^6$.

In our coordinates, which identify the Lie group with the Lie algebra, $H$ is just an $\alpha|_\Sigma$-invariant subspace of $\bR^6\oplus\bR$ which has full projection on $\bR^6$. In addition, $\alpha$ acts linearly by $\hbeta\oplus\Id$ on $\bR^6\oplus\bR$. In particular, for some element $\bfn$ from the finite index subgroup $\Sigma$, $\alpha^\bfn$ acts by $A^m\oplus\Id$ where $m\neq 0$. Due to the fact that all eigenvalues of  $A$ are not roots of unity, all eigenvalues of $A^m\oplus\Id$ on the subspace $\bR^6$ are different from $1$. Therefore, $H$ must be either $\bR^6$ or $\bR^6\oplus\bR$. 

Because $\mu$ is supported on the section $\psi(S\times\bT^6)$, which intersects every fiber of $\pi$ at most once, it is not invariant under translations by the $\bR$ subgroup that corresponds to the $z$-direction. Thus $H$ cannot be $\bR^6\oplus\bR$. 

Therefore $H$ must be the second $\bR^6$ component in $G$, more precisely $H=\{(\bfzero,\bfv,0):\bfv\in\bR^6\}$.

However, it is a direct computation to check which points of $X$ have compact orbits under this subgroup. The $H$-orbit of a point $\overline{(\bfx,\bfy,z)}$ is compact if and only if $\bfx$ is rational.

Observe that there are only countably many rational points in the $\bT^6$ parametrized by $\bfx$, while the measure $\rmm_S$ is an absolutely continuous measure on a twisted circle. So $\rmm_S$-almost every point is irrational. Therefore, with respect to the measure $\mu=\psi_*(\rmm_S\times\rmm_{\bT^6})$, a generic point does not have compact $H$-orbit, which is the contradiction we want. \end{proof}

\begin{bibdiv}
\begin{biblist}

\bib{BQ11}{article}{
   author={Benoist, Yves},
   author={Quint, Jean-Fran{\c{c}}ois},
   title={Mesures stationnaires et ferm\'es invariants des espaces
   homog\`enes},
   journal={Ann. of Math. (2)},
   volume={174},
   date={2011},
   number={2},
   pages={1111--1162},
}

\bib{BQ13}{article}{
   author={Benoist, Yves},
   author={Quint, Jean-Fran{\c{c}}ois},
   title={Stationary measures and invariant subsets of homogeneous spaces
   (II)},
   journal={J. Amer. Math. Soc.},
   volume={26},
   date={2013},
   number={3},
   pages={659--734},
}

\bib{B83}{article}{
   author={Berend, Daniel},
   title={Multi-invariant sets on tori},
   journal={Trans. Amer. Math. Soc.},
   volume={280},
   date={1983},
   number={2},
   pages={509--532},
}

\bib{B84}{article}{
   author={Berend, Daniel},
   title={Multi-invariant sets on compact abelian groups},
   journal={Trans. Amer. Math. Soc.},
   volume={286},
   date={1984},
   number={2},
   pages={505--535},
}

\bib{CG90}{book}{
   author={Corwin, Lawrence J.},
   author={Greenleaf, Frederick P.},
   title={Representations of nilpotent Lie groups and their applications.
   Part I},
   series={Cambridge Studies in Advanced Mathematics},
   volume={18},
   publisher={Cambridge University Press},
   place={Cambridge},
   date={1990},
   pages={viii+269},
}

\bib{E06}{article}{
   author={Einsiedler, Manfred},
   title={Ratner's theorem on ${\rm SL}(2,\bR)$-invariant measures},
   journal={Jahresber. Deutsch. Math.-Verein.},
   volume={108},
   date={2006},
   number={3},
   pages={143--164},
}

\bib{EK03}{article}{
   author={Einsiedler, Manfred},
   author={Katok, Anatole},
   title={Invariant measures on $G/\Gamma$ for split simple Lie groups $G$},
   journal={Comm. Pure Appl. Math.},
   volume={56},
   date={2003},
   number={8},
   pages={1184--1221},
}

\bib{EKL06}{article}{
   author={Einsiedler, Manfred},
   author={Katok, Anatole},
   author={Lindenstrauss, Elon},
   title={Invariant measures and the set of exceptions to Littlewood's
   conjecture},
   journal={Ann. of Math. (2)},
   volume={164},
   date={2006},
   number={2},
   pages={513--560},
}

\bib{EL03}{article}{
   author={Einsiedler, Manfred},
   author={Lindenstrauss, Elon},
   title={Rigidity properties of $\bZ^d$-actions on tori and
   solenoids},
   journal={Electron. Res. Announc. Amer. Math. Soc.},
   volume={9},
   date={2003},
   pages={99--110 (electronic)},
}

\bib{EL10}{article}{
   author={Einsiedler, M.},
   author={Lindenstrauss, E.},
   title={Diagonal actions on locally homogeneous spaces},
   conference={
      title={Homogeneous flows, moduli spaces and arithmetic},
   },
   book={
      series={Clay Math. Proc.},
      volume={10},
      publisher={Amer. Math. Soc.},
      place={Providence, RI},
   },
   date={2010},
   pages={155--241},
}

\bib{ELW}{article}{
   author={Einsiedler, M.},
   author={Lindenstrauss, E.},
   author={Wang, Z.},
   title={Rigidity properties of abelian actions on tori and
   solenoids},
   pages={in preparation},
}

\bib{F93}{article}{
   author={Feldman, J.},
   title={A generalization of a result of R. Lyons about measures on
   $[0,1)$},
   journal={Israel J. Math.},
   volume={81},
   date={1993},
   number={3},
   pages={281--287},
}

\bib{FKS11}{article}{
   author={Fisher, David},
   author={Kalinin, Boris},
   author={Spatzier, Ralf},
   title={Totally nonsymplectic Anosov actions on tori and nilmanifolds},
   journal={Geom. Topol.},
   volume={15},
   date={2011},
   number={1},
   pages={191--216},
}
		
\bib{FKS13}{article}{
   author={Fisher, David},
   author={Kalinin, Boris},
   author={Spatzier, Ralf},
   title={Global rigidity of higher rank Anosov actions on tori and
   nilmanifolds,  \rm{with an appendix by J. Davis}},
   journal={J. Amer. Math. Soc.},
   volume={26},
   date={2013},
   number={1},
   pages={167--198}
}

\bib{F67}{article}{
   author={Furstenberg, Harry},
   title={Disjointness in ergodic theory, minimal sets, and a problem in
   Diophantine approximation},
   journal={Math. Systems Theory},
   volume={1},
   date={1967},
   pages={1--49},
   issn={0025-5661},
}

\bib{H12}{article}{
   author={Hochman, Michael},
   title={Geometric rigidity of $\times m$ invariant measures},
   journal={J. Eur. Math. Soc. (JEMS)},
   volume={14},
   date={2012},
   number={5},
   pages={1539--1563},
}

\bib{HS12}{article}{
   author={Hochman, Michael},
   author={Shmerkin, Pablo},
   title={Local entropy averages and projections of fractal measures},
   journal={Ann. of Math. (2)},
   volume={175},
   date={2012},
   number={3},
   pages={1001--1059},
}

\bib{H95}{article}{
   author={Host, Bernard},
   title={Nombres normaux, entropie, translations},
   language={French, with English summary},
   journal={Israel J. Math.},
   volume={91},
   date={1995},
   number={1-3},
   pages={419--428},
}

\bib{H75}{book}{
   author={Humphreys, James E.},
   title={Linear algebraic groups},
   note={Graduate Texts in Mathematics, No. 21},
   publisher={Springer-Verlag},
   place={New York},
   date={1975},
   pages={xiv+247},
}

\bib{J92}{article}{
   author={Johnson, Aimee S. A.},
   title={Measures on the circle invariant under multiplication by a
   nonlacunary subsemigroup of the integers},
   journal={Israel J. Math.},
   volume={77},
   date={1992},
   number={1-2},
   pages={211--240},
}

\bib{JR95}{article}{
   author={Johnson, Aimee},
   author={Rudolph, Daniel J.},
   title={Convergence under $\times_q$ of $\times_p$ invariant
   measures on the circle},
   journal={Adv. Math.},
   volume={115},
   date={1995},
   number={1},
   pages={117--140},
}

\bib{KK01}{article}{
   author={Kalinin, Boris},
   author={Katok, Anatole},
   title={Invariant measures for actions of higher rank abelian groups},
   conference={
      title={Smooth ergodic theory and its applications},
      address={Seattle, WA},
      date={1999},
   },
   book={
      series={Proc. Sympos. Pure Math.},
      volume={69},
      publisher={Amer. Math. Soc.},
      place={Providence, RI},
   },
   date={2001},
   pages={593--637},
}

\bib{KS05}{article}{
   author={Kalinin, Boris},
   author={Spatzier, Ralf},
   title={Rigidity of the measurable structure for algebraic actions of
   higher-rank Abelian groups},
   journal={Ergodic Theory Dynam. Systems},
   volume={25},
   date={2005},
   number={1},
   pages={175--200},
}

\bib{KN11}{book}{
   author={Katok, Anatole},
   author={Ni{\c{t}}ic{\u{a}}, Viorel},
   title={Rigidity in higher rank abelian group actions. Volume I: Introduction and cocycle problem},
   series={Cambridge Tracts in Mathematics},
   volume={185},
   publisher={Cambridge University Press},
   place={Cambridge},
   date={2011},
   pages={vi+313},
}

\bib{KS96}{article}{
   author={Katok, A.},
   author={Spatzier, R. J.},
   title={Invariant measures for higher-rank hyperbolic abelian actions},
   journal={Ergodic Theory Dynam. Systems},
   volume={16},
   date={1996},
   number={4},
   pages={751--778},
}

\bib{L88}{article}{
   author={Lyons, Russell},
   title={On measures simultaneously $2$- and $3$-invariant},
   journal={Israel J. Math.},
   volume={61},
   date={1988},
   number={2},
   pages={219--224},
}

\bib{LY85}{article}{
author={Ledrappier, F.},
   author={Young, L.-S.},
   title={The metric entropy of diffeomorphisms. I., II. },
   journal={Ann. of Math. (2)},
   volume={122},
   date={1985},
   number={3},
   pages={509-539, 540--574},
}

\bib{L06}{article}{
   author={Lindenstrauss, Elon},
   title={Invariant measures and arithmetic quantum unique ergodicity},
   journal={Ann. of Math. (2)},
   volume={163},
   date={2006},
   number={1},
   pages={165--219}
}

\bib{LW12}{article}{
   author={Lindenstrauss, Elon},
   author={Wang, Zhiren},
   title={Topological self-joinings of Cartan actions by toral
   automorphisms},
   journal={Duke Math. J.},
   volume={161},
   date={2012},
   number={7},
   pages={1305--1350}
}

\bib{M10}{article}{
   author={Maucourant, Fran{\c{c}}ois},
   title={A nonhomogeneous orbit closure of a diagonal subgroup},
   journal={Ann. of Math. (2)},
   volume={171},
   date={2010},
   number={1},
   pages={557--570},
}

\bib{P69}{article}{
   author={Parry, William},
   title={Ergodic properties of affine transformations and flows on
   nilmanifolds. },
   journal={Amer. J. Math.},
   volume={91},
   date={1969},
   pages={757--771},
}

\bib{P96}{article}{
   author={Parry, William},
   title={Squaring and cubing the circle---Rudolph's theorem},
   conference={
      title={Ergodic theory of ${\bf Z}^d$ actions},
      address={Warwick},
      date={1993--1994},
   },
   book={
      series={London Math. Soc. Lecture Note Ser.},
      volume={228},
      publisher={Cambridge Univ. Press},
      place={Cambridge},
   },
   date={1996},
   pages={177--183},
}

\bib{R61}{article}{
   author={Rosenlicht, Maxwell},
   title={On quotient varieties and the affine embedding of certain
   homogeneous spaces},
   journal={Trans. Amer. Math. Soc.},
   volume={101},
   date={1961},
   pages={211--223},
}

\bib{R72}{book}{
   author={Raghunathan, M. S.},
   title={Discrete subgroups of Lie groups},
   note={Ergebnisse der Mathematik und ihrer Grenzgebiete, Band 68},
   publisher={Springer-Verlag},
   place={New York},
   date={1972},
   pages={ix+227}
}

\bib{R91}{article}{
   author={Ratner, Marina},
   title={On Raghunathan's measure conjecture},
   journal={Ann. of Math. (2)},
   volume={134},
   date={1991},
   number={3},
   pages={545--607},
   issn={0003-486X},
}

\bib{RHW}{article}{
   author={Rodriguez Hertz, Federico},
   author={Wang, Zhiren},
   title={Global rigidity of higher rank Anosov algebraic actions},
   pages={http://arxiv.org/abs/1304.1234}
}

\bib{R90}{article}{
   author={Rudolph, Daniel J.},
   title={$\times 2$ and $\times 3$ invariant measures and entropy},
   journal={Ergodic Theory Dynam. Systems},
   volume={10},
   date={1990},
   number={2},
   pages={395--406},
   issn={0143-3857},
}

\bib{S95}{book}{
   author={Schmidt, Klaus},
   title={Dynamical systems of algebraic origin},
   series={Progress in Mathematics},
   volume={128},
   publisher={Birkh\"auser Verlag},
   place={Basel},
   date={1995},
   pages={xviii+310}
}

\bib{S99}{article}{
   author={Starkov, A. N.},
   title={The first cohomology group, mixing, and minimal sets of the
   commutative group of algebraic actions on a torus},
   note={Dynamical systems. 7},
   journal={J. Math. Sci. (New York)},
   volume={95},
   date={1999},
   number={5},
   pages={2576--2582},
}

\end{biblist}
\end{bibdiv}

\end{document}